\renewcommand{\phi}{\varphi}
      \theoremstyle{plain}
      \newtheorem{assumption}{Assumption}
\begin{document}
\title{Minimum intrinsic
dimension scaling for entropic optimal transport}
\author{Austin J. Stromme\thanks{Department of EECS at the Massachusetts Institute of Technology, \texttt{austinjstromme.work@gmail.com}.}
}

\maketitle

\begin{abstract}
Motivated by the manifold hypothesis,
which states that data with a high extrinsic dimension
may yet have a low intrinsic dimension,
we develop refined statistical
bounds for entropic optimal transport
that are sensitive
to the intrinsic dimension of the data.
Our bounds involve a robust notion
of intrinsic dimension, measured at only
a single distance scale
depending on the regularization parameter,
and show that
it is only the minimum
of these single-scale intrinsic dimensions which 
governs the rate of convergence. We call this the Minimum
Intrinsic Dimension scaling (MID scaling) phenomenon, and establish MID scaling
with no assumptions on the data distributions
so long as the cost is bounded and Lipschitz,
and
for various entropic optimal transport quantities beyond just
values, %
with stronger analogs when one distribution
is supported on a manifold.
Our results significantly
advance
the theoretical state of the art
by showing that MID scaling is a generic
phenomenon, and 
 provide the first
rigorous interpretation
of the statistical
effect of entropic regularization as a distance
scale.
\end{abstract}

\section{Introduction}

Optimal transport (OT) is a powerful paradigm for comparing
probability distributions based on a minimum-energy criterion,
and has
recently been employed throughout applied science,
including statistics, computer science,
biology
and beyond~\cite{villani2008optimal,santambrogio2015optimal,peyre2019computational, panaretos2019statistical,schiebinger2019optimal}.
Rather than comparing distributions pointwise, OT searches
for the most efficient way to transform one into the other,
and thus gives practitioners a geometrically meaningful method
of contrasting and interpolating data that can be represented
as probability distributions.
For probability measures $\mu, \nu$ on $\R^d$
the OT problem, with respect to the squared Euclidean
cost $\|\cdot\|^2$, is defined as
\begin{equation}\label{eqn:OT}
    W_2^2(\mu, \nu) := \min_{\pi \in \Pi(\mu, \nu)} \int \|x - y\|^2\ud \pi(x, y),
\end{equation}
where $\Pi(\mu, \nu)$ denotes the set of probability distributions
$\pi$ on $\R^d \times \R^d$ which couple $\mu$ to $\nu$,
namely such that
their marginal on the first $d$ coordinates is $\mu$ and
their marginal on the last $d$ coordinates is $\nu$.
The fundamental theorem of OT
guarantees that, so long as $\mu, \nu$ have finite second moments,
there is a solution to~\eqref{eqn:OT};
moreover if $\mu$ has a density with respect to the Lebesgue measure,
there is a unique minimizer which is supported on the graph of a deterministic
map~\cite{villani2008optimal}. For many
applications, such as domain adaptation~\cite{flamary2016optimal} or cellular biology~\cite{schiebinger2019optimal},
it is the coupling minimizing~\eqref{eqn:OT},
rather than the cost $W_2^2(\mu, \nu)$ itself,
which is of greatest interest.

\paragraph*{Entropic optimal transport.} When discretized, the OT problem~\eqref{eqn:OT}
becomes a linear program (LP) with two equality constraints.
Although this problem can be feasibly solved at moderate
scale with specialized LP solvers
which have computational complexity scaling cubically in the
support size~\cite{dong2020study},
in practice, OT is most often approximated with an entropic regularization
term~\cite{cuturi2013sinkhorn}. This entropically regularized problem
is then solved with a simple iterative rounding algorithm
known as Sinkhorn's algorithm~\cite{sinkhorn1964relationship,sinkhorn1967concerning},
and is preferred for its quadratic scaling in the support size, simplicity,
and 
parallelizability.
The entropically regularized OT (entropic OT)
problem is defined, for
a regularization parameter $\eps > 0$, as
\begin{equation}\label{eqn:entropic_OT}
    S_\eps (\mu, \nu) := \min_{\pi \in \Pi(\mu, \nu)}
    \int \|x -y\|^2 \ud \pi(x, y) + \eps \KL{\pi}{\mu \otimes \nu},
\end{equation} where $\mu \otimes \nu$ denotes the joint
law of $(x, y)$ when $x \sim \mu$ and $y \sim \nu$ are independent,
and the Kullback--Leibler divergence is defined as
$$
\KL{P}{Q} :=\begin{cases}
\int \ln \big( \frac{\ud P}{\ud Q}(x) \big) \ud P(x) & P\ll  Q, \\
\infty & P \not \ll Q,
\end{cases}
$$ and the notation $P \ll Q$ means $P$ is absolutely continuous with
respect to $Q$.

\subsection{Statistical aspects of
(entropic) OT}
Unfortunately, the un-regularized OT problem~\eqref{eqn:OT}
is known to suffer from a statistical curse of dimensionality.
To describe this barrier, consider the practical
situation in which one does not have access to the entire
distributions $\mu, \nu$, and instead only 
has access to iid samples of size $n$
from each distribution, which we write as
$\X := (x_1, \ldots, x_n) \sim \mu^{\otimes n}$ and $\Y:= (y_1, \ldots, y_n)
\sim \nu^{\otimes n}$.
Let $\hat \mu$ 
and $\hat \nu$ denote the empirical measures
supported on $\X$ and $\Y$, respectively, namely
$$
\hat \mu := \frac{1}{n} \sum_{i = 1}^n \delta_{x_i}
\quad \quad \textrm{ and } \quad \quad \hat \nu := \frac{1}{n} \sum_{j =1 }^n
\delta_{y_j}.
$$
Given the empirical measures $\hat \mu, \hat \nu$, the most
natural way to estimate the population quantity $W_2^2(\mu, \nu)$
is with the plug-in estimator $W_2^2(\hat \mu, \hat \nu)$.
The discrepancy between the empirical OT value
and its population counterpart is an old and
well-studied area, and
it has long been understood that rates
like $n^{-2/d}$ are typical -- for instance,
if $d \geqslant 5$, and
$\mu, \nu$ are absolutely continuous
with respect to the Lebesgue measure on $[0, 1]^d$~\cite{dudley1969speed,manole2021sharp}.
It is natural to wonder whether these rates
can be improved with other estimators,
but it was recently shown they are essentially un-improvable,
both for OT value estimation~\cite{niles2022estimation,manole2021plugin} and
OT map estimation~\cite{hutter2021minimax}.

\paragraph*{Statistical entropic OT.}
Motivated by its computational benefits
and corresponding prevalence in practice,
as well as the curse of dimensionality for its
un-regularized counterpart,
a line of recent work has
endeavored to understand the statistical
consequences of entropic regularization~\cite{luise2019sinkhorn,genevay2019sample,mena2019statistical,rigollet2022sample,del2023improved}.
This work has shown that the entropic OT problem~\eqref{eqn:entropic_OT}
offers significantly improved statistical performance,
essentially transferring the curse of dimensionality
from the sample size $n$ to the regularization parameter $\eps$;
the following result describes the current
state of the art when the measures are bounded
and $\eps$ is small.

\begin{theorem}[\cite{genevay2019sample,mena2019statistical}]\label{thm:genevay_rate}
Suppose $\mu, \nu$ are probability measures on $\R^d$
with support contained in the unit ball $B(0, 1)$. Then for a constant $C_d$ depending only
on the dimension $d$,
$$
\E [|S_\eps(\hat \mu, \hat \nu) - S_\eps(\mu, \nu)|] \leqslant C_d \cdot \frac{1 + \eps^{-\lfloor d/2\rfloor }}{\sqrt{n}},
$$ where the expectation is taken over the iid samples
$\X \sim \mu^{\otimes n}, \Y \sim \nu^{\otimes n}$.
\end{theorem}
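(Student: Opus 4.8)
\noindent\emph{Proof strategy.} The plan is to pass to the dual formulation of entropic OT, establish quantitative H\"older regularity of the soft dual potentials with the sharp dependence on $\eps$, and then invoke classical empirical process theory. Write $\mathcal{T}_\rho^\eps$ for the soft $c$-transform against a probability measure $\rho$,
\[
  (\mathcal{T}_\rho^\eps f)(y) := -\eps \log \int e^{(f(x) - \|x - y\|^2)/\eps}\,\ud\rho(x),
\]
and recall the semidual identity $S_\eps(\alpha, \beta) = \sup_f \bigl\{ \int f\,\ud\alpha + \int (\mathcal{T}_\alpha^\eps f)\,\ud\beta \bigr\}$, obtained from the Fenchel--Rockafellar dual of~\eqref{eqn:entropic_OT} by maximizing out the second potential, together with its mirror image obtained by exchanging the two marginals. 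The structural point is that $\mathcal{T}_\alpha^\eps$ involves only the first argument $\alpha$: picking a near-maximizer $f$ for $S_\eps(\mu,\hat\nu)$ and using it as a feasible point for $S_\eps(\mu,\nu)$ (and conversely) gives $|S_\eps(\mu,\hat\nu) - S_\eps(\mu,\nu)| \leqslant \sup_f \bigl| \int (\mathcal{T}_\mu^\eps f)\,\ud(\hat\nu - \nu) \bigr|$, and the mirror identity gives $|S_\eps(\hat\mu,\hat\nu) - S_\eps(\mu,\hat\nu)| \leqslant \sup_g \bigl| \int (\mathcal{T}_{\hat\nu}^\eps g)\,\ud(\hat\mu - \mu) \bigr|$. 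Adding the two via the triangle inequality reduces the theorem to controlling two empirical processes indexed by the ranges of the operators $\mathcal{T}_\rho^\eps$; crucially this leaves behind no remainder term from the entropic penalty --- unlike the naive substitution of the population potentials into the empirical dual --- which is what permits a bound with no factor exponential in $1/\eps$.

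The crux is then to show that, for every probability measure $\rho$ supported in $B(0,1)$ and every bounded $f$, the function $\mathcal{T}_\rho^\eps f$ lies --- modulo an additive constant, which the centered measures $\hat\mu - \mu$ and $\hat\nu - \nu$ annihilate --- in a ball of the H\"older space $\mathcal{C}^s(B(0,1))$ of radius $C_d\,(1 + \eps^{-(s-1)})$ independent of $\rho$ and $f$. Completing the square yields $(\mathcal{T}_\rho^\eps f)(y) = \|y\|^2 - \eps\,\Lambda_\rho^f(2y/\eps)$, where $\Lambda_\rho^f(\lambda) := \log \int e^{\langle x, \lambda \rangle + (f(x) - \|x\|^2)/\eps}\,\ud\rho(x)$ is the cumulant generating function of a probability measure supported in $B(0,1)$. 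Hence for each $k \geqslant 1$ the derivative $\nabla^k \Lambda_\rho^f$ is a fixed polynomial in the moments of a random vector of norm at most $1$, so $\| \nabla^k \Lambda_\rho^f \|_\infty \leqslant C_k$ uniformly in $\rho$, $f$ and $\lambda$; the chain rule then gives $\| \nabla^k (\mathcal{T}_\rho^\eps f) \|_\infty \leqslant C_k\,(1 + \eps^{-(k-1)})$. Choosing $s = \lfloor d/2 \rfloor + 1$ produces the radius $1 + \eps^{-\lfloor d/2 \rfloor}$ and ensures $s > d/2$.

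Finally I would bound $\E\, \sup_{h \in \mathcal{H}} \bigl| \int h\, \ud(\hat\mu - \mu) \bigr|$, where $\mathcal{H}$ is the ball of radius $r = C_d(1 + \eps^{-\lfloor d/2\rfloor})$ in $\mathcal{C}^s(B(0,1))$ with $s = \lfloor d/2\rfloor + 1$, and symmetrically for $\hat\nu - \nu$; note that $\mathcal{H}$ is deterministic even though the operators $\mathcal{T}_{\hat\nu}^\eps$, $\mathcal{T}_{\hat\mu}^\eps$ involve the samples, so no independence of $\X$ and $\Y$ is needed here. The metric entropy of such a H\"older ball satisfies $\log N(\delta, \mathcal{H}, \|\cdot\|_\infty) \lesssim_d (r/\delta)^{d/s}$, and since $d/s < 2$ Dudley's entropy integral converges; the chaining maximal inequality for empirical processes then gives $\E\, \sup_{h \in \mathcal{H}} \bigl| \int h\, \ud(\hat\mu - \mu) \bigr| \lesssim_d r/\sqrt{n} = (1 + \eps^{-\lfloor d/2\rfloor})/\sqrt{n}$, and combining with the reduction of the first paragraph proves the claim. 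I expect the main obstacle to be the regularity estimate: one must differentiate the log-partition function up to order $\lfloor d/2\rfloor + 1$ and verify that, although each derivative beyond the first contributes a factor $\eps^{-1}$, the combinatorial constants depend on $d$ alone and the bounds hold uniformly over \emph{all} admissible potentials and base measures, so that the resulting index class is a fixed deterministic H\"older ball. A secondary subtlety --- and the reason the triangle inequality is split as $(\hat\mu,\hat\nu) \to (\mu,\hat\nu) \to (\mu,\nu)$ --- is that the soft $c$-transform regularizes only in the cost variable, so one must perturb the two marginals one at a time.
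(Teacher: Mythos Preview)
Your proposal is correct and is, in fact, a faithful reconstruction of the Mena--Niles-Weed argument from~\cite{mena2019statistical}: the split $(\hat\mu,\hat\nu)\to(\mu,\hat\nu)\to(\mu,\nu)$ via the semidual, the completion-of-the-square identity $\mathcal{T}_\rho^\eps f(y)=\|y\|^2-\eps\,\Lambda(2y/\eps)$ with $\Lambda$ a cumulant generating function of a measure supported in $B(0,1)$, the resulting uniform bound $\|\nabla^k(\mathcal{T}_\rho^\eps f)\|_\infty\lesssim_k 1+\eps^{-(k-1)}$, and the choice $s=\lfloor d/2\rfloor+1$ so that the H\"older entropy exponent $d/s<2$ and Dudley's integral converges --- this is exactly the mechanism of the cited reference.

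The paper, however, does \emph{not} give its own proof of Theorem~\ref{thm:genevay_rate}: the result is quoted as prior work and serves only as a benchmark. What the paper does prove is Theorem~\ref{thm:main_cost_covering}, whose specialization to arbitrary $\mu,\nu\subset B(0,1)$ (Example following that theorem) nearly recovers Theorem~\ref{thm:genevay_rate}, losing a factor $\eps^{-1/2}$ when $d$ is odd. The route is genuinely different from yours: instead of smoothness of the potentials and empirical-process chaining, the paper works entirely through the \emph{empirical} dual $\hat\Phi_\eps$, using its concavity to bound $\E[\hat\Phi_\eps(\hat f_\eps,\hat g_\eps)-\hat\Phi_\eps(f_\eps,g_\eps)]$ by $\|p_\eps\|_{L^2(\mu\otimes\nu)}/\sqrt{n}$, and then proving the key new estimate $\|p_\eps\|_{L^2(\mu\otimes\nu)}^2\lesssim\mathcal{N}(\mu,\eps/L)\wedge\mathcal{N}(\nu,\eps/L)$ (Lemma~\ref{lem:variance_control}) via the marginal constraints and log-Lipschitzness of $p_\eps$. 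Your approach requires smoothness of the cost (the cumulant trick is specific to $\|x-y\|^2$) but gives the sharp $\eps^{-\lfloor d/2\rfloor}$; the paper's approach needs only a Lipschitz cost and yields intrinsic-dimension dependence through covering numbers, at the price of the slightly worse exponent in the extrinsic case.
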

By giving
an explicit trade-off between regularization
and statistical error,
this result
offers a strong
and flexible description of the performance of entropic
OT in practice.
In this work, we build on Theorem~\ref{thm:genevay_rate}
and develop
a refined theory of the statistical
behavior of entropic OT.

\subsection{Effective statistical
dimension of entropic OT}

To describe our refinements, observe that
the estimate in Theorem~\ref{thm:genevay_rate}
is {\it extrinsic} in the sense that
the dimension $d$ is appearing because the ambient
space is $\R^d$, rather than anything to do with the {\it intrinsic}
dimensions of $\mu$ and $\nu$.
This extrinsic dimension dependence is fundamental
to the proof,
and can be very pessimistic.
For a toy example, suppose $\mu$ and $\nu$ are supported on the same
$k$-dimensional hyperplane in $\R^d$: then Theorem~\ref{thm:genevay_rate}
itself
implies that the statistical rate has the potentially
much milder dependence $\eps^{-\lfloor k/2\rfloor}$ on the regularization parameter
$\eps$.
In such a case, we can say that
$\mu, \nu$ have extrinsic
dimension $d$ yet have intrinsic dimension
(at most) $k$. Well beyond toy settings,
the widely believed
manifold hypothesis
states that natural data with a high
extrinsic dimension
is typically near to or on a low-dimensional
manifold embedded in Euclidean 
space, and so has low intrinsic
dimension~\cite{fefferman2016testing,bengio2017deep}. %
Given the ubiquity of entropic OT in practice,
it is therefore
of major interest
to determine the effective
statistical dimension of entropic OT,
and identify how it relates
to the extrinsic dimension $d$
as well as the intrinsic dimensions
of $\mu$ and $\nu$.
In this work, we thus study the
following question:
\begin{center}
    {\it Suppose $\mu$ and $\nu$
    are supported on $\R^d$,
    yet have (informally speaking) intrinsic dimensions
$d_\mu$ and
    $d_\nu$. How does the statistical
    rate of convergence of entropic OT
    depend on $d, d_\mu$, and $d_\nu$?}
\end{center}

Our results will measure intrinsic dimension
through the covering numbers of the supports
of $\mu$ and $\nu$.
To introduce covering numbers,
write the closed ball centered at $z_0 \in \R^d$ 
with radius $r > 0$ as
$$
B^{\rm cl}(z_0, r) := \big\{ z \in \R^d \colon
\|z - z_0 \| \leqslant r \big\}.
$$
Then the covering number of a set
$A \subset \R^d$ at scale $\delta > 0$
is defined as
\begin{equation} \label{eqn:covering_defn}
\mathcal{N}(A, \|\cdot\|, \delta):=
\min \Big\{ K \in \mathbb{N}_{> 0}
\, \, \Big| \, \, \exists
z_1, \ldots z_K \in \R^d \, \, \colon \,\,
A \subset
\bigcup_{k = 1}^K B^{\rm cl}(z_k,
\delta) \Big\}.
\end{equation} It will be convenient to define
$$
\mathcal{N}(\mu, \delta):= \mathcal{N}(\supp(\mu), \|\cdot\| , \delta),
\quad \quad 
\mathcal{N}(\nu, \delta):= \mathcal{N}(\supp(\nu), \|\cdot\|, \delta), \quad \quad \delta > 0.
$$

\paragraph*{Contributions.}

Using this notation, we can give our answer to the above
question:
\begin{center}
    {\it {\bf MID scaling:} 
    the statistical rate of convergence
    of entropic OT can be upper bounded
    by quantities whose dimension-dependence
    is purely contained within the minimum
    covering number at scale $\eps$,
    namely
    $\mathcal{N}(\mu, \eps) \land 
    \mathcal{N}(\nu, \eps)$.}
\end{center}
We call this the minimum intrinsic dimension scaling (MID scaling)
phenomenon.
We emphasize that
the MID scaling phenomenon encapsulates two related
yet distinct phenomena:
\begin{enumerate}
\item[1.]\label{item:min_dep} {\bf Minimum:}
only the {\it minimum} of the intrinsic dimensions
governs the convergence
    \item[2.] {\bf Intrinsic Dimension:} the dimension-dependence is
intrinsic to $\mu$ and $\nu$,
and in fact is 
intrinsic {\it at a single distance scale depending on the regularization parameter}.

\end{enumerate} 

As we elaborate on in section~\ref{sec:main_results},
the MID scaling phenomenon provides a strong
statistical justification for entropic OT
in the context of the manifold hypothesis.
It shows that distributions with 
support which may not be smooth
or even locally low-dimensional can still have
significantly improved dimension-dependence
in their statistical rates, and clearly describes
the statistical role of the regularization parameter
$\eps$ as a distance scale. Moreover,
it identifies the surprising fact that the
dimension-dependence of entropic OT is driven
by the lower-dimensional (in this single-scale
sense) distribution.

The above statement of MID scaling is in the setting
of Theorem~\ref{thm:genevay_rate},
but our main results
establish
MID scaling more generally for bounded,
Lipschitz costs, and for entropic OT quantities
beyond values, including maps and densities.
As our main results in this setting, we prove MID scaling for entropic OT values in Theorem~\ref{thm:main_cost_covering},
for entropic OT maps in empirical norms in Theorem~\ref{thm:main_map_covering}, and
for entropic OT densities in empirical norms
in Theorem~\ref{thm:density_estimation_slow}.
Under the additional assumption that
one distribution is supported on an embedded
manifold, we show that MID scaling
also holds for entropic dual potentials
with fast, population norm convergence
in Theorem~\ref{thm:dual_convergence},
and apply this result to prove stronger
analogs of the previous results on value, map,
and density estimation.

\paragraph*{Organization of the paper.}
We conclude this section with a discussion
of related work and a summary of our notation. In section~\ref{sec:duality},
we state our assumption
on the cost function and
give background and more notation
for the entropic OT problem.
In section~\ref{sec:main_results},
we state our main results and give examples
and discussion.
In section~\ref{sec:preliminary},
we describe some preliminary observations
that form the foundation of our approach.
In section~\ref{sec:proof_overview},
we prove our main results on MID scaling
with a thorough exposition of our proof
strategy in the case of value estimation.
In section~\ref{sec:proofs_fast_rates},
we prove our main results
in the case where one distribution
is supported on
an embedded manifold.
In Appendix~\ref{sec:background_manifolds},
we collect some background
on embedded manifolds that we use
in section~\ref{sec:proofs_fast_rates},
and in Appendix~\ref{sec:deferred}
we include some deferred proofs.

\paragraph*{Related work.}

Taken as a whole, our results provide strong
evidence of the MID scaling
phenomenon for entropic OT.
In fact, there is
additional evidence for MID scaling
and related phenomena
in the literature.
Recent work studied
the continuous to discrete case,
and established that
entropic OT maps achieve
dimension-free rates of
convergence, consistent with MID scaling~\cite{pooladian2023minimax}.
And in the un-regularized setting,
a similar phenomenon was 
recently established
for value estimation,
where it was dubbed
``lower complexity adaptation"
~\cite{hundrieser2022empirical}.
The lower complexity adaptation phenomenon
also states that the minimum of the intrinsic
dimensions of $\mu$ and $\nu$ dictates the
rate of convergence of un-regularized OT,
but their notion of intrinsic dimension is different
from ours. Indeed, as is common in the un-regularized
literature, the intrinsic dimensions $d_\mu$
and $d_\nu$ in lower complexity adaptation
are defined using the covering
numbers at many distance scales,
and so are distinct from the single-scale
intrinsic dimensions in MID scaling.
Also, their results require some structural assumptions,
whereas our results show that MID scaling
in the entropic setting is quite generic.
In light of these results,
it is natural to expect that the minimum part of MID scaling
holds in greater generality,
potentially including un-regularized maps and plans,
and
alternate forms of regularization.

To the best of our knowledge,
there is only one prior work which
considers the sample complexity
of regularized OT and intrinsic dimension~\cite{bayraktar2022stability}.
Those results apply to more general forms of
regularization, but
are only for value estimation
and incur worse $n$ dependence
without identifying MID scaling.
Concurrently with this work,~\cite{groppe2023lower}
extended the lower complexity
adaptation phenomenon to entropic OT values and Gromov-Wasserstein
distances,
yielding bounds which are similar
to those of the un-regularized problem,
with the curse of dimensionality
residing primarily in the sample size,
and which are mostly incomparable
to our own.

As we discuss in detail in section~\ref{sec:proof_overview},
our technical approach is an intrinsic dimension-sensitive refinement of~\cite{rigollet2022sample}
where fully dimension-free bounds were established for entropic OT
values, maps, and densities, but with exponential factors in $1/\eps$.
An alternative estimator of the
entropic OT map which achieves sub-exponential
dependence on $1/\eps$ was proposed in~\cite{stromme2023sampling}.
Other works have proven convergence
of the dual potentials~\cite{luise2019sinkhorn},
used entropic OT quantities
as computationally efficient estimators for their un-regularized counterparts~\cite{chizat2020faster,pooladian2021entropic},
considered the convergence
of entropic OT maps~\cite{masud2021multivariate,werenski2023rank},
and studied the sample
complexity of entropically regularized
Gromov-Wasserstein distances~\cite{zhang2022gromov}.
Significant recent effort has been devoted
to developing central limit theorems for entropic OT~\cite{bigot2019central,mena2019statistical,klatt2020empirical,del2023improved,goldfeld2022statistical,gonzalez2022weak,gonzalez2023weak}.
Finally, a related form of minimum dimension-dependence appears in the study of asymptotics
for entropic OT as $\eps \to 0^+$~\cite{eckstein2023convergence}.

Regarding the provenance of Theorem~\ref{thm:genevay_rate},
we remark that it was originally proven for smooth, Lipschitz costs
beyond $\ell_2^2$, but with an exponential
dependence on $1/\eps$~\cite{genevay2019sample}.
The follow-up work~\cite{mena2019statistical}
showed how to remove
this exponential factor in the compactly
supported case that we study here, but primarily 
concentrated on extending Theorem~\ref{thm:genevay_rate}
to un-bounded distributions.

Finally, our work can be seen
as an entropic analog to the long
line of work on the convergence of the
un-regularized OT problem, which dates
back to Dudley~\cite{dudley1969speed}
and encompasses precise asymptotics~\cite{dobric1995asymptotics}
as well as finer behavior in lower dimensions~\cite{ajtai1984optimal,leighton1986tight}.
In fact, the analysis of un-regularized OT
is naturally sensitive to intrinsic dimension,
at least
when intrinsic
dimension
is measured through covering numbers at many scales~\cite{dudley1969speed,boissard2014mean,weed2019sharp}.
The discrepancy between Theorem~\ref{thm:genevay_rate}
and the natural appearance
of intrinsic dimension in the un-regularized OT problem
provides part of the motivation for this work.
More discussion of the similarities and differences
between this line of work and our results
is provided in section~\ref{sec:main_results}.

\paragraph*{Notation.}
Given $a, b \in \R$, we write
the minimum $a \land b := \min(a, b )$,
and the maximum $a \lor b := \max(a, b)$.
For a non-negative integer $K \in \mathbb{N}_{>0}$,
we write $[K] = \{1 \ldots, K\}$.
We always work with Borel probability distributions.
Given probability distributions $P, Q$
on $\mathbb{R}^d$, we denote their
trivial coupling $P \otimes Q$,
which is uniquely defined by taking
Borel sets $A, B \subset \R^d$
to $(P \otimes Q)(A\times B) := P(A)Q(B)$.
The support of $P$ is denoted
$\supp(P)$, and is defined to be
the set of all points $x \in \R^d$
such that $P$ assigns
positive value to all open sets containing
$x$.
The $\ell_2$ Euclidean norm
is always written $\|\cdot\|$, without
a subscript.
Given a Borel measurable $\alpha \colon \R^d \to \R^k$
and $p \in [1, \infty)$,
we define the $L^p(P)$ norm as
$$
\|\alpha\|_{L^p(P)} := \Big(\int \|\alpha(x)\|^p\ud P(x) \Big)^{1/p}.
$$ The sup norm $\|\alpha\|_{L^{\infty}(P)}$
is defined to be the essential supremum
of $\|\alpha\|$ with respect to $P$.
For a Borel measurable $\beta \colon \R^d \to \R$,
we will variously write
$$
\int \beta(x) \ud P(x) = \E_P[\beta]
= P(\beta).
$$
We will also write
$$
\Var_P(\beta) := \E_P[(\beta - \E_P[\beta])^2].
$$
The notation
$u \lesssim v$ indicates
$u \leqslant Cv $ for constant $C$; whether
the constant $C$ is numerical
or problem-dependent is a matter of context.
The suppressed constants in our main results
on MID scaling, described in sections~\ref{subsec:main_intro}
and~\ref{subsec:main_maps_densities}
are numerical, while the suppressed
constants for our results on embedded manifolds,
described in section~\ref{subsec:embedded},
depend on the low-dimensional distribution.

Given a metric $\dist_N$ on a set $N \subset \R^d$,
we will write the $\dist_N$-ball of radius $r$
around $z_0 \in N$ as
$$
B_{\dist_N}(z_0, r):=  \{ z \in N \colon \dist_N(z, z_0) < r\}.
$$ If $\dist_N = \|\cdot\|$ and $N = \R^d$,
we simply write $B(z_0, r)$. The
closed ball is written
$$
B_{\dist_N}^{\rm cl}(z_0, r):=  \{ z \in N \colon \dist_N(z, z_0) \leqslant r\},
$$ and again, if $\dist_N =\|\cdot\|$
and $N = \R^d$, we simply
write $B^{\rm cl}(z_0, r)$.

Throughout, we will assume that $\mu, \nu$
are probability measures on $\R^d$,
and we have iid samples $\mathcal{X} := (x_1, \ldots, x_n)
\sim \mu^{\otimes n}$
and $\mathcal{Y} := (y_1, \ldots, y_n) \sim \nu^{\otimes n}$.
Expectations $\E$ without a subscript will always
refer to integration over both of these
samples.

\section{Assumptions and background
on entropic OT}
\label{sec:duality}
In this section, we state our
assumption on the cost function,
and then establish important notation
while reviewing
duality for the entropic OT
problem.
\subsection{Assumptions on cost}

Our results are most
naturally stated for bounded, Lipschitz
costs rather than the squared
Euclidean cost $\|\cdot\|^2$ and so we will work at this level
of generality throughout the remainder
of the paper. 
We make the following formal assumption
on our cost function $c$.
\begin{assumption}[Bounded and uniformly Lipschitz cost]\label{assumption:cost}
We assume
the cost $c$ is Borel
measurable and
$c \in L^{\infty}(\mu \otimes \nu)$.
    By re-scaling the problem we assume without loss of generality
    that
    $$
    \|c\|_{L^{\infty}(\mu \otimes \nu)} \leqslant 1.
    $$
    Also, we assume
    there exists $L > 0$ such that
    for all $y \in \supp(\nu)$,
    $$
    |c(x, y) - c(x', y)|\leqslant L \|x - x'\| \quad\quad
    \forall x, x' \in \supp(\mu),
    $$ and, similarly, for all $x \in \supp(\mu)$,
    $$
    |c(x, y) - c(x, y')|\leqslant L \|y - y'\| \quad\quad
    \forall y, y' \in \supp(\nu).
    $$
\end{assumption}

We remark that in the case where $c$
is the squared Euclidean cost $\|\cdot\|^2$, Assumption~\ref{assumption:cost}
just means the supports of $\mu$ and $\nu$
are compact.

\subsection{Strong duality for entropic OT}
Given the cost function $c$,
we define the population entropic OT
problem as
\begin{equation}\label{eqn:pop_primal}
S_\eps(\mu, \nu) :=
\inf_{\pi \in \Pi(\mu, \nu)}
\big\{ \pi(c) + \eps \KL{\pi }{\mu \otimes \nu}\big\}.
\end{equation}
The empirical entropic OT
problem is then defined
\begin{equation}\label{eqn:emp_primal}
S_\eps(\hat \mu, \hat \nu) :=
\inf_{\pi \in \Pi(\hat \mu, \hat \nu)}
\big \{\pi(c) + \eps \KL{\pi}{\hat\mu \otimes \hat\nu} \big\}.
\end{equation}
There are unique optimal solutions
to the primal problems~\eqref{eqn:pop_primal}
and~\eqref{eqn:emp_primal},
which we write as $\pi_\eps$
and $\hat \pi_\eps$, respectively.
We distinguish between $c$
and $S_\eps(\mu, \nu)$
by referring to the former
as the {\it cost function},
and the latter as the {\it entropic OT value}.
\paragraph*{Strong duality.}Under Assumption~\ref{assumption:cost}
on the cost $c$, a form
of strong duality holds 
for both the population and empirical
entropic OT problems. We refer to~\cite{di2020optimal}
for these results as well as a thorough
review of the literature on this topic.
For the population entropic OT problem,
strong duality is
$$
S_\eps(\mu, \nu) = \sup_{(f, g) \in L^{\infty}(\mu) \times L^{\infty}(\nu)}
\Phi_\eps(f, g) := 
\mu(f) + \nu(g) - \eps (\mu \otimes \nu)(e^{-\eps^{-1}(c - f - g)} - 1).
$$
The function $\Phi_\eps \colon L^{\infty}(\mu)
\times L^{\infty}(\nu) \to \R$
is the population entropic OT dual function.
Similarly for the empirical entropic OT
problem,
strong duality is stated
$$
S_\eps(\hat\mu, \hat\nu) = \sup_{(f, g) \in L^{\infty}(\hat\mu) \times L^{\infty}(\hat\nu)}
\hat\Phi_\eps(f, g) := 
\hat\mu(f) + \hat\nu(g) - \eps (\hat \mu \otimes \hat\nu)(e^{-\eps^{-1}(c - f - g)} - 1).
$$ And this $\hat \Phi_\eps \colon L^{\infty}(\hat \mu) \times L^{\infty}(\hat \nu) \to \R$
is the empirical entropic OT dual function.

\paragraph*{Population and empirical
dual potentials.}
The population and empirical
dual problems
have solutions
$(f_\eps, g_\eps) \in L^{\infty}(\mu)
\times L^{\infty}(\nu)$
and $(\hat f_\eps, \hat g_\eps) \in L^{\infty}(\hat \mu)
\times L^{\infty}(\hat \nu)$, respectively.
These solutions are unique up to
the translation $(f, g) \mapsto 
(f - a, g + a)$ for $a \in \R$,
and we thus specify the solutions
we consider by assuming
$\nu(g_\eps) = 0$ and $\hat \nu(\hat g_\eps) =0$.

\paragraph*{Population and empirical densities.}For notational convenience, we also define the population
density, for $\mu$-almost
every $x$ and $\nu$-almost every $y$, as
\begin{equation}\label{eqn:pop_scaling}
p_\eps(x,y):= \frac{\ud \pi_{\eps}}{\ud (\mu \otimes \nu)}(x,y)
= e^{-\eps^{-1}(c(x, y) - f_\eps(x) - g_\eps(y))}.
\end{equation}
And we similarly define, for all $x \in \X$ and $y \in \Y$, the
empirical density
\begin{equation}\label{eqn:emp_scaling}
\hat p_\eps(x, y) := \frac{\ud \hat \pi_{\eps}}{\ud (\hat \mu
\otimes \hat \nu)}(x, y) 
= e^{-\eps^{-1}(c(x, y) - \hat f_\eps(x) - \hat g_\eps(y))}
\end{equation}

\paragraph*{Marginal constraints
for the dual potentials.} 
The marginal constraints $\pi_\eps \in \Pi(\mu, \nu)$ and 
$\hat \pi_\eps \in \Pi(\hat \mu, \hat \nu)$
in fact define necessary and sufficient
optimality conditions
for the corresponding dual potentials
through equations~\eqref{eqn:pop_scaling}
and~\eqref{eqn:emp_scaling}, respectively.
The resulting system of equations is
sometimes known as the {\it Schrödinger system},
and is fundamental to entropic optimal transport.
For the population potentials,
the marginal constraints imply that
for $\mu$-almost every $x_0$ and $\nu$-almost every $y_0$,
\begin{equation}\label{eqn:pop_marginals}
1 = \int e^{-\eps^{-1}(c(x, y) - f_\eps(x_0) - g_\eps(y))} \ud \nu(y),
\quad \quad
1 = \int e^{-\eps^{-1}(c(x, y) - f_\eps(x) - g_\eps(y_0))} \ud \mu(x).
\end{equation}

The empirical potentials $\hat f_\eps, \hat g_\eps$ satisfy
the analogous marginal equations:
for all $x \in \X$ and $y \in \Y$,
\begin{equation}\label{eqn:emp_marginals}
    1 = \frac{1}{n} \sum_{j = 1}^n e^{-\eps^{-1}(c(x, y_j)
    - \hat f_\eps(x) - \hat g_\eps(y_j))}, \quad \quad
    1 = \frac{1}{n} \sum_{i = 1}^n e^{-\eps^{-1}(c(x_i, y)
    - \hat f_\eps(x_i) - \hat g_\eps(y))}.
\end{equation}

\paragraph*{Canonical extensions
of the dual potentials.} In fact, the marginal equations~\eqref{eqn:emp_marginals}
yield a canonical means of extending the empirical entropic
dual potentials to functions on all of $\R^d$~\cite{berman2020sinkhorn,pooladian2021entropic}.
We observe that solving for $\hat f_\eps$ in~\eqref{eqn:emp_marginals}
yields, for $x \in \mathcal{X}$,
\begin{equation}\label{eqn:feta_extension}
    \hat f_\eps(x) = - \eps \ln \big( \frac{1}{n}
    \sum_{j = 1}^n e^{-\eps^{-1}(c(x, y_j) - \hat g_\eps(y_j)) }\big).
\end{equation} Since
$c$ is defined everywhere, this equation
actually makes sense for all $x \in \supp(\mu)$,
and so we thus {\it define} $\hat f_\eps$
on all of $\supp \mu$ with this formula.
Similarly, we put,
\begin{equation}\label{eqn:geta_extension}
    \hat g_\eps(y) := - \eps \ln \big( \frac{1}{n}
    \sum_{i = 1}^n e^{-\eps^{-1}(c(x_i, y) - \hat f_\eps(x_i)) }\big)
    \quad \quad y \in \supp(\nu).
\end{equation}
Using Equation~\eqref{eqn:emp_scaling},
we extend the empirical density $\hat p_\eps$ to
all of $\supp(\mu)\times \supp(\nu)$ as well.

\section{Main results}\label{sec:main_results}
In this section we describe our main results
on the effective statistical dimension
of entropic optimal transport.
In section~\ref{subsec:main_intro},
we introduce and discuss MID scaling
in the context of entropic OT value estimation.
In section~\ref{subsec:main_maps_densities},
we state our results which establish MID
scaling for entropic OT map and density estimation.
And in section~\ref{subsec:embedded}, we 
show how these results can be strengthened
in the case where one of the measures
is supported on an embedded manifold.
Throughout, we work under Assumption~\ref{assumption:cost}
on the cost function.

\subsection{Introduction to MID scaling
with value convergence}
\label{subsec:main_intro}
In this section, we introduce
the reader to MID scaling in the context
of the convergence of entropic OT 
values.
We state our main result on MID scaling
for values,
Theorem~\ref{thm:main_cost_covering},
give several examples,
and then conclude with discussion
of its statistical significance and tightness.

\paragraph*{MID scaling for value estimation.}
Recall the definition of covering numbers
from~\eqref{eqn:covering_defn},
and that we define
$$
\mathcal{N}(\mu, \delta):= \mathcal{N}(\supp(\mu), \|\cdot\|,  \delta),
\quad \quad 
\mathcal{N}(\nu, \delta):= \mathcal{N}(\supp(\nu), \|\cdot\|, \delta), \quad \quad \delta > 0.
$$

Our main result on entropic OT
value estimation follows.
\begin{theorem}[MID scaling for values]\label{thm:main_cost_covering}
For numerical constants independent of all problem
parameters,
$$
\mathbb{E}[|S_\eps(\hat \mu, \hat \nu) - S_\eps(\mu, \nu)|] 
\lesssim (1 + \eps)
\sqrt{\frac{ \mathcal{N}(\mu, \frac{\eps}{L})\land \mathcal{N}(\nu, \frac{\eps}{L})}{n}}.
$$
\end{theorem}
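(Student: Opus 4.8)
The plan is to control the error $|S_\eps(\hat\mu,\hat\nu) - S_\eps(\mu,\nu)|$ by a two-step argument: first reduce to a supremum over a suitable class of dual potentials, and then bound the complexity of that class by the single-scale covering number. By strong duality, $S_\eps(\mu,\nu) = \sup_{f,g} \Phi_\eps(f,g)$ and $S_\eps(\hat\mu,\hat\nu) = \sup_{f,g} \hat\Phi_\eps(f,g)$, so the difference is controlled by $\sup_{f,g}|\Phi_\eps(f,g) - \hat\Phi_\eps(f,g)|$ provided we restrict $(f,g)$ to a class $\mathcal{F}\times\mathcal{G}$ large enough to contain (approximately) the optimizers for both the population and empirical problems. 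The crucial structural input is that the dual potentials inherit regularity from the cost: from the marginal equations~\eqref{eqn:pop_marginals} and~\eqref{eqn:emp_marginals}, the optimal potentials $f_\eps, \hat f_\eps$ (in their canonical extensions~\eqref{eqn:feta_extension}–\eqref{eqn:geta_extension}) are $L$-Lipschitz on $\supp(\mu)$ and bounded, with analogous statements for $g_\eps, \hat g_\eps$ on $\supp(\nu)$ — this is a soft-min of $L$-Lipschitz, uniformly bounded functions, hence $L$-Lipschitz and bounded. So we may take $\mathcal{F}$ to be the class of $L$-Lipschitz functions on $\supp(\mu)$ with sup-norm $O(1)$, and likewise $\mathcal{G}$.

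The heart of the matter — and where MID scaling, in particular the asymmetric appearance of the \emph{minimum} covering number, enters — is the asymmetry in $\Phi_\eps - \hat\Phi_\eps$. Writing $\Phi_\eps(f,g) - \hat\Phi_\eps(f,g) = (\mu - \hat\mu)(f) + (\nu - \hat\nu)(g) - \eps\big[(\mu\otimes\nu) - (\hat\mu\otimes\hat\nu)\big](e^{-\eps^{-1}(c-f-g)}-1)$, I would expand the product-measure discrepancy and handle each piece by an empirical-process bound. The linear terms $(\mu-\hat\mu)(f)$ are controlled by the expected suprema $\E\sup_{f\in\mathcal{F}}|(\mu-\hat\mu)(f)|$, which by a chaining/Dudley bound over the $L$-Lipschitz class scales like $n^{-1/2}$ times the metric entropy integral of $\mathcal{F}$ in $L^\infty(\hat\mu)$ — and here is the key point: covering $\mathcal{F}$ restricted to $\supp(\mu)$ in sup-norm costs entropy roughly $\mathcal{N}(\mu, \delta/L)\log(1/\delta)$, since an $L$-Lipschitz function is determined up to $\delta$ on a $\delta/L$-net. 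The nonlinear term is the delicate one: $x\mapsto e^{-\eps^{-1}(c(x,y)-f(x)-g(y))}$ is only $O(\eps^{-1})$-Lipschitz in $x$ pointwise, which would naively produce an $\eps^{-1}$ blow-up of the covering number at the relevant scale; the resolution is to exploit that this exponential, being the entropic density, is \emph{on average} $O(1)$ by the marginal constraint, so one should bound it in $L^2(\mu\otimes\nu)$ or via a localized/weighted chaining rather than in sup-norm. To get the \emph{minimum} of the two covering numbers rather than the sum or the max, I would, before doing any of this, symmetrize and then condition so as to fix the potential on the ``complicated'' side: by the structure of the soft-min, one potential can be written as an explicit function of the other (the $c$-transform), so the class on one side is a deterministic image of the class on the other, and we only pay the complexity of the smaller — i.e., we choose to net whichever of $\supp(\mu)$ or $\supp(\nu)$ has the smaller covering number at scale $\eps/L$ and express the other potential via its transform.

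I expect the main obstacle to be exactly this last point: making rigorous the reduction that yields $\mathcal{N}(\mu,\eps/L)\wedge\mathcal{N}(\nu,\eps/L)$ rather than a symmetric quantity. This requires carefully tracking that when we freeze, say, $g$ on a net of $\supp(\nu)$ and let $f = f^{(g)}$ be the induced potential, the resulting function of $x$ is Lipschitz with a constant that does \emph{not} degrade, and that the error terms involving $f$ are then controlled \emph{without} a separate net on $\supp(\mu)$ — the fluctuation of $(\mu-\hat\mu)(f^{(g)})$ must be absorbed into the already-accounted complexity of the $\nu$-side net, using stability of the $c$-transform (Lipschitz-in-$g$ in a suitable norm) together with the boundedness of the entropic densities. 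A secondary technical nuisance is the $(1+\eps)$ prefactor and ensuring the $\eps^{-1}$ factors from differentiating the exponential all cancel against the explicit $\eps$ multiplying the nonlinear term in $\Phi_\eps$; this should fall out of the boundedness of $p_\eps$, but needs care in the chaining bound for the nonlinear part. Once these are in place, combining the empirical-process bounds via a union/symmetrization argument and optimizing the chaining cutoff at scale $\eps$ yields the stated rate.
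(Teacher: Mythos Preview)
Your approach is quite different from the paper's, and has a real gap at its center. The paper does not use empirical-process or chaining arguments at all. Instead it evaluates both duals at the \emph{deterministic} population optimizer $(f_\eps,g_\eps)$, splitting
\[
\E\bigl|S_\eps(\hat\mu,\hat\nu)-S_\eps(\mu,\nu)\bigr|
\;\le\;
\E\bigl[\hat\Phi_\eps(\hat f_\eps,\hat g_\eps)-\hat\Phi_\eps(f_\eps,g_\eps)\bigr]
\;+\;
\E\bigl|\hat\Phi_\eps(f_\eps,g_\eps)-\Phi_\eps(f_\eps,g_\eps)\bigr|.
\]
The second (``variance'') term is a fluctuation of a fixed function and is computed directly; the first (``bias'') term is handled by concavity of $\hat\Phi_\eps$ together with Cauchy--Schwarz and the pointwise bounds on the potentials, reducing to $\E\|\nabla\hat\Phi_\eps(f_\eps,g_\eps)\|_{L^2(\hat\mu)\times L^2(\hat\nu)}^2$. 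Both pieces collapse to $(1+\eps)\|p_\eps\|_{L^2(\mu\otimes\nu)}/\sqrt n$, and the entire dimensional content of the theorem lives in a single estimate (Lemma~\ref{lem:variance_control}):
\[
\|p_\eps\|_{L^2(\mu\otimes\nu)}^2 \;\lesssim\; \mathcal N\Bigl(\mu,\tfrac{\eps}{L}\Bigr)\wedge\mathcal N\Bigl(\nu,\tfrac{\eps}{L}\Bigr),
\]
proved by combining the marginal constraint $\int p_\eps(x,\cdot)\,d\nu=1$ with the $2L/\eps$-log-Lipschitzness of $p_\eps$ to get the pointwise bound $p_\eps(x,y)\lesssim \nu(B^{\rm cl}(y,4\eps/L))^{-1}$, and then integrating.

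The gap in your plan is precisely this $L^2$ bound. You correctly note that the marginal constraint gives only $L^1$ control (``on average $O(1)$'') and that $L^2$ is what is needed, but you do not say how to obtain it --- and that step is exactly where the covering number at the single scale $\eps/L$ enters. Without it, your chaining over the Lipschitz class faces the problem you anticipated: for generic $(f,g)$ in the class (even pairs of the form $(f^{(g)},g)$ with one marginal satisfied), the exponential $e^{-\eps^{-1}(c-f-g)}$ has no useful uniform $L^2$ bound, so a Dudley-type argument will produce either the extrinsic $\eps^{-d/2}$ or at best an entropy integral over many scales rather than a single covering number. The paper's route sidesteps the supremum entirely by anchoring at the fixed population optimizer, so class-wide control is never needed; the minimum in the final bound then comes for free, because the $L^2$ density estimate is symmetric in $\mu$ and $\nu$.
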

The only dimensional quantity in this estimate
is contained in the minimum covering
numbers at scale $\eps$, demonstrating
the MID scaling phenomenon.
We emphasize that this result
only requires that the cost $c$
is bounded and Lipschitz (Assumption~\ref{assumption:cost}),
in contrast to the smoothness
assumptions in most of the previous literature.

\paragraph*{Examples of MID scaling.} 
To gain a feeling for Theorem~\ref{thm:main_cost_covering},
let us consider some examples.

\begin{example}[Generic distributions
in $B(0,1)$]
For generic probability distributions, we
can apply Theorem~\ref{thm:main_cost_covering}
with
standard upper bounds
on covering numbers in Euclidean space~\cite[Proposition 4.2.12]{vershynin2018high}.
We find that if $\mu, \nu$ are supported in $B(0, 1) \subset \R^d$,
then for numerical constants independent of all problem parameters,
and for all $\eps > 0$,
$$
\mathbb{E}[|S_\eps(\hat \mu, \hat \nu) - S_\eps(\mu, \nu)|] 
\lesssim (1 + \eps)\cdot \Big(1 + \frac{2L}{\eps} \Big)^{d/2} \cdot \frac{1}{\sqrt{n}}.
$$
Specializing to the case of 
the $\ell_2^2$ cost,
this bound nearly recovers
Theorem~\ref{thm:genevay_rate},
being worse by a factor of $\eps^{-1/2}$
in the case where $d$ is
odd. In fact,
Theorem~\ref{thm:genevay_rate}
applies more generally to costs which are
both Lipschitz and
smooth to degree $\lceil d/2\rceil $~\cite{genevay2019sample}.
The authors of that work
observed empirically that their smoothness assumption seemed
unnecessary and left it as an open problem to 
remove that assumption. 
Theorem~\ref{thm:main_cost_covering} therefore
resolves this problem.\footnote{For large $\eps$,
our bound
diverges
while Theorem~\ref{thm:genevay_rate}
becomes $O(1/\sqrt{n})$,
but it is straightforward to modify our proofs
to fully recover Theorem~\ref{thm:genevay_rate}
in this case.}
\end{example}

In cases where just one of the measures is
assumed to have low intrinsic dimension, we can obtain
bounds which only depend on that measure.
\begin{example}[Semi-discrete]
\label{example:semi_discrete}
Suppose $\nu$ is supported on $K$ points.
Then for numerical constants
independent of all problem parameters,
and for all $\eps > 0$,
$$
\mathbb{E}[|S_\eps(\hat \mu, \hat \nu) - S_\eps(\mu, \nu)|] 
\lesssim (1 + \eps) \cdot \sqrt{\frac{K}{n}}.
$$
\end{example}

\begin{example}[Embedded manifold]
When $\nu$ is supported on a $d_\nu$-dimensional
compact,
smooth, embedded Riemannian
manifold without boundary, we can apply Theorem~\ref{thm:main_cost_covering}
with the bound $\mathcal{N}(\nu, \delta)
\leqslant C_\nu \delta^{-d_\nu}$, valid for some $C_\nu > 0$
and $\delta$ sufficiently small (Proposition~\ref{prop:N_covering_numbers} in
Appendix~\ref{sec:background_manifolds}
formally verifies this fact).
In this case, Theorem~\ref{thm:main_cost_covering}
implies that for $\eps > 0$ sufficiently small and
numerical constants independent
of all problem parameters,
$$
\mathbb{E}[|S_\eps(\hat \mu, \hat \nu) - S_\eps(\mu, \nu)|] 
\lesssim \sqrt{C_\nu} (1 + \eps) \cdot \Big(\frac{L}{\eps} \Big)^{d_\nu/2}
\cdot \frac{1}{\sqrt{n}}.
$$
\end{example}

Because MID scaling involves only
a single distance scale, the above examples
can be generalized to sets which are only
low dimensional at some scales.

\begin{example}[$\delta$-fattening of sets~\cite{weed2019sharp}]
For
$\delta \geqslant 0$ and $A \subset \R^d$,
the $\delta$-fattening of $A$ is
$$
A_\delta := \bigcup_{a \in A} B^{\rm cl}(a, \delta).
$$
Suppose $\supp(\nu) \subset A_\delta$
for some $\delta > 0$ and $A$ such that
$ \mathcal{N}(A, \|\cdot\|, \tau) \leqslant C_A \tau^{-k}$
for all $\tau$ sufficiently small and some
constants $C_A, k \geqslant 0$. Note that
covering numbers on $A_\delta$
can be compared to those on $A$ itself,
since
for $\tau \geqslant \delta$ we have
$\mathcal{N}(A_\delta, \|\cdot\|, \tau)
\leqslant \mathcal{N}(A, \|\cdot\|, \tau - \delta)$.
Hence
for $\eps > L\delta$ 
sufficiently small,
$$
\mathbb{E}[|S_\eps(\hat \mu, \hat \nu) - S_\eps(\mu, \nu)|] 
\lesssim \sqrt{C_A}(1 + \eps) \cdot
\Big(\frac{L}{\eps - L\delta} \Big)^{k/2}
\cdot \frac{1}{\sqrt{n}}.
$$
\end{example}
The above example illustrates
that, so long as the ratio $\eps/L$ is
significantly larger than the fattening scale $\delta$,
the rates are essentially the same as if $\supp(\nu)$
were actually contained in $A$,
despite the fact that $\supp(\nu)$
may be full-dimensional at scale $\delta$. In this way,
the dimension-dependence of MID scaling is oblivious
to features below scale $\eps/L$.

\paragraph*{Discussion of MID scaling.}
We emphasize that MID scaling is similar to, but distinct
from, the intrinsic dimension-dependence
in the un-regularized OT literature~\cite{dudley1969speed,boissard2014mean,weed2019sharp,hundrieser2022empirical}.
While in both settings
the minimum of the intrinsic dimensions
governs the rate of convergence,
the notion of intrinsic dimension is different.
In the un-regularized setting,
intrinsic dimension
is characterized through uniform covering number control at small scales, whereas in MID scaling the covering
numbers play a role, but only at a single distance scale.
The convergence of un-regularized OT is, in fact, adaptive
to multi-scale behavior, but the relevant scales are
determined by the sample size $n$, meaning that
milder covering numbers at some scales
only translate to improved rates while $n$ is not
too large; in fact, such rates are known to be tight~\cite{weed2019sharp}.
In contrast, MID scaling shows that entropic 
OT benefits from better covering number
control at some scales {\it for all sample sizes $n$}.
Essentially, entropic regularization
decouples the sample size and the distance
scale of the problem, allowing for a flexible
trade-off between the intrinsic dimension of
the data distribution and the amount of regularization.\footnote{Thanks to Yann Ollivier for
stimulating comments on this point.}

MID scaling helps clarify the statistical
role of entropic regularization,
showing that beyond its 
well-known computational virtues,
entropy also provides statistical regularization by specifying a distance scale, allowing the user to balance the
intrinsic curse of dimensionality of the data with the statistical
difficulty of the problem.
Because of the manifold hypothesis, which states that natural
data is typically supported on or near
a low-dimensional embedded manifold,
we expect data to have significantly 
smaller intrinsic dimension than extrinsic dimension.
And so a means to flexibly adapt optimal transport
to such intrinsic low-dimensional structure,
particularly approximate low-dimensional structure,
is of major interest.
MID scaling demonstrates that entropic regularization
provides this benefit for optimal transport.

\paragraph*{Remarks on tightness.}
While the upper bounds in this work
substantiate MID scaling and the statistical benefits
of entropic regularization, a complete
statistical account of entropic OT requires
lower bounds.
We leave a thorough study of this interesting
direction to future work, but do give some indications on the tightness
of Theorem~\ref{thm:main_cost_covering}
here.
For this reason, we note that
the CLT for entropic OT
states that
the asymptotic variance of
$\sqrt{n}(S_\eps(\hat \mu, \hat \nu) -
S_\eps(\mu, \nu))$
is $\Var_\mu(f_\eps) + \Var_\nu(g_\eps)$~\cite{gonzalez2023weak}.
In particular, it follows
that
the dependence on $n$ in Theorem~\ref{thm:main_cost_covering}
is optimal, and moreover that
Theorem~\ref{thm:main_cost_covering}
is tight in the case
where $\nu$ is supported on two points (see
Example~\ref{example:semi_discrete}).

However, this example 
doesn't
rule out
the possibility that the 
dependence on the covering numbers
of $\mu$ and $\nu$ can be improved.
To reason about such issues,
we can reduce to lower bounds
for the un-regularized OT problem.
For example, it is known
that the approximation
error $|S_\eps - S_0|$ is
$O(\eps)$, up to logarithmic factors~\cite{genevay2019sample}.
Combining this approximation error
with minimax lower bounds
for estimating un-regularized OT values then
implies
a firm speed limit on statistical
bounds for entropic OT values.
Arguing along these lines,
we show in Appendix~\ref{subsec:tightness}
that Theorem~\ref{thm:main_cost_covering}
implies that entropic estimators
can estimate $W_1$ distances
at the rate $n^{-1/(d+  2)}$,
close to the minimax optimal $n^{-1/d}$
rate~\cite{niles2022estimation},
and that moreover
the covering number dependence
in the bound of Theorem~\ref{thm:main_cost_covering}
cannot be improved in general.

\subsection{MID scaling for maps and densities}
\label{subsec:main_maps_densities}

Theorem~\ref{thm:main_cost_covering} gives
a strong instance
of the MID scaling
phenomenon for
entropic optimal transport,
yet in many applications
of entropic OT, such as 
trajectory reconstruction~\cite{schiebinger2019optimal} and domain adaptation~\cite{flamary2016optimal,seguy2018large},
it is of greater interest
to estimate the solution to the entropic OT
problem than the value of the entropic OT objective alone.
Motivated by this fact, we also study 
the performance of empirical plug-in estimators 
for the problems of entropic OT map
estimation
as well as density estimation,
and develop results analogous to Theorem~\ref{thm:main_cost_covering},
showing that natural plug-in estimators also
sport MID scaling.

We define the entropic OT map 
$$
T_\eps(x) := \E_{\pi_\eps}[y \, |\, x].
$$ Note that the analogous
map defined over $y$ is symmetric
with this one, and so we study only
this case without loss of generality.
The map $T_\eps$ is an entropic analog of the OT map,
and has been the subject of much recent work
as it offers greater
computational
and statistical efficiency
than its un-regularized
counterpart~\cite{pooladian2021entropic,masud2021multivariate,rigollet2022sample,pooladian2023minimax}.
We study the empirical analog of $T_\eps(x)$:
\begin{equation}\label{eqn:empirical_map}
\hat T_\eps(x) := \E_{\hat \pi_\eps}[y \, | \, x] = 
\frac{1}{n} \sum_{j =1 }^n y_j \hat p_\eps(x, y_j).
\end{equation}
We consider $\hat T_\eps$ as an estimator for its
population analog $T_\eps$ and show it enjoys
a $1/\sqrt{n}$ rate of convergence with MID scaling.
\begin{theorem}\label{thm:main_map_covering}
Suppose the diameter of $\supp(\nu)$ is at most $R$.
Then for numerical constants
independent of all problem parameters,
$$
\mathbb{E}[\| \hat T_\eps - T_\eps \|^2_{L^2(\hat \mu)}]
\lesssim R^2\big(1 +  \frac{1}{\eps}
\big)\sqrt{\frac{\mathcal{N}(\mu, \frac{\eps}{L})\land\mathcal{N}(\nu, \frac{\eps}{L})}{n}}.
$$
\end{theorem}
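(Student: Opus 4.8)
The plan is to reduce the map estimation error to the convergence of the empirical densities $\hat p_\eps$ toward the population densities $p_\eps$, and then invoke the machinery behind Theorem~\ref{thm:main_cost_covering}. Writing $\hat T_\eps(x) - T_\eps(x) = \frac{1}{n}\sum_j y_j \hat p_\eps(x,y_j) - \int y\, p_\eps(x,y)\ud\nu(y)$, I would split this into two contributions: a \emph{sampling} term, comparing $\frac{1}{n}\sum_j y_j p_\eps(x,y_j)$ to its expectation $\int y\, p_\eps(x,y)\ud\nu(y)$, and a \emph{stability} term $\frac{1}{n}\sum_j y_j (\hat p_\eps(x,y_j) - p_\eps(x,y_j))$ capturing the difference between the empirical and population densities evaluated on the sample $\Y$. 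Both terms are to be bounded in $L^2(\hat\mu)$, in expectation over $\X,\Y$, after first conditioning appropriately. Using $\|y\|\le R$ (after centering $\supp(\nu)$, which costs only a constant factor in $R^2$) and the boundedness of the Sinkhorn densities, the sampling term is a standard empirical process / bounded-differences term and contributes at the $1/\sqrt n$ rate with a variance proxy controlled by $\|p_\eps\|_\infty$, and I expect the covering-number factor to enter here exactly as in the value case.

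The core of the argument is the stability term, and for that I would lean on the duality-based decomposition used to prove Theorem~\ref{thm:main_cost_covering}. Since $\hat p_\eps(x,y) = e^{-\eps^{-1}(c(x,y) - \hat f_\eps(x) - \hat g_\eps(y))}$ and $p_\eps(x,y) = e^{-\eps^{-1}(c(x,y) - f_\eps(x) - g_\eps(y))}$, the difference is controlled pointwise by $|\hat f_\eps(x) - f_\eps(x)|$ and $|\hat g_\eps(y) - g_\eps(y)|$ times a Lipschitz factor $\tfrac{1}{\eps}$ coming from differentiating the exponential (this is the source of the $1 + 1/\eps$ prefactor). So the plan is: bound $\mathbb{E}\|\hat T_\eps - T_\eps\|^2_{L^2(\hat\mu)}$ by something like $\tfrac{R^2}{\eps^2}$ times the squared error of the potentials in the appropriate empirical norms, plus the sampling term. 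To estimate the potential error without paying an exponential $e^{1/\eps}$ price, I would not try to get pointwise control of $\hat f_\eps - f_\eps$ directly; instead I would exploit the variational/first-order structure of the dual objective $\Phi_\eps$ — the potentials are fixed points of the Sinkhorn (softmin) maps~\eqref{eqn:pop_marginals}–\eqref{eqn:emp_marginals}, and the excess dual suboptimality of the population potentials evaluated on the empirical problem is itself an empirical-process quantity that inherits MID scaling, exactly the estimate driving Theorem~\ref{thm:main_cost_covering}. Strong convexity of $\hat\Phi_\eps$ in a suitable (weighted $L^2$) sense then converts this suboptimality gap into an $L^2(\hat\mu)\oplus L^2(\hat\nu)$ bound on $\hat f_\eps - f_\eps$ and $\hat g_\eps - g_\eps$, with the curvature constant scaling like $\eps$ (hence another factor of $1/\eps$, consistent with the stated $1+1/\eps$ after one takes a square root in going from the squared-norm bound).

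Concretely, the ordered steps would be: (1) center $\supp(\nu)$ and reduce to bounding $\frac1n\sum_j y_j(\hat p_\eps - p_\eps)(x,y_j)$ in $L^2(\hat\mu)$ plus the sampling term; (2) bound the sampling term by a bounded-differences / symmetrization argument, getting $R^2\|p_\eps\|_\infty \sqrt{(\mathcal N(\mu,\eps/L)\wedge\mathcal N(\nu,\eps/L))/n}$ up to constants, where the covering number appears because the relevant function class is the set of softmin-type functions indexed by the support at scale $\eps/L$ (same class as in Theorem~\ref{thm:main_cost_covering}); (3) Lipschitz-linearize the density difference in terms of potential differences, reducing the stability term to $\tfrac{R^2}{\eps^2}\mathbb{E}[\|\hat f_\eps - f_\eps\|^2_{L^2(\hat\mu)} + \|\hat g_\eps - g_\eps\|^2_{L^2(\hat\nu)}]$; (4) bound this potential error by combining the dual suboptimality gap (an empirical-process term with MID scaling, from the proof of Theorem~\ref{thm:main_cost_covering}) with the $\eps$-strong convexity of the empirical dual objective; (5) collect the $\eps$ powers and the $R$ powers. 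The main obstacle, as usual in this circle of ideas, is step~(4): getting the potential-difference bound with the \emph{right} covering-number dependence (the minimum, at the single scale $\eps/L$) and with only polynomial — not exponential — dependence on $1/\eps$. This is precisely the delicate part that the paper's refinement of~\cite{rigollet2022sample} is built to handle, so I would cite the relevant lemmas from section~\ref{sec:proof_overview} rather than redo the empirical-process bookkeeping, and the remaining work is the (routine but careful) propagation of constants and $\eps$-powers through the Lipschitz linearization of the exponential.
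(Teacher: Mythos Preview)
Your plan has a genuine gap at steps (3) and (4), and it is exactly the gap the paper's proof is designed to close. Lipschitz-linearizing $\hat p_\eps - p_\eps$ in terms of the potential differences costs a factor of $\|p_\eps \vee \hat p_\eps\|_{L^\infty}$ (since the derivative of $t \mapsto e^{t/\eps}$ is $\eps^{-1}e^{t/\eps}$), and in the general setting of Theorem~\ref{thm:main_map_covering} this sup-norm is only known to be $O(e^{C/\eps})$ via Proposition~\ref{prop:dual_pointwise_control}; the sub-exponential control $\|p_\eps\|_{L^\infty} \lesssim (L/\eps)^{d_\nu}$ of Lemma~\ref{lem:manifold_density_sup_control} is available only under the embedded-manifold assumptions of section~\ref{subsec:embedded}. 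Likewise, the ``$\eps$-strong convexity of the empirical dual'' you invoke in step (4) is precisely the mechanism of~\cite{rigollet2022sample} that the paper explicitly abandons because its strong-concavity constant is exponentially small; section~\ref{sec:proof_overview} contains \emph{no} $L^2$ bound on $\hat f_\eps - f_\eps$ or $\hat g_\eps - g_\eps$ in the general setting --- such bounds appear only in section~\ref{sec:proofs_fast_rates} under the manifold assumption, via the quadratic-growth Lemma~\ref{lem:qg_emp_dual}. So the lemmas you plan to cite do not exist at the generality you need.

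The paper's actual route bypasses both issues. It introduces the \emph{rounded} density $\tilde p_\eps$ (built from the rounded potential $\tilde f_\eps$ of~\eqref{eqn:tilde_f}), so that for each $x$ both $\frac1n\hat p_\eps(x,\cdot)$ and $\frac1n\tilde p_\eps(x,\cdot)$ are genuine probability measures on $\Y$. Pinsker's inequality then controls $\|\hat T_\eps - \frac1n\sum_j y_j \tilde p_\eps(\cdot,y_j)\|_{L^2(\hat\mu)}^2$ by $R^2/\eps$ times an averaged KL divergence, which telescopes exactly to the dual gap $\hat\Phi_\eps(\hat f_\eps,\hat g_\eps) - \hat\Phi_\eps(\tilde f_\eps, g_\eps)$; monotonicity under marginal rounding (Proposition~\ref{prop:entropic_dual_basics}, item 3) replaces $\tilde f_\eps$ by $f_\eps$, and the bias bound of Lemma~\ref{lem:bias_covering} delivers MID scaling directly --- no potential-difference estimate and no strong convexity are used (Lemma~\ref{lem:map_mse_kl}). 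The residual comparison of $\tilde p_\eps$ to $p_\eps$ (Lemma~\ref{lem:map_mse_massage}) collapses to $\|p_\eps\|_{L^2(\mu\otimes\nu)}^2/n$ via the gradient calculation~\eqref{eqn:gradient_calc}, again avoiding any sup-norm. Your sampling/stability decomposition is the right starting point, but the missing idea is the Pinsker-plus-rounding device that reduces the stability term to the \emph{bias}, rather than to potential differences.
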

We note that this result
is measured with respect to the empirical
norm $\|\cdot \|_{L^2(\hat \mu)}$,
rather than the population norm
$\|\cdot\|_{L^2(\mu)}$. In the next
section we assume that one distribution is supported on
an embedded manifold and show that this result
can be strengthened to population norm convergence with
fast $1/n$ rates.
As before, we may derive
bounds from this result in cases where
we have {\it a priori} control
on the relevant covering numbers.
Previous work
on the convergence of $\hat T_\eps$
to $T_\eps$ for generic distributions incurred an exponential
dependence on $1/\eps$~\cite{masud2021multivariate,rigollet2022sample}.
And an
 alternative estimator which achieves
sub-exponential dependendence on $1/\eps$
was considered in~\cite{stromme2023sampling},
but at the cost of worse
rates in $n$.
Theorem~\ref{thm:main_cost_covering}
avoids exponential
factors and achieves MID scaling
with a $1/\sqrt{n}$ rate.

We finally remark that Theorem~\ref{thm:main_map_covering}
only uses the boundedness of $y$,
and can be extended to bound
the convergence
of other conditional expectations
$\E_{\pi_\eps}[\alpha(x, y) \, | \, x]$
for bounded $\alpha \colon \R^d \times \R^d \to \R^k$.
Such generality is of interest, since
some researchers study alternative definitions of entropic
OT maps
when the cost is
not $\ell_2^2$.
For example, when $c(x, y) = h(x - y)$
for strictly convex $h$, a very recent work
suggested a different definition of $T_\eps$ 
based on analogy to the un-regularized OT map
in this case~\cite{cuturi2023monge}.
If $h$ satisfies
Assumption~\ref{assumption:cost}
and is additionally strongly convex and differentiable
everywhere, it is not hard
to check that Theorem~\ref{thm:main_map_covering}
also holds for the maps introduced in that work.

\paragraph*{MID scaling for entropic OT density estimation.}
We can also consider estimating
the full entropic OT density $p_\eps$,
defined in~\eqref{eqn:pop_scaling}.
 A natural way to estimate $p_\eps$
 from samples
 is with its plug-in counterpart,
 $\hat p_\eps$ from~\eqref{eqn:emp_scaling}.
We obtain the following result.
\begin{theorem}\label{thm:density_estimation_slow}
For numerical constants independent
of all problem parameters,
$$
\E[\|\hat p_\eps - p_\eps\|_{L^1(\hat \mu \otimes \hat \nu)}] \lesssim \big(1+  \frac{1}{\sqrt{\eps}} 
\big)
\Big(\frac{\mathcal{N}(\mu, \frac{\eps}{L})\land\mathcal{N}(\nu, \frac{\eps}{L})}{n}\Big)^{1/4}.
$$
\end{theorem}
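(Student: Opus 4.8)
The plan is to bound the $L^1$ density error by a second-order suboptimality quantity, control that quantity by the entropic \emph{value} error of Theorem~\ref{thm:main_cost_covering}, and then absorb the square-root loss that turns this into the claimed $1/4$-power rate. Throughout write $\mathsf N := \mathcal N(\mu,\tfrac\eps L)\land\mathcal N(\nu,\tfrac\eps L)$. Set $t(x,y) := \eps^{-1}\big((f_\eps(x) + g_\eps(y)) - (\hat f_\eps(x) + \hat g_\eps(y))\big)$; by~\eqref{eqn:pop_scaling} and~\eqref{eqn:emp_scaling} we have $p_\eps = \hat p_\eps\, e^{t}$ on $\supp(\hat\mu\otimes\hat\nu)$, so, using $\hat p_\eps\,(\hat\mu\otimes\hat\nu) = \hat\pi_\eps$,
$$
\|\hat p_\eps - p_\eps\|_{L^1(\hat\mu\otimes\hat\nu)} = (\hat\mu\otimes\hat\nu)\big(\hat p_\eps\,|e^t - 1|\big) = \E_{\hat\pi_\eps}\big[\,|e^t - 1|\,\big].
$$
With $\phi(u) := e^u - 1 - u \geqslant 0$, the main identity I will use is
$$
\eps\, \E_{\hat\pi_\eps}[\phi(t)] = S_\eps(\hat\mu, \hat\nu) - \hat\Phi_\eps(f_\eps, g_\eps),
$$
i.e.\ $\eps\,\E_{\hat\pi_\eps}[\phi(t)]$ is the suboptimality of the population potentials in the empirical dual problem.

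To prove the identity, expand $S_\eps(\hat\mu,\hat\nu) - \hat\Phi_\eps(f_\eps,g_\eps) = \hat\Phi_\eps(\hat f_\eps,\hat g_\eps) - \hat\Phi_\eps(f_\eps,g_\eps)$. The exponential parts contribute $-\eps\,(\hat\mu\otimes\hat\nu)(\hat p_\eps - p_\eps) = \eps\big(\E_{\hat\pi_\eps}[e^t] - 1\big)$, using $(\hat\mu\otimes\hat\nu)(\hat p_\eps) = 1$ and $p_\eps = \hat p_\eps e^t$; the linear parts contribute $\hat\mu(\hat f_\eps - f_\eps) + \hat\nu(\hat g_\eps - g_\eps) = -\eps\,\E_{\hat\mu\otimes\hat\nu}[t]$, which equals $-\eps\,\E_{\hat\pi_\eps}[t]$ because the empirical Schrödinger equations~\eqref{eqn:emp_marginals} make $y \mapsto \hat p_\eps(x_i,y)$ average to $1$ under $\hat\nu$ for every $x_i$ (and symmetrically), forcing $\E_{\hat\mu\otimes\hat\nu}[(\hat p_\eps - 1)\,t] = 0$ since $t$ is a sum of a function of $x$ and a function of $y$. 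Summing gives $\eps\big(\E_{\hat\pi_\eps}[e^t] - 1 - \E_{\hat\pi_\eps}[t]\big) = \eps\,\E_{\hat\pi_\eps}[\phi(t)]$. Now take expectations over the samples: since $f_\eps$ is non-random, $\nu(g_\eps) = 0$, and each of the $n^2$ pairs $(x_i,y_j)$ consists of independent $x_i\sim\mu$ and $y_j\sim\nu$, the population Schrödinger equations~\eqref{eqn:pop_marginals} give $\E[(\hat\mu\otimes\hat\nu)(p_\eps)] = (\mu\otimes\nu)(p_\eps) = 1$, hence $\E[\hat\Phi_\eps(f_\eps,g_\eps)] = \mu(f_\eps) = \Phi_\eps(f_\eps,g_\eps) = S_\eps(\mu,\nu)$. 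Combining with the identity, $\phi\geqslant 0$, and Theorem~\ref{thm:main_cost_covering},
$$
A := \E\big[\E_{\hat\pi_\eps}[\phi(t)]\big] = \tfrac1\eps\,\E\big[S_\eps(\hat\mu,\hat\nu) - S_\eps(\mu,\nu)\big] \leqslant \tfrac1\eps\,\E\big|S_\eps(\hat\mu,\hat\nu) - S_\eps(\mu,\nu)\big| \lesssim \Big(1 + \tfrac1\eps\Big)\sqrt{\tfrac{\mathsf N}{n}}.
$$

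It remains to pass from $A$ to the $L^1$ error. An elementary computation produces a numerical constant $C_0$ with $|e^u - 1| \leqslant C_0\big(\sqrt{\phi(u)} + \phi(u)\big)$ for all $u\in\R$ (square-root behavior for $u\leqslant 0$ and for bounded $u$; $e^u\lesssim\phi(u)$ for large $u>0$). Applying this with $u=t$ and then Jensen's inequality twice — first under the probability measure $\hat\pi_\eps$, then under the sample expectation — gives $\E[\|\hat p_\eps - p_\eps\|_{L^1(\hat\mu\otimes\hat\nu)}]\leqslant C_0(\sqrt A + A)$. If the right-hand side of the claimed bound exceeds $1$, we conclude from the a priori estimate $\E[\|\hat p_\eps - p_\eps\|_{L^1(\hat\mu\otimes\hat\nu)}] \leqslant \|\hat p_\eps\|_{L^1(\hat\mu\otimes\hat\nu)} + \E[\|p_\eps\|_{L^1(\hat\mu\otimes\hat\nu)}] = 2$. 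Otherwise, the bound on $A$ gives $A \lesssim (1+\eps^{-1})\sqrt{\mathsf N/n} \leqslant (1 + \eps^{-1/2})^2\sqrt{\mathsf N/n}$, i.e.\ $A$ is at most the square of the claimed bound, hence (being $\leqslant 1$) at most the claimed bound, so $C_0(\sqrt A + A) \lesssim (1+\eps^{-1/2})(\mathsf N/n)^{1/4}$.

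The main obstacle is this last step. Unlike in a Pinsker-type inequality, $|e^t-1|$ is \emph{not} globally dominated by $\sqrt{\phi(t)}$: on the region where $\hat p_\eps$ overshoots $p_\eps$ (large positive $t$), $e^t - 1 \approx \phi(t)$ rather than its square root, so the estimate unavoidably leaves the extra additive term $A$, which lives at the value-estimation scale rather than at its square root. This is harmless only because $A$ is quadratically small in the claimed rate, and handling it cleanly requires the case split above against the a priori bound $\|\hat p_\eps - p_\eps\|_{L^1(\hat\mu\otimes\hat\nu)} \leqslant 2$. A secondary point requiring care is establishing the identity with the linear term cancelling \emph{exactly}, which relies on the empirical Schrödinger equations~\eqref{eqn:emp_marginals} rather than any approximation.
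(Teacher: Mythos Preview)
Your proof is correct and takes a genuinely different route from the paper's. The paper introduces an intermediate marginal-rounded density $\tilde p_\eps$ (obtained by replacing $f_\eps$ with the $\hat\nu$-rounded potential $\tilde f_\eps$), bounds $\|\hat p_\eps-\tilde p_\eps\|_{L^1(\hat\mu\otimes\hat\nu)}$ via a row-wise application of Pinsker's inequality to $\KL{\tfrac1n\hat p_\eps(x_i,\cdot)}{\tfrac1n\tilde p_\eps(x_i,\cdot)}$ together with monotonicity of marginal rounding, and then separately handles $\|\tilde p_\eps - p_\eps\|_{L^1(\hat\mu\otimes\hat\nu)}$ by recognizing it as (one component of) $\|\nabla\hat\Phi_\eps(f_\eps,g_\eps)\|$ and appealing to Lemma~\ref{lem:variance_control}. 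Your argument is more direct: you bypass $\tilde p_\eps$ entirely by exploiting the exact Bregman-type identity $\eps\,\E_{\hat\pi_\eps}[\phi(t)]=\hat\Phi_\eps(\hat f_\eps,\hat g_\eps)-\hat\Phi_\eps(f_\eps,g_\eps)$ and then controlling $|e^t-1|$ pointwise by $\sqrt{\phi(t)}+\phi(t)$. The price you pay is the extra $\phi(t)$ term (forced by the large-$t$ regime where Pinsker fails globally), which you then absorb via the case split against the trivial bound $2$; the paper's two-step decomposition avoids this case split because the rounding error $\|\tilde p_\eps - p_\eps\|$ lands directly at the $\sqrt{\mathsf N/n}$ scale. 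As a minor point, you invoke Theorem~\ref{thm:main_cost_covering} to bound the bias $\E[S_\eps(\hat\mu,\hat\nu)-S_\eps(\mu,\nu)]$, picking up an unnecessary $(1+\eps)$ factor; the paper uses the sharper Lemma~\ref{lem:bias_covering} (which is just equation~\eqref{eqn:overview_bias_final} plus Lemma~\ref{lem:variance_control}), but your bound $(1+\eps^{-1})\leqslant(1+\eps^{-1/2})^2$ absorbs this cleanly.
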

This result shows that MID scaling even
applies when estimating the full density $p_\eps$,
albeit in the empirical $L^1(\hat\mu \otimes \hat\nu)$ norm.
Previously known bounds for the 
convergence of $\hat p_\eps$ to $p_\eps$
incurred an exponential dependence
on $1/\eps$~\cite{rigollet2022sample},
and Theorem~\ref{thm:density_estimation_slow}
removes this dependence while only incurring
MID scaling.

We remark that, unlike
Theorem~\ref{thm:main_cost_covering}
on MID scaling for values,
MID scaling for maps and densities
doesn't yet have analogs in the un-regularized
setting, to the best of our knowledge.
Developing such analogs is
an interesting direction for future work.
In the next section, we demonstrate a
stronger form of MID scaling for these problems
when the low-dimensional distribution
is supported on an embedded manifold.

\subsection{Fast rates with MID scaling on embedded manifolds}
\label{subsec:embedded}

In this section, we describe how we can
strengthen the results from the previous section
by assuming the low-dimensional measure
is supported on an embedded manifold.

\paragraph*{Assumptions.}
We work under the following assumptions,
and for simplicity assume that $\nu$ is the 
low-dimensional measure.
See Appendix~\ref{sec:background_manifolds}
for a review of embedded Riemannian manifolds,
as well as a discussion of the tools
from the theory of random geometric graphs on
embedded manifolds that we use in our proofs.

\begin{assumption}[$\nu$ is supported on an embedded manifold] \label{assum:nu_manifold}
Assume $\nu$ is supported on a compact, smooth, connected,
Riemannian manifold $(N, h)$ of dimension $d_\nu \geqslant 3$ without boundary,
where $N$ is endowed with the submanifold
geometry from its inclusion in $\R^d$.
\end{assumption}

We also assume that $\nu$ is compatible with $N$
in the following sense.
\begin{assumption}[$\nu$ is Lipschitz and non-vanishing]\label{assum:nu}
Assume that $\nu$ has a Lipschitz, non-vanishing density with respect
to the Riemannian volume on $N$.
\end{assumption}

These assumptions represent a natural
instantiation of the manifold hypothesis
while still furnishing enough analytical
structure to yield stronger 
results than in the previous section.
In this section and associated proofs, Assumptions~\ref{assum:nu_manifold}
and~\ref{assum:nu} are made throughout,
in addition to Assumption~\ref{assumption:cost}.

\paragraph*{Suppressing constants depending
on $N$ and $\nu$.}
In contrast to the previous section,
in this section and associated proofs
we will generally suppress constants depending on $N$
and $\nu$.
The suppressed
constants are strictly a function of the intrinsic
dimension $d_\nu$ (and not the extrinsic dimension $d$),
the geometry
of $N$, and uniform bounds on the 
density of $\nu$ and its Lipschitz constant.
In Appendix~\ref{sec:background_manifolds},
where
we collect all the background and necessary
results on embedded manifolds and random geometric graphs,
we describe the relevant geometric
quantities from the manifold $N$.

\paragraph*{Main result on dual potential
convergence.}
The purpose of this section is to show
that under these assumptions
on $\nu$ and $N$,
 we can derive significantly
 stronger instances of MID scaling
 than in the previous section.
 Our main result concerns
the convergence of the empirical dual potentials
$\hat f_\eps, \hat g_\eps$ to their
population counterparts $f_\eps, g_\eps$.
Such convergence is essentially stronger
than that of entropic OT values, maps, and densities, since 
each of those quantities is defined
in terms of the dual potentials.
Since this result
measures convergence in
population norms, we remind
the reader that the out of sample
extensions of $\hat f_\eps, \hat g_\eps$
as defined in~\eqref{eqn:feta_extension} 
and~\eqref{eqn:geta_extension}, respectively,
are in full effect.

\begin{theorem}\label{thm:dual_convergence}
If $\eps/L$ is sufficiently
small,
then
$$
\E\big[\|\hat f_\eps - f_\eps\|^2_{L^2(\mu)}
+ \|\hat g_\eps - g_\eps\|^2_{L^2(\nu)}\big]
\lesssim 
\big( \eps^2 + \frac{1}{\eps^2}\big)
\cdot \Big(\frac{L}{\eps} \Big)^{13 d_\nu + 8} \cdot \frac1n.
$$
We also have convergence
in empirical norms
$$
\E\big[\|\hat f_\eps - f_\eps\|^2_{L^2(\hat\mu)}
+ \|\hat g_\eps - g_\eps\|^2_{L^2(\hat \nu)}\big]
\lesssim  
\big(\eps^2 + \frac{1}{\eps^2}\big)\cdot \Big(\frac{L}{\eps} \Big)^{9d_\nu + 4} \cdot \frac1n.
$$
\end{theorem}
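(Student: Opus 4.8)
The plan is to exploit the Schrödinger system — the marginal equations \eqref{eqn:pop_marginals} and \eqref{eqn:emp_marginals} — together with the manifold structure of $\nu$ to obtain a linearized stability estimate for the dual potentials. The starting point is that $f_\eps, g_\eps$ and $\hat f_\eps, \hat g_\eps$ are fixed points of (soft-min) operators built from $c$ and the respective marginals; subtracting the two systems expresses $\hat f_\eps - f_\eps$ and $\hat g_\eps - g_\eps$ as solutions of a perturbed linear system whose "noise" is the difference between empirical and population integrals against the kernel $e^{-\eps^{-1}(c - f_\eps - g_\eps)}$. Concretely, I would first record that the population density $p_\eps$ is bounded above and below by quantities of order $e^{\pm O(L/\eps \cdot \mathrm{diam})}$ — but under Assumption~\ref{assum:nu_manifold}, because $\nu$ lives on a $d_\nu$-dimensional manifold, the relevant oscillation is controlled at scale $\eps/L$, so the effective conditioning of the linearized Schrödinger operator degrades only polynomially in $L/\eps$ rather than exponentially; this is exactly where the $(L/\eps)^{\mathrm{poly}(d_\nu)}$ factors will enter, and tracking the exponents carefully through the two-sided bounds is what produces the explicit $13 d_\nu + 8$ and $9 d_\nu + 4$.

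The second and central step is a variance computation. Having linearized, the empirical-norm bound $\E[\|\hat f_\eps - f_\eps\|_{L^2(\hat\mu)}^2 + \|\hat g_\eps - g_\eps\|_{L^2(\hat\nu)}^2]$ reduces to bounding the fluctuations of averages of the form $\frac1n\sum_j \psi(y_j) \, e^{-\eps^{-1}(c(x,y_j) - \hat f_\eps(x) - \hat g_\eps(y_j))}$ around their population analogs, uniformly over $x$. Since each summand is bounded (by the two-sided density bounds) and the potentials are Lipschitz with constant $O(L + L/\eps)$ (differentiate \eqref{eqn:feta_extension}), I would invoke a chaining / localization argument over the manifold $N$: the metric entropy of the relevant function class is governed by $\mathcal{N}(\nu, \eps/L) \lesssim (L/\eps)^{d_\nu}$, which collapses the empirical-process error to $O_P(n^{-1/2})$ with the advertised polynomial-in-$L/\eps$ prefactor, and squaring gives the $1/n$ rate. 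This is a genuine $1/n$ (not $1/\sqrt n$) rate because we are bounding a squared norm of a centered empirical average, not a single Lipschitz functional — the improvement over Theorem~\ref{thm:main_map_covering} comes precisely from the manifold structure making the linearized operator invertible with quantitative control.

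For the population-norm bound I would bootstrap from the empirical one: write $\|\hat f_\eps - f_\eps\|_{L^2(\mu)}^2 \leq \|\hat f_\eps - f_\eps\|_{L^2(\hat\mu)}^2 + \big|(\mu - \hat\mu)(|\hat f_\eps - f_\eps|^2)\big|$ and control the second term by another empirical-process bound over the $(L+L/\eps)$-Lipschitz, bounded class of functions $|\hat f_\eps - f_\eps|^2$, whose entropy is again $(L/\eps)^{O(d_\nu)}$; iterating the resulting self-bounding inequality absorbs the extra factors and accounts for the larger exponent $13d_\nu + 8$ versus $9d_\nu+4$. The main obstacle I anticipate is the first step: making the linearized Schrödinger stability estimate quantitative with the \emph{correct} polynomial (rather than exponential) dependence on $L/\eps$ requires showing that the linearized operator — essentially $\mathrm{Id}$ minus a Markov-type conditional-expectation operator on $L^2(\nu)$ — has a spectral gap that shrinks only polynomially in $\eps/L$. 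This is where the manifold hypothesis does the real work, via a connection to the mixing of the random walk on a geometric graph on $N$ (the reason Assumptions~\ref{assum:nu_manifold}–\ref{assum:nu} and the tools on random geometric graphs from Appendix~\ref{sec:background_manifolds} are invoked), and it is the step most likely to require the full strength of the Lipschitz, non-vanishing density hypothesis and a careful Cheeger/Poincaré-type argument at scale $\eps/L$.
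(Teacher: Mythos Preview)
Your proposal correctly identifies the crucial ingredient: the manifold hypothesis enters through a spectral-gap/Poincar\'e-type statement at scale $\eps/L$ for a random-geometric-graph construction on $N$, and this is what converts the exponential $\eps$-dependence of previous dual-stability results into a polynomial one. That instinct matches the paper exactly (Lemma~\ref{lem:qg_emp_dual} and Theorem~\ref{thm:graph_spectrum}).

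However, the technical route you outline diverges from the paper's and has a genuine gap. You propose to \emph{linearize} the Schr\"odinger system and then run an empirical-process / chaining argument over a Lipschitz class of entropy $(L/\eps)^{d_\nu}$. The paper does neither. It explicitly avoids empirical process theory altogether: instead of linearizing, it proves a \emph{quadratic growth} inequality for the empirical dual $\hat\Phi_\eps$ itself (Lemma~\ref{lem:qg_emp_dual}), namely $\|g-\hat g_\eps\|_{L^2(\hat\nu)}^2\lesssim \eps^{-1}(L/\eps)^{d_\nu+2}\|g-\hat g_\eps\|_{L^\infty}^2\,(\Psi_\eps^{\hat\mu\hat\nu}(\hat g_\eps)-\Psi_\eps^{\hat\mu\hat\nu}(g))$, obtained by lower-bounding the second derivative of $t\mapsto\Psi_\eps^{\hat\mu\hat\nu}(\hat g_\eps+t(g-\hat g_\eps))$ via the RGG Dirichlet form. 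This is then closed by a self-bounding loop: concavity bounds the sub-optimality gap by $\|\nabla\hat\Phi_\eps(f_\eps,g_\eps)\|\cdot\|(\hat f_\eps-f_\eps,\hat g_\eps-g_\eps)\|$ (Lemma~\ref{lem:manifold_bias}); the gradient norm is a pure variance of $p_\eps$ and hence $\lesssim (L/\eps)^{d_\nu}/n$; and a separate reduction (Lemma~\ref{lem:f_control_by_g}) shows $\|\hat f_\eps-f_\eps\|^2\lesssim(L/\eps)^{3d_\nu}\|\hat g_\eps-g_\eps\|^2+O(1/n)$. Choosing the Young parameter closes the loop. No uniform-in-$x$ fluctuation bound, no chaining, no function-class entropy enters.

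The specific gap in your plan is the linearization step. Inverting the linearized Schr\"odinger operator ``$\mathrm{Id}$ minus a conditional-expectation Markov operator'' with only polynomial loss in $L/\eps$ is precisely the hard part, and you have not said how to do it---the RGG spectral gap you allude to is a statement about a \emph{different} operator (the graph Laplacian at bandwidth $\eps/L$), and translating that into conditioning of the linearized Sinkhorn operator requires essentially the same work as the paper's quadratic-growth lemma. Your population-norm bootstrap via $(\mu-\hat\mu)(|\hat f_\eps-f_\eps|^2)$ is also problematic: $\hat f_\eps$ depends on the whole sample, so this is not an empirical process over a fixed class, and a naive chaining argument does not apply. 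The paper instead handles the population $g$-norm by a separate argument (Lemmas~\ref{lem:pop_g_by_g_maps}--\ref{lem:nu_hat_g}): apply the continuum Poincar\'e inequality on $N$ to $\hat g_\eps-g_\eps$, rewrite $\nabla g_\eps$ and $\nabla\hat g_\eps$ as conditional expectations of $\nabla_y c$, and control those via the semi-empirical dual $\Phi_\eps^{\hat\mu\nu}$---another self-bounding loop, again with no empirical process.
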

Some remarks on this result are in order.
First, as in all the results
of this section, we require $\eps/L$ to be
sufficiently small; the precise
size is given in Equation~\eqref{eqn:scale}.
Second, the result concerns
the sum of the squared norms,
but this is merely for a convenient
statement, and the proofs for each term
are separate and different.
Third, the empirical norm convergence is
useful for some of our applications below,
while also being an important
preliminary step to the full population
norm results.
Fourth, we remark that,
to the best of our knowledge,
all previously known bounds on entropic OT dual
potentials
incurred an exponential dependence
on $\eps$ ~\cite{luise2019sinkhorn,del2023improved,rigollet2022sample}.
The significance of Theorem~\ref{thm:dual_convergence}
is that it avoids exponential
dependence on $\eps$ while
only incurring minimum intrinsic dimension-dependence,
in accordance with MID scaling.

\paragraph*{MID scaling in the embedded
manifold setting.}
As do all the results
in this section, Theorem~\ref{thm:dual_convergence}
evinces a slightly modified
form of MID scaling as compared
to the previous sections.
In our embedded manifold
setting, the minimum part of MID scaling
simply means that the intrinsic dimension
of $\mu$ doesn't appear in the bounds (remarkably,
nothing about $\mu$
appears in the bound at all);
if $\mu$ were also assumed
to satisfy the above assumptions on a manifold
of dimension $d_\mu$, we could write bounds with an 
appropriate minimum.
In terms of the intrinsic dimension
part of MID scaling,
there is only a superficial
difference, since although the rates
in this section do not explicitly involve
covering numbers at scale $\eps/L$ but instead
 the power $(L/\eps)^{d_\nu}$,
 the two are, in fact, comparable (Proposition~\ref{prop:N_covering_numbers}).
And finally, as we mentioned above, there
are hidden constants depending on $N$ and $\nu$, and especially the intrinsic dimension $d_\nu$,
as
opposed to the previous results which had only numerical constants.

\paragraph*{Applications
of dual convergence.}
We demonstrate the power
of Theorem~\ref{thm:dual_convergence}
with a few applications.
Our first two results
improve upon the map and density
estimation results 
from the previous section.
For each of these applications,
we again require $\eps/L$
to be sufficiently
small, as
specified in Equation~\eqref{eqn:scale}.

Recall the setting of Theorem~\ref{thm:main_map_covering}:
we wish to estimate $T_\eps(x) := \E_{\pi_\eps}[y\, | \, x]$
from samples, and we use the
plug-in estimator $\hat T_\eps(x) :=
\E_{\hat \pi_\eps}[y\, | \, x]$.
Using the canonical extensions of
the empirical dual potentials
$\hat f_\eps,\hat g_\eps$
and correspondingly the empirical 
density $\hat p_\eps$ as in equations~\eqref{eqn:feta_extension},~\eqref{eqn:geta_extension}, and~\eqref{eqn:emp_scaling},
we can extend $\hat T_\eps$
to a map in $L^2(\mu)$,
and consider convergence in
this population norm.
This extended map $\hat T_\eps$ 
can be evaluated in linear time once the empirical dual
potentials are known, and was originally
proposed as a computationally efficient
estimator for the un-regularized OT map~\cite{pooladian2021entropic}.
We obtain the following corollary
of Theorem~\ref{thm:dual_convergence}.
\begin{corollary}\label{cor:manifold_map}
If $\eps/L$ is sufficiently small, and the diameter
of $\supp(\nu)$ is at most $R$, then
$$
\E[\|\hat T_\eps - T_\eps \|^2_{L^2(\mu)}]
\lesssim R^2 \cdot  \big( 1 + \frac{1}{\eps^4}\big)
\cdot \Big(\frac{L}{\eps} \Big)^{11 d_\nu + 4} \cdot \frac1n 
$$
\end{corollary}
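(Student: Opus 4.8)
The plan is to reduce convergence of $\hat T_\eps$ to $T_\eps$ in $L^2(\mu)$ to convergence of the dual potentials furnished by Theorem~\ref{thm:dual_convergence}, by writing the difference $\hat T_\eps(x) - T_\eps(x)$ as an integral against a difference of densities and controlling that difference pointwise. Recall that $T_\eps(x) = \int y\, p_\eps(x,y)\ud\nu(y)$ and, using the canonical extension, $\hat T_\eps(x) = \int y\, \hat p_\eps(x,y)\ud\hat\nu(y)$, where $p_\eps(x,y) = e^{-\eps^{-1}(c(x,y) - f_\eps(x) - g_\eps(y))}$ and similarly for $\hat p_\eps$ with $\hat f_\eps, \hat g_\eps$. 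First I would add and subtract $\int y\, \hat p_\eps(x,y)\ud\nu(y)$, so that
$$
\hat T_\eps(x) - T_\eps(x) = \int y\big(\hat p_\eps(x,y) - p_\eps(x,y)\big)\ud\nu(y) + \Big(\int y\,\hat p_\eps(x,y)\ud\hat\nu(y) - \int y\,\hat p_\eps(x,y)\ud\nu(y)\Big).
$$
Since $\|y\|\leqslant R$ on $\supp(\nu)$, the first term is bounded in norm by $R\int|\hat p_\eps(x,y) - p_\eps(x,y)|\ud\nu(y)$, and the second term is an empirical-process fluctuation of the bounded function $y\mapsto y\,\hat p_\eps(x,y)$; both should ultimately be controlled by dual potential convergence plus the boundedness and Lipschitz structure already available.

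The key step is a pointwise/$L^2$ bound on $|\hat p_\eps(x,y) - p_\eps(x,y)|$ in terms of $|\hat f_\eps(x) - f_\eps(x)|$ and $|\hat g_\eps(y) - g_\eps(y)|$. Writing $a = \eps^{-1}(c(x,y) - f_\eps(x) - g_\eps(y))$ and $\hat a = \eps^{-1}(c(x,y) - \hat f_\eps(x) - \hat g_\eps(y))$, one has $\hat p_\eps - p_\eps = e^{-\hat a} - e^{-a}$, and by the mean value theorem this is bounded by $\max(e^{-a}, e^{-\hat a})\cdot|\hat a - a| \leqslant \max(e^{-a},e^{-\hat a})\cdot\eps^{-1}(|\hat f_\eps(x) - f_\eps(x)| + |\hat g_\eps(y) - g_\eps(y)|)$. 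Here I would invoke the uniform boundedness of the dual potentials (which in the manifold setting follows from the marginal equations~\eqref{eqn:pop_marginals},~\eqref{eqn:emp_marginals} and the canonical extension formulas, giving $\|f_\eps\|_\infty, \|\hat f_\eps\|_\infty \lesssim 1$ up to the Lipschitz normalization, hence $e^{-a}, e^{-\hat a} \lesssim e^{O(1/\eps)}$) — but since we want to avoid the exponential-in-$1/\eps$ factor, I would instead use that $\int p_\eps(x,y)\ud\nu(y) = 1$ and $\int\hat p_\eps(x,y)\ud\hat\nu(y)=1$ to argue that $p_\eps(x,\cdot)$ and $\hat p_\eps(x,\cdot)$ cannot be uniformly large, controlling the prefactor $\max(e^{-a},e^{-\hat a})$ in an averaged $L^2(\nu)$ sense rather than pointwise. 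This yields
$$
\E\big[\|\hat T_\eps - T_\eps\|^2_{L^2(\mu)}\big] \lesssim R^2\eps^{-2}\,\E\big[\|\hat f_\eps - f_\eps\|^2_{L^2(\mu)} + \|\hat g_\eps - g_\eps\|^2_{L^2(\nu)}\big] + (\text{empirical measure term}).
$$
Plugging in the population-norm bound from Theorem~\ref{thm:dual_convergence}, the first term contributes $R^2\eps^{-2}\cdot(\eps^2 + \eps^{-2})(L/\eps)^{13d_\nu+8}/n$; some care with the bookkeeping of $\eps$-powers — and presumably a slightly sharper treatment of the prefactor and of the $\hat\nu$-versus-$\nu$ fluctuation, perhaps via the empirical-norm bound in Theorem~\ref{thm:dual_convergence} for that piece — should bring the exponent down to the claimed $(L/\eps)^{11d_\nu+4}$ with prefactor $R^2(1 + \eps^{-4})$.

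I expect the main obstacle to be obtaining the clean $\eps$-power bookkeeping: naively bounding $\max(e^{-a}, e^{-\hat a})$ by its trivial uniform bound reintroduces an exponential factor in $1/\eps$, so the delicate point is to exploit the normalization $\int p_\eps(x,y)\ud\nu(y) = 1$ (and its empirical analog) — together, perhaps, with the Lipschitz-in-$y$ regularity of $p_\eps(x,\cdot)$ inherited from the Lipschitz cost — to replace the pointwise prefactor with an integrated quantity of size $O(1)$ (or a controlled small power of $L/\eps$). The second obstacle, comparatively minor, is the $\hat\nu$-versus-$\nu$ discrepancy term $\int y\,\hat p_\eps(x,y)(\ud\hat\nu - \ud\nu)(y)$: since $\hat p_\eps$ depends on the samples, one cannot directly treat this as a fixed empirical process, so I would either condition appropriately, or bound it by $R\,\|\hat p_\eps(x,\cdot)\|_{\mathrm{Lip}}\cdot W_1(\hat\nu,\nu)$ using the Lipschitz regularity of $\hat p_\eps$ in $y$ and the known $W_1$ convergence rate of $\hat\nu$ to $\nu$ on the $d_\nu$-dimensional manifold $N$, again feeding in whichever $\eps$-powers are needed to match the stated exponent.
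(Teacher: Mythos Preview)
Your overall plan --- reduce to density differences, then to dual potentials --- is the right direction, but there are two genuine gaps.

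\textbf{The fluctuation term does not give $1/n$.} Your decomposition creates the term $\int y\,\hat p_\eps(x,y)\,(\ud\hat\nu-\ud\nu)(y)$, and you propose to bound it by $R\,\|\hat p_\eps(x,\cdot)\|_{\mathrm{Lip}}\cdot W_1(\hat\nu,\nu)$. But on a $d_\nu$-dimensional manifold $W_1(\hat\nu,\nu)\asymp n^{-1/d_\nu}$, so after squaring you get $n^{-2/d_\nu}$, which is strictly slower than $1/n$ for $d_\nu\geqslant 3$. The paper avoids this by choosing the \emph{other} intermediate: it compares $\hat T_\eps$ to $\frac1n\sum_j y_j\,p_\eps(x,y_j)$, i.e.\ keeps the empirical measure $\hat\nu$ but swaps to the \emph{population} density $p_\eps$. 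Then the $\hat\nu$-versus-$\nu$ fluctuation involves the fixed (sample-independent) function $y\mapsto y\,p_\eps(x,y)$, so it is a genuine variance and contributes $R^2\|p_\eps\|_{L^2(\mu\otimes\nu)}^2/n$, which is $R^2(L/\eps)^{d_\nu}/n$ by Lemma~\ref{lem:variance_control}.

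\textbf{The prefactor $\max(p_\eps,\hat p_\eps)$ is controlled pointwise, not ``on average''.} You correctly flag this as the crux, but the resolution is not an averaged $L^2$ argument: it is the pointwise sup bound of Lemma~\ref{lem:manifold_density_sup_control}, namely $\|p_\eps\|_\infty,\|\hat p_\eps\|_\infty\lesssim (L/\eps)^{d_\nu}$ (the latter on a high-probability event). This follows from the marginal constraint plus log-Lipschitzness plus the manifold small-ball lower bound $\nu(B(y,\eps/L))\gtrsim(\eps/L)^{d_\nu}$. With this in hand, $|e^{-\hat a}-e^{-a}|\leqslant (L/\eps)^{d_\nu}|\hat a-a|$, giving the content of Lemma~\ref{lem:L2_density}.

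Finally, on the exponent bookkeeping: your route through $L^2(\mu\otimes\nu)$ forces the \emph{population} norm $\|\hat g_\eps-g_\eps\|_{L^2(\nu)}$ (exponent $13d_\nu+8$), which overshoots. The paper's decomposition naturally lands in $L^2(\mu\otimes\hat\nu)$, so one can use the \emph{empirical} norm $\|\hat g_\eps-g_\eps\|_{L^2(\hat\nu)}$ (exponent $6d_\nu+4$ from Lemma~\ref{lem:emp_g_convergence}); combined with the $(L/\eps)^{2d_\nu}$ from Lemma~\ref{lem:L2_density} and the $(L/\eps)^{3d_\nu}$ from the $f$-to-$g$ reduction (Lemma~\ref{lem:f_control_by_g}), this gives exactly $11d_\nu+4$ and the prefactor $(1+\eps^{-4})$.
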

Corollary~\ref{cor:manifold_map}
improves over Theorem~\ref{thm:main_map_covering}
in that it achieves a fast $1/n$
rate of convergence in the full population
norm $L^2(\mu)$.

We can also strengthen
our result on density
estimation from the previous section,
Theorem~\ref{thm:density_estimation_slow}.
As in the case of map estimation,
the canonical extensions of
the empirical density $\hat p_\eps$
described in section~\ref{sec:duality},
allow us to consider
$\hat p_\eps$ as an out of sample estimator
for $p_\eps$.

\begin{corollary}\label{cor:fast_density}
If $\eps/L$ is sufficiently small, then
$$
\E[ \|\hat p_\eps - p_\eps\|_{L^2(\mu \otimes \nu)}^2] \lesssim 
\big( 1 + \frac{1}{\eps^4}\big)
\cdot \Big(\frac{L}{\eps} \Big)^{15 d_\nu + 8} 
\cdot \frac1n.
$$
\end{corollary}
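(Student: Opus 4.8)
The plan is to reduce the claim to the population-norm bound of Theorem~\ref{thm:dual_convergence} by means of an algebraic identity for the density difference in which the dangerous exponential factors cancel. Write $s(x,y) := \eps^{-1}\big((\hat f_\eps - f_\eps)(x) + (\hat g_\eps - g_\eps)(y)\big)$, so that by~\eqref{eqn:pop_scaling}, \eqref{eqn:emp_scaling} and the canonical extensions~\eqref{eqn:feta_extension}--\eqref{eqn:geta_extension} we have $\hat p_\eps = p_\eps\, e^{s}$ on $\supp(\mu)\times\supp(\nu)$. The key observation is that
$$
(\hat p_\eps - p_\eps)^2 = \hat p_\eps\, p_\eps\,\big(e^{s/2} - e^{-s/2}\big)^2 \;\leqslant\; \hat p_\eps\, p_\eps\, s^2 e^{|s|} \;=\; s^2 \max(p_\eps,\hat p_\eps)^2 \;\leqslant\; s^2\big(p_\eps^2 + \hat p_\eps^2\big),
$$
using $(e^{t}-e^{-t})^2 \leqslant 4t^2 e^{2|t|}$ (from $\sinh t \leqslant t\, e^t$ for $t\geqslant 0$) with $t = s/2$, and $\hat p_\eps\, p_\eps\, e^{|s|} = \max(p_\eps,\hat p_\eps)^2$ (since $e^{|s|} = \max(\hat p_\eps/p_\eps,\ p_\eps/\hat p_\eps)$). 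The point is that the $e^{|s|}$ factor, which is of uncontrolled order $e^{\Theta(1/\eps)}$, is absorbed into the densities themselves rather than appearing as a standalone multiplicative loss.

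First I would integrate this pointwise bound against $\mu\otimes\nu$, using $s^2 \leqslant 2\eps^{-2}\big((\hat f_\eps - f_\eps)(x)^2 + (\hat g_\eps - g_\eps)(y)^2\big)$. For the terms carrying $p_\eps^2$ I would exploit the marginal constraints~\eqref{eqn:pop_marginals}: for instance $\int p_\eps(x,y)^2\,\ud\nu(y) \leqslant \|p_\eps(x,\cdot)\|_{L^\infty(\nu)}\int p_\eps(x,y)\,\ud\nu(y) = \|p_\eps(x,\cdot)\|_{L^\infty(\nu)} \leqslant \|p_\eps\|_{L^\infty(\mu\otimes\nu)}$, and symmetrically after integrating $x$. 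For the terms carrying $\hat p_\eps^2$ no $\nu$-marginal is available (only the $\hat\nu$-marginal), so I would simply bound $\int \hat p_\eps(x,y)^2\,\ud\nu(y) \leqslant \|\hat p_\eps\|_{L^\infty(\mu\otimes\nu)}^2$. Altogether this yields, for a numerical constant,
$$
\|\hat p_\eps - p_\eps\|_{L^2(\mu\otimes\nu)}^2 \;\lesssim\; \eps^{-2}\big(\|p_\eps\|_{L^\infty(\mu\otimes\nu)} + \|\hat p_\eps\|_{L^\infty(\mu\otimes\nu)}^2\big)\big(\|\hat f_\eps - f_\eps\|_{L^2(\mu)}^2 + \|\hat g_\eps - g_\eps\|_{L^2(\nu)}^2\big).
$$

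Next I would invoke the pointwise (heat-kernel-type) bounds on the entropic densities available in the embedded-manifold setting of section~\ref{sec:proofs_fast_rates}, namely that for $\eps/L$ sufficiently small both $\|p_\eps\|_{L^\infty(\mu\otimes\nu)}$ and $\|\hat p_\eps\|_{L^\infty(\mu\otimes\nu)}$ are polynomial in $L/\eps$, contributing an overall factor of order $(L/\eps)^{2 d_\nu}$. Taking expectations, pulling out these (deterministic once $\eps/L$ is small, or otherwise high-probability) density bounds, and plugging in the population-norm estimate of Theorem~\ref{thm:dual_convergence}, which is of order $(\eps^2 + \eps^{-2})(L/\eps)^{13 d_\nu + 8}/n$, gives
$$
\E\big[\|\hat p_\eps - p_\eps\|_{L^2(\mu\otimes\nu)}^2\big] \;\lesssim\; \eps^{-2}\Big(\frac{L}{\eps}\Big)^{2 d_\nu}\cdot\big(\eps^2 + \eps^{-2}\big)\Big(\frac{L}{\eps}\Big)^{13 d_\nu + 8}\cdot\frac1n \;=\; \big(1 + \eps^{-4}\big)\Big(\frac{L}{\eps}\Big)^{15 d_\nu + 8}\,\frac1n,
$$
matching the claimed bound, with the exponent $15d_\nu+8$ arising precisely as $13d_\nu+8$ from the dual estimate plus $2d_\nu$ from the density sup-norms, and $1+\eps^{-4}$ from $\eps^{-2}\cdot(\eps^2+\eps^{-2})$.

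I expect the main obstacle to be the density sup-norm control: establishing that $\|p_\eps\|_{L^\infty}$, and especially the random quantity $\|\hat p_\eps\|_{L^\infty}$, grows only polynomially in $1/\eps$ rather than exponentially genuinely requires Assumptions~\ref{assum:nu_manifold}--\ref{assum:nu} and the random-geometric-graph estimates underpinning Theorem~\ref{thm:dual_convergence}, together with a careful handling of the expectation of the product $\|\hat p_\eps\|_{L^\infty}^2\cdot(\|\hat f_\eps - f_\eps\|_{L^2(\mu)}^2 + \|\hat g_\eps - g_\eps\|_{L^2(\nu)}^2)$ — either via a deterministic sup-norm bound valid for $\eps/L$ small, or via a high-probability bound combined with a union bound or Cauchy--Schwarz against higher moments of the dual error. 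By contrast, the $\sinh$-type identity that makes the exponential factors cancel is the clean structural step, and I would present it first, as it is what renders the entire reduction viable.
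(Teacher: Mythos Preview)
Your proposal is correct and matches the paper's approach essentially step for step: the paper packages precisely your reduction as Lemma~\ref{lem:L2_density}, using the elementary bound $|e^a-e^b|\leqslant (e^a\lor e^b)\,|a-b|$ (your $\sinh$ identity is an equivalent rephrasing), then invokes the sup-norm control of Lemma~\ref{lem:manifold_density_sup_control} on a high-probability event $\mathcal{E}$, handles $\mathcal{E}^c$ via the crude pointwise bounds of Proposition~\ref{prop:dual_pointwise_control} together with $\mathbb{P}[\mathcal{E}^c]\leqslant n^{-1}e^{-10/\eps}$ to get an additive $1/n$, and finally plugs in the dual-potential estimates (the paper cites Lemmas~\ref{lem:f_control_by_g}, \ref{lem:emp_g_convergence}, \ref{lem:pop_g_convergence} rather than Theorem~\ref{thm:dual_convergence}, but the arithmetic is identical). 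Your anticipated ``main obstacle'' and its resolution are exactly what the paper does.
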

This result shows
that, in the embedded manifold setting,
Theorem~\ref{thm:density_estimation_slow}
can be strengthened to full population norm convergence,
$L^2$ rather than $L^1$, and to faster rates.

Our final application
concerns the \emph{bias}
of entropic OT, namely
the quantity $\E[S_\eps(\hat \mu, \hat \nu)]
- S_\eps(\mu, \nu)$.
Recent work established
that the bias converges at a fast $1/n$
rate, yet with
an exponential
dependence on the regularization parameter $\eps$~\cite{rigollet2022sample, del2023improved}.
The following result
removes this exponential dependence
and achieves MID scaling.
We state this result as a Theorem
since, unlike the previous Corollaries,
it doesn't directly follow from Theorem~\ref{thm:dual_convergence}.
\begin{theorem}\label{thm:manifold_bias}
If $\eps/L$ is sufficiently small, then
$$
|\E[S_\eps(\hat \mu, \hat \nu)]-
S_\eps(\mu, \nu)|\lesssim  
\big(\eps^2 + \frac{1}{\eps^2}\big)\cdot \Big(\frac{L}{\eps} \Big)^{9 d_\nu + 4} \cdot \frac1n.
$$
\end{theorem}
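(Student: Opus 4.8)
The plan is to exploit the variational structure of the entropic OT value together with the dual-potential convergence from Theorem~\ref{thm:dual_convergence}. The starting point is the elementary observation that the population value $S_\eps(\mu,\nu)=\Phi_\eps(f_\eps,g_\eps)$ is obtained by maximizing a concave functional, so it is \emph{stationary} at the optimum: the first variation of $\Phi_\eps$ vanishes at $(f_\eps,g_\eps)$ thanks to the marginal constraints~\eqref{eqn:pop_marginals}. Consequently, plugging the empirical potentials $(\hat f_\eps,\hat g_\eps)$ into the \emph{population} dual functional produces an error that is second order in the discrepancy $(\hat f_\eps-f_\eps,\hat g_\eps-g_\eps)$; quantitatively one expects a bound of the form $|\Phi_\eps(\hat f_\eps,\hat g_\eps)-S_\eps(\mu,\nu)|\lesssim \eps^{-1}\big(\|\hat f_\eps-f_\eps\|_{L^2(\mu)}^2+\|\hat g_\eps-g_\eps\|_{L^2(\nu)}^2\big)$, using that the second derivative of the exponential term in $\Phi_\eps$ is controlled by $\eps^{-1}$ times $p_\eps$, which is bounded above on the (compact, manifold) support. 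Taking expectations and invoking the population-norm half of Theorem~\ref{thm:dual_convergence} then gives a contribution of the right order, $\lesssim (\eps^2+\eps^{-2})(L/\eps)^{9d_\nu+4}/n$, to the bias.

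The second, complementary step compares $\Phi_\eps(\hat f_\eps,\hat g_\eps)$ with the empirical value $S_\eps(\hat\mu,\hat\nu)=\hat\Phi_\eps(\hat f_\eps,\hat g_\eps)$. The difference between $\Phi_\eps$ and $\hat\Phi_\eps$ evaluated at the same pair $(\hat f_\eps,\hat g_\eps)$ is a difference of empirical and population averages of the functions $\hat f_\eps$, $\hat g_\eps$, and $e^{-\eps^{-1}(c-\hat f_\eps-\hat g_\eps)}$; here I would center these against $f_\eps,g_\eps,p_\eps$ and again use that $\hat f_\eps-f_\eps$, $\hat g_\eps-g_\eps$ are small. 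The key point making the bias a \emph{fast} $1/n$ quantity rather than $1/\sqrt n$ is that the leading $O(1/\sqrt n)$ fluctuations — namely $(\hat\mu-\mu)(f_\eps)$, $(\hat\nu-\nu)(g_\eps)$, and the linearization of the exponential term against the true $p_\eps$ — have zero mean, so they drop out after taking $\E[\cdot]$; what survives is again quadratic in $(\hat f_\eps-f_\eps,\hat g_\eps-g_\eps)$ plus cross terms of the form $\E[(\hat\mu-\mu)(\hat f_\eps-f_\eps)]$, which I would bound by Cauchy--Schwarz using $\E\|\hat f_\eps-f_\eps\|_{L^2(\hat\mu)}^2$ and a crude variance estimate, both of which are available from (the empirical-norm half of) Theorem~\ref{thm:dual_convergence}. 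Collecting all the surviving terms yields the claimed bound with the exponent $9d_\nu+4$ inherited from the empirical-norm estimate in Theorem~\ref{thm:dual_convergence}.

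The main obstacle, I expect, is bookkeeping the quadratic remainder of the exponential term in $\hat\Phi_\eps$ \emph{without} losing extra powers of $1/\eps$: writing $e^{-\eps^{-1}(c-\hat f-\hat g)}=e^{-\eps^{-1}(c-f-g)}e^{\eps^{-1}((\hat f-f)+(\hat g-g))}$ and Taylor-expanding the second factor, one must control $\E[(\hat\mu\otimes\hat\nu)(p_\eps\cdot e^{\xi/\eps}(\hat f-f+\hat g-g)^2/\eps^2)]$ where $\xi$ lies between $0$ and $(\hat f-f)+(\hat g-g)$; this needs a uniform $L^\infty$ bound on $\hat f_\eps-f_\eps$ and $\hat g_\eps-g_\eps$ (or at least an exponential-moment control) so that $e^{\xi/\eps}=O(1)$, on top of the $L^2$ control from Theorem~\ref{thm:dual_convergence}. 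Such sup-norm stability of the empirical potentials in the bounded-Lipschitz, manifold setting should be obtainable from the canonical-extension formulas~\eqref{eqn:feta_extension}--\eqref{eqn:geta_extension} together with the marginal equations — indeed this is exactly the kind of estimate that must already be in play to prove Theorem~\ref{thm:dual_convergence} — so I would factor it out as a lemma and cite it. A secondary nuisance is that $S_\eps(\hat\mu,\hat\nu)$ is defined via $\hat\Phi_\eps$ maximized over $L^\infty(\hat\mu)\times L^\infty(\hat\nu)$, so one only has the one-sided inequality $S_\eps(\hat\mu,\hat\nu)\ge\hat\Phi_\eps(\hat f_\eps,\hat g_\eps)$ automatically; but equality holds by strong duality (stated in section~\ref{sec:duality}), so this causes no real trouble.
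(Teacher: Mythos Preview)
Your decomposition through $\Phi_\eps(\hat f_\eps,\hat g_\eps)$ is different from the paper's route and runs into a genuine obstacle. The paper does not compare $\Phi_\eps$ and $\hat\Phi_\eps$ at the sample-dependent pair $(\hat f_\eps,\hat g_\eps)$ at all; instead it uses the identity $\E[\hat\Phi_\eps(f_\eps,g_\eps)]=\Phi_\eps(f_\eps,g_\eps)$ (linearity of expectation applied to a deterministic integrand) to write the bias directly as $\E[\hat\Phi_\eps(\hat f_\eps,\hat g_\eps)-\hat\Phi_\eps(f_\eps,g_\eps)]$, which is an \emph{empirical} sub-optimality gap. Concavity of $\hat\Phi_\eps$ plus Young's inequality (this is Lemma~\ref{lem:manifold_bias}) then bounds this by $\tfrac{1}{n}(L/\eps)^{d_\nu}+\E\big[\|\hat f_\eps-f_\eps\|_{L^2(\hat\mu)}^2+\|\hat g_\eps-g_\eps\|_{L^2(\hat\nu)}^2\big]$, and the empirical-norm potential bounds (Lemma~\ref{lem:f_control_by_g} and Lemma~\ref{lem:emp_g_convergence}) finish the job with exponent $9d_\nu+4$.

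In your second step, the cross term $\E[(\hat\mu-\mu)(\hat f_\eps-f_\eps)]$ is the sticking point: since $\hat f_\eps$ depends on the full sample $\X$, this is not a centered sum of independent terms, and Cauchy--Schwarz against $\|\hat f_\eps-f_\eps\|_{L^2(\hat\mu)}$ or $\|\hat f_\eps-f_\eps\|_{L^2(\mu)}$ only yields the square root of the $L^2$ bound, i.e.\ $O(n^{-1/2})$, not $O(n^{-1})$. There is no ``crude variance estimate'' that repairs this without either a leave-one-out stability argument for $\hat f_\eps$ or precisely the trick the paper uses. A secondary issue: your first step, if carried out carefully, requires the \emph{population}-norm half of Theorem~\ref{thm:dual_convergence} (to control $\|\hat f_\eps-f_\eps\|_{L^2(\mu)}^2+\|\hat g_\eps-g_\eps\|_{L^2(\nu)}^2$), which carries exponent $13d_\nu+8$; together with the extra $(L/\eps)^{d_\nu}/\eps$ from the Hessian bound on $p_\eps^{1-t}\hat p_\eps^{t}$, this would give a strictly worse exponent than the stated $9d_\nu+4$, not ``the right order'' as you claim.
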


In sum, the results in this section demonstrate
a strong form of the MID scaling phenomenon 
in the relatively general setting
of Assumptions~\ref{assum:nu_manifold}
and~\ref{assum:nu}.
These results underscore
the main message of our work:
MID scaling is a generic phenomenon
for entropic OT.
They demonstrate that MID scaling
is certainly not limited to
value estimation or map and density estimation
in empirical norms with slow $1/\sqrt{n}$ rates, but in the embedded manifold setting
even holds
for dual potentials, maps, densities, and biases,
all
with fast $1/n$ rates in population norms.

\section{Preliminary results}\label{sec:preliminary}
In this section, we fix final
pieces of notation
and describe some simple observations that form
the foundation of our approach.

\subsection{Concavity for the empirical
dual}
\label{subsec:concavity}
Given probability measures $P, Q$ on $\R^d$,
we define the entropic OT dual function $\Phi^{PQ}_\eps \colon L^{\infty}(P) \times L^{\infty}(Q) \to \R$ as
$$
\Phi^{PQ}_\eps(f, g) := P(f) + Q(g) - \eps (P \otimes Q)(e^{-\eps^{-1}(c - f - g)} - 1).
$$
With this notation,
we can express the population dual 
$\Phi_\eps = \Phi_\eps^{\mu \nu}$ and the empirical
dual $\hat \Phi_\eps = \Phi_\eps^{\hat \mu \hat \nu}$.
We collect some basic facts about entropic dual
functions below.
\begin{prop}\label{prop:entropic_dual_basics}
Let $P, Q$ be probability measures on $\R^d$.
The corresponding entropic OT dual function $\Phi^{PQ}_\eps$
has the following properties:
\begin{enumerate}
    \item[1.] {\bf (Definition of gradient.)} For every pair $h_1 = (f_1, g_1) \in L^{\infty}(P)
    \times L^{\infty}(Q)$, there exists an 
    element of $L^{\infty}(P) \times L^{\infty}(Q)$
    which we denote
    by $\nabla \Phi_\eps^{PQ}(f_1, g_1)$ such that
    for all $h_0 = (f_0, g_0) \in L^{\infty}(P) \times L^{\infty}(Q)$,
    \begin{align*}
    \langle \nabla \Phi_\eps^{PQ}(h_1), h_0 \rangle_{L^2(P)\times L^2(Q)}
    &= \int f_0(x) \Big(1 - \int e^{-\eps^{-1}(c(x, y) - f_1(x) - g_1(y))}
    \ud Q(y) \Big) \ud P(x) \\
    &+
     \int g_0(y) \Big(1 - \int e^{-\eps^{-1}(c(x, y) - f_1(x) - g_1(y))}
    \ud P(x) \Big) \ud Q(x).
    \end{align*}
    In other words, the gradient of $\Phi_\eps^{PQ}$
    at $(f_1, g_1)$ is
    the marginal error corresponding to $(f_1, g_1)$.
    \item[2.] {\bf (Concavity.)}
    For any two pairs of dual potentials
    $h_0, h_1 \in L^{\infty}(P) \times L^{\infty}(Q)$, we have the inequalities
    \begin{equation}\label{eqn:entropic_dual_concavity_upper}
         \Phi_\eps^{PQ}(h_0) - 
        \Phi_\eps^{PQ}(h_1)
        \leqslant 
        \langle \nabla \Phi_\eps^{PQ}(h_1),
        h_0 - h_1 \rangle_{L^2(P) \times L^2(Q)},
    \end{equation} and 
    \begin{equation}\label{eqn:entropic_dual_concavity_lower}
        \langle \nabla \Phi_\eps^{PQ}(h_0),
        h_0 - h_1 \rangle_{L^2(P) \times L^2(Q)}
        \leqslant \Phi_\eps^{PQ}(h_0) - \Phi_\eps^{PQ}(h_1).
    \end{equation}
    \item[3.] {\bf (Marginal rounding improves the dual objective.)}
    Let $(f, g) \in L^{\infty}(P) \times L^{\infty}(Q)$, and
    set
    $$
    \tilde f(x) := - \eps \log \Big( \int e^{-\eps^{-1}(c(x, y)
    - g(y))} \ud Q(y) \Big).
    $$ Then
    $$
    \Phi_\eps^{PQ}(f, g) \leqslant \Phi_\eps^{PQ}(\tilde f, g).
    $$ And analogously when marginal rounding in the $g$ variable.
\end{enumerate}
\end{prop}

\begin{proof}[Proof of Proposition~\ref{prop:entropic_dual_basics}]
The first item is a definition.
For the second item, write
$$
\Phi_\eps^{PQ}(f, g) = (P \otimes Q)\big(f(x) + g(y) - \eps e^{-\eps^{-1}
(c(x, y) - f(x) - g(y))} \big) + \eps,
$$ and observe that the function $t\mapsto  t - \eps e^{-\eps^{-1}(c -
t)}$ is concave. The result follows by applying the
concavity of the integrand pointwise and collecting terms.
The third item follows from the first two.
\end{proof}

\subsection{Pointwise
control and log-Lipschitz bounds}
\label{subsec:pointwise}
In this section we describe
some simple yet powerful implications
of Assumption~\ref{assumption:cost}.

The following pointwise
bounds appear in a number of places~\cite{mena2019statistical,di2020optimal,rigollet2022sample}, and so their
proof is omitted.
\begin{prop}[Pointwise control 
on dual potentials]\label{prop:dual_pointwise_control}
Under our normalization conventions that $\nu(g_\eps) = 0$
and $\hat \nu(\hat g_\eps) = 0$, 
we have the uniform bounds
$$
\|\hat f_\eps \|_{L^{\infty}(\mu)}, \, \, \|\hat g_\eps \|_{L^{\infty}(\nu)}
\leqslant 2,
\quad \quad
\|f_\eps\|_{L^{\infty}(\mu)}, \, \, \|g_\eps\|_{L^{\infty}(\nu)}
\leqslant 1.
$$
\end{prop}

The following Lipschitz bounds are also well-known~\cite{di2020optimal},
but since they are
the foundation of our approach
we include
their proof.
\begin{prop}[Lipschitz bounds]
\label{prop:consequences_of_lipschitz_cost}
The population dual potentials $f_\eps$ and $g_\eps$
are $L$-Lipschitz over $\supp(\mu)$ and $\supp(\nu)$,
respectively.
The extended empirical dual potentials
$\hat f_\eps$ and $\hat g_\eps$ are also $L$-Lipschitz
over $\supp(\mu)$ and $\supp(\nu)$, respectively.
In particular,
the population density $p_\eps$ and
the extended empirical density $\hat p_\eps$ are
each $\frac{2L}{\eps}$-log-Lipschitz in each
of their variables
over $\supp(\mu) \times \supp(\nu)$.
\end{prop}

\begin{proof}
By the marginal constraints~\eqref{eqn:pop_marginals},
we have that for all $x \in \supp(\mu)$,
$$
e^{-\frac{1}{\eps} f_\eps(x)}
= \int e^{-\frac{1}{\eps}(c(x, y) - g_\eps(y)} \ud \nu(y).
$$ Using Assumption~\ref{assumption:cost} on the cost $c$
being Lipschitz, we see that for $x, x' \in \supp(\mu)$
$$
e^{-\frac{1}{\eps}f_\eps(x)} \leqslant e^{-\frac{1}{\eps}f_\eps(x') + \frac{L}{\eps}\|x - x'\|}
$$ By symmetry it follows that
$$
|f_\eps(x) - f_\eps(x')| \leqslant L \|x - x'\|.
$$ The analogous argument establishes
the claim for $g_\eps$, and also proves
the claims for the extended empirical
potentials $\hat f_\eps$ and $\hat g_\eps$.

For the claims about the densities, note that
$$
\log p_\eps(x, y) = -\frac{1}{\eps}\big(
c(x, y) - f_\eps(x) - g_\eps(y) \big),
$$ and so the claim follows from the previous
ones for $f_\eps$ and $g_\eps$, as well as Assumption~\ref{assumption:cost}.
Similarly for $\log \hat p_\eps$.
\end{proof}

\section{Proofs of MID scaling with
slow rates}\label{sec:proof_overview}
In this section, we give our proofs
of MID scaling for values, Theorem~\ref{thm:main_cost_covering},
MID scaling for maps in empirical norms,
Theorem~\ref{thm:main_map_covering},
and MID scaling for densities in empirical norms,
Theorem~\ref{thm:density_estimation_slow}.
The proofs are organized in the following manner.
In section~\ref{subsec:value_proof},
we give a detailed exposition
of our approach in the case
of value estimation.
In section~\ref{subsec:sub_exp},
we give the proof of one of our main
technical lemmas, and the source of MID scaling,
on sub-exponential bounds for $\|p_\eps\|_{L^2(\mu \otimes \nu)}^2$.
In section~\ref{subsec:empirical_map_proofs},
we show how to reduce map estimation to
the estimates from the previous sections,
and do the same for density estimation
in section~\ref{subsec:empirical_density_proofs}.
\subsection{Proof of value rate
and overview of technical approach}
\label{subsec:value_proof}
In this section, we illustrate the main ideas
of our technical approach by describing our proof
of Theorem~\ref{thm:main_cost_covering} on the
convergence of entropic OT values.
To situate our approach, note that
most existing statistical
work on entropic OT studies
smooth cost functions and shows
that the smoothness of the cost
functions implies smoothness
for the dual potentials,
leading to small function classes
over which empirical processes can be well-controlled~\cite{genevay2019sample,luise2019sinkhorn,mena2019statistical,del2023improved,bayraktar2022stability}.
The present work builds on a different
approach, developed in~\cite{rigollet2022sample},
which entirely avoids empirical process theory
by using strong concavity of the empirical 
dual objective. The benefit
of this approach is that it is simple, requires no smoothness assumptions 
on the cost, leads
to dimension-free bounds,
and can be used to prove fast
rates for most entropic OT quantities~\cite{rigollet2022sample}.
However, because it is dimension-free, it is not suited to a fine understanding
of the dimension-dependence of entropic OT,
and in particular
it incurs exponential factors
of $1/\eps$
at many points in the arguments.
The main technical goal of this work
is to refine this strong concavity of the
empirical dual approach 
by replacing
the exponential
dependence on $1/\eps$ with a very fine
dependence on the dimension of the problem:
MID scaling.

To this end,
note that
the exponential dependence of~\cite{rigollet2022sample}
primarily arises from two separate sources:
first, the strong concavity
of the empirical dual objective, and second,
pointwise bounds on the entropic density $p_\eps$.
For the results in this section, we avoid the exponential factors
arising from strong concavity 
by using more delicate arguments
which rely on only concavity,
and remove the exponential
factors from the pointwise
bounds on the entropic density $p_\eps$
with a novel estimate (Lemma~\ref{lem:variance_control} below)
which is itself the source of MID scaling.

To present the ideas as clearly
as possible, we will suppress
numerical constants with the notation $u \lesssim v$;
they can easily be extracted from the proof.
Recall that $\Phi_\eps$
is the population entropic OT dual function,
and $\hat \Phi_\eps$
is its empirical counterpart.
Then Theorem~\ref{thm:main_cost_covering} concerns the
quantity
$$
\E[|S_\eps(\hat \mu, \hat\nu) - S_\eps(\mu, \nu)|]
= \E[|\hat \Phi_\eps(\hat f_\eps, \hat g_\eps)
- \Phi_\eps(f_\eps, g_\eps)|].
$$
To bound this quantity, we first decompose it 
in the following manner:
\begin{align*}
\E[|\hat \Phi_\eps(\hat f_\eps, \hat g_\eps) -
\Phi_\eps(f_\eps, g_\eps)|]
&\leqslant \E[|\hat \Phi_\eps(\hat f_\eps, \hat g_\eps)
- \hat \Phi_\eps(f_\eps, g_\eps)|]
+ \E[|\hat \Phi_\eps(f_\eps, g_\eps) - \Phi_\eps(f_\eps, g_\eps)| ]\\
&= \E[\hat \Phi_\eps(\hat f_\eps, \hat g_\eps) - \hat \Phi_\eps(f_\eps, g_\eps)] + \E[|\hat \Phi_\eps(f_\eps, g_\eps) - \Phi_\eps(f_\eps, g_\eps)|].
\end{align*}
For convenience, we refer to the first term as a bias term,
and the second as a variance term.
Note that our usage of ``bias" and ``variance"
in this context is not intended to be standard, merely
suggestive.
\paragraph*{Variance term.} If we just examine this
variance term, we can compute:
\begin{align*}
\E[&|\hat \Phi_\eps(f_\eps, g_\eps) - \Phi_\eps(f_\eps, g_\eps)|] \\
    &= \E[|(\hat \mu - \mu)(f_\eps) + (\hat \nu
    - \nu)(g_\eps)
    - \eps (\hat \mu \otimes \hat \nu - \mu \otimes \nu)(p_\eps)|] \\
    &\leqslant \E[\big((\hat \mu - \mu)(f_\eps) + (\hat \nu
    - \nu)(g_\eps)
    - \eps (\hat \mu \otimes \hat \nu - \mu \otimes \nu)(p_\eps)\big)^2]^{1/2}\\
    &\lesssim \big(\E[\big((\hat \mu - \mu)(f_\eps)
    \big)^2] + \E[\big((\hat \nu
    - \nu)(g_\eps)\big)^2]
    + \eps^2 \E[\big((\hat \mu \otimes \hat \nu - \mu \otimes \nu)(p_\eps)\big)^2] \big)^{1/2} \\
    &\leqslant
    \E[\big((\hat \mu - \mu)(f_\eps)
    \big)^2]^{1/2} + \E[\big((\hat \nu
    - \nu)(g_\eps)\big)^2]^{1/2}
    + \eps \E[\big((\hat \mu \otimes \hat \nu - \mu \otimes \nu)(p_\eps)\big)^2]^{1/2}.
\end{align*}
For the first two terms, we use
the pointwise boundedness of the dual potentials
from Proposition~\ref{prop:dual_pointwise_control}
to yield
$$
\E[\big((\hat \mu - \mu)(f_\eps)
    \big)^2]^{1/2} + \E[\big((\hat \nu
    - \nu)(g_\eps)\big)^2]^{1/2} = 
    \sqrt{\frac{\Var_\mu(f_\eps)}{n}}
    + \sqrt{\frac{\Var_\nu(g_\eps)}{n}}
    \lesssim \frac{1}{\sqrt{n}}.
$$
For the third term, the marginal constraints
for $p_\eps$ from~\eqref{eqn:pop_marginals}
allow us to cancel cross-terms and compute
\begin{align*}
    \E[(\hat \mu \otimes \hat \nu)(p_\eps - (\mu\otimes \nu)(p_\eps))^2]^{1/2}
    &=\E[(\hat \mu \otimes \hat \nu)(p_\eps - 1)^2]^{1/2} \\
    &= \Big(\frac{1}{n^4} \sum_{i,j,k,l = 1}^n
    \E[(p_\eps(x_i,y_j) - 1)(p_\eps(x_k, y_l) - 1)]
    \Big)^{1/2}\\
    &= \frac{1}{n}\sqrt{\Var_{\mu \otimes \nu}(p_\eps)} \leqslant 
    \frac{1}{n} \|p_\eps\|_{L^2(\mu \otimes \nu)}.
\end{align*}
We note that this kind of calculation is used
repeatedly throughout the paper.
We finally arrive at the bound
\begin{align}
\E[|\hat \Phi_\eps(f_\eps, g_\eps) - \Phi_\eps(f_\eps, g_\eps)|] &\lesssim 
\frac{1}{\sqrt{n}} + \frac{\eps\|p_\eps\|_{L^2(\mu \otimes \nu)}}{n} \nonumber \\
&\lesssim (1 + \eps) \frac{\|p_\eps\|_{L^2(\mu \otimes \nu)}}{\sqrt{n}},
\label{eqn:overview_variance_final}
\end{align} 
where the last inequality follows
because $(\mu \otimes \nu)(p_\eps) = 1$ 
implies $1 \leqslant \|p_\eps\|_{L^2(\mu \otimes \nu)}$ by Cauchy-Schwarz.
\paragraph{Bias term.}

We use
concavity of the empirical dual to control the bias term
$\E[\hat\Phi_\eps(\hat f_\eps, \hat g_\eps) - \hat\Phi_\eps(f_\eps, g_\eps)]$
by a quantity involving 
$\|p_\eps\|_{L^2(\mu \otimes \nu)}$ again.
Indeed, by Proposition~\ref{prop:entropic_dual_basics}
on
concavity of $\hat \Phi_\eps$ and
Proposition~\ref{prop:dual_pointwise_control}
on pointwise
control for the dual potentials,
we have
\begin{align*}
\E [\hat \Phi_\eps(\hat f_\eps, \hat g_\eps)
- \hat \Phi_\eps(f_\eps, g_\eps)]& \leqslant 
\E[ \langle \nabla \hat \Phi_\eps(f_\eps, g_\eps), (\hat f_\eps
- f_\eps, \hat g_\eps - g_\eps) \rangle_{L^2(\hat \mu) \times
L^2(\hat \nu)}] \\
&\leqslant 
\E\big[ \|\nabla \hat \Phi_\eps (f_\eps, g_\eps)\|_{L^2(\hat \mu) \times 
L^2(\hat \nu)} \cdot 
\| (\hat f_\eps
- f_\eps, \hat g_\eps - g_\eps)\|_{L^2(\hat \mu) \times L^2( \hat \nu)}\big] \\
&\lesssim \E\big[\|\nabla \hat \Phi_\eps (f_\eps, g_\eps)\|_{L^2(\hat \mu) \times 
L^2(\hat \nu)}^2  \big]^{1/2}.
\end{align*}
Using the marginal
constraints~\eqref{eqn:pop_marginals} to cancel
cross terms once more, we find
\begin{align*}
    \E[\|\nabla \hat \Phi_\eps(f_\eps, g_\eps)\|_{L^2(\hat \mu)
    \times L^2(\hat \nu)}^2]
    &= \E\Big[\frac{1}{n} \sum_{i = 1}^n
    \Big(\frac1n \sum_{j = 1}^n  p_\eps(x_i, y_j) - 1\Big)^2\\
    &+ \frac{1}{n} \sum_{j = 1}^n \Big(\frac1n \sum_{i = 1}^n
    p_\eps(x_i, y_j) - 1 \Big)^2\Big] \\
    &= \frac{1}{n^3}
    \sum_{i, j_0, j_1 = 1}^n
    \E[(p_\eps(x_i, y_{j_0}) - 1)(p_\eps(x_i,
    y_{j_1}) - 1)] \\
    &+ \frac{1}{n^3}
    \sum_{i_0, i_1, j = 1}^n
    \E[(p_\eps(x_{i_0}, y_{j}) - 1)(p_\eps(x_{i_1},
    y_{j}) - 1)] \\
    &= \frac{2}{n}
    \Var_{\mu \otimes \nu}(p_\eps)\lesssim \frac{\|p_\eps\|_{L^2(\mu \otimes \nu)}^2}{n}.
\end{align*}
We arrive at an important, if elementary,
bound that we will use frequently
throughout this work,
\begin{equation}\label{eqn:gradient_calc}
    \E[\|\nabla \hat \Phi_\eps(f_\eps, g_\eps)\|_{L^2(\hat \mu)
    \times L^2(\hat \nu)}^2]
    \lesssim \frac{\|p_\eps\|_{L^2(\mu \otimes \nu)}^2}{n}.
\end{equation}
In particular, the bias is bounded as
\begin{equation}\label{eqn:overview_bias_final}
\E [\hat \Phi_\eps(\hat f_\eps, \hat g_\eps)
- \hat \Phi_\eps(f_\eps, g_\eps)]
\lesssim \frac{\|p_\eps\|_{L^2(\mu \otimes \nu)}}{\sqrt{n}}.
\end{equation}

\paragraph*{Controlling $\|p_\eps\|_{L^2(\mu \otimes \nu)}$.}
We can
 combine~\eqref{eqn:overview_variance_final}
and~\eqref{eqn:overview_bias_final} to obtain
\begin{equation}\label{eqn:overview_absolute_difference_final}
\E[|S_\eps(\hat \mu, \hat \nu) - S_\eps(\mu, \nu)|]
\lesssim ( 1 + \eps)\frac{\|p_\eps\|_{L^2(\mu \otimes \nu)}}{\sqrt{n}}.
\end{equation}
We have thus reduced the problem to
terms with a dimension-free dependence on the sample
size $n$, and with the dimension and $\eps$-dependence residing fully 
in the term $(1 + \eps)\|p_\eps\|_{L^2(\mu\otimes \nu)}$.
Up to this point, the differences
between the techniques introduced in~\cite{rigollet2022sample}
and our approach are essentially contained
in two steps, each taken
to avoid factors of $e^{1/\eps}$.
First, we control the bias
term with concavity rather than strong
concavity. And second, we refrain
from using the
pointwise control of
Proposition~\ref{prop:dual_pointwise_control}
to bound the $p_\eps$ terms,
which would
result in the bound $\|p_\eps\|_{L^{\infty}(\mu \otimes \nu)} \leqslant e^{C/\eps}$ for a constant $C$.
Such pointwise
and exponential bounds
were, to the best of our knowledge,
all that was known about $p_\eps$ until this work.

One of the core technical
innovations in this work, and indeed
the source of MID scaling,
is to provide the following
sub-exponential, dimension-dependent
bound on $p_\eps$.

\begin{lemma}[MID scaling for the norm of entropic densities]\label{lem:variance_control}
We have
$$
\|p_\eps\|_{L^2(\mu \otimes \nu)}^2 \lesssim  \mathcal{N}(\mu, \frac{\eps}{L}) \land \mathcal{N}(\nu, \frac{\eps}{L}).
$$
\end{lemma}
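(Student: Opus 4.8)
The plan is to bound $\|p_\eps\|_{L^2(\mu\otimes\nu)}^2 = (\mu\otimes\nu)(p_\eps^2)$ by exploiting the $\frac{2L}{\eps}$-log-Lipschitz property of $p_\eps$ from Proposition~\ref{prop:consequences_of_lipschitz_cost}, together with the marginal constraint $\int p_\eps(x,y)\,\ud\nu(y) = 1$ for $\mu$-a.e.\ $x$. By symmetry it suffices to produce the bound with $\mathcal{N}(\nu,\frac{\eps}{L})$, since an identical argument with the roles of $\mu$ and $\nu$ swapped gives $\mathcal{N}(\mu,\frac{\eps}{L})$, and then we take the minimum. First I would fix a minimal $\frac{\eps}{L}$-cover of $\supp(\nu)$ by closed balls $B^{\rm cl}(z_k,\frac{\eps}{L})$, $k\in[K]$ with $K = \mathcal{N}(\nu,\frac{\eps}{L})$, and let $\{A_k\}_{k\in[K]}$ be the induced Borel partition of $\supp(\nu)$ (each $A_k$ a subset of the $k$-th ball, made disjoint in the usual way).

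The key observation is a local \emph{reverse} comparison: within a single piece $A_k$, any two points $y,y'$ satisfy $\|y-y'\|\le \frac{2\eps}{L}$, so log-Lipschitzness gives $p_\eps(x,y) \le e^{\frac{2L}{\eps}\cdot\frac{2\eps}{L}} p_\eps(x,y') = e^{4} p_\eps(x,y')$ for every $x$. Averaging over $y'\in A_k$ against $\nu$ gives, for each $y\in A_k$,
$$
p_\eps(x,y) \;\le\; e^{4}\,\frac{1}{\nu(A_k)}\int_{A_k} p_\eps(x,y')\,\ud\nu(y').
$$
Now square, integrate in $y$ over $A_k$ against $\nu$, and use that $p_\eps(x,y)\le e^4 \frac{1}{\nu(A_k)}\int_{A_k}p_\eps(x,y')\ud\nu(y')$ to replace \emph{one} of the two factors of $p_\eps(x,y)$ by this constant-in-$y$ upper bound while keeping the other:
$$
\int_{A_k} p_\eps(x,y)^2\,\ud\nu(y)
\;\le\; \frac{e^{4}}{\nu(A_k)}\Big(\int_{A_k} p_\eps(x,y')\,\ud\nu(y')\Big)\cdot \int_{A_k} p_\eps(x,y)\,\ud\nu(y)
\;=\; \frac{e^{4}}{\nu(A_k)}\Big(\int_{A_k} p_\eps(x,y)\,\ud\nu(y)\Big)^2.
$$
Summing over $k\in[K]$, applying Cauchy--Schwarz in the form $\sum_k a_k^2/\nu(A_k) \ge (\sum_k a_k)^2$ \emph{in reverse}—actually I want the other direction, so instead bound $(\int_{A_k}p_\eps\,\ud\nu)^2 \le \nu(A_k)\int_{A_k}p_\eps^2\,\ud\nu$... that circles back—so I would instead just use $(\int_{A_k}p_\eps\ud\nu)^2 \le (\int_{A_k}p_\eps\ud\nu)(\int p_\eps\ud\nu) = \int_{A_k}p_\eps\ud\nu$ by the marginal constraint, giving $\int_{A_k}p_\eps(x,y)^2\ud\nu(y)\le \frac{e^4}{\nu(A_k)}\int_{A_k}p_\eps(x,y)\ud\nu(y)$. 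Hmm, this still has a $\nu(A_k)$ in the denominator. The clean route: from $\int_{A_k}p_\eps^2\,\ud\nu \le \frac{e^4}{\nu(A_k)}(\int_{A_k}p_\eps\,\ud\nu)^2$ and $\int_{A_k}p_\eps\,\ud\nu \le 1$ (marginal constraint), we get $\int_{A_k}p_\eps(x,y)^2\,\ud\nu(y) \le \frac{e^4}{\nu(A_k)}\int_{A_k}p_\eps(x,y)\,\ud\nu(y)$; summing and using that $\sum_k \frac{1}{\nu(A_k)}\int_{A_k}p_\eps\,\ud\nu$ could be large is the wrong track.

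The correct and cleaner finish: keep $\int_{A_k}p_\eps(x,y)^2\ud\nu(y)\le \frac{e^4}{\nu(A_k)}\big(\int_{A_k}p_\eps(x,y)\ud\nu(y)\big)^2$, sum over $k$, and bound $\sum_k \frac{1}{\nu(A_k)}\big(\int_{A_k}p_\eps\ud\nu\big)^2 \le \sum_k \int_{A_k}p_\eps\ud\nu = 1$ — no wait, that's false in general; what is true is $\frac{1}{\nu(A_k)}(\int_{A_k}p_\eps\ud\nu)^2 \le \int_{A_k}p_\eps^2\ud\nu$ by Cauchy--Schwarz, again circular. The resolution I expect to work is the opposite bookkeeping: use the marginal constraint as $1 = \int_{A_k}p_\eps\ud\nu + \int_{A_k^c}p_\eps\ud\nu \ge \int_{A_k}p_\eps\ud\nu$, and instead pass through $\|p_\eps(x,\cdot)\|_{L^\infty(\nu)}$ locally: on $A_k$, $p_\eps(x,y)\le e^4 \inf_{A_k}p_\eps(x,\cdot) \le e^4\frac{1}{\nu(A_k)}\int_{A_k}p_\eps\ud\nu$, hence $p_\eps(x,y)^2 \le e^4\,p_\eps(x,y)\cdot\frac{1}{\nu(A_k)}\int_{A_k}p_\eps\ud\nu \le e^8 \big(\frac{1}{\nu(A_k)}\int_{A_k}p_\eps\ud\nu\big)^2$; integrating $\int_{A_k}p_\eps^2\ud\nu \le e^8\frac{1}{\nu(A_k)}(\int_{A_k}p_\eps\ud\nu)^2 \le e^8\frac{1}{\nu(A_k)}\cdot \nu(A_k) = e^8$ after using $\int_{A_k}p_\eps\ud\nu\le 1$ twice (once bounding one factor, once... ) — this gives $\int p_\eps(x,y)^2\ud\nu(y) = \sum_k \int_{A_k}p_\eps^2\ud\nu \le e^8 K$. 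Integrating in $x$ against $\mu$ (the bound is uniform in $x$) yields $\|p_\eps\|_{L^2(\mu\otimes\nu)}^2 \lesssim K = \mathcal{N}(\nu,\frac{\eps}{L})$, and the symmetric argument gives $\mathcal{N}(\mu,\frac{\eps}{L})$, completing the proof after taking the minimum.

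The main obstacle is getting the Cauchy--Schwarz / averaging bookkeeping exactly right so that the marginal constraint $\int p_\eps\,\ud\nu = 1$ is used to kill the $1/\nu(A_k)$ factors without circularity — concretely, one wants to bound one copy of $p_\eps(x,y)$ by its local sup (controlled via log-Lipschitzness by the local average, which is at most $\frac{1}{\nu(A_k)}$ since the total mass is $1$) and leave the other copy to be integrated against the marginal constraint, so that each cell contributes $O(1)$ and there are $K$ cells. A minor point to handle carefully is the out-of-sample extension of $p_\eps$ and whether we need it to be defined everywhere — here we only work with the population density $p_\eps$ on $\supp(\mu)\times\supp(\nu)$, so Proposition~\ref{prop:consequences_of_lipschitz_cost} applies directly and no extension is needed.
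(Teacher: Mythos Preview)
Your strategy is the same as the paper's---use the $\tfrac{2L}{\eps}$-log-Lipschitz bound on $p_\eps$ together with the marginal constraints to dominate $p_\eps^2$ by something integrable at the cost of a covering number---but your final bookkeeping has a real gap. The step
\[
\int_{A_k} p_\eps(x,y)^2\,\ud\nu(y)\ \le\ e^8\,\frac{1}{\nu(A_k)}\Big(\int_{A_k} p_\eps(x,y)\,\ud\nu(y)\Big)^2\ \le\ e^8\,\frac{\nu(A_k)}{\nu(A_k)}\ =\ e^8
\]
requires $(\int_{A_k} p_\eps\,\ud\nu)^2 \le \nu(A_k)$, which does not follow from $\int_{A_k} p_\eps\,\ud\nu \le 1$. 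If $p_\eps(x,\cdot)$ happens to concentrate on a cell with tiny $\nu$-mass, $\int_{A_k} p_\eps\,\ud\nu$ can be of order $1$ while $\nu(A_k)$ is arbitrarily small, so the per-cell bound $e^8$ (and hence the uniform-in-$x$ bound $\int p_\eps(x,y)^2\,\ud\nu(y)\le e^8 K$) is false in general.

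The fix is to use the \emph{other} marginal constraint. You already have, for $y\in A_k$,
\[
p_\eps(x,y)\ \le\ \frac{e^4}{\nu(A_k)}\int_{A_k} p_\eps(x,y')\,\ud\nu(y')\ \le\ \frac{e^4}{\nu(A_k)},
\]
so $p_\eps(x,y)^2 \le \tfrac{e^4}{\nu(A_k)}\,p_\eps(x,y)$. Now integrate over $(x,y)$ and use $\int p_\eps(x,y)\,\ud\mu(x)=1$:
\[
\|p_\eps\|_{L^2(\mu\otimes\nu)}^2
\ \le\ e^4\sum_k \frac{1}{\nu(A_k)}\int_{A_k}\Big(\int p_\eps(x,y)\,\ud\mu(x)\Big)\,\ud\nu(y)
\ =\ e^4\sum_k 1\ =\ e^4 K.
\]
This is exactly the paper's argument, phrased with partition cells instead of balls: the paper bounds $p_\eps(x,y)\lesssim \nu(B^{\rm cl}(y,\tfrac{4\eps}{L}))^{-1}$ pointwise, keeps the second factor of $p_\eps$, integrates against the $\mu$-marginal, and then invokes Proposition~\ref{prop:inverse_mass_by_covering} to turn $\int \nu(B^{\rm cl}(y,\delta))^{-1}\,\ud\nu(y)$ into a covering number. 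Your partition version short-circuits that last proposition, but the essential idea---one pointwise factor via log-Lipschitzness plus the $\nu$-marginal, one integrated factor via the $\mu$-marginal---is identical, and you were missing the second half.
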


This result is proven, in the following
section, 
by using the marginal constraints
that $p_\eps$ satisfies in Equation~\eqref{eqn:pop_marginals}
to gain pointwise control on $p_\eps$ that, when integrated,
yields the covering numbers.
In the embedded manifold setting of section~\ref{sec:proofs_fast_rates},
the $L^2$ bound of Lemma~\ref{lem:variance_control}
can be strengthened to pointwise 
$L^{\infty}$ control,
using
the same technique (Lemma~\ref{lem:manifold_density_sup_control}).
The proof technique is used again in section~\ref{subsec:reduction},
and we indeed expect that it will
be of some broader utility. For example,
it may be helpful for
calculating quantities
arising in the asymptotic distributions
of entropic OT quantities~\cite{gonzalez2023weak}.

For our subsequent results,
it is convenient to record the following
Lemma, which results from 
 combining Equation~\eqref{eqn:overview_bias_final}
with Lemma~\ref{lem:variance_control}.

\begin{lemma}[MID scaling
for the bias]\label{lem:bias_covering}
We have
$$
\E[S_\eps(\hat \mu, \hat \nu)
- S_\eps(\mu, \nu)] \lesssim 
\sqrt{\frac{\mathcal{N}(\mu, \frac{\eps}{L}) \land \mathcal{N}(\nu, \frac{\eps}{L})}{n}}.
$$
\end{lemma}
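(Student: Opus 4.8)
The plan is to derive this as an immediate consequence of the ``bias term'' analysis from the proof of Theorem~\ref{thm:main_cost_covering} together with Lemma~\ref{lem:variance_control}, using crucially that the statement concerns the \emph{signed} quantity $\E[S_\eps(\hat\mu,\hat\nu) - S_\eps(\mu,\nu)]$ and not its absolute value.

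First I would record the centering identity $\E[\hat\Phi_\eps(f_\eps,g_\eps)] = \Phi_\eps(f_\eps,g_\eps)$. This holds because the population potentials $(f_\eps,g_\eps)$ are non-random, so $\E[\hat\mu(f_\eps)] = \mu(f_\eps)$, $\E[\hat\nu(g_\eps)] = \nu(g_\eps)$, and $\E[(\hat\mu\otimes\hat\nu)(e^{-\eps^{-1}(c-f_\eps-g_\eps)}-1)] = (\mu\otimes\nu)(e^{-\eps^{-1}(c-f_\eps-g_\eps)}-1)$, the last equality because $x_i$ and $y_j$ are independent for every pair $(i,j)$ (the samples $\mathcal{X},\mathcal{Y}$ being drawn independently). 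Combining this with strong duality, $S_\eps(\mu,\nu) = \Phi_\eps(f_\eps,g_\eps)$ and $S_\eps(\hat\mu,\hat\nu) = \hat\Phi_\eps(\hat f_\eps,\hat g_\eps)$, I obtain
\[
\E[S_\eps(\hat\mu,\hat\nu) - S_\eps(\mu,\nu)] = \E[\hat\Phi_\eps(\hat f_\eps,\hat g_\eps) - \hat\Phi_\eps(f_\eps,g_\eps)],
\]
which is exactly the bias term treated in Section~\ref{subsec:value_proof}.

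At this point I would simply invoke the bound~\eqref{eqn:overview_bias_final} established there --- via concavity of $\hat\Phi_\eps$ (Proposition~\ref{prop:entropic_dual_basics}), pointwise control of the potentials (Proposition~\ref{prop:dual_pointwise_control}), and the gradient estimate~\eqref{eqn:gradient_calc} --- to get $\E[\hat\Phi_\eps(\hat f_\eps,\hat g_\eps) - \hat\Phi_\eps(f_\eps,g_\eps)] \lesssim \|p_\eps\|_{L^2(\mu\otimes\nu)}/\sqrt n$, and then apply Lemma~\ref{lem:variance_control} to replace $\|p_\eps\|_{L^2(\mu\otimes\nu)}^2$ by $\mathcal{N}(\mu,\frac{\eps}{L})\land\mathcal{N}(\nu,\frac{\eps}{L})$, which yields the claim. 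There is essentially no obstacle internal to this Lemma: the substantive work is already contained in~\eqref{eqn:overview_bias_final} and Lemma~\ref{lem:variance_control}. The one point worth stressing is why no ``variance'' term appears, unlike in Theorem~\ref{thm:main_cost_covering}: since $(\hat f_\eps,\hat g_\eps)$ maximizes $\hat\Phi_\eps$, the difference $\hat\Phi_\eps(\hat f_\eps,\hat g_\eps) - \hat\Phi_\eps(f_\eps,g_\eps)$ is non-negative sample-by-sample, so the centering identity turns the two-sided control into a clean one-sided bias bound without any need to also estimate $\E[|\hat\Phi_\eps(f_\eps,g_\eps) - \Phi_\eps(f_\eps,g_\eps)|]$.
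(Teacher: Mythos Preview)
Your proposal is correct and matches the paper's approach exactly: the paper states that Lemma~\ref{lem:bias_covering} ``results from combining Equation~\eqref{eqn:overview_bias_final} with Lemma~\ref{lem:variance_control},'' which is precisely what you do after the centering identity $\E[\hat\Phi_\eps(f_\eps,g_\eps)] = \Phi_\eps(f_\eps,g_\eps)$. Your added remarks on why the variance term disappears are accurate and make explicit what the paper leaves implicit.
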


\paragraph*{Summary of the technical approach.}
In summary, we decomposed the difference $|S_\eps(\hat \mu, \hat \nu)
- S_\eps(\mu, \nu)|$ into a bias term and a 
variance term. For the variance term, an elementary
calculation allows us to control it
by something with the dimension
of the problem residing purely in the
$\|p_\eps\|_{L^2(\mu \otimes \nu)}$ term.
And for the bias term, a judicious use of concavity
allows us to control it by
another expression with
 the dimension of the problem purely
 in the $\|p_\eps\|_{L^2(\mu \otimes \nu)}$ term.
We then conclude with Lemma~\ref{lem:variance_control}.
The proofs of MID scaling with
slow rates
for maps and densities, Theorem~\ref{thm:main_map_covering}
and Theorem~\ref{thm:density_estimation_slow},
use additional estimates that also follow the pattern of splitting
into bias and variance terms (loosely construed) to then
reduce to Lemma~\ref{lem:variance_control},
and are included in sections~\ref{subsec:empirical_map_proofs}
and~\ref{subsec:empirical_density_proofs},
respectively.

\subsection{MID scaling for
the norm of entropic densities}
\label{subsec:sub_exp}
In this section, we will prove Lemma~\ref{lem:variance_control},
which gives sub-exponential,
MID scaling-type bounds
on the entropic density $p_\eps$.
To this end, the following fact will be useful.
Recall that we write the closed ball of size $\delta$
at $z \in \R^d$ as 
$$
B^{\rm cl}(z, \delta) := \big\{z' \in \R^d \colon 
\|z - z'\|\leqslant \delta \big\}.
$$
\begin{prop}[Average inverse mass is bounded
by the covering number]\label{prop:inverse_mass_by_covering}
Suppose $P$ is a compactly supported
probability measure on $\R^d$.
Then for all $\delta > 0$,
$$
\int P(B^{\rm cl}(z, \delta))^{-1} \ud P(z) \leqslant 
\mathcal{N}(P, \delta/4).
$$
\end{prop}
To give the proof of this Proposition, recall
that for $\tau > 0$, a {\it proper} $\tau$-covering
of a set $A \subset \R^d$, is a 
covering of $A$ at scale $\tau$ with centers contained
within $A$. We write the minimal size of 
a proper covering
with respect to the Euclidean norm
at scale $\tau > 0$ as $\mathcal{N}^{\rm pr}(A, \|\cdot\|, \tau)$.
Note that proper covering numbers 
are comparable to vanilla covering numbers since
$\mathcal{N}^{\rm pr}(A, \|\cdot\|, \tau)
\leqslant \mathcal{N}(A, \|\cdot\|, \tau/2)$.
\begin{proof}[Proof of Proposition~\ref{prop:inverse_mass_by_covering}]
Let $z_1, \ldots, z_K \in \supp(P)$ be a proper $\delta/2$
covering of $\supp(P)$ achieving $K = \mathcal{N}^{\rm pr}(\supp(P), \|\cdot\|, \delta/2)$.
Since this is a proper covering, we know that
for all $k \in [K]$,
$P(B^{\rm cl}(z_k, \delta/2)) > 0$.
The triangle inequality then implies
$$
z \in B^{\rm cl}(z_k, \delta/2)
\implies 0 < P(B^{\rm cl}(z_k, \delta/2))
\leqslant P(B^{\rm  cl}(z, \delta)).
$$ Thus
\begin{align*}
\int P(B^{\rm cl}(z, \delta))^{-1} \ud P(z) 
&\leqslant \sum_{k = 1}^K
\int_{B^{\rm cl}(z_k, \delta/2)}
P(B^{\rm cl}(z, \delta))^{-1} \ud P(z) \\
&\leqslant 
\sum_{k = 1}^K
\int_{B^{\rm cl}(z_k, \delta/2)}
P(B^{\rm cl}(z_k, \delta/2))^{-1} \ud P(z) \\
&= K  = \mathcal{N}^{\rm pr}(P, \delta/2)
\leqslant \mathcal{N}(P, \delta/4).
\end{align*}
\end{proof}

\begin{proof}[Proof of Lemma~\ref{lem:variance_control}]
We show the bound for $\mathcal{N}(\nu, \frac{\eps}{L})$,
the bound for $\mathcal{N}(\mu, \frac{\eps}{L})$ follows
by symmetry.
We proceed via the marginal constraints on $p_\eps$ from~\eqref{eqn:pop_marginals}: for all $x \in \supp(\mu)$
and $y \in \supp(\nu)$,
\begin{equation}\label{eqn:relative_marginal}
1 = p_\eps(x, y) \int  \frac{p_\eps(x, y')}{p_\eps(x, y)}
\ud \nu(y').
\end{equation} Applying Proposition~\ref{prop:consequences_of_lipschitz_cost}
to
Equation~\eqref{eqn:relative_marginal} yields,
\begin{align*}
1 &\geqslant  p_\eps(x, y) \int_{B^{\rm cl}(y, \frac{4\eps}{L})}
 \frac{p_\eps(x, y')}{p_\eps(x, y)}\ud \nu(y')  \\
& \geqslant p_\eps(x, y) \int_{B^{\rm cl}(y, \frac{4\eps}{L})}
e^{- \frac{2L}{\eps}\|y'- y\|} \ud \nu(y') \\
&\geqslant
e^{-8} \cdot p_\eps(x, y)\cdot \nu(B^{\rm cl}(y, \frac{4\eps}{L})).
\end{align*}
Since $y \in \supp(\nu)$, 
$\nu(B^{\rm cl}(y, \frac{4\eps}{L})) > 0$,
so we can rearrange this inequality and apply
the marginal 
 constraints~\eqref{eqn:pop_marginals} once more
 to yield
\begin{align*}
    \|p_\eps\|_{L^2(\mu \otimes \nu)}^2 &=
    \int p_\eps(x, y)^2 \ud(\mu \otimes \nu)(x, y) \\
    &\lesssim \int  \nu(B^{\rm cl}(y, \frac{4\eps}{L}))^{-1}
    p_\eps(x, y) \ud (\mu \otimes \nu)(x, y) \\
    &= \int \nu(B^{\rm cl}(y, \frac{4\eps}{L}))^{-1} \ud \nu(y).
\end{align*}
The result follows from Proposition~\ref{prop:inverse_mass_by_covering}.
\end{proof}

\subsection{MID scaling for maps in empirical norm}
\label{subsec:empirical_map_proofs}
As we discuss in the previous
section, to prove Theorem~\ref{thm:main_map_covering}
and Theorem~\ref{thm:density_estimation_slow},
we reduce to the bound on the bias
from
Lemma~\ref{lem:bias_covering}.
We emphasize that bounds on entropic
maps and densities with exponential
dependence on $1/\eps$ are known 
from~\cite{rigollet2022sample}.
To prove such bounds without incurring
exponential factors requires further
work beyond the previous sections.
As in some previous works, we reduce from
the error of map estimation
to the error of value estimation;
the difference in our approach is that we avoid
factors of the KL-divergence~\cite{pooladian2023minimax}
and it applies to the empirical entropic OT map
rather than a modified estimator~\cite{stromme2023sampling}.

The idea is to consider
the following rounded dual potential,
defined for all $x \in \supp(\mu)$ as 
\begin{equation}\label{eqn:tilde_f}
\tilde{f}_\eps(x) := - \eps \log \Big( \frac{1}{n}
\sum_{j = 1}^n e^{-\eps^{-1}(c(x, y_j) - g_\eps(y_j))} \Big).
\end{equation}
Let the corresponding density
be defined for all $x \in \supp(\mu)$ 
and $y \in \supp(\nu)$ as
$\tilde{p}_\eps(x, y):=
e^{-\eps^{-1}(c(x, y) - \tilde f_\eps(x) - g_\eps(y))}$;
note that $\tilde p_\eps$ is such that
for all $x \in \supp(\mu)$,
$$
1 =\frac1n \sum_{j = 1}^n \tilde p_{\eps}(x, y_j).
$$ This fact allows us to compare
$\frac1n \hat p_\eps(x, \cdot)$
with $\frac 1n \tilde p_\eps(x, \cdot)$ as probability
distributions on $\Y$.
The following Lemma is then 
a consequence of
Pinsker's inequality,
cancellation of some terms, and a crucial
use of the monotonicity of the empirical
dual under marginal rounding from Proposition~\ref{prop:entropic_dual_basics}
to replace $\tilde f_\eps$
with its un-rounded counterpart $f_\eps$.

\begin{lemma}\label{lem:map_mse_kl}
Let $\tilde f_\eps$ and $\tilde p_\eps$ be as above, and suppose the diameter
of $\supp(\nu)$ is at most $R$. Then
$$
\E\Big[ \Big\|\hat T_\eps - \frac{1}{n} \sum_{j = 1}^n y_j
\tilde p_\eps(x, y_j) \Big\|_{L^2(\hat \mu)}^2 \Big] 
\lesssim  \frac{R^2}{\eps} \mathbb{E}[
S_\eps(\hat \mu, \hat \nu) - S_\eps(\mu, \nu)].
$$
\end{lemma}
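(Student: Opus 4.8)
The plan is to unpack the definitions of $\hat T_\eps$ and the rounded map $\frac1n\sum_j y_j\tilde p_\eps(\cdot,y_j)$, bound their squared $L^2(\hat\mu)$ distance pointwise in $x$, and then convert the resulting expression into a Kullback--Leibler divergence to which Pinsker's inequality applies. Concretely, fix $x\in\X$. Both $\frac1n\hat p_\eps(x,\cdot)$ and $\frac1n\tilde p_\eps(x,\cdot)$ are probability distributions on the index set $\{1,\dots,n\}$ (equivalently, on $\Y$), by the empirical marginal constraint~\eqref{eqn:emp_marginals} and by the defining property of $\tilde p_\eps$ noted just above the lemma. Since $\|y_j\|\le R$ (up to translating $\supp(\nu)$, which does not affect $\hat T_\eps - \frac1n\sum_j y_j\tilde p_\eps$ since both are weighted by probability vectors summing to $1$), the difference of the two conditional means is bounded by $R$ times the total variation distance between these two discrete distributions:
$$
\Big\| \hat T_\eps(x) - \tfrac1n \textstyle\sum_j y_j \tilde p_\eps(x,y_j)\Big\|
\le R \cdot \Big\| \tfrac1n \hat p_\eps(x,\cdot) - \tfrac1n \tilde p_\eps(x,\cdot)\Big\|_{\rm TV}.
$$
Squaring and applying Pinsker's inequality gives a bound by $\tfrac{R^2}{2}\,\KL{\tfrac1n\hat p_\eps(x,\cdot)}{\tfrac1n\tilde p_\eps(x,\cdot)}$ pointwise in $x$, and then averaging over $x\sim\hat\mu$.

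The next step is to identify the averaged KL term with a difference of dual objectives. Writing out the KL divergence and using that $\hat p_\eps(x,y_j)=e^{-\eps^{-1}(c(x,y_j)-\hat f_\eps(x)-\hat g_\eps(y_j))}$ and $\tilde p_\eps(x,y_j)=e^{-\eps^{-1}(c(x,y_j)-\tilde f_\eps(x)-g_\eps(y_j))}$, the logarithm of the ratio telescopes into an affine expression in $\hat f_\eps - \tilde f_\eps$ and $\hat g_\eps - g_\eps$, with coefficients $\hat p_\eps(x,y_j)$. Averaging over $x\in\X$ and $j$, and using the marginal constraints~\eqref{eqn:emp_marginals} to cancel the $\hat f_\eps-\tilde f_\eps$ cross-terms and to simplify the $\hat g_\eps - g_\eps$ cross-terms against the first marginal, this collapses to something proportional to $\eps^{-1}$ times a gap between dual objective values — specifically, one recognizes $\E_{\hat\mu}\big[\KL{\tfrac1n\hat p_\eps(x,\cdot)}{\tfrac1n\tilde p_\eps(x,\cdot)}\big] = \eps^{-1}\big(\hat\Phi_\eps(\hat f_\eps,\hat g_\eps) - \hat\Phi_\eps(\tilde f_\eps, g_\eps)\big)$, using that both $(\hat f_\eps,\hat g_\eps)$ and the rounded pairs satisfy the relevant marginal identity so that the $-\eps(\hat\mu\otimes\hat\nu)(e^{\cdots}-1)$ terms both vanish after rounding.

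The final step handles the sign and the comparison to $S_\eps(\mu,\nu)$. Since $(\hat f_\eps,\hat g_\eps)$ is optimal for $\hat\Phi_\eps$, the quantity $\hat\Phi_\eps(\hat f_\eps,\hat g_\eps) - \hat\Phi_\eps(\tilde f_\eps,g_\eps)$ is nonnegative, so the absolute values are harmless. To bring in the population value, note $\hat\Phi_\eps(\tilde f_\eps, g_\eps) \ge \hat\Phi_\eps(f_\eps, g_\eps)$ by the monotonicity of the empirical dual under marginal rounding (Proposition~\ref{prop:entropic_dual_basics}, item 3, applied with $P=\hat\mu$, $Q=\hat\nu$, rounding the $f$-variable, which produces exactly $\tilde f_\eps$ since the formula~\eqref{eqn:tilde_f} matches the rounding formula in that proposition). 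Therefore
$$
\hat\Phi_\eps(\hat f_\eps,\hat g_\eps) - \hat\Phi_\eps(\tilde f_\eps, g_\eps)
\le \hat\Phi_\eps(\hat f_\eps,\hat g_\eps) - \hat\Phi_\eps(f_\eps, g_\eps)
= S_\eps(\hat\mu,\hat\nu) - \hat\Phi_\eps(f_\eps,g_\eps),
$$
and taking expectations, $\E[\hat\Phi_\eps(f_\eps,g_\eps)] = \Phi_\eps(f_\eps,g_\eps) = S_\eps(\mu,\nu)$ since $\hat\Phi_\eps$ has the same form as $\Phi_\eps$ with $\hat\mu,\hat\nu$ in place of $\mu,\nu$ and $\E[\hat\mu(\phi)]=\mu(\phi)$, $\E[(\hat\mu\otimes\hat\nu)(\psi)]=(\mu\otimes\nu)(\psi)$ (the last because the diagonal terms in the double sum are negligible — more precisely, $\E[(\hat\mu\otimes\hat\nu)(p)] = (\mu\otimes\nu)(p) + O(1/n)$ corrections that one absorbs, but actually for $\hat\mu\otimes\hat\nu$ built from independent samples the identity $\E[(\hat\mu\otimes\hat\nu)(\psi)] = (\mu\otimes\nu)(\psi)$ holds exactly since all cross terms $i\ne j$ average correctly and the diagonal contributes only $\tfrac1n(\mu(\text{diag}))$ — one should track this carefully, but it is $O(R^2/n)$ and harmless after multiplying by $R^2/\eps$). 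Chaining the bounds yields $\E\big[\|\hat T_\eps - \tfrac1n\sum_j y_j\tilde p_\eps(x,y_j)\|_{L^2(\hat\mu)}^2\big] \lesssim \tfrac{R^2}{\eps}\E[S_\eps(\hat\mu,\hat\nu) - S_\eps(\mu,\nu)]$.

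I expect the main obstacle to be the bookkeeping in the second step: verifying that the averaged log-ratio really does telescope down to $\eps^{-1}(\hat\Phi_\eps(\hat f_\eps,\hat g_\eps) - \hat\Phi_\eps(\tilde f_\eps,g_\eps))$ and not some messier quantity. This requires carefully using \emph{both} empirical marginal constraints — the one for $\hat\pi_\eps$ (to kill the constant $1$'s and the $\hat f_\eps - \tilde f_\eps$ terms) and the defining property of $\tilde p_\eps$ — and being careful that the KL is taken between the correctly normalized discrete measures. A secondary subtlety is making sure the translation-invariance argument for the $R$ bound is legitimate: $\hat T_\eps(x) - \frac1n\sum_j y_j\tilde p_\eps(x,y_j) = \frac1n\sum_j y_j(\hat p_\eps(x,y_j) - \tilde p_\eps(x,y_j))$, and since $\frac1n\sum_j \hat p_\eps(x,y_j) = \frac1n\sum_j\tilde p_\eps(x,y_j) = 1$, replacing $y_j$ by $y_j - y_0$ for any fixed $y_0$ leaves this difference unchanged, so we may assume $\|y_j\|\le R/2$ (or just $\le R$) and bound $\|\frac1n\sum_j y_j(\hat p_\eps - \tilde p_\eps)\| \le \max_j\|y_j\|\cdot \sum_j \frac1n|\hat p_\eps(x,y_j) - \tilde p_\eps(x,y_j)| = \max_j\|y_j\|\cdot 2\|\cdot\|_{\rm TV}$; the factors of $2$ are absorbed into $\lesssim$.
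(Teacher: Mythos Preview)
Your proof is correct and follows essentially the same approach as the paper: Pinsker's inequality pointwise in $x$, rewriting the averaged KL as $\eps^{-1}\big(\hat\Phi_\eps(\hat f_\eps,\hat g_\eps) - \hat\Phi_\eps(\tilde f_\eps, g_\eps)\big)$ via the marginal constraints, then invoking the marginal-rounding monotonicity (Proposition~\ref{prop:entropic_dual_basics}, item 3) to replace $\tilde f_\eps$ by $f_\eps$ before taking expectations. Your hesitation about the identity $\E[\hat\Phi_\eps(f_\eps,g_\eps)]=\Phi_\eps(f_\eps,g_\eps)$ is unwarranted: since $\X$ and $\Y$ are \emph{independent} samples and $(f_\eps,g_\eps)$ are deterministic, $\E[(\hat\mu\otimes\hat\nu)(p_\eps)]=(\mu\otimes\nu)(p_\eps)$ holds exactly with no diagonal correction.
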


However, this lemma
does not yet yield Theorem~\ref{thm:main_map_covering}
as it involves the rounded density $\tilde p_\eps$.
The following calculation addresses
this issue.

\begin{lemma}\label{lem:map_mse_massage}
Let $\tilde f_\eps$ and $\tilde p_\eps$
be as above. Then
$$
\E[\|\hat T_\eps - T_\eps\|_{L^2(\hat \mu)}^2] 
\lesssim \E\Big[ \Big\|\hat T_\eps - \frac{1}{n} \sum_{j = 1}^n y_j
\tilde p_\eps(x, y_j) \Big\|_{L^2(\hat \mu)}^2 \Big] + R^2\cdot \frac{\mathcal{N}(\mu, \frac{\eps}{L}) \land \mathcal{N}(\nu, \frac{\eps}{L})}{n}.
$$
\end{lemma}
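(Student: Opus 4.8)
The plan is to add and subtract the intermediate quantity $\frac1n\sum_j y_j \tilde p_\eps(x,y_j)$ and bound the resulting cross term. Writing $A(x) := \hat T_\eps(x) = \frac1n\sum_j y_j \hat p_\eps(x,y_j)$, $B(x) := \frac1n\sum_j y_j \tilde p_\eps(x,y_j)$, and recalling that $T_\eps(x) = \E_{\pi_\eps}[y\mid x] = \int y\, p_\eps(x,y)\ud\nu(y)$, the triangle inequality in $L^2(\hat\mu)$ gives
\[
\|A - T_\eps\|_{L^2(\hat\mu)}^2 \lesssim \|A - B\|_{L^2(\hat\mu)}^2 + \|B - T_\eps\|_{L^2(\hat\mu)}^2,
\]
so by linearity of expectation it suffices to show $\E[\|B - T_\eps\|_{L^2(\hat\mu)}^2] \lesssim R^2\cdot \tfrac{\mathcal N(\mu,\eps/L)\land\mathcal N(\nu,\eps/L)}{n}$. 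Note $B(x) - T_\eps(x) = \frac1n\sum_j y_j\, \tilde p_\eps(x,y_j) - \int y\, p_\eps(x,y)\ud\nu(y)$; the key point is that $\tilde p_\eps$ shares with $p_\eps$ the same first marginal-type structure in the $y$ variable but differs from $p_\eps$ only through the replacement of $f_\eps$ by the rounded potential $\tilde f_\eps$ defined in~\eqref{eqn:tilde_f} — and crucially $\tilde f_\eps(x)$ is a deterministic function of $x$ given the $y$-sample, so for fixed $x$ the summands $y_j\tilde p_\eps(x,y_j)$ are \emph{not} independent (the normalization couples them), but $\tilde p_\eps(x,y) = p_\eps(x,y)\cdot e^{\eps^{-1}(\tilde f_\eps(x) - f_\eps(x))}$ factors the $x$-dependence out of the $y$-average.

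The cleanest route is: first replace $B(x)$ by $\bar B(x) := \frac1n\sum_j y_j\, p_\eps(x,y_j)$, the genuine empirical average of the iid terms $y_j p_\eps(x,y_j)$, which has expectation exactly $T_\eps(x)$. We have $B(x) = \lambda_\eps(x)\bar B(x)$ where $\lambda_\eps(x) := e^{\eps^{-1}(\tilde f_\eps(x) - f_\eps(x))} = \big(\frac1n\sum_j p_\eps(x,y_j)\big)^{-1}$ by~\eqref{eqn:tilde_f} and~\eqref{eqn:pop_marginals}; so $\lambda_\eps(x)$ is exactly the reciprocal of the empirical mass $\frac1n\sum_j p_\eps(x,y_j)$, which concentrates around $1$. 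Then
\[
B(x) - T_\eps(x) = \lambda_\eps(x)\big(\bar B(x) - T_\eps(x)\big) + (\lambda_\eps(x) - 1)T_\eps(x),
\]
and since $\|y_j\|\le R$ and $p_\eps \ge 0$ we have $\|\bar B(x)\|\le R\cdot\frac1n\sum_j p_\eps(x,y_j) = R/\lambda_\eps(x)$, hence $\|B(x)\| \le R$ and $\|T_\eps(x)\|\le R$ as well; combining, $\|B(x)-T_\eps(x)\|\le R|\lambda_\eps(x)-1| + \|\bar B(x)-T_\eps(x)\|$ after absorbing $\lambda_\eps$ (one can also simply bound $\|\lambda_\eps(\bar B - T_\eps)\| \le \|\lambda_\eps \bar B - T_\eps\| + |\lambda_\eps-1|\|T_\eps\|$). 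The variance term $\E[\|\bar B(x) - T_\eps(x)\|^2]$ for fixed $x$ is $\frac1n\Var_\nu(y\, p_\eps(x,\cdot)) \le \frac{R^2}{n}\|p_\eps(x,\cdot)\|_{L^2(\nu)}^2$, and integrating against $\mu$ and using Lemma~\ref{lem:variance_control} gives the claimed $R^2 \cdot \tfrac{\mathcal N(\mu,\eps/L)\land\mathcal N(\nu,\eps/L)}{n}$. The term $R^2\,\E[|\lambda_\eps(x)-1|^2]$ requires controlling $\E\big[\big(\frac1n\sum_j p_\eps(x,y_j) - 1\big)^2\big]$ (after noting $|\lambda_\eps - 1| \lesssim |\frac1n\sum_j p_\eps(x,y_j) - 1|$ when the empirical mass is bounded below, which follows from the log-Lipschitz bound of Proposition~\ref{prop:consequences_of_lipschitz_cost} restricting to a ball as in the proof of Lemma~\ref{lem:variance_control}), and this is again $\frac1n\Var_\nu(p_\eps(x,\cdot)) \le \frac1n\|p_\eps(x,\cdot)\|_{L^2(\nu)}^2$, so integrating and applying Lemma~\ref{lem:variance_control} closes this term too.

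The main obstacle I anticipate is handling $\lambda_\eps(x)$ rigorously: a priori the empirical mass $\frac1n\sum_j p_\eps(x,y_j)$ could be small, making $\lambda_\eps(x)$ large and the crude bounds $\|B(x)\|\le R$ less immediately useful for the $L^2$ estimate. The fix is the same localization trick used in the proof of Lemma~\ref{lem:variance_control}: restricting the $y$-sum to the ball $B^{\rm cl}(y_*, 4\eps/L)$ for an appropriate reference point and using the log-Lipschitz property shows the empirical mass is bounded below by a constant times $\hat\nu(B^{\rm cl}(y_*,4\eps/L))$; combined with the fact that $\|B(x)\|\le R$ always holds (it is a convex-combination-like average with nonnegative weights summing to $n\lambda_\eps(x)^{-1}$, scaled), one obtains $\|B(x) - T_\eps(x)\| \le \|B(x)\| + \|T_\eps(x)\| \le 2R$ unconditionally, so that $\|B(x)-T_\eps(x)\|^2 \le 2R\|B(x)-T_\eps(x)\|$, reducing a second-moment bound to a first-moment bound, which is more forgiving. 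Alternatively, one decomposes on the event that the empirical mass is within, say, $[1/2, 3/2]$ (whose complement has probability $\lesssim 1/n$ times covering numbers by Chebyshev and Lemma~\ref{lem:variance_control}) and uses the $2R$ bound off the event. Either way the bookkeeping is routine once the localization is in place; I expect this to be the only place where real care is needed.
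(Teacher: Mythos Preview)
Your overall decomposition is the same as the paper's: split via the intermediate $\bar B(x) := \frac1n\sum_j y_j\, p_\eps(x,y_j)$, control the variance piece $\bar B - T_\eps$ by iid cancellation plus Lemma~\ref{lem:variance_control}, and then handle the discrepancy between $\tilde p_\eps$ and $p_\eps$. The variance piece is fine.

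Where you overcomplicate things is the $B - \bar B$ piece. You introduce $\lambda_\eps(x) = \big(\frac1n\sum_j p_\eps(x,y_j)\big)^{-1}$, write $B - T_\eps = \lambda_\eps(\bar B - T_\eps) + (\lambda_\eps-1)T_\eps$, and then worry about $\lambda_\eps$ being large, proposing a localization argument or a good/bad-event split. The good/bad-event route (Chebyshev on $|a-1|$, trivial bound $\|B - T_\eps\|\le 2R$ on the bad event) does work after some bookkeeping, but the localization suggestion---bounding the empirical mass below by $\hat\nu(B^{\rm cl}(y_*,4\eps/L))$---does not give a deterministic lower bound without further assumptions (that ball can have zero empirical mass), so that branch is a dead end here.

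The paper avoids all of this with a one-line algebraic identity you are missing. Writing $a := \frac1n\sum_k p_\eps(x,y_k)$, one has $\tilde p_\eps(x,y_j) = p_\eps(x,y_j)/a$, so
\[
\frac1n\sum_{j=1}^n \big|\tilde p_\eps(x,y_j) - p_\eps(x,y_j)\big|
= \Big|\frac{1}{a}-1\Big|\cdot \frac1n\sum_{j=1}^n p_\eps(x,y_j)
= a\Big|\frac{1}{a}-1\Big|
= |a-1|.
\]
Hence $\|B(x)-\bar B(x)\| \le R\,|a(x)-1|$ \emph{deterministically}, with no need to control $\lambda_\eps$. Squaring and taking $L^2(\hat\mu)$-norm, this is exactly (a component of) $\|\nabla\hat\Phi_\eps(f_\eps,g_\eps)\|_{L^2(\hat\mu)\times L^2(\hat\nu)}^2$, already bounded by $\|p_\eps\|_{L^2(\mu\otimes\nu)}^2/n$ in~\eqref{eqn:gradient_calc}. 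So the paper's three-term split $(\hat T_\eps - B) + (B-\bar B) + (\bar B - T_\eps)$ with this identity closes immediately, whereas your two-term split plus the $\lambda_\eps$ decomposition forces an unnecessary detour.
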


We first dispatch Theorem~\ref{thm:main_map_covering},
and then prove these lemmas.

\begin{proof}[Proof of Theorem~\ref{thm:main_map_covering}]
Combining Lemma~\ref{lem:bias_covering}
with the above two lemmas yields
$$
\E[\|\hat T_\eps - T_\eps\|_{L^2(\hat \mu)}^2]  
\lesssim R^2 \Big\{\frac{1}{\eps}
\cdot \sqrt{\frac{\mathcal{N}(\mu, \frac{\eps}{L}) \land \mathcal{N}(\nu, \frac{\eps}{L})}{n}}
+
 \frac{\mathcal{N}(\mu, \frac{\eps}{L}) \land \mathcal{N}(\nu, \frac{\eps}{L})}{n} \Big\}.
$$ Now observe that the statement
of Theorem~\ref{thm:main_map_covering}
is trivial when $\mathcal{N}(\mu, \frac{\eps}{L}) \land \mathcal{N}(\nu, \frac{\eps}{L}) \geqslant n$, since
we always have the bound $2R^2$.
Hence it suffices to consider the case 
$\mathcal{N}(\mu, \frac{\eps}{L}) \land \mathcal{N}(\nu, \frac{\eps}{L}) \leqslant n$, in which case we may conclude from
the above inequality.
\end{proof}

\begin{proof}[Proof of Lemma~\ref{lem:map_mse_kl}]
To prove this result,
let us first assume that
$0 \in \supp(\nu)$
so that for each $y \in \supp(\nu)$,
we have
$\|y\|\leqslant R$. Then for each $x \in \X$,
we apply triangle inequality and
Pinsker's inequality to see that
\begin{align*}
    \Big\| \hat T_\eps(x) - \frac{1}{n} \sum_{j = 1}^n 
    y_j \tilde p_\eps(x, y_j)\Big\| 
    &\leqslant 
    \frac{R}{n} \sum_{j = 1}^n |\tilde p_\eps(x, y_j)
    - \hat p_\eps(x, y_j)| \\
    &\lesssim R \sqrt{\KL{\frac1n \hat p_\eps(x, \cdot)}{\frac 1n\tilde p_\eps(x, \cdot)}}.
\end{align*}
Therefore,
\begin{align*}
\Big\| \hat T_\eps - \frac{1}{n} \sum_{j = 1}^n 
    y_j  \tilde p_\eps(x,y_j)&\Big\|^2_{L^2(\hat \mu)}
    \lesssim R^2 \frac{1}{n}
    \sum_{i = 1}^n\KL{\frac1n \hat p_\eps(x_i, \cdot)}{\frac 1n\tilde p_\eps(x_i, \cdot)} \\
    &= \frac{R^2}{\eps} \sum_{i, j = 1}^n \frac{1}{n^2} \hat p_\eps(x_i, y_j)(
    \hat f_\eps(x_i) - \tilde f_\eps(x_i) + \hat g_\eps(y_j) - g_\eps(y_j)) \\
    &= \frac{R^2}{\eps} \big(\hat \mu(
    \hat f_\eps - \tilde f_\eps)
    + \hat \nu (\hat g_\eps - g_\eps) \big),
\end{align*} where we used
the marginal constraints of $\hat p_\eps$ for the
last step. Using the notation,
as in the previous section,
$\hat \Phi_\eps := \Phi_\eps^{\hat \mu \hat \nu}$,
we recognize the result as a difference of $\hat \Phi_\eps$,
and apply Proposition~\ref{prop:entropic_dual_basics}
on marginal rounding improving dual objective value,
to conclude that
\begin{align*}
\Big\| \hat T_\eps - \frac{1}{n} \sum_{j = 1}^n 
    y_j  \tilde p_\eps(x,y_j)\|^2_{L^2(\hat \mu)}
    &\lesssim 
    \frac{R^2}{\eps} \big(\hat \mu(
    \hat f_\eps - \tilde f_\eps)
    + \hat \nu (\hat g_\eps - g_\eps) \big)\\
    &=
    \frac{R^2}{\eps} \big(
    \hat \Phi_\eps(\hat f_\eps, \hat g_\eps) - 
    \hat \Phi_\eps(\tilde f_\eps, g_\eps)\big) \\
    & \leqslant 
    \frac{R^2}{\eps}  \big( 
     \hat \Phi_\eps(\hat f_\eps, \hat g_\eps) - 
    \hat \Phi_\eps(f_\eps, g_\eps)\big).
\end{align*} Taking expectations, $\E [\hat \Phi_\eps(f_\eps,
g_\eps)]= \Phi_\eps(f_\eps, g_\eps)$, and we conclude the result.
For the general case
where $0$ may not be in $\supp(\nu)$,
we can perform the above argument
with suitably offset $y$.
\end{proof}

\begin{proof}[Proof of Lemma~\ref{lem:map_mse_massage}]
As above, let us first assume that
$0 \in \supp(\nu)$
so that for each $y \in \supp(\nu)$,
we have
$\|y\|\leqslant R$.
We apply Young's inequality twice, first comparing $T_\eps(x)$ to
the empirical version $\frac{1}{n} \sum_{j = 1}^n y_j p_\eps(x, y_j)$,
and then the empirical version to the version involving $\tilde p_\eps$:
\begin{align*}
    \E[\|\hat T_\eps - T_\eps\|_{L^2(\hat \mu)}^2] &\lesssim \E\Big[ \Big\|\hat T_\eps -  \frac{1}{n} \sum_{j = 1}^ny_j 
\tilde p_\eps(x, y_j) \Big\|_{L^2(\hat \mu)}^2 \Big]  \\
&+ \E\Big[ \Big\|\frac{1}{n} \sum_{j = 1}^n y_j
(\tilde p_\eps(x, y_j) - p_\eps(x, y_j)) \Big\|_{L^2(\hat \mu)}^2 \Big] \\
&+ \E\Big[ \Big\|\frac{1}{n} \sum_{j = 1}^n y_j
 p_\eps(x, y_j)  - T_\eps(x) \Big\|_{L^2(\hat \mu)}^2 \Big]
\end{align*}
The third term is controlled as
\begin{align*}
    \E\Big[ \Big\|\frac{1}{n} &\sum_{j = 1}^n y_j p_\eps(x, y_j)  - T_\eps(x)
\Big\|_{L^2(\hat \mu)}^2 \Big] \\
 &=
 \E \Big[ \frac{1}{n^2} \sum_{j, k = 1}^n \langle y_j
 p_\eps(x, y_j)
 - T_\eps(x), y_k 
 p_\eps(x, y_k) - T_\eps(x) \rangle_{L^2(\hat \mu)}\Big] \\
 &= \frac{1}{n} \|y p_\eps(x, y) - T_\eps(x)
 \|_{L^2(\mu \otimes \nu)}^2 
 \lesssim \frac{R^2}{n} \|p_\eps\|_{L^2(\mu \otimes \nu)}^2,
\end{align*} where the second equality follows because, for $j \neq k$,
$y_j$ and $y_k$ are iid draws from $\nu$, so that these terms cancel. 
For the second term, we begin by observing that for each $y_j$,
$$
\tilde p_\eps(x, y_j)  = \frac{p_\eps(x, y_j) }{\frac{1}{n} \sum_{k = 1}^n 
p_\eps(x, y_k)}.
$$
We thus apply triangle inequality and this equation to see that
\begin{align*}
    \Big\| \frac{1}{n} \sum_{j = 1}^n y_j
(\tilde p_\eps(x, y_j) - p_\eps(x, y_j)) \Big\| &\leqslant
\frac{R}{n} \sum_{j = 1}^n |\tilde  p_\eps(x, y_j) - p_\eps(x, y_j) | \\
&= \frac{R}{n} \sum_{j = 1}^n p_\eps(x, y_j)\Big|\frac{1}{\frac{1}{n} \sum_{k =1 }^n p_\eps(x, y_k)} - 1 \Big| \\
&= R \Big| \frac1n \sum_{j = 1}^n p_\eps(x, y_j) - 1 \Big|.
\end{align*}
Taking the $\|\cdot\|_{L^2(\hat \mu)}^2$ norm, 
we recognize this
as the squared norm of the first component of $\nabla \hat \Phi_\eps(f_\eps, g_\eps)$,
and use the bound in Equation~\eqref{eqn:gradient_calc}
to conclude
\begin{align*}
\E\Big[ \Big\|\frac{1}{n} \sum_{j = 1}^n y_j
(\tilde p_\eps(x, y_j) - p_\eps(x, y_j)) \Big\|_{L^2(\hat \mu)}^2 \Big]
&\leqslant R^2 \E[\|\nabla \hat \Phi_\eps(f_\eps,
g_\eps)\|^2_{L^2(\hat \mu)
\times L^2(\hat \nu)}]\\
&\lesssim \frac{R^2}{n}\|p_\eps\|^2_{L^2(\mu \otimes \nu)}.
\end{align*} Applying
Lemma~\ref{lem:variance_control}
yields the result.
For the general case
where $0$ may not be in $\supp(\nu)$,
we can perform the above argument
with suitably offset $y$.
\end{proof}

\subsection{MID scaling for densities in empirical norm}
\label{subsec:empirical_density_proofs}

We prove Theorem~\ref{thm:density_estimation_slow}
on empirical norm
convergence of the entropic OT
density
using the techniques introduced
in the previous section.
We prove the following two
lemmas.

\begin{lemma}\label{lem:density_l1_KL}
Let $\tilde f_\eps, \tilde p_\eps$ be as in 
the previous section.
Then
$$
\E[\|\hat p_\eps - \tilde p_\eps\|_{L^1(\hat \mu \otimes 
\hat \nu)} ]\lesssim \frac{1}{\sqrt{\eps}}
\E[S_\eps(\hat \mu, \hat \nu)
- S_\eps(\mu, \nu)]^{1/2}
$$
\end{lemma}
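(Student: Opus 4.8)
The plan is to mimic the opening moves of the proof of Lemma~\ref{lem:map_mse_kl}, but now keeping the full density rather than collapsing it against $y_j$. First I would observe that for each fixed $x \in \X$, both $\frac1n \hat p_\eps(x, \cdot)$ and $\frac1n \tilde p_\eps(x, \cdot)$ are probability distributions on the points $\Y = (y_1, \ldots, y_n)$: the first because of the empirical marginal constraint~\eqref{eqn:emp_marginals}, the second because of the defining property of $\tilde p_\eps$ noted just before Lemma~\ref{lem:map_mse_kl}. Hence the $L^1(\hat\mu\otimes\hat\nu)$ distance between the two densities is, up to the normalization $\frac1n$, exactly an average over $i$ of the total variation distances $\mathrm{TV}\!\left(\frac1n\hat p_\eps(x_i,\cdot), \frac1n \tilde p_\eps(x_i,\cdot)\right)$. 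Concretely,
\begin{align*}
\|\hat p_\eps - \tilde p_\eps\|_{L^1(\hat\mu\otimes\hat\nu)}
&= \frac1n \sum_{i=1}^n \left( \frac1n \sum_{j=1}^n |\hat p_\eps(x_i,y_j) - \tilde p_\eps(x_i,y_j)| \right)\\
&= \frac1n \sum_{i=1}^n 2\,\mathrm{TV}\!\left(\tfrac1n\hat p_\eps(x_i,\cdot),\, \tfrac1n\tilde p_\eps(x_i,\cdot)\right).
\end{align*}

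Next I would apply Pinsker's inequality pointwise in $i$ and then Cauchy--Schwarz over the empirical average in $i$ to pull the square root outside: $\frac1n\sum_i \sqrt{\mathrm{KL}_i} \leqslant \left(\frac1n\sum_i \mathrm{KL}_i\right)^{1/2}$, where $\mathrm{KL}_i := \KL{\frac1n\hat p_\eps(x_i,\cdot)}{\frac1n\tilde p_\eps(x_i,\cdot)}$. Now the same computation as in the proof of Lemma~\ref{lem:map_mse_kl} applies: expanding the KL divergence, using that $\hat p_\eps(x_i,y_j) = e^{-\eps^{-1}(c(x_i,y_j) - \hat f_\eps(x_i) - \hat g_\eps(y_j))}$ and $\tilde p_\eps(x_i,y_j) = e^{-\eps^{-1}(c(x_i,y_j) - \tilde f_\eps(x_i) - g_\eps(y_j))}$, the cost terms cancel inside the logarithm and one is left with
$$
\frac1n\sum_{i=1}^n \mathrm{KL}_i = \frac1\eps\left( \hat\mu(\hat f_\eps - \tilde f_\eps) + \hat\nu(\hat g_\eps - g_\eps) \right) = \frac1\eps\left( \hat\Phi_\eps(\hat f_\eps, \hat g_\eps) - \hat\Phi_\eps(\tilde f_\eps, g_\eps) \right),
$$
again invoking the empirical marginal constraints~\eqref{eqn:emp_marginals} to handle the $\frac1n\sum_j \hat p_\eps(x_i,y_j)$ weighting. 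Then Proposition~\ref{prop:entropic_dual_basics}(3) — marginal rounding improves the dual objective, so $\hat\Phi_\eps(\tilde f_\eps, g_\eps) \geqslant \hat\Phi_\eps(f_\eps, g_\eps)$ — upgrades this to $\hat\Phi_\eps(\hat f_\eps,\hat g_\eps) - \hat\Phi_\eps(f_\eps, g_\eps)$, whose expectation is $\E[S_\eps(\hat\mu,\hat\nu) - S_\eps(\mu,\nu)]$ since $\E[\hat\Phi_\eps(f_\eps,g_\eps)] = \Phi_\eps(f_\eps,g_\eps)$.

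Assembling: $\E[\|\hat p_\eps - \tilde p_\eps\|_{L^1(\hat\mu\otimes\hat\nu)}] \lesssim \E\big[(\frac1n\sum_i \mathrm{KL}_i)^{1/2}\big] \leqslant (\E[\frac1n\sum_i\mathrm{KL}_i])^{1/2} \lesssim \frac1{\sqrt\eps}\,\E[S_\eps(\hat\mu,\hat\nu) - S_\eps(\mu,\nu)]^{1/2}$, using Jensen to pull the outer square root out of the expectation. I do not anticipate a serious obstacle: the only delicate point is being careful that the bias term $S_\eps(\hat\mu,\hat\nu) - S_\eps(\mu,\nu)$ is nonnegative in expectation — which is exactly what Lemma~\ref{lem:bias_covering} (or the bound~\eqref{eqn:overview_bias_final}) guarantees, so the square root of its expectation is well-defined — and that Pinsker is being applied to genuine probability measures, which the marginal identities ensure. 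This is the same engine as Lemma~\ref{lem:map_mse_kl} with the trivial simplification that no $y_j$ factor or diameter $R$ enters.
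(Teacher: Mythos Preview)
Your proposal is correct and follows essentially the same argument as the paper's proof: Pinsker's inequality pointwise in $i$, Jensen (Cauchy--Schwarz) over the empirical average, another Jensen to pull the square root outside the expectation, and then the KL computation and marginal-rounding step borrowed verbatim from Lemma~\ref{lem:map_mse_kl}. The paper simply compresses the last part into ``the statement then follows as in Lemma~\ref{lem:map_mse_kl}'', which you have written out in full.
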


\begin{lemma}\label{lem:density_l1_massage}
Let $\tilde f_\eps, \tilde p_\eps$
be as in the previous section.
Then
$$
\E[\|p_\eps - \hat p_\eps\|_{L^1(\hat\mu \otimes \hat\nu)}]
\lesssim \E[\|\hat p_\eps - \tilde p_\eps\|_{L^1(\hat \mu \otimes \hat\nu)}]
+  \sqrt{\frac{\mathcal{N}(\mu, \frac{\eps}{L}) \land \mathcal{N}(\nu, \frac{\eps}{L})}{n}}.
$$
\end{lemma}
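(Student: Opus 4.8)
The plan is to mirror the strategy used in the proof of Lemma~\ref{lem:map_mse_massage}, replacing the empirical $L^2(\hat\mu)$ distance for maps with the empirical $L^1(\hat\mu\otimes\hat\nu)$ distance for densities. First I would insert the intermediate density $p_\eps$ and use the triangle inequality in $L^1(\hat\mu\otimes\hat\nu)$:
\begin{align*}
\E[\|p_\eps - \hat p_\eps\|_{L^1(\hat\mu\otimes\hat\nu)}]
&\leqslant \E[\|\hat p_\eps - \tilde p_\eps\|_{L^1(\hat\mu\otimes\hat\nu)}]
+ \E[\|\tilde p_\eps - p_\eps\|_{L^1(\hat\mu\otimes\hat\nu)}],
\end{align*}
so that everything reduces to controlling the second term. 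Here I would again use the identity $\tilde p_\eps(x,y_j) = p_\eps(x,y_j)/\big(\tfrac1n\sum_k p_\eps(x,y_k)\big)$ valid for $y_j\in\Y$, which gives, for each fixed $x_i\in\X$,
$$
\frac1n\sum_{j=1}^n |\tilde p_\eps(x_i,y_j) - p_\eps(x_i,y_j)|
= \Big|\frac1n\sum_{j=1}^n p_\eps(x_i,y_j) - 1\Big|,
$$
exactly as in the proof of Lemma~\ref{lem:map_mse_massage}. Averaging over $i$, this says $\|\tilde p_\eps - p_\eps\|_{L^1(\hat\mu\otimes\hat\nu)}$ equals the $L^1(\hat\mu)$ norm of the first component of the marginal error $\nabla\hat\Phi_\eps(f_\eps,g_\eps)$.

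Next I would pass from $L^1$ to $L^2$ by Cauchy--Schwarz and apply the key gradient bound~\eqref{eqn:gradient_calc}:
$$
\E[\|\tilde p_\eps - p_\eps\|_{L^1(\hat\mu\otimes\hat\nu)}]
\leqslant \E\big[\|\nabla\hat\Phi_\eps(f_\eps,g_\eps)\|^2_{L^2(\hat\mu)\times L^2(\hat\nu)}\big]^{1/2}
\lesssim \frac{\|p_\eps\|_{L^2(\mu\otimes\nu)}}{\sqrt n}.
$$
Invoking Lemma~\ref{lem:variance_control} then turns $\|p_\eps\|_{L^2(\mu\otimes\nu)}^2$ into $\mathcal{N}(\mu,\tfrac\eps L)\land\mathcal{N}(\nu,\tfrac\eps L)$, yielding the stated bound. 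Unlike the map case, there is no boundedness-of-$y$ issue and no separate ``third term,'' since $p_\eps$ is itself the population density rather than an empirical average of it; so the calculation is actually cleaner here.

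The only mildly delicate point — and the step I expect to require the most care — is confirming that the probability-ratio identity for $\tilde p_\eps$ holds with the correct normalization and that the telescoping $\tfrac1n\sum_j |\tilde p_\eps - p_\eps| = |\tfrac1n\sum_j p_\eps - 1|$ goes through cleanly once one observes $\tilde p_\eps(x,\cdot)$ and $p_\eps(x,\cdot)$ differ only by the scalar factor $\big(\tfrac1n\sum_k p_\eps(x,y_k)\big)^{-1}$ independent of the summation index. Everything else is a direct reuse of Cauchy--Schwarz, Equation~\eqref{eqn:gradient_calc}, and Lemma~\ref{lem:variance_control}.
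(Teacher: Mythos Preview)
Your proposal is correct and matches the paper's proof essentially step for step: triangle inequality to isolate $\|\tilde p_\eps - p_\eps\|_{L^1(\hat\mu\otimes\hat\nu)}$, the scalar-factor identity to rewrite this as $\tfrac1n\sum_i\big|\tfrac1n\sum_j p_\eps(x_i,y_j)-1\big|$, Cauchy--Schwarz to bound by $\|\nabla\hat\Phi_\eps(f_\eps,g_\eps)\|_{L^2(\hat\mu)\times L^2(\hat\nu)}$, then Equation~\eqref{eqn:gradient_calc} and Lemma~\ref{lem:variance_control}. The only slip is cosmetic: you say you ``insert the intermediate density $p_\eps$'' when you mean $\tilde p_\eps$, but the displayed triangle inequality you write is the correct one.
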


We first dispatch Theorem~\ref{thm:density_estimation_slow},
and then prove these lemmas.

\begin{proof}[Proof of Theorem~\ref{thm:density_estimation_slow}]
Combining the above two lemmas
with Lemma~\ref{lem:bias_covering}
yields
$$
\E[\|p_\eps - \hat p_\eps\|_{L^1(\hat\mu \otimes \hat\nu)}]
\lesssim \frac{1}{\sqrt{\eps}}
\Big(\frac{\mathcal{N}(\mu, \frac{\eps}{L}) \land \mathcal{N}(\nu, \frac{\eps}{L})}{n}\Big)^{1/4}
+
\sqrt{\frac{\mathcal{N}(\mu, \frac{\eps}{L}) \land \mathcal{N}(\nu, \frac{\eps}{L})}{n}}.
$$ Now, observe that the statement
of Theorem~\ref{thm:density_estimation_slow}
is trivial when $\mathcal{N}(\mu, \frac{\eps}{L}) \land \mathcal{N}(\nu, \frac{\eps}{L}) \geqslant n$, since
we always have the bound $\lesssim 1$ by triangle inequality.
Hence it suffices to consider the case 
$\mathcal{N}(\mu, \frac{\eps}{L}) \land \mathcal{N}(\nu, \frac{\eps}{L}) \leqslant n$, in which case we may conclude from
the above inequality.
\end{proof}

\begin{proof}[Proof of Lemma~\ref{lem:density_l1_KL}]
Pinsker's and Jensen's imply
\begin{align*}
\|\tilde p_\eps - \hat p_\eps\|_{L^1(\hat \mu \otimes \hat \nu)}
&\lesssim \frac1n \sum_{i = 1}^n
\sqrt{\KL{\frac1n \hat p_\eps(x_i, \cdot)}{
\frac1n \tilde p_\eps(x_i, \cdot)}}\\
&\leqslant 
\Big(\frac1n \sum_{i = 1}^n \KL{\frac1n \hat p_\eps(x_i, \cdot)}{
\frac1n \tilde p_\eps(x_i, \cdot)} \Big)^{1/2}.
\end{align*} Taking the expectation
and applying Jensen's once more yields
$$
\E [\|\tilde p_\eps - \hat p_\eps\|_{L^1(\hat \mu \otimes \hat \nu)}]
\lesssim \E\Big[ \frac1n \sum_{i = 1}^n \KL{\frac1n \hat p_\eps(x_i, \cdot)}{
\frac1n \tilde p_\eps(x_i, \cdot)} \Big]^{1/2}.
$$ 
The statement then
follows as in Lemma~\ref{lem:map_mse_kl}.
\end{proof}

\begin{proof}[Proof of Lemma~\ref{lem:density_l1_massage}]
Apply triangle inequality:
$$
\E[\|p_\eps - \hat p_\eps\|_{L^1(\hat\mu \otimes \hat\nu)}]
\leqslant
\E[\|\hat p_\eps - \tilde p_\eps\|_{L^1(\hat \mu \otimes \hat\nu)}]
+ \E[\|\tilde p_\eps - p_\eps\|_{L^1(\hat \mu \otimes \hat \nu)}].
$$ For the second term we use
the same reasoning as in the proof of
Lemma~\ref{lem:map_mse_massage} to observe
that
$$
\|\tilde p_\eps - p_\eps\|_{L^1(\hat \mu \otimes 
\hat \nu)}
= \frac{1}{n^2}\sum_{i,j = 1}^n
|\tilde p_\eps(x_i, y_j) - p_\eps(x_i, y_j)|=
\frac{1}{n} \sum_{i =1}^n 
\Big|\frac{1}{n} \sum_{j = 1}^n
p_\eps(x_i, y_j) - 1\Big|.
$$
Applying Cauchy-Schwarz
we find that
$$
\|\tilde p_\eps - p_\eps\|_{L^1(\hat \mu \otimes 
\hat \nu)}  \leqslant \Big(\frac1n
\sum_{i = 1}^n \Big(\frac1n \sum_{j = 1}^n
p_\eps(x_i, y_j) - 1 \Big)^2 \Big)^{1/2}
\leqslant \|\nabla \hat \Phi_\eps(f_\eps, g_\eps)\|_{L^2(\hat \mu) \times L^2(\hat \nu)}.
$$
Taking expectations and applying
Cauchy-Schwarz once more, we can
apply Equation~\eqref{eqn:gradient_calc}
to yield
$$
\E[ \|\tilde p_\eps - p_\eps\|_{L^1(\hat \mu \otimes 
\hat \nu)}] \leqslant 
\E[\|\nabla \hat \Phi_\eps(f_\eps, g_\eps)\|_{L^2(\hat \mu)
    \times L^2(\hat \nu)}^2]^{1/2}
 \lesssim \frac{\|p_\eps\|_{L^2(\mu \otimes \nu)}}{\sqrt{n}}.
$$ Finally, Lemma~\ref{lem:variance_control}
yields the result.
\end{proof}

\section{Proofs of MID scaling with fast rates
on manifolds}
\label{sec:proofs_fast_rates}
In this section, we give the proofs
of our stronger results
when $\nu$ is supported on an embedded Riemannian
manifold.
To introduce our approach, recall that fast
rates of convergence for the entropic OT dual potentials,
map, density, and bias are known, but at the
price of exponential factors in $1/\eps$~\cite{rigollet2022sample}.
As we mention in the previous section, those estimates
are established through strong concavity of the empirical dual objective,
but because the strong concavity parameter is exponentially small, such an approach is inadequate for establishing MID scaling.
The techniques in the previous section represent a very weak form of this approach,
using only concavity rather than strong concavity to replace exponential factors with MID scaling.
In this section, we power our arguments
with a \emph{quadratic growth} condition~\cite{karimi2016linear},
which is a stronger
condition than concavity alone,
yet still weaker than true strong concavity.

In this regard, our approach
is inspired by
recent work~\cite{dimarino2021linear},
which
has found that a sufficient condition for
a quadratic growth condition for the
entropic OT dual is a Poincaré inequality.
Indeed,~\cite{dimarino2021linear}
shows that
given probability measures $P, Q$ on $\R^d$,
if $P$ satisfies a Poincaré inequality with constant
$C_P$, then for any dual potentials $f, g$,
\begin{equation}\label{eqn:simone_qg}
\|f - f_\eps^{PQ}\|^2_{L^2(P)}
\lesssim  \frac{L^2}{\eps C_P}( \Phi_\eps^{PQ}(f_\eps^{PQ}, g_\eps^{PQ})
- \Phi_\eps^{PQ}(f, g)),
\end{equation} where $f_\eps^{PQ}, g_\eps^{PQ}$ are the optimal dual
potentials from $P$ to $Q$ and we are suppressing numerical constants.
However, this estimate cannot directly establish
quadratic growth for the {\it empirical} dual 
$\hat \Phi_\eps$ since it relies on a Poincaré inequality
for the source measure $P$.
We emphasize that our approach is fundamentally
about
the empirical dual rather than
the population dual,
since working with the
empirical dual is what allows us to bypass empirical processes
by reducing to computing
variances of population quantities (consider, for example, our proof of Lemma~\ref{lem:bias_covering}).

As our first major step in this section,
we develop quadratic growth inequalities
for the empirical dual.
In particular, in section~\ref{subsec:qg} we
use different techniques
than~\cite{dimarino2021linear}
to show that
under an empirical analog of a Poincaré inequality --
a spectral gap of the random geometric
graph (RGG) of $\nu$ -- an analog
of~\eqref{eqn:simone_qg} holds for the empirical dual $\hat \Phi_\eps$.
Using recent advances in the theory
of RGGs on embedded manifolds~\cite{garcia2020error}
which we describe in Appendix~\ref{subsec:rgg_spectral},
a spectral gap for the RGG of $\nu$ then follows
from a Poincaré inequality for $\nu$ itself,
which is entailed by Assumption~\ref{assum:nu_manifold}
and Assumption~\ref{assum:nu}.
To summarize, the primary uses
of our embedded manifold assumptions are
that they imply
a spectral gap
for a weighted Laplacian
associated to $\nu$ (Proposition~\ref{prop:weighted_laplacian}), which then implies
a spectral gap for the RGG of $\nu$
using 
spectral
convergence theory~\cite{garcia2020error},
which finally yields quadratic growth
for the empirical dual; this 
argument is encapsulated in Lemma~\ref{lem:qg_emp_dual}.

After we prove Lemma~\ref{lem:qg_emp_dual} in
section~\ref{subsec:qg}, in section~\ref{subsec:reduction}
we next bound the $f$ dual potentials in terms of the $g$ dual potentials,
both in population and empirical norms, while
only picking up MID scaling factors. These results are combined
in section~\ref{subsec:proofs_MID_bias} to yield 
convergence of the $g$ dual potentials in empirical norm,
which then implies
Theorem~\ref{thm:manifold_bias} 
and all of Theorem~\ref{thm:dual_convergence} on dual potential
convergence
except the convergence of the $g$ dual potentials in population norm. In section~\ref{subsec:g_pop_norm},
we complete the proof of Theorem~\ref{thm:dual_convergence}
by applying the previous results to establish the convergence
of the $g$ dual potentials in population norm.
Finally, in section~\ref{subsec:fast_mid_maps} we
prove Corollary~\ref{cor:manifold_map}
and Corollary~\ref{cor:fast_density}.

Throughout, we work under the assumption
that $n$ is large enough that
\begin{equation}\label{eqn:n_large}
\big(\frac{\eps}{L}\big)^{2d_\nu} \gtrsim
\frac{1}{n}\Big(\log\Big(\frac{Ln}{\eps}\Big) + \frac{1}{\eps} \Big).
\end{equation}
We may assume $n$ is this large without loss of generality,
since otherwise it is not
hard to check that our bounds are worse than
the trivial ones.

\subsection{Quadratic growth for the empirical
dual}\label{subsec:qg}

In this section,
we prove the following quadratic growth
inequality for the empirical entropic OT
dual function.

\begin{lemma}[Quadratic growth
for the empirical dual]\label{lem:qg_emp_dual} Let
$\Psi_\eps^{\hat \mu \hat \nu} \colon L^{\infty}(\hat \nu) \to \R$
denote the empirical dual objective with $f$ rounded, namely
$$
\Psi_\eps^{\hat \mu \hat \nu}(g) := \hat \mu \big(-\eps \log 
\big( \frac1n \sum_{j = 1}^n e^{-\eps^{-1}(c(x, y_j) - g(y_j))}\big) \big)
+ \hat \nu(g).
$$
Then, for $\eps$ sufficiently
small, we have
$$
\eps\Big( \frac{\eps}{L} \Big)^{d_\nu + 2} \cdot 
\|g - \hat g_\eps\|_{L^2(\hat \nu)}^2 \lesssim 
\|g - \hat g_\eps\|_{L^{\infty}(\hat \nu)}^2
(\Psi_\eps^{\hat \mu \hat \nu}(\hat g_\eps) - \Psi_\eps^{\hat \mu \hat \nu}(g)),
\quad \quad \forall g\in L^{\infty}(\hat \nu), \, \,
\hat \nu(g) = 0.
$$
\end{lemma}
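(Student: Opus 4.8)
The plan is to read this as a quadratic growth (error-bound / \L ojasiewicz-type) inequality and prove it by a second-order Taylor expansion of $\Psi:=\Psi_\eps^{\hat\mu\hat\nu}$ along the segment from $\hat g_\eps$ to $g$, with the requisite local strong concavity supplied by a spectral gap for the random geometric graph of $\nu$. The starting observation is that $\Psi$ is exactly the partial maximization of the empirical dual $\hat\Phi_\eps$ over $f$: by Proposition~\ref{prop:entropic_dual_basics}(3) the maximizer in $f$ of $\hat\Phi_\eps(\cdot,g)$ is the rounding $\tilde f^g$ appearing in the statement (equivalently the formula~\eqref{eqn:feta_extension}), so $\Psi(g)=\hat\Phi_\eps(\tilde f^g,g)$ is concave, has unique maximizer $\hat g_\eps$, and $\nabla\Psi(\hat g_\eps)=0$; moreover $\tilde f^{\hat g_\eps}=\hat f_\eps$ and the associated rounded density $\tilde p^g$ satisfies $\frac1n\sum_j\tilde p^g(x,y_j)=1$ for every $x$. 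Differentiating the defining formula of $\tilde f^g$ twice shows that the Hessian of $\Psi$ at any $g'$ acts by $-\langle\nabla^2\Psi(g')\phi,\phi\rangle=\frac1{n\eps}\sum_{i=1}^n\Var_{q_i^{g'}}(\phi)$, where $q_i^{g'}$ is the probability vector on $\Y$ with $q_i^{g'}(y_j)\propto\tilde p^{g'}(x_i,y_j)$. Taylor with integral remainder and $\nabla\Psi(\hat g_\eps)=0$ then gives, with $\phi:=g-\hat g_\eps$ and $\xi_t:=\hat g_\eps+t\phi$,
$$\Psi(\hat g_\eps)-\Psi(g)=\frac1{n\eps}\int_0^1(1-t)\sum_{i=1}^n\Var_{q_i^{\xi_t}}(\phi)\,\ud t.$$

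Next I would rewrite the averaged variance as a weighted graph Dirichlet energy, $\frac1n\sum_i\Var_{q_i^{g'}}(\phi)=\frac12\sum_{j,k}W^{g'}_{jk}(\phi(y_j)-\phi(y_k))^2$ with $W^{g'}_{jk}=\frac1{n^3}\sum_i\tilde p^{g'}(x_i,y_j)\tilde p^{g'}(x_i,y_k)$, and lower bound these weights for nearby points. Since $\hat g_\eps$ is $L$-Lipschitz (Proposition~\ref{prop:consequences_of_lipschitz_cost}) and $\tilde p^{\hat g_\eps}=\hat p_\eps$ satisfies both marginal constraints~\eqref{eqn:emp_marginals}, the $\frac{2L}\eps$-log-Lipschitz bound gives $W^{\hat g_\eps}_{jk}\gtrsim n^{-2}$ whenever $\|y_j-y_k\|\lesssim\eps/L$. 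The intermediate potentials $\xi_t$ need not be Lipschitz, so rather than working at $\xi_t$ I would use the sup-norm stability $\|\tilde f^{\xi_t}-\hat f_\eps\|_{L^\infty(\hat\mu)}\leqslant t\|\phi\|_{L^\infty(\hat\nu)}$ (immediate from the log-sum-exp formula), which forces $\tilde p^{\xi_t}\asymp\hat p_\eps$ pointwise and hence $W^{\xi_t}_{jk}\gtrsim n^{-2}$ for close $j,k$ — but only for $t\leqslant t^\ast:=\min\{\tfrac12,\,c\,\eps/\|\phi\|_{L^\infty(\hat\nu)}\}$. This cutoff is precisely where the factor $\|g-\hat g_\eps\|_{L^\infty(\hat\nu)}$ in the statement originates.

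The main step is to invoke the spectral gap of the random geometric graph of $\nu$. Under Assumptions~\ref{assum:nu_manifold} and~\ref{assum:nu}, $\nu$ satisfies a Poincar\'e inequality on $N$; feeding this into the spectral-convergence theory for RGGs on embedded manifolds collected in Appendix~\ref{subsec:rgg_spectral} — which is exactly where~\eqref{eqn:n_large} is needed, to guarantee that the $\frac\eps L$-graph on $\Y$ spectrally approximates the weighted Laplacian of $\nu$ — the combinatorial Dirichlet energy of the $\frac{c\eps}L$-graph satisfies $\sum_{\|y_j-y_k\|\leqslant c\eps/L}(\phi(y_j)-\phi(y_k))^2\gtrsim n^2(\eps/L)^{d_\nu+2}\|\phi\|_{L^2(\hat\nu)}^2$ for every $\phi$ with $\hat\nu(\phi)=0$; and $\hat\nu(\phi)=\hat\nu(g)-\hat\nu(\hat g_\eps)=0$ by the hypothesis on $g$ together with our normalization $\hat\nu(\hat g_\eps)=0$. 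Combining with the weight bound yields $\frac1n\sum_i\Var_{q_i^{\xi_t}}(\phi)\gtrsim(\eps/L)^{d_\nu+2}\|\phi\|_{L^2(\hat\nu)}^2$ for every $t\leqslant t^\ast$. Plugging this into the Taylor identity and keeping only the integral over $[0,t^\ast]$ (the remaining integrand is nonnegative by concavity) gives $\Psi(\hat g_\eps)-\Psi(g)\gtrsim\frac1\eps(t^\ast)^2(\eps/L)^{d_\nu+2}\|\phi\|_{L^2(\hat\nu)}^2$, and rearranging using $(t^\ast)^2\asymp\eps^2/\|\phi\|_{L^\infty(\hat\nu)}^2$ (up to the cutoff) produces the claimed inequality. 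If the RGG spectral gap is only guaranteed with high probability, one passes to expectations — the form in which the lemma is used downstream — and absorbs the complement event using the trivial bounds together with~\eqref{eqn:n_large}.

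I expect the main obstacle to be the third step: transporting the manifold Poincar\'e inequality to a genuine spectral gap for the data-dependent weighted graph $W^{g'}$, with the correct $(\eps/L)^{d_\nu+2}$ scaling and the probabilistic control, since the connection radius here is dictated by $\eps/L$ rather than the usual sample-size-dependent radius, and since the weights are not the plain adjacency of an RGG but a two-step kernel routed through the $\X$ samples. The other delicate point is the second-step bookkeeping — decoupling the possibly non-Lipschitz intermediate potentials $\xi_t$ from the Lipschitz optimum $\hat g_\eps$ via sup-norm stability — which is also the reason the statement is normalized by $\|g-\hat g_\eps\|_{L^\infty(\hat\nu)}$ rather than by a bare constant.
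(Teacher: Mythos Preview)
Your proposal is correct and follows essentially the same route as the paper: a one-parameter expansion of $\Psi$ along the segment from $\hat g_\eps$ to $g$, the identification of the second derivative as $\frac{1}{n\eps}\sum_i \Var_{q_i}(\phi)$, comparison of the interpolated densities to $\hat p_\eps$ via the sup-norm perturbation bound (this is exactly where the $\|g-\hat g_\eps\|_{L^\infty(\hat\nu)}$ factor and the cutoff $t^\ast\asymp \eps/\|\phi\|_{L^\infty}$ enter), the log-Lipschitz plus Jensen/marginal-constraint step to get $W_{jk}^{\hat g_\eps}\gtrsim n^{-2}$ for $\|y_j-y_k\|\le \eps/L$, and finally the RGG spectral gap from Appendix~\ref{subsec:rgg_spectral}. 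The only cosmetic difference is that the paper packages the second-order step as ``strong convexity of $\psi$ on $(-\infty,t]$'' evaluated at $t=\eps/(4\|\alpha\|_{L^\infty})$, whereas you use Taylor with integral remainder truncated at $t^\ast$; these are equivalent. Your remark that the RGG gap is a high-probability statement and that one conditions on the good event downstream is also how the paper uses the lemma (see the proof of Lemma~\ref{lem:manifold_g_conv}).
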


This lemma is proven by reducing the claim
to a spectral gap for the random geometric
graph (RGG) of $\nu$. Using recent fundamental
work on the spectral convergence of RGGs of embedded
manifolds~\cite{garcia2020error},
we conclude the result.
We include this background material
in Appendix~\ref{subsec:rgg_spectral}.

\begin{proof}[Proof of Lemma~\ref{lem:qg_emp_dual}]
Fix $g \in L^{\infty}(\hat \nu)$
with $\hat \nu(g) = 0$,
and put $\alpha := g - \hat g_\eps$.
Let
$$
\psi(t) := -\Psi_\eps^{\hat \mu \hat \nu}(\hat g_\eps + t \alpha),
$$ and also define
$$
q_t(x_i, y_j) := \frac{e^{-\eps^{-1}(c(x_i, y_j) - (\hat g_\eps(y_j)
+ t\alpha(y_j))}}{\frac 1n \sum_{k =1}^n  e^{-\eps^{-1}(c(x_i, y_k) -
(\hat g_\eps(y_k) + t\alpha(y_k))}}
$$
Then it is straightforward to verify that
$\psi$ is a convex function with minimum attained at
$ t= 0$ such that
$$
\psi''(t) = \frac{1}{\eps} \hat \mu \big(\Var_{\frac{1}{n} q_t(x, \cdot)}
(\alpha)\big).
$$ Note that,
$$
q_t(x_i, y_j) \geqslant e^{-2t\eps^{-1}
\|\alpha\|_{L^{\infty}(\hat \nu)}}
q_0(x_i, y_j) = e^{-2t\eps^{-1}
\|\alpha\|_{L^{\infty}(\hat \nu)}}
\hat p_\eps(x_i, y_j).
$$ Therefore,
\begin{align*}
\psi''(t) &= \frac{1}{2\eps} 
\hat \mu\big( \E_{y, y' \sim \frac{1}{n} q_t(x, \cdot)}[(\alpha(y)
- \alpha(y'))^2] \big)  \\
&\geqslant  \frac{e^{-4t \eps^{-1}\|\alpha\|_{L^{\infty}(\hat \nu)}}}{2 \eps} \hat \mu \big(
\E_{y, y' \sim \frac{1}{n}\hat p_\eps(x, \cdot)}[(\alpha(y)
- \alpha (y'))^2]\big),
\end{align*}
It follows that, for $t > 0$,
\begin{align*}
   t^2 \frac{e^{-4t \eps^{-1}\|\alpha\|_{L^{\infty}(\hat \nu)}}}{
   4 \eps} \hat \mu \big(
\E_{y, y' \sim \frac1n \hat p_\eps(x, \cdot)}[(\alpha(y)
- \alpha (y'))^2]\big)
\leqslant \psi(t) - \psi(0) \leqslant \psi(1) - \psi(0),
\end{align*}
where the first inequality is strong
convexity
for $\psi$ on the interval $(-\infty, t]$,
and the second inequality follows because $\psi$
is a convex function with minimizer at $t = 0$.
If $\|\alpha\|_{L^{\infty}(\hat \nu)} = 0$,
then the inequality is trivially true.
Otherwise, we may set $t = \eps/4\|\alpha\|_{L^{\infty}(\hat \nu)}$
to yield
$$
\frac{\eps}{\|\alpha\|^2_{L^{\infty}(\hat \nu)}}
\E_{y, y' \sim \frac1n \hat p_\eps(x, \cdot)}[(\alpha(y)
- \alpha(y'))^2]\lesssim \psi(1) - \psi(0).
$$ It thus suffices to show that
\begin{equation}\label{eqn:qg_goal}
\hat \mu\big(\E_{y, y' \sim \frac1n \hat p_\eps(x, \cdot)}[(\alpha(y)
- \alpha(y'))^2]\big)
\gtrsim \Big( \frac{\eps}{L} \Big)^{d_\nu + 2} \hat \nu(\alpha^2),
\end{equation}
Observe that
$$
\hat \mu\big(\E_{y, y' \sim \frac1n \hat p_\eps(x, \cdot)}[(\alpha(y)
- \alpha(y'))^2]\big)
= \frac{1}{n^2}\sum_{j, k = 1}^n
\Big( \frac1n \sum_{i = 1}^n
\hat p_\eps(x_i, y_j)
\hat p_\eps(x_i, y_k)\Big)
(\alpha(y_j) - \alpha(y_k))^2.
$$ By Proposition~\ref{prop:consequences_of_lipschitz_cost},
$$
\hat p_\eps(x_i, y_j) \geqslant e^{-2\frac{L}{\eps}\|y_j - y_k\|}
\hat p_\eps(x_i, y_k), \quad \quad  \forall j, k \in [n].
$$ Hence
\begin{align*}
    \frac{1}{n^2}\sum_{j, k = 1}^n
\Big( \frac1n \sum_{i = 1}^n &
\hat p_\eps(x_i, y_j)
\hat p_\eps(x_i, y_k)\Big)
(\alpha(y_j) - \alpha(y_k))^2
\geqslant \\
&\frac{1}{n^2} \sum_{j, k = 1}^n
e^{-2 \frac{L}{\eps}
\|y_j - y_k\|} \Big( \frac{1}{n}
\sum_{i = 1}^n \hat p_\eps(x_i, y_k)^2\Big)
(\alpha(y_j) - \alpha(y_k))^2 \\
&\geqslant 
\frac{1}{n^2} \sum_{j, k = 1}^n
 e^{-2 \frac{L}{\eps} \|y_j -y_k\|}
 \Big( \frac{1}{n} \sum_{i = 1}^n
 \hat p_\eps(x_i, y_k) \Big)^2 (\alpha(y_j) - \alpha(y_k))^2 \\
 &= \frac{1}{n^2} \sum_{j, k = 1}^n
 e^{-2 \frac{L}{\eps}\|y_j - y_k\|}
 (\alpha(y_j) - \alpha(y_k))^2.
\end{align*}
Now, form the RGG with threshold
$\delta = \frac{\eps}{L}$, so that
$y_j \sim y_k$ if and only if
$\|y_j - y_k\| \leqslant \frac{\eps}{L}$.
Then
$$
\hat \mu\big(\E_{y, y' \sim \frac{1}{n}\hat p_\eps(x, \cdot)}[(\alpha(y)
- \alpha(y'))^2]\big)
\gtrsim \frac{1}{n^2} \sum_{j \sim k} 
(\alpha(y_j) - \alpha(y_k))^2.
$$ 
Applying Theorem~\ref{thm:graph_spectrum}
from Appendix~\ref{subsec:rgg_spectral},
we arrive at~\eqref{eqn:qg_goal},
and so the result follows.
\end{proof}

\subsection{Reduction from $f$ dual potentials to $g$ dual potentials}
\label{subsec:reduction}

In this section, we will prove the following
Lemma, which shows that although the $f$ dual potentials
are defined on the support of $\mu$ -- a potentially
higher dimensional set than the support of $\nu$ --
their convergence can be controlled by the convergence of the $g$ dual potentials
while only incurring MID scaling factors.

\begin{lemma}[Reduction from $f$ dual potentials
to $g$ dual potentials]\label{lem:f_control_by_g}
We have
$$
\E[\|\hat f_\eps - f_\eps\|_{L^2(\hat \mu)}^2]
\lesssim \Big(\frac{L}{\eps} \Big)^{3d_\nu} \big(\E[\|\hat g_\eps - g_\eps\|_{L^2(\hat \nu)}^2]
+ \frac{1 + \eps^2}{n}\big),
$$ and similarly
$$
\E[\|\hat f_\eps - f_\eps\|_{L^2(\mu)}^2]
\lesssim \Big(\frac{L}{\eps} \Big)^{3d_\nu} \big(\E[\|\hat g_\eps - g_\eps\|_{L^2(\hat \nu)}^2]
+ \frac{1 + \eps^2}{n}\big).
$$
\end{lemma}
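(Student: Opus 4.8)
The plan is to turn both potentials into soft-minima over $g$-values and then control the resulting logarithm. Using the out-of-sample formula~\eqref{eqn:feta_extension} for $\hat f_\eps$, the population marginal identity~\eqref{eqn:pop_marginals} for $f_\eps$, and the intermediate rounded potential $\tilde f_\eps$ of~\eqref{eqn:tilde_f} (which uses $g_\eps$ on the empirical points $\Y$), I would decompose
$$
\hat f_\eps(x) - f_\eps(x) = \big(\hat f_\eps(x) - \tilde f_\eps(x)\big) + \big(\tilde f_\eps(x) - f_\eps(x)\big).
$$
The second summand equals $-\eps\log\big(\tfrac1n\sum_j p_\eps(x,y_j)\big)$, so on a favorable event (made precise below) where $\tfrac1n\sum_j p_\eps(x,y_j)$ stays bounded away from $0$ its square is $\lesssim \eps^2\big(\tfrac1n\sum_j p_\eps(x,y_j)-1\big)^2$ pointwise; averaging against $\mu$ or $\hat\mu$ and cancelling cross-terms through~\eqref{eqn:pop_marginals} exactly as in the derivation of~\eqref{eqn:gradient_calc} yields a contribution $\lesssim \eps^2\|p_\eps\|_{L^2(\mu\otimes\nu)}^2/n$, which by Lemma~\ref{lem:variance_control} and the bound $\mathcal{N}(\nu,\eps/L)\lesssim(L/\eps)^{d_\nu}$ (Proposition~\ref{prop:N_covering_numbers}) is well within the target $(L/\eps)^{3d_\nu}\tfrac{1+\eps^2}{n}$.

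The first summand is $\hat f_\eps(x)-\tilde f_\eps(x) = -\eps\log\big(\sum_j w_j(x)e^{\eps^{-1}\delta_j}\big)$ with $\delta_j := \hat g_\eps(y_j)-g_\eps(y_j)$ and $w_j(x) := p_\eps(x,y_j)/\sum_k p_\eps(x,y_k)$ probability weights. Linearising the exponential via $|e^u-1|\le|u|e^{|u|}$ and using $|\log(1+t)|\lesssim|t|$ on the favorable event bounds its square by $\lesssim\eps^2\big(\sum_j w_j(x)(e^{\eps^{-1}\delta_j}-1)\big)^2$; expanding this square and integrating in $x$ — against $\mu$, or against $\hat\mu$ after taking expectation over $\X$, which replaces $\tfrac1n\sum_i p_\eps(x_i,y_j)p_\eps(x_i,y_k)$ by $\int p_\eps(x,y_j)p_\eps(x,y_k)\,\ud\mu(x)$ by independence — produces the Gram factors $\int p_\eps(x,y_j)p_\eps(x,y_k)\,\ud\mu(x)$. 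I would bound these by $\big(\nu(B^{\rm cl}(y_j,\tfrac{4\eps}{L}))\,\nu(B^{\rm cl}(y_k,\tfrac{4\eps}{L}))\big)^{-1/2}$ exactly as in the proof of Lemma~\ref{lem:variance_control}, then use the non-vanishing density of $\nu$ on $N$ to get $\nu(B^{\rm cl}(y,\tfrac{4\eps}{L}))\gtrsim(\eps/L)^{d_\nu}$ and Proposition~\ref{prop:inverse_mass_by_covering} to get $\int\nu(B^{\rm cl}(y,\tfrac{4\eps}{L}))^{-1}\ud\nu(y)\le\mathcal{N}(\nu,\eps/L)$; together with the normalisation $\tfrac1n\sum_k p_\eps(x,y_k)\approx1$ (again controlled on the favorable event) this collapses everything to $\lesssim (L/\eps)^{d_\nu}\,\big\|e^{\eps^{-1}(\hat g_\eps-g_\eps)}-1\big\|_{L^2(\hat\nu)}^2$. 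The three powers of $(L/\eps)^{d_\nu}$ in the statement then come from this Gram step, the inverse-mass step, and the $\|p_\eps\|_{L^2}^2$ step.

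The crucial — and hardest — step is to pass from $\big\|e^{\eps^{-1}(\hat g_\eps-g_\eps)}-1\big\|_{L^2(\hat\nu)}^2$ back to $\|\hat g_\eps-g_\eps\|_{L^2(\hat\nu)}^2$ without incurring a factor $e^{O(1/\eps)}$: a priori $\hat g_\eps-g_\eps$ is only bounded by the constant $3$ (Proposition~\ref{prop:dual_pointwise_control}), so the naive bound $|e^u-1|\le|u|e^{|u|}$ loses $e^{3/\eps}$. I would resolve this with a case split on $\|\hat g_\eps-g_\eps\|_{L^\infty(\hat\nu)}$. Both $\hat g_\eps$ and $g_\eps$ are $L$-Lipschitz on the $d_\nu$-dimensional set $\supp(\nu)$ (Proposition~\ref{prop:consequences_of_lipschitz_cost}), so the usual Lipschitz interpolation — a point where $|\hat g_\eps-g_\eps|=M$ forces $|\hat g_\eps-g_\eps|\ge M/2$ on a ball of radius $M/(2L)$ carrying $\nu$-mass $\gtrsim(M/L)^{d_\nu}$, hence (when $n$ obeys~\eqref{eqn:n_large}) roughly that many sample points — gives $\|\hat g_\eps-g_\eps\|_{L^\infty(\hat\nu)}\lesssim\max\!\big(\eps,\,L^{d_\nu/(d_\nu+2)}\|\hat g_\eps-g_\eps\|_{L^2(\hat\nu)}^{2/(d_\nu+2)}\big)$. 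On the favorable event, where this maximum is $\lesssim\eps$ (and $\tfrac1n\sum_j p_\eps(x,y_j)$ is bounded below), one has $e^{\eps^{-1}\|\hat g_\eps-g_\eps\|_{L^\infty(\hat\nu)}}\lesssim1$, hence $\big\|e^{\eps^{-1}(\hat g_\eps-g_\eps)}-1\big\|_{L^2(\hat\nu)}^2\lesssim\eps^{-2}\|\hat g_\eps-g_\eps\|_{L^2(\hat\nu)}^2$, closing the chain; on the complementary event one falls back on the trivial bound $\|\hat f_\eps-f_\eps\|_{L^\infty(\mu)}\le3$, whose contribution is absorbed provided that event has probability $\lesssim(L/\eps)^{3d_\nu}/n$. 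This last fact is where~\eqref{eqn:n_large} does its work: a preliminary tail bound for $\|\hat g_\eps-g_\eps\|_{L^2(\hat\nu)}$ — obtained by feeding the deterministic bound $\|\hat g_\eps-g_\eps\|_{L^\infty(\hat\nu)}\le3$ into the quadratic-growth inequality of Lemma~\ref{lem:qg_emp_dual} and combining with a concentration estimate for the empirical dual gap $S_\eps(\hat\mu,\hat\nu)-\hat\Phi_\eps(f_\eps,g_\eps)$ (cf.\ Lemma~\ref{lem:bias_covering}) — together with~\eqref{eqn:n_large} makes the bad-event probability negligible. Finally, the population-norm inequality follows from the very same computation with $\mu$ in place of $\hat\mu$, since the Gram bounds and the identities $\int p_\eps(x,y_j)\,\ud\mu(x)=1$ are available verbatim.
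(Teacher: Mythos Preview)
Your decomposition $\hat f_\eps - f_\eps = (\hat f_\eps - \tilde f_\eps) + (\tilde f_\eps - f_\eps)$ matches the paper's, and your treatment of the second piece is essentially the same (the paper also shows $\tfrac1n\sum_j p_\eps(x,y_j)\gtrsim \hat\nu(B(y',\eps/L))$ deterministically and then linearises $\log$ on the event $\mathcal{E}$ of Lemma~\ref{lem:empirical_mass_balls}).

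The gap is in the first piece. Your route forces you to control $\big\|e^{\eps^{-1}(\hat g_\eps - g_\eps)} - 1\big\|_{L^2(\hat\nu)}^2$, and, as you correctly flag, this requires $\|\hat g_\eps - g_\eps\|_{L^\infty(\hat\nu)}\lesssim\eps$ to avoid an $e^{O(1/\eps)}$ loss. Your proposed workaround --- a Lipschitz Gagliardo--Nirenberg step plus a tail bound coming from Lemma~\ref{lem:qg_emp_dual} and a concentration estimate for the empirical dual gap --- does not close under the paper's standing hypothesis~\eqref{eqn:n_large}. Concretely: the interpolation you sketch only yields $\|\hat g_\eps - g_\eps\|_{L^\infty(\hat\nu)}\lesssim (L/\eps)^{d_\nu/2}\|\hat g_\eps - g_\eps\|_{L^2(\hat\nu)}$ on the ball--mass event (the empirical ball mass is only known at scale $\eps/L$, not $M/L$), and plugging in the $1/\sqrt{n}$ bound on $\|\hat g_\eps - g_\eps\|_{L^2(\hat\nu)}$ from Lemma~\ref{lem:qg_emp_dual} plus Lemma~\ref{lem:bias_covering} gives a threshold on $n$ much larger than~\eqref{eqn:n_large}. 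Moreover, the paper never proves a high--probability (as opposed to in--expectation) bound on the empirical dual gap, so the tail estimate you invoke is not available.

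The paper sidesteps this entirely with a path interpolation: set $\phi(t) := -\eps\log\big(\tfrac1n\sum_j e^{-\eps^{-1}(c(x,y_j) - \hat g_\eps(y_j) - t\alpha(y_j))}\big)$ with $\alpha = g_\eps - \hat g_\eps$, so $\phi(0) = \hat f_\eps(x)$ and $\phi(1) = \tilde f_\eps(x)$. The crucial observation is that $(1-t)\hat g_\eps + tg_\eps$ is $L$--Lipschitz for every $t\in[0,1]$, hence the intermediate density $q_t(x,\cdot)$ is $2L/\eps$--log--Lipschitz \emph{uniformly in $t$}. Cauchy--Schwarz on $|\phi'(t)| = |\tfrac1n\sum_j \alpha(y_j)q_t(x,y_j)|$ together with $q_t(x,y_j)\lesssim\hat\nu(B(y_j,\eps/L))^{-1}$ and $\tfrac1n\sum_j q_t(x,y_j)=1$ then gives directly
\[
|\hat f_\eps(x) - \tilde f_\eps(x)| \;\lesssim\; \|\hat g_\eps - g_\eps\|_{L^2(\hat\nu)}\,\sup_{z\in N}\hat\nu(B(z,\eps/L))^{-1/2},
\]
which is exactly the desired $L^2$ control with no exponential factor and no case split. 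On the event of Lemma~\ref{lem:empirical_mass_balls} this supremum is $\lesssim (L/\eps)^{d_\nu/2}$, and off that event the trivial pointwise bound plus $\P[\mathcal{E}^c]\le 1/n$ suffices.
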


To prove Lemma~\ref{lem:f_control_by_g},
as well as for the proofs in the following sections,
the following helper lemmas
are convenient (proven in Appendix~\ref{subsec:deferred_proofs_fast_rates}).

\begin{lemma}[Mass of empirical balls]\label{lem:empirical_mass_balls}
With probability at least $1 - \frac{1}{n}e^{-10/\eps}$,
$$
\inf_{z \in N} \hat \nu(B(z, \frac{\eps}{L})) \gtrsim 
\Big(\frac{\eps}{L}  \Big)^{d_\nu}.
$$
\end{lemma}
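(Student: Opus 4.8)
The plan is to prove the uniform lower bound by a standard net-plus-Chernoff argument, reducing the infimum over the infinitely many centers $z \in N$ to a finite union bound, and then leaning on the $n$-large hypothesis~\eqref{eqn:n_large} to make the union bound beat the target probability $\frac1n e^{-10/\eps}$. First I would record the population-level estimate: because $\nu$ has a non-vanishing density with respect to the Riemannian volume on the compact $d_\nu$-dimensional embedded manifold $N$ (Assumptions~\ref{assum:nu_manifold} and~\ref{assum:nu}), the standard comparison of Euclidean and intrinsic balls together with volume comparison on $N$ --- collected in Appendix~\ref{sec:background_manifolds} --- gives $\nu(B(z, r)) \gtrsim r^{d_\nu}$ uniformly over $z \in N = \supp(\nu)$ for all $r$ below a scale depending only on $N$. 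In particular, once $\eps/L$ is small enough, $\nu(B(z, \frac{\eps}{2L})) \gtrsim (\frac{\eps}{L})^{d_\nu}$ for every $z \in N$.

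Next I would discretize $N$: let $z_1, \dots, z_K$ be a proper $\frac{\eps}{2L}$-net of $N$, so that $K = \mathcal{N}^{\rm pr}(N, \|\cdot\|, \frac{\eps}{2L}) \lesssim (\frac{L}{\eps})^{d_\nu}$ by the covering-number bound for compact embedded manifolds (Proposition~\ref{prop:N_covering_numbers}). For each fixed $k$, $n\hat\nu(B(z_k, \frac{\eps}{2L}))$ is a sum of $n$ iid Bernoulli random variables with mean $p_k := \nu(B(z_k, \frac{\eps}{2L})) \gtrsim (\frac{\eps}{L})^{d_\nu}$, so the multiplicative Chernoff bound gives
$$
\mathbb{P}\Big[\hat\nu\big(B(z_k, \tfrac{\eps}{2L})\big) < \tfrac12 p_k\Big] \leqslant e^{-np_k/8} \leqslant e^{-c\, n (\eps/L)^{d_\nu}}
$$
for a numerical $c>0$. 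A union bound over $k \in [K]$ shows that, outside an event of probability at most $K e^{-cn(\eps/L)^{d_\nu}}$, we have $\hat\nu(B(z_k, \frac{\eps}{2L})) \geqslant \frac12 p_k \gtrsim (\frac{\eps}{L})^{d_\nu}$ for all $k$ simultaneously. On the complementary (good) event, any $z \in N$ lies within $\frac{\eps}{2L}$ of some net point $z_k$, so $B(z_k, \frac{\eps}{2L}) \subseteq B(z, \frac{\eps}{L})$ by the triangle inequality, and hence $\hat\nu(B(z, \frac{\eps}{L})) \geqslant \hat\nu(B(z_k, \frac{\eps}{2L})) \gtrsim (\frac{\eps}{L})^{d_\nu}$, which is the claim.

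It remains to check that the failure probability $K e^{-cn(\eps/L)^{d_\nu}}$ is at most $\frac1n e^{-10/\eps}$, and this is exactly what~\eqref{eqn:n_large} is designed for: taking logarithms, it suffices that $c\, n(\eps/L)^{d_\nu} \gtrsim \log K + \log n + \frac{10}{\eps}$, and multiplying~\eqref{eqn:n_large} through by $(\frac{L}{\eps})^{d_\nu} \geqslant 1$ gives $n(\eps/L)^{d_\nu} \gtrsim (\frac{L}{\eps})^{d_\nu}\big(\log\frac{Ln}{\eps} + \frac1\eps\big)$, whose right-hand side dominates $\log K + \log n + \frac{10}{\eps}$ once $\eps/L$ is small enough (so that $(L/\eps)^{d_\nu}$ exceeds $10$ and $\log K \lesssim \log(L/\eps)$); since the suppressed constant in~\eqref{eqn:n_large} may be taken as large as needed, this closes the argument. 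I do not expect any single step to be genuinely hard; the main work is bookkeeping --- choosing the net scale and the Chernoff threshold so that the single hypothesis~\eqref{eqn:n_large} simultaneously absorbs the net cardinality $(\frac L\eps)^{d_\nu}$, the sample-size factor $\log n$, and the target $e^{-10/\eps}$ --- together with ensuring $\eps/L$ is small enough for the manifold volume-comparison and covering-number estimates from Appendix~\ref{sec:background_manifolds} to be in force.
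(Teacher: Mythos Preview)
Your proposal is correct and follows essentially the same net-plus-concentration-plus-union-bound route as the paper: both take a proper $\frac{\eps}{2L}$-net of $N$, use the triangle-inequality inclusion $B(z_k,\frac{\eps}{2L})\subset B(z,\frac{\eps}{L})$ to reduce the infimum to the net, and then combine a concentration bound for each $\hat\nu(B(z_k,\frac{\eps}{2L}))$ with a union bound, closing via Proposition~\ref{prop:nu_volume_control}, Proposition~\ref{prop:N_covering_numbers}, and~\eqref{eqn:n_large}. The only real difference is the concentration step---you use multiplicative Chernoff, whereas the paper uses Hoeffding's inequality (with additive deviation $t\asymp\sqrt{(\log(nK)+1/\eps)/n}$); your choice is slightly sharper but both succeed under~\eqref{eqn:n_large}.
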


\begin{lemma}[Uniform control on entropic densities]\label{lem:manifold_density_sup_control}
We have the uniform bound,
$$
\|p_\eps\|_{L^{\infty}(\mu \otimes \nu)}
\lesssim \Big( \frac{L}{\eps} \Big)^{d_\nu}.
$$
And, with probability at least $1 - \frac{1}{n}e^{-10/\eps}$,
$$
\|\hat p_\eps\|_{L^{\infty}(\mu \otimes \nu)}
\lesssim \Big( \frac{L}{\eps} \Big)^{d_\nu}.
$$
\end{lemma}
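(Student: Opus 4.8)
The plan is to reuse, essentially verbatim, the pointwise estimate from the proof of Lemma~\ref{lem:variance_control}, and then to combine it with uniform lower bounds on the mass of small balls --- for the empirical measure this is exactly Lemma~\ref{lem:empirical_mass_balls}, and for the population measure it is a standard volume comparison on $N$.

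For the population bound, fix $x \in \supp(\mu)$ and $y \in N = \supp(\nu)$. Starting from the marginal constraint $1 = \int p_\eps(x, y')\ud\nu(y')$ in~\eqref{eqn:pop_marginals}, restricting the integral to $B^{\rm cl}(y, \eps/L)$ and invoking the $\tfrac{2L}{\eps}$-log-Lipschitzness of $p_\eps$ in its second variable (Proposition~\ref{prop:consequences_of_lipschitz_cost}) --- exactly the computation carried out in the proof of Lemma~\ref{lem:variance_control} --- one obtains
$$
p_\eps(x,y) \;\lesssim\; \nu\big(B^{\rm cl}(y, \tfrac{\eps}{L})\big)^{-1}.
$$
Since the representatives of $f_\eps, g_\eps$ fixed in Proposition~\ref{prop:consequences_of_lipschitz_cost} make $p_\eps$ continuous on $\supp(\mu)\times N$, taking the supremum over $x, y$ controls $\|p_\eps\|_{L^{\infty}(\mu\otimes\nu)}$, and it remains to bound $\inf_{y\in N}\nu(B^{\rm cl}(y,\eps/L))$ from below. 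Under Assumptions~\ref{assum:nu_manifold} and~\ref{assum:nu}, $N$ is compact and smooth and $\nu$ has a density bounded below with respect to the Riemannian volume, so for $\eps/L$ small a standard ball-volume comparison on $N$ (see Appendix~\ref{sec:background_manifolds}, which in particular underlies Proposition~\ref{prop:N_covering_numbers}) gives $\nu(B^{\rm cl}(y,\eps/L)) \gtrsim (\eps/L)^{d_\nu}$ uniformly in $y$, whence the first claim.

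For the empirical bound, run the same argument with the extended objects. The canonical extension~\eqref{eqn:feta_extension} of $\hat f_\eps$ is defined precisely so that the first equation in~\eqref{eqn:emp_marginals}, namely $\tfrac1n\sum_{j}\hat p_\eps(x,y_j) = 1$, holds for \emph{every} $x\in\supp(\mu)$, not merely for $x\in\X$. Fixing such an $x$ and any $y\in N$, the same truncation-and-log-Lipschitz step (now for $\hat p_\eps$, again via Proposition~\ref{prop:consequences_of_lipschitz_cost}) yields $\hat p_\eps(x,y)\lesssim \hat\nu(B^{\rm cl}(y,\eps/L))^{-1}$. On the event of Lemma~\ref{lem:empirical_mass_balls}, which occurs with probability at least $1 - \tfrac1n e^{-10/\eps}$, we have $\inf_{z\in N}\hat\nu(B(z,\eps/L)) \gtrsim (\eps/L)^{d_\nu}$, so $\hat\nu(B^{\rm cl}(y,\eps/L))^{-1} \lesssim (L/\eps)^{d_\nu}$ uniformly in $y\in N$, giving the stated high-probability bound.

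There is no real obstacle here: the substantive work is already done in Section~\ref{subsec:sub_exp} and in Lemma~\ref{lem:empirical_mass_balls}, and what remains is bookkeeping --- checking that the estimate behind Lemma~\ref{lem:variance_control} is genuinely pointwise, and that the extension~\eqref{eqn:feta_extension} propagates the marginal constraint to all of $\supp(\mu)$. The only input specific to this section is the uniform ball-volume lower bound, which is deterministic for $\nu$ (from compactness of $N$ and Assumption~\ref{assum:nu}) and high-probability for $\hat\nu$ (Lemma~\ref{lem:empirical_mass_balls}); I do not anticipate any difficulty in either case.
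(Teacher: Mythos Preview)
The proposal is correct and follows essentially the same route as the paper: the pointwise estimate $p_\eps(x,y)\lesssim \nu(B(y,\eps/L))^{-1}$ (resp.\ $\hat p_\eps(x,y)\lesssim \hat\nu(B(y,\eps/L))^{-1}$) from the marginal constraint plus log-Lipschitzness, combined with Proposition~\ref{prop:nu_volume_control} for the population bound and Lemma~\ref{lem:empirical_mass_balls} for the empirical one. Your organization is in fact slightly cleaner than the paper's for the empirical part---you bound $\hat p_\eps(x,y)$ directly rather than first passing through a nearby sample point $y_j$---but the content is the same.
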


\begin{proof}[Proof of Lemma~\ref{lem:f_control_by_g}]
Let
$$
\tilde f_\eps (x) := -\eps \log \Big(\frac1n \sum_{j = 1}^n
e^{-\frac{1}{\eps}(c(x, y_j) - g_\eps(y_j))} \Big).
$$ Note that, for all $x \in \supp \mu$,
$$
(\hat f_\eps(x) - f_\eps(x))^2
\lesssim
(\hat f_\eps(x) - \tilde f_\eps(x))^2
+ (\tilde f_\eps(x) - f_\eps(x))^2,
$$ and we'll begin by studying each of these terms separately.

For the first term, 
put $\alpha := g_\eps - \hat g_\eps$,
and let $\phi \colon [0, 1] \to \R$
be defined as
$$
\phi(t) := -\eps \log \Big( \frac{1}{n} \sum_{j = 1}^n
e^{-\eps^{-1}(c(x, y_j) - \hat g_\eps(y_j)  - t\alpha(y_j)}\Big),
$$ and notice $\phi(0) = \hat f_\eps(x)$ 
while $\phi(1) = \tilde f_\eps(x)$.
For $y \in \supp(\nu)$, put
$$
q_t(x, y) := \frac{e^{-\eps^{-1}(c(x, y) - \hat g_\eps(y) - t \alpha(y))}}
{\frac1n \sum_{k = 1}^n e^{-\eps^{-1}(c(x, y_k) - \hat g_\eps(y_k) - t \alpha(y_k))}}
$$
Then
\begin{align*}
|\phi'(t)| &=\Big|\frac1n \sum_{j = 1}^n 
\alpha(y_j) q_t(x, y_j)
\Big| \leqslant 
\|\alpha\|_{L^2(\hat \nu)} \Big(
\frac1n 
\sum_{j = 1}^n q_t(x, y_j)^2 \Big)^{1/2}.
\end{align*}
Using Proposition~\ref{prop:consequences_of_lipschitz_cost},
we find that for any $y, y' \in \supp(\nu)$
and $t \in [0,1]$,
\begin{align*}
|\log q_t(x, y) - \log q_t(x, y')| &= 
\frac{1}{\eps} |c(x, y) - \hat g_\eps(y) 
- t\alpha(y)
- (c(x, y') - \hat g_\eps(y') 
- t\alpha(y'))| \\
&\leqslant \frac{2L}{\eps}\|y - y'\|.
\end{align*}  Therefore, for any $j \in [n]$,
$$
1 = \frac1n \sum_{k = 1}^n q_t(x, y_k) 
\gtrsim q_t(x, y_j) 
\hat \nu(B(y_j, \frac{\eps}{L})) \geqslant q_t(x, y_j)
\inf_{z \in N} \hat \nu (B(z, \frac{\eps}{L})).
$$ So that
\begin{align}
    |\hat f_\eps(x) - \tilde f_\eps(x)|
    \leqslant \int_0^1 |\phi'(t)| \ud t
    &\leqslant 
    \|\alpha\|_{L^2(\hat \nu)}
    \int_0^1
    \Big(
\frac1n  \sum_{j = 1}^n q_t(x, y_j)^2 \Big)^{1/2} \ud t\nonumber \\
&\lesssim 
    \|\alpha\|_{L^2(\hat \nu)} \sup_{z \in N}
    \hat \nu(B(z, \frac{\eps}{L}))^{-1/2} \int_0^1
    \Big(\frac1n \sum_{j = 1}^n q_t(x, y_j) \Big)^{1/2} 
    \ud t \nonumber \\
    &= \|\alpha\|_{L^2(\hat \nu)}  \sup_{z \in N}
    \hat \nu(B(z, \frac{\eps}{L}))^{-1/2}. \label{eqn:marginal_rounded_bd_lemma}
\end{align}

Now consider the 
$(\tilde f_\eps(x) - f_\eps(x))^2$ term.
Note that
$$
|\tilde f_\eps(x) - f_\eps(x)| = \Big| 
\eps \log \Big( \frac1n \sum_{j = 1}^n p_\eps(x, y_j) \Big)
\Big|.
$$ Since
$$
1 = \int p_\eps(x, y) \ud \nu(y),
$$ there must exist some $y'\in \supp(\nu)$ such that $p_\eps(x, y')
\geqslant 1$. By Proposition~\ref{prop:consequences_of_lipschitz_cost},
this implies
\begin{align*}
\frac1n \sum_{j = 1}^n p_\eps(x, y_j) \geqslant 
\frac1n \sum_{j = 1}^n \frac{p_\eps(x, y_j)}{p_\eps(x, y')}
\geqslant \frac1n \sum_{j = 1}^n e^{-\frac{2L}{\eps}\|y_j - y'\|}
\gtrsim \hat \nu(B(y', \frac{\eps}{L})).
\end{align*}
Using Lipschitz-ness of $\log$,
\begin{equation}\label{eqn:marginal_rounded_bd2_lemma}
|\tilde f_\eps(x) - f_\eps(x)| 
\lesssim \eps \hat \nu( B(y', \frac{\eps}{L} ))^{-1}
\Big| \frac1n \sum_{j = 1}^n p_\eps(x, y_j)  - 1\Big|.
\end{equation} Now, let $\mathcal{E}$ denote the
event of Lemma~\ref{lem:empirical_mass_balls}.
Observe that by the pointwise control in
Proposition~\ref{prop:dual_pointwise_control}
and since $\P[\mathcal{E}^c] \leqslant \frac{1}{n}$,
$$
\E[\|\hat f_\eps - f_\eps\|^2_{L^2(\hat \mu)}]
\lesssim 
\E[\mathbbold{1}[\mathcal{E}]
\|\hat f_\eps - f_\eps\|^2_{L^2(\hat \mu)}
]
+ \frac{1}{n}.
$$ Combining Equations~\eqref{eqn:marginal_rounded_bd_lemma}
and~\eqref{eqn:marginal_rounded_bd2_lemma}
yields
\begin{align*}
\E[\|\hat f_\eps - f_\eps\|^2_{L^2(\hat \mu)}]
&\lesssim 
\E[\mathbbold{1}[\mathcal{E}]
\|\hat f_\eps - f_\eps\|^2_{L^2(\hat \mu)}
] + \frac{1}{n} \\
&\lesssim 
\E[\mathbbold{1}[\mathcal{E}]
(\|\hat f_\eps - \tilde f_\eps\|^2_{L^2(\hat \mu)}
+\|\tilde f_\eps - f_\eps \|^2_{L^2(\hat \mu)})
] + \frac{1}{n} \\
&\lesssim \Big( \frac{L}{\eps} \Big)^{2d_{\nu}}
\E\Big[\mathbbold{1}[\mathcal{E}]\Big\{
\|\hat g_\eps - g_\eps\|_{L^2(\hat \nu)}^2
+ \frac{\eps^2}{n}\sum_{i = 1}^n\Big( \frac1n \sum_{j = 1}^n
p_\eps(x_i, y_j) - 1 \Big)^2 \Big\}\Big]
+ \frac1n \\
&\leqslant \Big( \frac{L}{\eps} \Big)^{2d_{\nu}}
\E\Big[\mathbbold{1}[\mathcal{E}]\Big\{
\|\hat g_\eps - g_\eps\|_{L^2(\hat \nu)}^2
+ \frac{\eps^2}{n} \|\nabla \hat \Phi_\eps(f_\eps, g_\eps)\|^2_{L^2(\hat \mu) \times L^2(\hat \nu)} \Big\}\Big]
+ \frac1n \\
&\lesssim 
\Big( \frac{L}{\eps} \Big)^{2d_{\nu}}
\Big\{\E[\|\hat g_\eps - g_\eps\|^2_{L^2(\hat \nu)}]
+ \frac{\eps^2}{n} \|p_\eps\|_{L^2(\mu \otimes \nu)}^2
\Big\}
+ \frac1n,
\end{align*}
where the final inequality follows from
$\mathbbold{1}[\mathcal{E}] \leqslant 1$
and Equation~\eqref{eqn:gradient_calc}.
By 
Lemma~\ref{lem:variance_control}
and Proposition~\ref{prop:N_covering_numbers} on the covering numbers of $N$ (proved in
Appendix~\ref{subsec:additional_embedded}), we conclude the first
inequality. The second follows in the same manner.
\end{proof}

\subsection{MID scaling for the bias and $g$ dual potentials
in empirical norm}\label{subsec:proofs_MID_bias}
In this section, we use
our quadratic growth inequality
from section~\ref{subsec:qg} and 
our bound on the $f$ dual potentials in terms
of the $g$ dual potentials from section~\ref{subsec:reduction} to
prove the following result, on convergence of the $g$ dual potentials
in empirical norm.

\begin{lemma}[Convergence of $g$ dual potentials
in empirical norm]\label{lem:emp_g_convergence}
We have
$$
\E[\|\hat g_\eps - g_\eps\|^2_{L^2(\hat\nu)}]
\lesssim \big(\eps^2 + \frac{1}{\eps^2}\big) \cdot \Big(\frac{L}{\eps}\Big)^{6d_\nu + 4} \cdot \frac1n.
$$
\end{lemma}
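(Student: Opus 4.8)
The plan is to upgrade the slow, $n^{-1/2}$-type control on the ``bias'' $\E[S_\eps(\hat\mu,\hat\nu)-S_\eps(\mu,\nu)]$ to the fast $n^{-1}$ rate by running a self-bounding argument that couples the quadratic growth inequality of Lemma~\ref{lem:qg_emp_dual} with the reduction of Lemma~\ref{lem:f_control_by_g} and the elementary gradient bound~\eqref{eqn:gradient_calc}. First I would deal with the normalization mismatch: Lemma~\ref{lem:qg_emp_dual} needs a competitor $g$ with $\hat\nu(g)=0$, whereas $g_\eps$ satisfies only $\nu(g_\eps)=0$. So set $\bar g_\eps := g_\eps - \hat\nu(g_\eps)$, which has $\hat\nu(\bar g_\eps)=0$; since $g_\eps-\bar g_\eps$ is constant,
$$\|\hat g_\eps - g_\eps\|_{L^2(\hat\nu)}^2 = \|\hat g_\eps - \bar g_\eps\|_{L^2(\hat\nu)}^2 + (\hat\nu(g_\eps))^2,$$
and $\hat\nu(g_\eps)=(\hat\nu-\nu)(g_\eps)$ gives $\E[(\hat\nu(g_\eps))^2]=\Var_\nu(g_\eps)/n\lesssim 1/n$ by Proposition~\ref{prop:dual_pointwise_control}. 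This last term is harmless, so it suffices to bound $\E[\|\hat g_\eps - \bar g_\eps\|_{L^2(\hat\nu)}^2]$.

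Next I would feed $g=\bar g_\eps$ into Lemma~\ref{lem:qg_emp_dual}. The useful fact is that $\Psi_\eps^{\hat\mu\hat\nu}$ is the empirical dual $\hat\Phi_\eps$ maximized over its first argument: by item 3 of Proposition~\ref{prop:entropic_dual_basics} the maximizer is the $f$-marginal rounding, and plugging a rounded $f$ into $\hat\Phi_\eps$ makes the exponential term vanish by the marginal constraint. Hence $\Psi_\eps^{\hat\mu\hat\nu}(\hat g_\eps)=\hat\Phi_\eps(\hat f_\eps,\hat g_\eps)$ via the extension formula~\eqref{eqn:feta_extension}, while $\Psi_\eps^{\hat\mu\hat\nu}$ is invariant under adding a constant, so $\Psi_\eps^{\hat\mu\hat\nu}(\bar g_\eps)=\Psi_\eps^{\hat\mu\hat\nu}(g_\eps)\geq\hat\Phi_\eps(f_\eps,g_\eps)$. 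Combining these with the $O(1)$ bound on $\|\bar g_\eps-\hat g_\eps\|_{L^\infty(\hat\nu)}$ from Proposition~\ref{prop:dual_pointwise_control}, Lemma~\ref{lem:qg_emp_dual} becomes
$$\eps\Big(\frac\eps L\Big)^{d_\nu+2}\|\hat g_\eps-\bar g_\eps\|_{L^2(\hat\nu)}^2 \lesssim \hat\Phi_\eps(\hat f_\eps,\hat g_\eps)-\hat\Phi_\eps(f_\eps,g_\eps).$$
I would then bound the right-hand side by concavity~\eqref{eqn:entropic_dual_concavity_upper} and Cauchy--Schwarz by $\|\nabla\hat\Phi_\eps(f_\eps,g_\eps)\|_{L^2(\hat\mu)\times L^2(\hat\nu)}\,\big(\|\hat f_\eps-f_\eps\|_{L^2(\hat\mu)}+\|\hat g_\eps-g_\eps\|_{L^2(\hat\nu)}\big)$, take expectations, apply Cauchy--Schwarz in $\E$, and invoke Lemma~\ref{lem:f_control_by_g} to control $\E[\|\hat f_\eps-f_\eps\|_{L^2(\hat\mu)}^2]$ by $\lesssim(L/\eps)^{3d_\nu}\big(\E[\|\hat g_\eps-g_\eps\|_{L^2(\hat\nu)}^2]+(1+\eps^2)/n\big)$, together with $\E[\|\nabla\hat\Phi_\eps(f_\eps,g_\eps)\|^2]\lesssim\|p_\eps\|_{L^2(\mu\otimes\nu)}^2/n\lesssim(L/\eps)^{d_\nu}/n$ from~\eqref{eqn:gradient_calc}, Lemma~\ref{lem:variance_control}, and Proposition~\ref{prop:N_covering_numbers}.

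Writing $a:=\E[\|\hat g_\eps-\bar g_\eps\|_{L^2(\hat\nu)}^2]^{1/2}$ and folding in $\E[(\hat\nu(g_\eps))^2]\lesssim 1/n$ to handle the cross term, all of this collapses to a self-bounding inequality of the rough shape
$$\eps\Big(\frac\eps L\Big)^{d_\nu+2}a^2 \lesssim \Big(\frac{(L/\eps)^{d_\nu}}{n}\Big)^{1/2}\Big(\frac L\eps\Big)^{3d_\nu/2}\Big(a+\sqrt{\frac{1+\eps^2}{n}}\Big).$$
The main work is to solve this inequality. If $a$ dominates the parenthesis, dividing through yields $a^2\lesssim\eps^{-2}(L/\eps)^{6d_\nu+4}\,n^{-1}$; otherwise $a^2\lesssim(1+\eps^2)/n$, which is already of the claimed order since $(L/\eps)^{6d_\nu+4}\geq 1$. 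Adding back $\E[(\hat\nu(g_\eps))^2]\lesssim 1/n$ gives the stated bound. The genuinely delicate point is precisely this last step: Lemma~\ref{lem:f_control_by_g} re-introduces $\|\hat g_\eps-g_\eps\|$ on the right with a large prefactor $(L/\eps)^{3d_\nu}$, and one must verify this reappearing term can be absorbed — which it can only because $\|\nabla\hat\Phi_\eps(f_\eps,g_\eps)\|$ contributes a genuine $n^{-1/2}$ and the factor $\eps(\eps/L)^{d_\nu+2}$ on the left survives against the accumulated powers of $L/\eps$. It is exactly here that quadratic growth, rather than mere concavity, is what buys the fast rate; the bookkeeping to land at the exponent $6d_\nu+4$ is then routine.
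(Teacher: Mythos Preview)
Your approach is essentially the same as the paper's: re-center $g_\eps$, apply the quadratic-growth Lemma~\ref{lem:qg_emp_dual} (noting $\Psi_\eps^{\hat\mu\hat\nu}(g_\eps)\geq\hat\Phi_\eps(f_\eps,g_\eps)$ by marginal rounding), bound the bias via concavity and~\eqref{eqn:gradient_calc}, reduce $f$ to $g$ via Lemma~\ref{lem:f_control_by_g}, and close the self-bounding loop; the paper uses Young's inequality with a free parameter in place of your double Cauchy--Schwarz, but this is cosmetic and lands on the same exponent $6d_\nu+4$. One small omission: Lemma~\ref{lem:qg_emp_dual} holds only on a high-probability event (probability $\geq 1-C/n$, via Theorem~\ref{thm:graph_spectrum}), so when taking expectations you must split on this event and use the pointwise bounds of Proposition~\ref{prop:dual_pointwise_control} to control the complement by $O(1/n)$, as the paper does in the proof of Lemma~\ref{lem:manifold_g_conv}.
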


By the previous section, this
implies convergence of the $f$ dual potentials
in both empirical and population norms, and therefore yields all parts
of Theorem~\ref{thm:dual_convergence} other than the population
norm convergence of the $g$ dual potentials, which is proved
in the next section.
As a byproduct of our proof, we will also obtain Theorem~\ref{thm:manifold_bias},
on MID scaling with fast rates for the bias.

Notice that a $1/\sqrt{n}$ rate
follows by combining
our result on quadratic growth
for the empirical dual, Lemma~\ref{lem:qg_emp_dual},
with our prior result on the bias, Lemma~\ref{lem:bias_covering}.
To obtain a faster rate, we proceed by a self-bounding argument.

We first bound
the bias $\E[\hat \Phi_\eps(\hat f_\eps, \hat g_\eps) - \hat \Phi_\eps(f_\eps, g_\eps)]$ in terms of 
    $\|\hat f_\eps - f_\eps\|^2_{L^2(\hat \mu)}$
    and $\|\hat g_\eps - g_\eps\|^2_{L^2(\hat \nu)}$
    plus a $1/n$ term arising from the squared
    norm of $\nabla \hat \Phi_\eps (f_\eps, g_\eps)$.

    \begin{lemma}\label{lem:manifold_bias}
For all $a > 0$,
$$
\E[\hat \Phi_\eps(\hat \mu, \hat \nu) - \hat \Phi_\eps(\mu, \nu)]
\lesssim \frac{1}{an}  \Big(\frac{L}{\eps} \Big)^{d_\nu}
+ a \E\big[\|\hat f_\eps - f_\eps \|^2_{L^2(\hat \mu)}+
\|\hat g_\eps - g_\eps\|^2_{L^2(\hat \nu)}\big].
$$
\end{lemma}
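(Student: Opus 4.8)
The plan is to derive this self-bounding inequality directly from concavity of the empirical dual, exactly as in the bias/variance decomposition of section~\ref{subsec:value_proof}, but stopping one step earlier so as to keep an adjustable constant $a$. Concretely, writing the bias as $\hat\Phi_\eps(\hat f_\eps,\hat g_\eps)-\hat\Phi_\eps(f_\eps,g_\eps)$, the concavity inequality~\eqref{eqn:entropic_dual_concavity_upper} of Proposition~\ref{prop:entropic_dual_basics} applied to $\hat\Phi_\eps$ gives
$$
\hat\Phi_\eps(\hat f_\eps,\hat g_\eps)-\hat\Phi_\eps(f_\eps,g_\eps)
\leqslant \big\langle \nabla\hat\Phi_\eps(f_\eps,g_\eps),\,(\hat f_\eps-f_\eps,\hat g_\eps-g_\eps)\big\rangle_{L^2(\hat\mu)\times L^2(\hat\nu)}.
$$
First I would apply Cauchy--Schwarz to this inner product, and then Young's inequality in the form $uv\leqslant \frac{1}{4a}u^2 + a v^2$ with $u=\|\nabla\hat\Phi_\eps(f_\eps,g_\eps)\|_{L^2(\hat\mu)\times L^2(\hat\nu)}$ and $v=\|(\hat f_\eps-f_\eps,\hat g_\eps-g_\eps)\|_{L^2(\hat\mu)\times L^2(\hat\nu)}$. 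Taking expectations yields
$$
\E[\hat\Phi_\eps(\hat f_\eps,\hat g_\eps)-\hat\Phi_\eps(f_\eps,g_\eps)]
\leqslant \frac{1}{4a}\,\E\big[\|\nabla\hat\Phi_\eps(f_\eps,g_\eps)\|^2_{L^2(\hat\mu)\times L^2(\hat\nu)}\big]
+ a\,\E\big[\|\hat f_\eps-f_\eps\|^2_{L^2(\hat\mu)}+\|\hat g_\eps-g_\eps\|^2_{L^2(\hat\nu)}\big].
$$

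It then remains only to bound the gradient term. For this I would invoke the elementary identity~\eqref{eqn:gradient_calc}, which holds with no structural assumptions and gives $\E[\|\nabla\hat\Phi_\eps(f_\eps,g_\eps)\|^2_{L^2(\hat\mu)\times L^2(\hat\nu)}]\lesssim \|p_\eps\|^2_{L^2(\mu\otimes\nu)}/n$. In the embedded-manifold setting $\|p_\eps\|^2_{L^2(\mu\otimes\nu)}$ can be controlled sharply: since $\|p_\eps\|_{L^1(\mu\otimes\nu)}=1$ we have $\|p_\eps\|^2_{L^2(\mu\otimes\nu)}\leqslant\|p_\eps\|_{L^\infty(\mu\otimes\nu)}$, and Lemma~\ref{lem:manifold_density_sup_control} bounds the latter by $(L/\eps)^{d_\nu}$; alternatively Lemma~\ref{lem:variance_control} together with the covering-number estimate of Proposition~\ref{prop:N_covering_numbers} gives the same conclusion. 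Substituting produces $\E[\|\nabla\hat\Phi_\eps(f_\eps,g_\eps)\|^2]\lesssim (L/\eps)^{d_\nu}/n$, and plugging this into the displayed inequality above is exactly the claim (after absorbing the harmless factor $1/4$).

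I do not expect any serious obstacle here: the lemma is a routine combination of concavity, Cauchy--Schwarz and Young's inequality, and its only subtlety is bookkeeping the power of $L/\eps$. What genuinely matters is the role this inequality plays downstream — the free parameter $a$ is what will let one absorb the term $a\,\E[\|\hat f_\eps-f_\eps\|^2_{L^2(\hat\mu)}+\|\hat g_\eps-g_\eps\|^2_{L^2(\hat\nu)}]$ into the left-hand side once quadratic growth for the empirical dual (Lemma~\ref{lem:qg_emp_dual}) and the reduction of the $f$-potentials to the $g$-potentials (Lemma~\ref{lem:f_control_by_g}) are brought in, leaving only the fast $1/(an)$ term. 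The one point I would be careful to state explicitly in the write-up is that the $L^\infty$ bound on $p_\eps$ from Lemma~\ref{lem:manifold_density_sup_control} is deterministic, so no conditioning on an exceptional event is needed at this stage; the high-probability events enter only later, where the \emph{empirical} density $\hat p_\eps$ appears.
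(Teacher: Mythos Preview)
Your proposal is correct and follows essentially the same route as the paper: concavity of $\hat\Phi_\eps$ at $(f_\eps,g_\eps)$, then Young's inequality with free parameter $a$, then the gradient bound~\eqref{eqn:gradient_calc} combined with Lemma~\ref{lem:variance_control} and Proposition~\ref{prop:N_covering_numbers} to turn $\|p_\eps\|_{L^2(\mu\otimes\nu)}^2$ into $(L/\eps)^{d_\nu}$. The paper uses exactly the covering-number route you list as your second option; your alternative via the deterministic $L^\infty$ bound of Lemma~\ref{lem:manifold_density_sup_control} also works and your remark that no conditioning is needed here is accurate.
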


Using our result on the quadratic
    growth of $\hat \Phi_\eps$, Lemma~\ref{lem:qg_emp_dual},
    the previous step then implies
    a bound for $\|\hat g_\eps - g_\eps\|^2_{L^2(\hat \nu)}$
    in terms of itself, $1/n$, and $\|\hat f_\eps - f_\eps\|^2_{L^2(\hat \mu)}$.
    
\begin{lemma}\label{lem:manifold_g_conv}
For all $a > 0$,
$$
\E[\|\hat g_\eps - g_\eps\|^2_{L^2(\hat \nu)}]
\lesssim \frac{1}{\eps}
\Big( \frac{L}{\eps} \Big)^{d_\nu + 2}
\Big\{ \frac{1}{an}  \Big(\frac{L}{\eps} \Big)^{d_\nu}
+ a \E\big[\|\hat f_\eps - f_\eps \|^2_{L^2(\hat \mu)}+
\|\hat g_\eps - g_\eps\|^2_{L^2(\hat \nu)}\big]\Big\} +\frac1n.
$$
\end{lemma}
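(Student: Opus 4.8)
The plan is to derive Lemma~\ref{lem:manifold_g_conv} by feeding the quadratic growth inequality of Lemma~\ref{lem:qg_emp_dual} into the bias bound of Lemma~\ref{lem:manifold_bias}, after a few bookkeeping identifications for the rounded empirical dual $\Psi_\eps^{\hat\mu\hat\nu}$. Two structural facts will do the work. First, $\Psi_\eps^{\hat\mu\hat\nu}$ is invariant under the shift $g \mapsto g + a$, $a \in \R$: integrating the $-\eps\log$ term against the probability measure $\hat\mu$ decreases it by $a$, exactly cancelling the increase of $\hat\nu(g)$ by $a$. Second, whenever $f$ is the $f$-rounding of $g$ the exponential penalty term in $\hat\Phi_\eps(f,g)$ vanishes, so $\Psi_\eps^{\hat\mu\hat\nu}(g)$ equals $\hat\Phi_\eps(f, g)$ for that particular $f$; in particular $\Psi_\eps^{\hat\mu\hat\nu}(\hat g_\eps) = \hat\Phi_\eps(\hat f_\eps, \hat g_\eps)$ by~\eqref{eqn:feta_extension}, and $\Psi_\eps^{\hat\mu\hat\nu}(g_\eps) = \hat\Phi_\eps(\tilde f_\eps, g_\eps)$ with $\tilde f_\eps$ as in~\eqref{eqn:tilde_f}.

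Next I would set $\bar g_\eps := g_\eps - \hat\nu(g_\eps)$, so that $\hat\nu(\bar g_\eps) = 0$, and apply Lemma~\ref{lem:qg_emp_dual} at $g = \bar g_\eps$. The $L^\infty$ prefactor appearing there is $\|\bar g_\eps - \hat g_\eps\|_{L^\infty(\hat\nu)}^2 \lesssim 1$ by the pointwise control of Proposition~\ref{prop:dual_pointwise_control} together with $|\hat\nu(g_\eps)| = |(\hat\nu - \nu)(g_\eps)| \leqslant 1$. For the objective gap, shift invariance gives $\Psi_\eps^{\hat\mu\hat\nu}(\bar g_\eps) = \Psi_\eps^{\hat\mu\hat\nu}(g_\eps) = \hat\Phi_\eps(\tilde f_\eps, g_\eps)$, and the third part of Proposition~\ref{prop:entropic_dual_basics}, on marginal rounding improving the dual objective, gives $\hat\Phi_\eps(\tilde f_\eps, g_\eps) \geqslant \hat\Phi_\eps(f_\eps, g_\eps)$; hence
$$\|\bar g_\eps - \hat g_\eps\|_{L^2(\hat\nu)}^2 \lesssim \frac{1}{\eps}\Big(\frac{L}{\eps}\Big)^{d_\nu + 2}\big(\hat\Phi_\eps(\hat f_\eps, \hat g_\eps) - \hat\Phi_\eps(f_\eps, g_\eps)\big).$$

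Finally, since $\hat g_\eps - \bar g_\eps$ has $\hat\nu$-mean zero while $g_\eps - \bar g_\eps = \hat\nu(g_\eps)$ is constant, we get $\|\hat g_\eps - g_\eps\|_{L^2(\hat\nu)}^2 = \|\hat g_\eps - \bar g_\eps\|_{L^2(\hat\nu)}^2 + \hat\nu(g_\eps)^2$ with $\E[\hat\nu(g_\eps)^2] = \Var_\nu(g_\eps)/n \lesssim 1/n$, again by Proposition~\ref{prop:dual_pointwise_control}. Taking expectations and substituting Lemma~\ref{lem:manifold_bias} for $\E[\hat\Phi_\eps(\hat f_\eps, \hat g_\eps) - \hat\Phi_\eps(f_\eps, g_\eps)]$ then yields the claimed inequality for every $a > 0$. (If the random geometric graph spectral gap behind Lemma~\ref{lem:qg_emp_dual} holds only on an event of probability $\geqslant 1 - O(1/n)$, its complement contributes $O(1/n)$ through the trivial estimate $\|\hat g_\eps - g_\eps\|_{L^\infty(\hat\nu)} \lesssim 1$, which is absorbed.) I expect the one genuinely delicate point to be keeping straight which rounded objective appears where --- $\Psi_\eps^{\hat\mu\hat\nu}$ at $\hat g_\eps$ versus at $g_\eps$ --- and invoking marginal-rounding monotonicity in the direction that makes the bias $\hat\Phi_\eps(\hat f_\eps, \hat g_\eps) - \hat\Phi_\eps(f_\eps, g_\eps)$ an upper bound; the remaining steps are routine.
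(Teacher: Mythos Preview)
Your proposal is correct and follows essentially the same route as the paper: recenter $g_\eps$ so Lemma~\ref{lem:qg_emp_dual} applies, absorb the $L^\infty$ prefactor via Proposition~\ref{prop:dual_pointwise_control}, pass from $\Psi_\eps^{\hat\mu\hat\nu}$ to $\hat\Phi_\eps$ using shift invariance and marginal-rounding monotonicity, handle the complement of the spectral-gap event with the trivial bound, and finish with Lemma~\ref{lem:manifold_bias}. Your explicit use of the Pythagorean identity for the recentering and of the identity $\Psi_\eps^{\hat\mu\hat\nu}(g)=\hat\Phi_\eps(f,g)$ when $f$ is the $f$-rounding of $g$ are in fact slightly cleaner than the paper's presentation, but the argument is the same.
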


Combining this inequality with
Lemma~\ref{lem:f_control_by_g} from section~\ref{subsec:reduction}
and taking the free parameter
$a$ sufficiently small, we can re-arrange
and arrive at Lemma~\ref{lem:emp_g_convergence}.
Plugging our bounds
on the dual potentials
into
 Lemma~\ref{lem:manifold_bias}
 then yields
Theorem~\ref{thm:manifold_bias}.
Let's give these proofs first, before
we turn to the proofs
of Lemma~\ref{lem:manifold_bias}
and Lemma~\ref{lem:manifold_g_conv}.
\begin{proof}[Proof of Lemma~\ref{lem:emp_g_convergence}]
By Lemma~\ref{lem:manifold_g_conv}
and Lemma~\ref{lem:f_control_by_g}, for all $ a > 0$,
\begin{align*}
    \E[\|\hat g_\eps - g_\eps\|^2_{L^2(\hat\nu)}]
    \lesssim \frac{1}{\eps}
\Big( \frac{L}{\eps} \Big)^{d_\nu + 2}
\Big\{ \frac{1}{an}  \Big(\frac{L}{\eps} \Big)^{d_\nu}
+ a   \Big(\frac{L}{\eps} \Big)^{3d_\nu} \big(\E[\|\hat g_\eps - g_\eps\|_{L^2(\hat \nu)}^2]
+ \frac{1 + \eps^2}{n}\big) \Big\}+ \frac1n.
\end{align*}
If we take $a = C\eps\big(\frac{\eps}{L}\big)^{4d_{\nu} + 2}$
for a sufficiently small constant $C$, we can re-arrange
to yield
$$
 \E[\|\hat g_\eps - g_\eps\|^2_{L^2(\hat\nu)}]
\lesssim 
\big(1 + \eps^2 + \frac{1}{\eps^2}\big) \cdot
\Big( \frac{L}{\eps} \Big)^{6d_\nu + 4}
\cdot
\frac1n \lesssim 
\big(\eps^2 + \frac{1}{\eps^2}\big)
\cdot
\Big( \frac{L}{\eps} \Big)^{6d_\nu + 4}
\cdot
\frac1n.
$$
\end{proof}

\begin{proof}[Proof of Theorem~\ref{thm:manifold_bias}]
First notice that $\E[S_\eps(\hat \mu, \hat \nu)] - S_\eps(\mu, \nu) = \E[\hat \Phi_\eps(\hat f_\eps,
\hat g_\eps) - \hat \Phi_\eps(f_\eps, g_\eps)] \geqslant 0$, so that
$$
|\E[S_\eps(\hat \mu, \hat \nu)] - S_\eps(\mu, \nu)| = \E[\hat \Phi_\eps(\hat f_\eps,
\hat g_\eps) - \hat \Phi_\eps(f_\eps, g_\eps)].
$$
Invoking Lemma~\ref{lem:manifold_bias}
with $a = 1$,
and applying
Lemma~\ref{lem:f_control_by_g}
plus Lemma~\ref{lem:emp_g_convergence},
we obtain
\begin{align*}
\E[S_\eps(\hat \mu, \hat \nu)
- S_\eps(\mu, \nu)]
\lesssim
\big(1 + \eps^2 + \frac{1}{\eps^2}\big) \cdot
\Big( \frac{L}{\eps} \Big)^{9d_\nu + 4}
\cdot
\frac1n \leqslant
\big(\eps^2 + \frac{1}{\eps^2}\big)
\cdot
\Big( \frac{L}{\eps} \Big)^{9d_\nu + 4}
\cdot
\frac1n.
\end{align*}
and whence the result.
\end{proof}

The proofs of Lemma~\ref{lem:manifold_bias}
and Lemma~\ref{lem:manifold_g_conv}
follow.

\begin{proof}[Proof of Lemma~\ref{lem:manifold_bias}]
By Proposition~\ref{prop:entropic_dual_basics},
for all $a > 0$,
\begin{align*}
\E[S_\eps(\hat \mu, &\hat \nu)-
S_\eps(\mu, \nu)] = \E[\hat \Phi_\eps(\hat f_\eps,
\hat g_\eps) - \Phi_\eps(f_\eps, g_\eps)] \\
&= \E[ \hat \Phi_\eps(\hat f_\eps, \hat g_\eps)
- \hat \Phi_\eps(f_\eps, g_\eps) ]\\
&\leqslant
\E[ \langle \nabla \hat \Phi_\eps(f_\eps, g_\eps), (\hat f_\eps
- f_\eps, \hat g_\eps - g_\eps) \rangle_{L^2(\hat \mu) \times
L^2(\hat \nu)}] \\
&\leqslant \frac{1}{a}
\E[ \| \nabla \hat \Phi_\eps(f_\eps, g_\eps)\|_{L^2(\hat \mu) \times L^2(\hat \nu)}^2]
+ a \E\big[\|\hat f_\eps - f_\eps \|^2_{L^2(\hat \mu)}+
\|\hat g_\eps - g_\eps\|^2_{L^2(\hat \nu)}\big].
\end{align*}
By Equation~\eqref{eqn:gradient_calc}
and Lemma~\ref{lem:variance_control},
we have
$$
\E[S_\eps(\hat \mu, \hat \nu)-
S_\eps(\mu, \nu)]
\lesssim \frac{1}{an} \mathcal{N}(N, \frac{\eps}{L})
+ a \E\big[\|\hat f_\eps - f_\eps \|^2_{L^2(\hat \mu)}+
\|\hat g_\eps - g_\eps\|^2_{L^2(\hat \nu)}\big].
$$ Applying Proposition~\ref{prop:N_covering_numbers}
on the covering numbers of $N$
(proved in Appendix~\ref{subsec:additional_embedded}),
we may conclude.
\end{proof}

\begin{proof}[Proof of Lemma~\ref{lem:manifold_g_conv}]
We begin by re-centering
$g_\eps$ so that we can apply Lemma~\ref{lem:qg_emp_dual}:
\begin{align}
\E[\|\hat g_\eps - g_\eps \|_{L^2(\hat \nu)}^2]
&\leqslant 
\E[\|\hat g_\eps - (g_\eps - \hat \nu(g_\eps)) \|_{L^2(\hat \nu)}^2]
+ \E[(\hat \nu(g_\eps))^2]\nonumber \\
&\lesssim  
\E[\|\hat g_\eps - (g_\eps - \hat \nu(g_\eps)) \|_{L^2(\hat \nu)}^2] +
\frac{1}{n}, \label{eqn:fixed_point_inequality}
\end{align} where the last step
follows by our convention that $\nu(g_\eps) = 0$
and the pointwise control on $g_\eps$ from Proposition~\ref{prop:dual_pointwise_control}.
Let $\mathcal{E}$ denote the event
described in Lemma~\ref{lem:qg_emp_dual}.
Applying Proposition~\ref{prop:dual_pointwise_control}
once more we see that
\begin{align*}
\E[\|\hat g_\eps - (g_\eps - \hat \nu(g_\eps)) \|_{L^2(\hat \nu)}^2]
&=
\E[\mathbbold{1}[\mathcal{E}] \|\hat g_\eps - (g_\eps - \hat \nu(g_\eps)) \|_{L^2(\hat \nu)}^2]
+ \E[\mathbbold{1}[\mathcal{E}^c]] \\
&\lesssim 
\E[\mathbbold{1}[\mathcal{E}] \|\hat g_\eps - (g_\eps - \hat \nu(g_\eps)) \|_{L^2(\hat \nu)}^2]
+ \frac1n \\
&\lesssim \E\Big[\mathbbold{1}[\mathcal{E}]
\| \hat g_\eps - (g_\eps - \hat \nu(g_\eps))\|^2_{L^{\infty}(\hat\nu)}
\\
&\times \frac{1}{\eps} \Big( \frac{L}{\eps} \Big)^{d_\nu + 2}
\big( \Psi_\eps^{\hat \mu \hat \nu}(\hat g_\eps)
- \Psi_\eps^{\hat \mu \hat \nu}(g_\eps) \big)\Big] + \frac1n.
\end{align*}
Using $\mathbbold{1}[\mathcal{E}]\leqslant 1$
and Proposition~\ref{prop:dual_pointwise_control}
once more, we find
$$
\E[\|\hat g_\eps - (g_\eps - \hat \nu(g_\eps))\|_{L^2(\hat \nu)}^2]
\lesssim \frac{1}{\eps} \Big( \frac{L}{\eps} \Big)^{d_\nu + 2}
\E\big[
 \Psi_\eps^{\hat \mu \hat \nu}(\hat g_\eps)
- \Psi_\eps^{\hat \mu \hat \nu}(g_\eps)\big].
$$
By
Proposition~\ref{prop:entropic_dual_basics}
on marginal rounding
$$
\Psi_\eps^{\hat \mu \hat \nu}( g_\eps) \geqslant \hat \Phi_\eps( f_\eps, g_\eps).
$$ Hence
$$
\E[\|\hat g_\eps - (g_\eps - \hat \nu(g_\eps))\|_{L^2(\hat \nu)}^2]
\lesssim \frac{1}{\eps} \Big( \frac{L}{\eps} \Big)^{d_\nu + 2}
\E\big[
 \hat \Phi(\hat f_\eps, \hat g_\eps)
 - \hat \Phi_\eps(f_\eps, g_\eps)\big].
$$ Since $\E[\hat \Phi_\eps(f_\eps, g_\eps)]
= \Phi_\eps(f_\eps, g_\eps)$, the result
follows from Lemma~\ref{lem:manifold_bias}.
\end{proof}

\subsection{Fast rates for $g$-dual potentials in population norms}
\label{subsec:g_pop_norm}

In this section, we establish the remaining part
of Theorem~\ref{thm:dual_convergence}
by proving the following Lemma.

\begin{lemma}[Fast population norm convergence of $g$-dual potentials]\label{lem:pop_g_convergence}
We have
$$
\E[\|\hat g_\eps - g_\eps\|_{L^2(\nu)}^2]
\lesssim  \big( \eps^2 + \frac{1}{\eps^2}\big)
 \cdot \Big(\frac{L}{\eps}  \Big)^{13d_\nu + 8}
\cdot \frac{1}{n}.
$$
\end{lemma}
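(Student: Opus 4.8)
The plan is to combine three ingredients: the \emph{population} quadratic growth inequality for the $\nu$-side potential, concavity of the population dual, and the empirical-norm control already in Lemma~\ref{lem:emp_g_convergence} together with the reduction in Lemma~\ref{lem:f_control_by_g}. Under Assumptions~\ref{assum:nu_manifold} and~\ref{assum:nu}, $\nu$ satisfies a Poincar\'e inequality (this is precisely the spectral gap of the weighted Laplacian of $\nu$, Proposition~\ref{prop:weighted_laplacian}), so~\eqref{eqn:simone_qg} applies with source measure $\nu$. Taking the test pair there to be the canonical extensions $(\hat g_\eps, \hat f_\eps)$, and using $\Phi_\eps^{\nu\mu}(g_\eps, f_\eps) = S_\eps(\mu,\nu)$, this yields
$$
\|\hat g_\eps - g_\eps\|_{L^2(\nu)}^2 \lesssim \frac{L^2}{\eps}\big(\Phi_\eps(f_\eps, g_\eps) - \Phi_\eps(\hat f_\eps, \hat g_\eps)\big),
$$
the hidden constant absorbing the (fixed) Poincar\'e constant of $\nu$. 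Thus it suffices to bound the expected sub-optimality $\E[\Phi_\eps(f_\eps, g_\eps) - \Phi_\eps(\hat f_\eps, \hat g_\eps)]$ at a $1/n$ rate.

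To this end I would invoke concavity of $\Phi_\eps$ (Proposition~\ref{prop:entropic_dual_basics}): $\Phi_\eps(f_\eps, g_\eps) - \Phi_\eps(\hat f_\eps, \hat g_\eps) \leqslant \langle \nabla \Phi_\eps(\hat f_\eps, \hat g_\eps),\, (f_\eps - \hat f_\eps,\, g_\eps - \hat g_\eps)\rangle_{L^2(\mu)\times L^2(\nu)} \leqslant \|\nabla \Phi_\eps(\hat f_\eps, \hat g_\eps)\|_{L^2(\mu)\times L^2(\nu)}\big(\|\hat f_\eps - f_\eps\|_{L^2(\mu)} + \|\hat g_\eps - g_\eps\|_{L^2(\nu)}\big)$. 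Taking expectations, Cauchy--Schwarz, and AM--GM to move $\E[\|\hat g_\eps - g_\eps\|_{L^2(\nu)}^2]$ to the left, we are left with $\E[\|\hat f_\eps - f_\eps\|_{L^2(\mu)}^2]$, which by Lemma~\ref{lem:f_control_by_g} composed with Lemma~\ref{lem:emp_g_convergence} is $\lesssim (\eps^2 + \eps^{-2})(L/\eps)^{9d_\nu + 4}/n$, and with $\E[\|\nabla \Phi_\eps(\hat f_\eps, \hat g_\eps)\|_{L^2(\mu)\times L^2(\nu)}^2]$, the expected squared norm of the \emph{population} gradient at the \emph{empirical} potentials; bounding this last quantity at a $1/n$ rate is the heart of the argument.

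For the gradient, the empirical marginal equations~\eqref{eqn:emp_marginals} for $\hat p_\eps$ identify its two components with the centered averages $(\hat\nu - \nu)(\hat p_\eps(x,\cdot))$ and $(\hat\mu - \mu)(\hat p_\eps(\cdot, y))$; since the population marginals of $p_\eps$ in~\eqref{eqn:pop_marginals} vanish, each equals $\int (p_\eps - \hat p_\eps)(x,y)\,\ud\nu(y)$, respectively $\int(p_\eps-\hat p_\eps)(x,y)\,\ud\mu(x)$. I would work on the intersection of the high-probability events of Lemmas~\ref{lem:empirical_mass_balls} and~\ref{lem:manifold_density_sup_control} with the event that $\hat f_\eps - f_\eps$ and $\hat g_\eps - g_\eps$ are $O(\eps)$ pointwise; the latter holds with high probability because Lemma~\ref{lem:emp_g_convergence}, the reduction in Lemma~\ref{lem:f_control_by_g}, the $L$-Lipschitzness of the potentials, and the ball-mass bound of Lemma~\ref{lem:empirical_mass_balls} force these discrepancies to be small \emph{provided $n$ is large}, and we may assume so: if $n \lesssim (\eps^2+\eps^{-2})(L/\eps)^{13d_\nu+8}$ the claimed bound already dominates the trivial bound $\|\hat g_\eps - g_\eps\|_{L^2(\nu)}^2 \lesssim 1$ of Proposition~\ref{prop:dual_pointwise_control}, so this reduction is compatible with (and refines) the standing assumption~\eqref{eqn:n_large}. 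On this event, the log-Lipschitz control of Proposition~\ref{prop:consequences_of_lipschitz_cost} and the density bounds of Lemma~\ref{lem:manifold_density_sup_control} give $|p_\eps - \hat p_\eps|(x,y) \lesssim (L/\eps)^{d_\nu}\eps^{-1}\big(|\hat f_\eps - f_\eps|(x) + |\hat g_\eps - g_\eps|(y)\big)$, after which integration bounds $\|\nabla\Phi_\eps(\hat f_\eps, \hat g_\eps)\|^2$ in terms of $\|\hat f_\eps - f_\eps\|_{L^2(\mu)}^2$, $\|\hat g_\eps - g_\eps\|_{L^2(\nu)}^2$, and a genuine $1/n$ remainder coming from $\Var_{\mu\otimes\nu}(p_\eps)/n \lesssim (L/\eps)^{d_\nu}/n$ (the variance of $\tfrac1n\sum_j p_\eps(x,y_j)$ about $1$, exactly as in Lemma~\ref{lem:f_control_by_g} and~\eqref{eqn:gradient_calc}). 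Off the good event $\hat p_\eps$ is at worst $e^{O(1/\eps)}$, and the failure probabilities in Lemmas~\ref{lem:empirical_mass_balls} and~\ref{lem:manifold_density_sup_control} carry the compensating factor $e^{-10/\eps}/n$, contributing only $O(1/n)$.

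The delicate point --- and the main obstacle --- is that the $\|\hat g_\eps - g_\eps\|_{L^2(\nu)}^2$ term produced this way is \emph{self-referential} and must be absorbed back through the quadratic growth inequality; this closes only because the relevant coefficient carries an extra $1/n$ factor (morally, a centered empirical average of a function of small $L^2(\nu)$ norm has variance of order $\|\cdot\|_{L^2(\nu)}^2/n$), hence is $\ll 1$ in our range of $n$. Exposing that $1/n$ seems to require comparing $\hat p_\eps(x,\cdot)$ not to $p_\eps(x,\cdot)$ directly but to the density obtained from $\hat g_\eps$ by \emph{population}-marginal rounding, so that the residual empirical average is of a function of small $L^2(\nu)$ norm, and then estimating that average by a localized argument avoiding any (extrinsic-dimension-dependent) covering numbers of $\supp(\mu)$. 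Granting this, re-arranging the self-bounding inequality, inserting $\E[\|\hat f_\eps - f_\eps\|_{L^2(\mu)}^2] \lesssim (\eps^2+\eps^{-2})(L/\eps)^{9d_\nu+4}/n$, and tracking the accumulated powers of $L/\eps$ through the reductions yields the stated rate $(\eps^2 + \eps^{-2})(L/\eps)^{13 d_\nu + 8}/n$.
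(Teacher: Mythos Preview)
Your overall self-bounding strategy is sound, but the argument has a genuine gap at exactly the step you flag as ``the main obstacle,'' and your proposed workaround is not filled in. Concretely: after invoking population quadratic growth and concavity you need $\E\big[\|\nabla\Phi_\eps(\hat f_\eps,\hat g_\eps)\|^2_{L^2(\mu)\times L^2(\nu)}\big]$ at rate $1/n$. But the two components of this gradient are $1-\nu(\hat p_\eps(x,\cdot))$ and $1-\mu(\hat p_\eps(\cdot,y))$, and $\hat p_\eps$ depends on the \emph{entire} sample, so neither $(\hat\nu-\nu)(\hat p_\eps(x,\cdot))$ nor $(\hat\mu-\mu)(\hat p_\eps(\cdot,y))$ is a centered empirical average of a fixed function; the ``moral $\|\cdot\|_{L^2}^2/n$'' heuristic does not apply. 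Bounding instead through Lemma~\ref{lem:L2_density} produces a self-referential $\|\hat g_\eps-g_\eps\|_{L^2(\nu)}^2$ term with coefficient $\asymp \eps^{-2}(L/\eps)^{2d_\nu}$, which is \emph{not} small and cannot be absorbed regardless of how you tune the free parameter $a$ from Young's inequality (making $a$ small kills the $a\|\hat g_\eps-g_\eps\|^2$ term but blows up the $\tfrac{1}{a}\|\nabla\Phi_\eps\|^2$ term, and vice versa). Your population-marginal-rounding idea still leaves a $(\hat\nu-\nu)$ acting on a $\hat g_\eps$-dependent integrand, so the same obstruction recurs; ``granting this'' is precisely where the proof is missing.

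The paper avoids this trap by two changes of viewpoint. First, it does \emph{not} use~\eqref{eqn:simone_qg} but applies the Poincar\'e inequality on $\nu$ directly to $\hat g_\eps-g_\eps$, reducing to $\|\nabla \hat g_\eps-\nabla g_\eps\|_{L^2(\nu)}^2$; these gradients are conditional expectations $\E_{\hat\pi_\eps}[\nabla_y c\mid y]$ and $\E_{\pi_\eps}[\nabla_y c\mid y]$, i.e.\ map-type quantities amenable to the Pinsker/rounding machinery of Section~\ref{subsec:empirical_map_proofs}. Second---and this is the key maneuver---that machinery is run through the \emph{semi-empirical} dual $\Phi_\eps^{\hat\mu\nu}$ rather than the full population dual $\Phi_\eps$. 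The point is that $(f_\eps,g_\eps)$ satisfies the $\nu$-marginal exactly, so the $\hat\mu$-component of $\nabla\Phi_\eps^{\hat\mu\nu}(f_\eps,g_\eps)$ vanishes, and the surviving $\nu$-component is $1-\tfrac{1}{n}\sum_i p_\eps(x_i,y)$, a bona fide empirical average of the \emph{fixed} function $p_\eps$ with expected squared $L^2(\nu)$ norm $\lesssim (L/\eps)^{d_\nu}/n$ (Lemma~\ref{lem:semi_empirical_bound}). The residual self-referential $\|\hat g_\eps-g_\eps\|_{L^2(\nu)}^2$ terms that do appear (via Lemma~\ref{lem:L2_density}, inside Lemma~\ref{lem:g_maps_by_semi_empirical}) now carry their \emph{own} free Young's parameter, independent of the earlier one, and can be made small; together with a separate bound on the shift $\E[\nu(\hat g_\eps)^2]$ (Lemma~\ref{lem:nu_hat_g}) this closes the self-bounding loop.
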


Lemma~\ref{lem:pop_g_convergence}
is proved
with another self-bounding
argument that we outline here, ignoring $\eps/L$ dependence for clarity. Note that
we will frequently use the canonical extension
of $\hat g_\eps$ to consider $(\hat f_\eps, \hat g_\eps)$
as a dual variable pair
for the semi-empirical problem between $\hat \mu$ and $\nu$.
In particular, notation such as $\nu(\hat g_\eps)$
is not referring to integration over any part of
the sample $y_1, \ldots, y_n \sim \nu^{\otimes n}$,
but rather over an \emph{independent} draw from $\nu$.
\begin{enumerate}
    \item[1.] Lemma~\ref{lem:pop_g_by_g_maps}: Use a Poincaré inequality
    on $\nu$ (which follows from our assumptions and is described in
    Proposition~\ref{prop:weighted_poincare} in Appendix~\ref{subsec:additional_embedded})
    to control $\|\hat g_\eps - g_\eps - \nu(\hat g_\eps) \|_{L^2(\nu)}^2$ in terms of entropic OT
    maps defined on $N$
    \item[2.] Lemma~\ref{lem:g_maps_by_semi_empirical}:
    Use the same analysis
    as in the proof of the empirical norm convergence
    of the entropic OT maps (section~\ref{subsec:empirical_map_proofs})
    to control the previous term
    in terms of $1/n$, the semi-empirical dual
    objective $\Phi^{\hat \mu\nu}_\eps$, 
    and $\|\hat g_\eps - g_\eps\|_{L^2(\nu)}^2$, with
    a free parameter from Young's inequality.
    \item[3.] Lemma~\ref{lem:semi_empirical_bound}: Control the semi-empirical objective
    term
    with concavity to yield a bound in terms of $1/n$
    and $\|\hat g_\eps - g_\eps\|_{L^2(\nu)}^2$, with
    a free parameter from Young's inequality.
    \item[4.] Lemma~\ref{lem:nu_hat_g}: Bound the shift $\E[ \nu(\hat g_\eps)^2]$ by $1/n$ using the semi-discrete
    objective once more
    \item[5.] Use the free parameters from Young's
    inequality to conclude
    a $1/n$ bound on $\|\hat g_\eps - g_\eps\|^2_{L^2(\nu)}$.
\end{enumerate}
The formal statements of the lemmas outlined
above are given below.

\begin{lemma}\label{lem:pop_g_by_g_maps} We have
$$
\E[\|\hat g_\eps - g_\eps - \nu(\hat g_\eps)\|_{L^2(\nu)}^2]
\lesssim 
\E\Big[\Big\|\E_{\pi_\eps}[\nabla_y c(x, y) \, | \, y]
-\frac1n \sum_{i = 1}^n  \nabla_y c(x_i, y) \hat p_\eps(x_i, y)\Big\|_{L^2(\nu)}^2
\Big].
$$
\end{lemma}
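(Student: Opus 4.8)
The plan is to combine a Poincar\'e inequality for $\nu$ on the manifold $N$ with exact formulas for the gradients along $N$ of the population potential $g_\eps$ and of the canonically extended empirical potential $\hat g_\eps$. Since $\nu(g_\eps)=0$ by our normalization, the function $u:=\hat g_\eps - g_\eps - \nu(\hat g_\eps)$ has $\nu(u)=0$, and both $g_\eps$ and the extension $\hat g_\eps$ are $L$-Lipschitz by Proposition~\ref{prop:consequences_of_lipschitz_cost}, hence $u\in W^{1,2}(\nu)$. The weighted Poincar\'e inequality for $\nu$ on $N$ (Proposition~\ref{prop:weighted_poincare}, valid under Assumptions~\ref{assum:nu_manifold}--\ref{assum:nu} with a constant depending only on $N$ and $\nu$, which we suppress) then gives, pointwise in the sample,
$$
\|\hat g_\eps - g_\eps - \nu(\hat g_\eps)\|_{L^2(\nu)}^2 \lesssim \|\nabla_N(\hat g_\eps - g_\eps)\|_{L^2(\nu)}^2,
$$
where $\nabla_N$ is the intrinsic gradient on $N$ and the additive constant $\nu(\hat g_\eps)$ has dropped out.

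Next I compute the two gradients. The canonical extension~\eqref{eqn:geta_extension} makes the marginal identity $\frac1n\sum_{i=1}^n \hat p_\eps(x_i,y)=1$ hold for every $y$ in a neighbourhood of $\supp(\nu)$ in $N$; differentiating it in $y$ and using $\nabla_y\hat p_\eps(x_i,y) = -\eps^{-1}\hat p_\eps(x_i,y)\,(\nabla_y c(x_i,y)-\nabla_y\hat g_\eps(y))$, the term $\nabla_y\hat g_\eps$ factors out because $\frac1n\sum_i\hat p_\eps(x_i,y)=1$, giving $\nabla_y\hat g_\eps(y) = \frac1n\sum_{i=1}^n \hat p_\eps(x_i,y)\,\nabla_y c(x_i,y)$. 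Differentiating the analogous population relation $\int p_\eps(x,y)\,\ud \mu(x)=1$ (differentiation under the integral being justified by $\|\nabla_y c\|\leqslant L$ and the bound on $p_\eps$ from Lemma~\ref{lem:manifold_density_sup_control}), and using that conditionally on $y$ the law of $x$ under $\pi_\eps$ has $\mu$-density $p_\eps(\cdot,y)$, yields
$$
\nabla_y g_\eps(y) = \int \nabla_y c(x,y)\,p_\eps(x,y)\,\ud \mu(x) = \E_{\pi_\eps}[\nabla_y c(x,y)\,|\,y].
$$
Since the intrinsic gradient on $N$ is the orthogonal projection of the ambient $\R^d$ gradient onto the tangent space, $\|\nabla_N(\hat g_\eps - g_\eps)(y)\|\leqslant\|\nabla_y(\hat g_\eps - g_\eps)(y)\|$ pointwise; substituting the two formulas and integrating against $\nu$ gives
$$
\|\nabla_N(\hat g_\eps - g_\eps)\|_{L^2(\nu)}^2 \leqslant \Big\|\frac1n\sum_{i=1}^n\hat p_\eps(x_i,\cdot)\nabla_y c(x_i,\cdot) - \E_{\pi_\eps}[\nabla_y c(x,\cdot)\,|\,\cdot]\Big\|_{L^2(\nu)}^2,
$$
and taking expectations over the samples concludes the proof.

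These computations presume $c(x,\cdot)$ is differentiable, which is already implicit in the statement (otherwise $\nabla_y c$ is undefined); if only Lipschitzness is available the identities still hold $\nu$-a.e.\ by Rademacher's theorem, which is all the $L^2(\nu)$ estimate needs. I do not anticipate a genuine obstacle here: the only points requiring care are that the canonical extension $\hat g_\eps$ really satisfies the marginal identity on a neighbourhood in $N$ (immediate from~\eqref{eqn:geta_extension}, so it may be differentiated), and that invoking the Poincar\'e inequality for $\nu$ is legitimate under the constant-suppression conventions of this section.
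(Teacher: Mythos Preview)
Your proposal is correct and follows essentially the same route as the paper: apply the Poincar\'e inequality for $\nu$ on $N$ (Proposition~\ref{prop:weighted_poincare}) to $\hat g_\eps - g_\eps$, bound the intrinsic gradient by the ambient one (Proposition~\ref{prop:embedded_gradient_vs_ambient}), and compute $\nabla g_\eps$ and $\nabla \hat g_\eps$ by differentiating the respective marginal constraints. The paper differentiates the closed-form expression $e^{-g_\eps/\eps}=\int e^{-\eps^{-1}(c-f_\eps)}\ud\mu$ directly rather than the identity $\int p_\eps\,\ud\mu=1$, but this is the same computation, and your remark about Rademacher's theorem for the Lipschitz-only case is a reasonable addendum.
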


\begin{lemma}\label{lem:g_maps_by_semi_empirical}
For all $a > 0$,
we have
\begin{align*}
\E\Big[\Big\|\E_{\pi_\eps}[\nabla_y c(x, y) \, | \, y]
-\frac1n \sum_{i = 1}^n  \nabla_y c(x_i, y)& \hat p_\eps(x_i, y)\Big\|_{L^2(\nu)}^2
\Big]
\lesssim 
\frac{L^2}{\eps} \E[\Phi^{\hat \mu \nu}_\eps( \hat f_\eps, \hat g_\eps)
- \Phi^{\hat \mu \nu}_\eps( f_\eps, g_\eps)] \\
&+ a \frac{1}{\eps} \Big(\frac{L}{\eps} \Big)^{2 d_\nu + 2} 
 \E[ \|\hat g_\eps - g_\eps\|^2_{L^2(\nu)}]\\
 &+ L\big(\frac{a}{\eps^2} + \frac1a\big) \big( 
 \eps^2 + \frac{1}{\eps^2} \big)\Big(\frac{L}{\eps} \Big)^{11 d_\nu + 5} 
\cdot \frac1n.
\end{align*}
\end{lemma}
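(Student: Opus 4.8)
The plan is to imitate, in the semi-empirical setting, the two-step argument of Section~\ref{subsec:empirical_map_proofs} (Lemmas~\ref{lem:map_mse_kl} and~\ref{lem:map_mse_massage}), now with the roles of $x$ and $y$ interchanged. Introduce the $g$-marginal rounding of the population potential $f_\eps$ over the $x$-sample, $\tilde g_\eps(y) := -\eps\log(\frac1n\sum_{i=1}^n e^{-\eps^{-1}(c(x_i,y) - f_\eps(x_i))})$, and set $\tilde p_\eps(x,y) := e^{-\eps^{-1}(c(x,y) - f_\eps(x) - \tilde g_\eps(y))}$, so that $\frac1n\sum_i\tilde p_\eps(x_i,y) = 1$ for every $y\in\supp(\nu)$ and hence $(\hat\mu\otimes\nu)(\tilde p_\eps) = 1$. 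By Young's inequality I would split the target quantity into a main term $\E[\|\frac1n\sum_i\nabla_y c(x_i,\cdot)(\hat p_\eps - \tilde p_\eps)(x_i,\cdot)\|_{L^2(\nu)}^2]$ and two lower-order terms comparing $\tilde p_\eps$ to $p_\eps$ and $\frac1n\sum_i\nabla_y c(x_i,\cdot)p_\eps(x_i,\cdot)$ to $\E_{\pi_\eps}[\nabla_y c\mid\cdot]$. Exactly as in the proof of Lemma~\ref{lem:map_mse_massage} --- using the identity $\tilde p_\eps(x_i,y) = p_\eps(x_i,y)/(\frac1n\sum_k p_\eps(x_k,y))$, the bound $\|\nabla_y c\|\leqslant L$, cancellation of cross-terms, and $\int p_\eps(x,y)\ud\mu(x)=1$ --- the two lower-order terms are $\lesssim \frac{L^2}{n}\|p_\eps\|^2_{L^2(\mu\otimes\nu)}$, which is $\lesssim \frac{L^2}{n}(\frac L\eps)^{d_\nu}$ by Lemma~\ref{lem:manifold_density_sup_control}; these are absorbed into the final $\frac1n$ term.

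For the main term, for each $y$ both $\frac1n\hat p_\eps(\cdot,y)$ (by the canonical extension~\eqref{eqn:geta_extension} of $\hat g_\eps$) and $\frac1n\tilde p_\eps(\cdot,y)$ are probability vectors on $\{x_1,\dots,x_n\}$, so the triangle and Pinsker inequalities give $\|\frac1n\sum_i\nabla_y c(x_i,y)(\hat p_\eps - \tilde p_\eps)(x_i,y)\| \lesssim L\sqrt{\KL{\frac1n\hat p_\eps(\cdot,y)}{\frac1n\tilde p_\eps(\cdot,y)}}$. Writing $w(x) := \int\hat p_\eps(x,y)\ud\nu(y)$, computing the KL in terms of the potentials as in Lemma~\ref{lem:map_mse_kl} and integrating against $\nu$ yields $\int\KL{\frac1n\hat p_\eps(\cdot,y)}{\frac1n\tilde p_\eps(\cdot,y)}\ud\nu(y) = \frac1\eps\big(\hat\mu(w(\hat f_\eps - f_\eps)) + \nu(\hat g_\eps) - \nu(\tilde g_\eps)\big)$. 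Using $(\hat\mu\otimes\nu)(\tilde p_\eps)=1$ to write $\nu(\tilde g_\eps) = \Phi^{\hat\mu\nu}_\eps(f_\eps,\tilde g_\eps) - \hat\mu(f_\eps)$, the identity $\nu(\hat g_\eps) = \Phi^{\hat\mu\nu}_\eps(\hat f_\eps,\hat g_\eps) - \hat\mu(\hat f_\eps) + \eps(\hat\mu(w)-1)$, and Proposition~\ref{prop:entropic_dual_basics} on marginal rounding ($\Phi^{\hat\mu\nu}_\eps(f_\eps,\tilde g_\eps)\geqslant\Phi^{\hat\mu\nu}_\eps(f_\eps,g_\eps)$), this rearranges to
\[
\eps\int\KL{\tfrac1n\hat p_\eps(\cdot,y)}{\tfrac1n\tilde p_\eps(\cdot,y)}\ud\nu(y) \leqslant \Phi^{\hat\mu\nu}_\eps(\hat f_\eps,\hat g_\eps) - \Phi^{\hat\mu\nu}_\eps(f_\eps,g_\eps) - \hat\mu\big((1-w)(\hat f_\eps - f_\eps + \eps)\big).
\]
The first difference is precisely the semi-empirical objective gap appearing in the statement, and the prefactor $\frac{L^2}{\eps}$ arises from the Pinsker step.

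The main obstacle is to control the $\nu$-marginal-defect correction $\hat\mu((1-w)(\hat f_\eps - f_\eps + \eps)) = \langle\nabla_f\Phi^{\hat\mu\nu}_\eps(\hat f_\eps,\hat g_\eps), \hat f_\eps - f_\eps + \eps\rangle_{L^2(\hat\mu)}$, which does not vanish because $(\hat f_\eps,\hat g_\eps)$ is optimal for the fully empirical problem, not the semi-empirical one, so $w\not\equiv 1$. I would use $1 - w(x) = (\hat\nu - \nu)(\hat p_\eps(x,\cdot))$ (from the canonical extension~\eqref{eqn:feta_extension} of $\hat f_\eps$) together with the factorization $\hat p_\eps(x,y) = p_\eps(x,y)\,e^{\eps^{-1}(\hat f_\eps - f_\eps)(x)}e^{\eps^{-1}(\hat g_\eps - g_\eps)(y)}$, and then, on the high-probability event of Lemmas~\ref{lem:empirical_mass_balls} and~\ref{lem:manifold_density_sup_control}, linearize the exponentials: the Lipschitz control of Proposition~\ref{prop:consequences_of_lipschitz_cost}, the empirical ball-mass lower bound of Lemma~\ref{lem:empirical_mass_balls}, and the assumption~\eqref{eqn:n_large} that $n$ is large force the sup-norms of $\hat f_\eps - f_\eps$ and $\hat g_\eps - g_\eps$ to be small enough that $e^z - 1$ may be replaced by $O(z)$. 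This reduces $\E[\|1-w\|^2_{L^2(\hat\mu)}]$ to (i) a genuine empirical-process term $\E[\|(\hat\nu - \nu)(p_\eps(\cdot,\cdot))\|^2_{L^2(\hat\mu)}] = \frac1n\E_{x\sim\mu}[\Var_\nu(p_\eps(x,\cdot))]\lesssim \frac1n(\frac L\eps)^{d_\nu}$, exactly the kind of variance computation behind~\eqref{eqn:gradient_calc}, and (ii) perturbation terms governed by $\|\hat f_\eps - f_\eps\|^2_{L^2(\hat\mu)}$, $\|\hat g_\eps - g_\eps\|^2_{L^2(\hat\nu)}$, and $\|\hat g_\eps - g_\eps\|^2_{L^2(\nu)}$. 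Plugging in the already-established fast empirical-norm rates from Lemma~\ref{lem:emp_g_convergence} (hence Lemma~\ref{lem:f_control_by_g}), applying Cauchy--Schwarz to the correction term, and finally using Young's inequality with the free parameter $a$ to peel off the term proportional to $\E[\|\hat g_\eps - g_\eps\|^2_{L^2(\nu)}]$ from the $\frac1n$ terms, I expect to obtain the stated inequality, with the powers of $L/\eps$ accumulating through the sup-norm and ball-mass estimates and through Lemma~\ref{lem:f_control_by_g}. The exponent bookkeeping is routine; the genuinely delicate point is checking that the sup-norm control on the potential differences, available under~\eqref{eqn:n_large}, is strong enough to linearize the exponentials without reintroducing factors exponential in $1/\eps$.
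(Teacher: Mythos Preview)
Your setup and the Pinsker/KL argument match the paper essentially verbatim: the same $\tilde g_\eps,\tilde p_\eps$, the same three-term split with the two lower-order terms handled as in Lemma~\ref{lem:map_mse_massage}, and the same rewriting of the integrated KL as $\Phi^{\hat\mu\nu}_\eps(\hat f_\eps,\hat g_\eps)-\Phi^{\hat\mu\nu}_\eps(f_\eps,\tilde g_\eps)$ plus a marginal-defect inner product $\langle w-1,\hat f_\eps-f_\eps\rangle_{L^2(\hat\mu)}$, followed by the rounding inequality $\Phi^{\hat\mu\nu}_\eps(f_\eps,\tilde g_\eps)\geqslant\Phi^{\hat\mu\nu}_\eps(f_\eps,g_\eps)$. (Your extra ``$+\eps$'' is harmless since $\hat\mu(w)=(\hat\mu\otimes\nu)(\hat p_\eps)=1$ by the canonical extension of $\hat g_\eps$.)

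The point where you diverge, and where there is a real issue, is the treatment of $\|1-w\|_{L^2(\hat\mu)}^2$. Your plan is to write $1-w=(\hat\nu-\nu)(\hat p_\eps(x,\cdot))$, factor $\hat p_\eps=p_\eps\,e^{\eps^{-1}(\hat f_\eps-f_\eps)}e^{\eps^{-1}(\hat g_\eps-g_\eps)}$, and then \emph{linearize} $e^z-1\approx z$ by arguing that assumption~\eqref{eqn:n_large} forces $\|\hat f_\eps-f_\eps\|_{L^\infty}$ and $\|\hat g_\eps-g_\eps\|_{L^\infty}$ to be $o(\eps)$. That last claim is not justified: converting the $L^2(\hat\nu)$ rate of Lemma~\ref{lem:emp_g_convergence} into $L^\infty$ via Lipschitzness and the ball-mass bound of Lemma~\ref{lem:empirical_mass_balls} would require $n\gtrsim(L/\eps)^{7d_\nu+6}$ or so, which is strictly stronger than~\eqref{eqn:n_large}. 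So the ``genuinely delicate point'' you flag is, as you suspect, a genuine obstacle for your route.

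The paper sidesteps this entirely. It writes $w(x)-1=\int(\hat p_\eps-p_\eps)(x,y)\,\ud\nu(y)$ (using $\int p_\eps(x,y)\ud\nu(y)=1$), bounds by Jensen to get $\|w-1\|_{L^2(\hat\mu)}^2\leqslant\|\hat p_\eps-p_\eps\|_{L^2(\hat\mu\otimes\nu)}^2$, and then invokes Lemma~\ref{lem:L2_density}. That lemma uses the elementary inequality $|e^a-e^b|\leqslant(e^a\vee e^b)\,|a-b|$ together with the \emph{density} sup-bound $\|\hat p_\eps\|_{L^\infty}\lesssim(L/\eps)^{d_\nu}$ from Lemma~\ref{lem:manifold_density_sup_control}; no smallness of the potential differences is needed. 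This immediately produces the term $\frac{a}{\eps^2}(L/\eps)^{2d_\nu}\E\|\hat g_\eps-g_\eps\|_{L^2(\nu)}^2$ (giving the second line of the statement after the $L^2/\eps$ prefactor) and terms in $\E\|\hat f_\eps-f_\eps\|_{L^2(\hat\mu)}^2$, which are then closed with Lemmas~\ref{lem:f_control_by_g} and~\ref{lem:emp_g_convergence}. Replacing your linearization step by this argument fixes the proof with no other changes.
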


\begin{lemma}\label{lem:semi_empirical_bound}
For all $a > 0$, we have
$$
\E[\Phi^{\hat \mu \nu}_\eps( \hat f_\eps, \hat g_\eps)
- \Phi^{\hat \mu \nu}_\eps( f_\eps, g_\eps)]
\lesssim \frac1a \Big( \frac{L}{\eps} \Big)^{d_\nu} \cdot
\frac1n + a\E[\|\hat g_\eps - g_\eps\|^2_{L^2(\nu)}].
$$
\end{lemma}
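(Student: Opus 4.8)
The plan is to apply the concavity upper bound~\eqref{eqn:entropic_dual_concavity_upper} for the semi-empirical dual $\Phi_\eps^{\hat\mu\nu}$ from Proposition~\ref{prop:entropic_dual_basics}, at the pair $h_0 = (\hat f_\eps, \hat g_\eps)$ with reference point $h_1 = (f_\eps, g_\eps)$, giving
$$
\Phi_\eps^{\hat\mu\nu}(\hat f_\eps, \hat g_\eps) - \Phi_\eps^{\hat\mu\nu}(f_\eps, g_\eps)
\leqslant \big\langle \nabla\Phi_\eps^{\hat\mu\nu}(f_\eps, g_\eps),\, (\hat f_\eps - f_\eps,\, \hat g_\eps - g_\eps)\big\rangle_{L^2(\hat\mu)\times L^2(\nu)}.
$$
The crucial observation is that $(f_\eps, g_\eps)$ already satisfies the marginal constraint in $x$ against $\nu$: by the population marginal equations~\eqref{eqn:pop_marginals}, which by continuity (Proposition~\ref{prop:consequences_of_lipschitz_cost}) hold at every point of $\supp(\mu)$, the first component of $\nabla\Phi_\eps^{\hat\mu\nu}(f_\eps, g_\eps)$ -- the marginal error $x \mapsto 1 - \int p_\eps(x, y)\,\ud\nu(y)$ -- vanishes identically on $\mathcal{X}$. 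Hence the inner product reduces to its second component alone, which involves only $\hat g_\eps - g_\eps$; in particular no control on $\|\hat f_\eps - f_\eps\|$ is required here, which matters because this lemma feeds the self-bounding argument for $\|\hat g_\eps - g_\eps\|_{L^2(\nu)}^2$.

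The second component of $\nabla\Phi_\eps^{\hat\mu\nu}(f_\eps, g_\eps)$ is the $y$-marginal error of $(f_\eps, g_\eps)$ against $\hat\mu$, namely $y \mapsto 1 - \frac1n\sum_{i=1}^n p_\eps(x_i, y)$, so the bound becomes
$$
\Phi_\eps^{\hat\mu\nu}(\hat f_\eps, \hat g_\eps) - \Phi_\eps^{\hat\mu\nu}(f_\eps, g_\eps)
\leqslant \int \big(\hat g_\eps(y) - g_\eps(y)\big)\Big(1 - \frac1n\sum_{i=1}^n p_\eps(x_i, y)\Big)\,\ud\nu(y).
$$
Young's inequality then separates the right-hand side into $a\,\|\hat g_\eps - g_\eps\|_{L^2(\nu)}^2$ plus a constant multiple of $\frac{1}{a}\int\big(1 - \frac1n\sum_{i=1}^n p_\eps(x_i, y)\big)^2\,\ud\nu(y)$. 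Taking expectations over $\mathcal{X}$ and using the remaining marginal constraint $\int p_\eps(x, y)\,\ud\mu(x) = 1$ for every $y \in \supp(\nu)$ to cancel the cross terms -- the same moment computation used in section~\ref{subsec:value_proof} -- yields $\E\big[\int\big(1 - \frac1n\sum_{i=1}^n p_\eps(x_i, y)\big)^2\,\ud\nu(y)\big] = \frac1n\int \Var_\mu\big(p_\eps(\cdot, y)\big)\,\ud\nu(y) \leqslant \frac1n\|p_\eps\|_{L^2(\mu\otimes\nu)}^2$.

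It remains to bound $\|p_\eps\|_{L^2(\mu\otimes\nu)}^2$. Since $p_\eps$ is a probability density with respect to $\mu\otimes\nu$, we have $\|p_\eps\|_{L^2(\mu\otimes\nu)}^2 \leqslant \|p_\eps\|_{L^\infty(\mu\otimes\nu)}\cdot(\mu\otimes\nu)(p_\eps) = \|p_\eps\|_{L^\infty(\mu\otimes\nu)} \lesssim (L/\eps)^{d_\nu}$ by Lemma~\ref{lem:manifold_density_sup_control} (alternatively, Lemma~\ref{lem:variance_control} combined with Proposition~\ref{prop:N_covering_numbers} gives the same power of $L/\eps$). Assembling the displays gives the claimed inequality. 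The argument is essentially routine; the only delicate points are recognizing that the $x$-component of the gradient vanishes -- so that $\|\hat f_\eps - f_\eps\|$ never enters -- and retaining the power $d_\nu$ rather than $2d_\nu$ by exploiting that $p_\eps$ integrates to one rather than crudely squaring the $L^\infty$ bound.
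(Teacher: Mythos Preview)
Your proof is correct and follows essentially the same route as the paper's: concavity of $\Phi_\eps^{\hat\mu\nu}$ at $(f_\eps,g_\eps)$, vanishing of the $\hat\mu$-component of the gradient via the population $\nu$-marginal constraint, Young's inequality, and the variance computation bounding the remaining term by $\|p_\eps\|_{L^2(\mu\otimes\nu)}^2/n$. The only cosmetic difference is that the paper invokes Lemma~\ref{lem:variance_control} together with Proposition~\ref{prop:N_covering_numbers} to get $(L/\eps)^{d_\nu}$, which you mention as the alternative to your $L^\infty$ route via Lemma~\ref{lem:manifold_density_sup_control}.
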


\begin{lemma}\label{lem:nu_hat_g}
We have
$$
\E[\nu(\hat g_\eps)^2] \lesssim \big( \eps^2 + \frac{1}{\eps^2}\big) \Big(\frac{L}{\eps}  \Big)^{11d_\nu + 4}
\cdot \frac{1}{n}
$$
\end{lemma}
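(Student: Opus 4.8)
The plan is to express the out-of-sample shift $\nu(\hat g_\eps)$ through the \emph{semi-empirical} dual objective $\Phi^{\hat\mu\nu}_\eps$ and then play concavity against the marginal equations, arranged so that the population norm quantity $\|\hat g_\eps - g_\eps\|_{L^2(\nu)}^2$ --- which Theorem~\ref{thm:dual_convergence} only establishes with this lemma as an input --- never enters. First I would record an exact identity: since $\hat g_\eps$ is canonically extended via~\eqref{eqn:geta_extension}, the marginal $\frac1n\sum_i \hat p_\eps(x_i, y) = 1$ holds for every $y \in \supp(\nu)$, whence $(\hat\mu\otimes\nu)(\hat p_\eps) = 1$ and $\Phi^{\hat\mu\nu}_\eps(\hat f_\eps, \hat g_\eps) = \hat\mu(\hat f_\eps) + \nu(\hat g_\eps)$; likewise $\int p_\eps(x, y)\ud\nu(y) = 1$ for $\mu$-a.e.\ $x$ from~\eqref{eqn:pop_marginals} gives $\Phi^{\hat\mu\nu}_\eps(f_\eps, g_\eps) = \hat\mu(f_\eps)$ almost surely. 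Subtracting, $\nu(\hat g_\eps) = \Delta - \hat\mu(\hat f_\eps - f_\eps)$ with $\Delta := \Phi^{\hat\mu\nu}_\eps(\hat f_\eps, \hat g_\eps) - \Phi^{\hat\mu\nu}_\eps(f_\eps, g_\eps)$. The term $\E[(\hat\mu(\hat f_\eps - f_\eps))^2] \leq \E[\|\hat f_\eps - f_\eps\|_{L^2(\hat\mu)}^2]$ is already controlled by Lemma~\ref{lem:f_control_by_g} together with Lemma~\ref{lem:emp_g_convergence}, so it remains to bound $\E[\Delta^2]$, and as $\Delta$ has no definite sign I would treat its positive and negative parts separately.

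For the positive part, both $(\hat f_\eps,\hat g_\eps)$ and $(f_\eps, g_\eps)$ are feasible dual pairs for the $(\hat\mu,\nu)$-problem, so $\Delta \leq S_\eps(\hat\mu,\nu) - \Phi^{\hat\mu\nu}_\eps(f_\eps, g_\eps) =: \Delta_1$, a nonnegative quantity. Applying the concavity inequality~\eqref{eqn:entropic_dual_concavity_upper} to $\Phi^{\hat\mu\nu}_\eps$ and noting that the $f$-component of $\nabla\Phi^{\hat\mu\nu}_\eps(f_\eps, g_\eps)$ vanishes (the $\nu$-marginal of $p_\eps$), one gets $\Delta_1 \leq \|1 - \tfrac1n\sum_i p_\eps(x_i, \cdot)\|_{L^2(\nu)}\cdot\|g_\eps^{\hat\mu\nu} - g_\eps\|_{L^2(\nu)}$, where $(f_\eps^{\hat\mu\nu}, g_\eps^{\hat\mu\nu})$ are the optimal $(\hat\mu,\nu)$-potentials normalized by $\nu(g_\eps^{\hat\mu\nu}) = 0$; crucially, only the $O(1)$ boundedness of the second factor is needed, which is the pointwise control of Proposition~\ref{prop:dual_pointwise_control}, whose proof applies verbatim to the semi-empirical potentials. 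Squaring and running the variance computation behind~\eqref{eqn:gradient_calc} --- using $\E[\tfrac1n\sum_i p_\eps(x_i, y)] = 1$ for $\nu$-a.e.\ $y$ --- gives $\E[\Delta_1^2] \lesssim \tfrac1n\|p_\eps\|_{L^2(\mu\otimes\nu)}^2 \lesssim \tfrac1n(L/\eps)^{d_\nu}$ by Lemma~\ref{lem:manifold_density_sup_control}. Hence $\E[((\nu(\hat g_\eps))_+)^2] \lesssim \E[\Delta_1^2] + \E[\|\hat f_\eps - f_\eps\|_{L^2(\hat\mu)}^2]$.

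For the negative part I would instead apply~\eqref{eqn:entropic_dual_concavity_lower} at $(\hat f_\eps, \hat g_\eps)$: the canonical extension makes the $g$-component of $\nabla\Phi^{\hat\mu\nu}_\eps(\hat f_\eps, \hat g_\eps)$ vanish, while its $f$-component at $x_i\in\X$ is $1 - \int\hat p_\eps(x_i, y)\ud\nu(y) = (\hat\nu - \nu)(\hat p_\eps(x_i, \cdot))$ by the empirical marginal~\eqref{eqn:emp_marginals}, so $\Delta \geq -\|(\hat\nu - \nu)(\hat p_\eps(\cdot, \cdot))\|_{L^2(\hat\mu)}\cdot\|\hat f_\eps - f_\eps\|_{L^2(\hat\mu)}$. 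On the event $\mathcal{E}$ of Lemma~\ref{lem:manifold_density_sup_control} one has $\|\hat p_\eps\|_{L^\infty(\mu\otimes\nu)}\lesssim(L/\eps)^{d_\nu}$, hence $|(\hat\nu - \nu)(\hat p_\eps(x_i, \cdot))| \leq \hat\nu(\hat p_\eps(x_i, \cdot)) + \nu(\hat p_\eps(x_i, \cdot)) \lesssim (L/\eps)^{d_\nu}$ and $\|(\hat\nu - \nu)(\hat p_\eps(\cdot, \cdot))\|_{L^2(\hat\mu)}^2\lesssim(L/\eps)^{2d_\nu}$, while on $\mathcal{E}^c$ (probability $\lesssim \tfrac1n$) I would use the deterministic bound $\nu(\hat g_\eps)^2\lesssim1$ from Proposition~\ref{prop:dual_pointwise_control}. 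This yields $\E[((\nu(\hat g_\eps))_-)^2] \lesssim (L/\eps)^{2d_\nu}\E[\|\hat f_\eps - f_\eps\|_{L^2(\hat\mu)}^2] + \tfrac1n$. Combining the two halves with $\E[\|\hat f_\eps - f_\eps\|_{L^2(\hat\mu)}^2]\lesssim(\eps^2 + \tfrac1{\eps^2})(L/\eps)^{9d_\nu+4}\tfrac1n$ from Lemmas~\ref{lem:f_control_by_g} and~\ref{lem:emp_g_convergence} gives $\E[\nu(\hat g_\eps)^2]\lesssim(\eps^2 + \tfrac1{\eps^2})(L/\eps)^{11d_\nu+4}\tfrac1n$, as claimed.

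The hard part is getting the asymmetry right so as to avoid circularity: the obvious moves --- expanding $\nu(\hat g_\eps) = (\nu - \hat\nu)(\hat g_\eps)$ directly, or bounding $\Delta$ by concavity at $(f_\eps, g_\eps)$ --- all resurrect $\|\hat g_\eps - g_\eps\|_{L^2(\nu)}^2$, precisely the object being built toward. The resolution is to bound the positive direction through the semi-empirical \emph{optimum} (so only boundedness, not any rate, of $g_\eps^{\hat\mu\nu}$ is used) and the negative direction by concavity at $(\hat f_\eps, \hat g_\eps)$ (so the vanishing $g$-gradient leaves only the already-controlled empirical-norm error $\|\hat f_\eps - f_\eps\|_{L^2(\hat\mu)}$). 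A secondary nuisance is the sample dependence of $\hat p_\eps$ inside the marginal-violation factor $(\hat\nu - \nu)(\hat p_\eps(x_i, \cdot))$, which I would handle bluntly via the high-probability sup bound of Lemma~\ref{lem:manifold_density_sup_control} rather than any empirical-process argument.
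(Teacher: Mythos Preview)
Your proposal is correct and follows essentially the same route as the paper's proof. Both start from the identity $\nu(\hat g_\eps) = \Delta + \hat\mu(f_\eps - \hat f_\eps)$ and control $|\Delta|$ via the two concavity inequalities of Proposition~\ref{prop:entropic_dual_basics}: one at $(f_\eps, g_\eps)$ (where the $f$-gradient vanishes and the $g$-factor is absorbed by an $O(1)$ pointwise bound) and one at $(\hat f_\eps, \hat g_\eps)$ (where the $g$-gradient vanishes and the remaining factor is handled by $\|\hat p_\eps\|_{L^\infty}$ on the high-probability event of Lemma~\ref{lem:manifold_density_sup_control}).

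The only differences are cosmetic. The paper bounds $|\Delta|$ directly by the sum of the two gradient inner products, whereas you split $\nu(\hat g_\eps)$ into its positive and negative parts and route each through the relevant inequality; this yields the same terms. For the upper bound on $\Delta$, the paper applies concavity to the pair $(\hat f_\eps, \hat g_\eps)$ and uses $\|\hat g_\eps - g_\eps\|_{L^\infty(\nu)} \lesssim 1$ from Proposition~\ref{prop:dual_pointwise_control}, while you detour through the semi-empirical optimum $g_\eps^{\hat\mu\nu}$ to obtain the same $O(1)$ bound --- an unnecessary step, since $\hat g_\eps$ is already uniformly bounded, but harmless.
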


\begin{proof}[Proof of Lemma~\ref{lem:pop_g_convergence}]
Note that
$$
    \E[\|\hat g_\eps - g_\eps\|^2_{L^2(\nu)}]
    \lesssim \E[\|\hat g_\eps - g_\eps - \nu(\hat g_\eps)\|^2_{L^2(\nu)}]
    + \E[\nu(\hat g_\eps)^2].
$$ We can thus apply each of the above lemmas
to yield, for $a, a' > 0$,
\begin{align*}
    \E[\|\hat g_\eps - g_\eps\|^2_{L^2(\nu)}]
   &\lesssim
   \frac{L}{a}\Big(\frac{L}{\eps} \Big)^{d_\nu + 1} 
   + a\frac{L^2}{\eps}\E[ \|\hat g_\eps - g_\eps\|^2_{L^2(\nu)}] \\
   &+ \frac{a'}{\eps}\Big(\frac{L}{\eps} \Big)^{2d_\nu + 2}
   \E[ \|\hat g_\eps - g_\eps\|^2_{L^2(\nu)}] 
   +
   L\big(\frac{a'}{\eps^2} + \frac{1}{a'}\big) \big(\eps^2 + \frac{1}{\eps^2} \big)\Big(\frac{L}{\eps} \Big)^{11 d_\nu + 5} 
\cdot \frac1n \\
&+ \big( \eps^2 + \frac{1}{\eps^2}\big) \Big(\frac{L}{\eps}  \Big)^{11d_\nu + 4}
\cdot \frac{1}{n}.
\end{align*}
Taking $a = C \eps/L^2$
and $a' = C \eps(\eps/L)^{2d_\nu + 2}$
for $C$ a sufficiently small constant and re-arranging
yields
$$
 \E[\|\hat g_\eps - g_\eps\|^2_{L^2(\nu)}]
    \lesssim \Big\{ 
    L^2 \Big(\frac{L}{\eps}  \Big)^{d_\nu + 2} 
    + \big(\eps^2 + \frac{1}{\eps^2} \big)
\Big(\frac{L}{\eps} \Big)^{13 d_\nu + 8} 
    +\big( \eps^2 + \frac{1}{\eps^2}\big) \Big(\frac{L}{\eps}  \Big)^{11d_\nu + 4} 
    \Big\} \cdot \frac1n.
$$ Using $\eps/L \lesssim 1$ 
yields the result.
\end{proof}
\begin{proof}[Proof of Lemma~\ref{lem:pop_g_by_g_maps}]
Note that the term on the LHS is precisely
$\Var_\nu(\hat g_\eps - g_\eps)$ since
we have specified $g_\eps$ to be such that $\nu(g_\eps) = 0$.
We will thus apply the Poincaré inequality
on $\nu$ 
from Proposition~\ref{prop:weighted_poincare}
to bound this term; this is legitimate
because $\hat g_\eps$ and $g_\eps$ are each Lipschitz
with respect to the ambient Euclidean norm
and so are Lipschitz with respect to $(N, h)$, since
embedded manifold distances are always at least
as large as extrinsic distances.
We obtain
$$
\|\hat g_\eps - g_\eps - \nu(\hat g_\eps)\|^2_{L^2(\nu)}
\lesssim \int \|\nabla_N(\hat g_\eps - g_\eps)\|_h^2
\ud \nu(y).
$$ Since $N$
is an embedded manifold,
the manifold norm of the manifold gradient
is always upper bounded by the Euclidean
norm of the Euclidean gradient
(for the reader's convenience,
this fact is formally stated
and proved as Proposition~\ref{prop:embedded_gradient_vs_ambient}
in Appendix~\ref{subsec:additional_embedded})
so that 
$$
\|\hat g_\eps - g_\eps - \nu(\hat g_\eps)\|^2_{L^2(\nu)}
\lesssim \|\nabla \hat g_\eps - \nabla g_\eps\|^2_{L^2(\nu)}.
$$ To calculate $\nabla  g_\eps$,
consider the marginal constraint
$$
e^{-\frac{1}{\eps} g_\eps(y)} = \int e^{-\frac{1}{\eps}(c(x, y)
- f_\eps(x))} \ud \mu(x).
$$ Differentiating and re-arranging, we find that
$$
\nabla g_\eps(y) = \E_{\pi_\eps}[\nabla_y c(x, y) \, | \, y].
$$ Calculating in a similar fashion with the extended dual potential
$\hat g_\eps$ we find
$$
\nabla \hat g_\eps(y) = \E_{\hat \pi_\eps}[\nabla_y c(x, y) \, | \, y] = \frac1n \sum_{i = 1}^n \nabla_y c(x_i, y) \hat p_\eps(x_i, y).
$$
The result follows.
\end{proof}

\begin{proof}[Proof of Lemma~\ref{lem:g_maps_by_semi_empirical}]
For all $y$ in $\supp(\nu)$, put
$$
\tilde g_\eps(y) :=- \eps \log \Big( \frac1n
\sum_{i = 1}^n e^{-\eps^{-1}(c(x_i, y) - f_\eps(x_i))}\Big),
$$ and let
$$
\tilde p_\eps(x, y) := e^{-\eps^{-1}(c(x,y)
- f_\eps(x) - \tilde g_\eps(y)}.
$$ Then by Young's inequality,
\begin{align*}
    \E\Big[\Big\|&\E_{\pi_\eps}[\nabla_y c(x, y) \, | \, y]
-\frac1n \sum_{i = 1}^n  \nabla_y c(x_i, y) \hat p_\eps(x_i, y)\Big\|_{L^2(\nu)}^2
\Big] 
\lesssim  \\
&\E\Big[\Big\|\E_{\pi_\eps}[\nabla_y c(x, y) \, | \, y]
- \frac1n \sum_{i = 1}^n \nabla_y c(x_i, y) p_\eps(x_i, y) \Big\|^2_{L^2(\nu)}\Big] \\
&+ \E\Big[\Big\|\frac1n \sum_{i = 1}^n \nabla_y c(x_i, y) p_\eps(x_i, y)
- \frac1n \sum_{i = 1}^n \nabla_y c(x_i, y) \tilde p_\eps(x_i, y) \Big\|^2_{L^2(\nu)}\Big] \\
&+\E\Big[\Big\|\frac1n \sum_{i = 1}^n \nabla_y c(x_i, y) \tilde p_\eps(x_i, y)
- \frac1n \sum_{i = 1}^n \nabla_y c(x_i, y) \hat p_\eps(x_i, y) \Big\|^2_{L^2(\nu)}\Big].
\end{align*}
Using the same argument as in the
proof of Lemma~\ref{lem:map_mse_massage}
just with the role of $\mu$ and $\nu$ swapped,
the first two terms can each be bounded
as $\lesssim L^2\|p_\eps\|^2_{L^2(\mu \otimes \nu)}/n$.
By
Lemma~\ref{lem:variance_control}
and Proposition~\ref{prop:N_covering_numbers},
we can
bound $\|p_\eps\|^2_{L^2(\mu \otimes \nu)}
\lesssim (L/\eps)^{d_\nu}$
and so arrive at
\begin{align*}
    \E\Big[\Big\|&\E_{\pi_\eps}[\nabla_y c(x, y) \, | \, y]
-\frac1n \sum_{i = 1}^n  \nabla_y c(x_i, y) \hat p_\eps(x_i, y)\Big\|_{L^2(\nu)}^2
\Big] 
\lesssim \\ 
&\E\Big[\Big\|\frac1n \sum_{i = 1}^n \nabla_y c(x_i, y) \tilde p_\eps(x_i, y)
- \frac1n \sum_{i = 1}^n \nabla_y c(x_i, y) \hat p_\eps(x_i, y) \Big\|^2_{L^2(\nu)}\Big] + 
L^2 \Big(\frac{L}{\eps} \Big)^{d_\nu}\cdot \frac1n.
\end{align*}
We proceed as in the proof of
Lemma~\ref{lem:map_mse_kl}, by first
applying triangle inequality
and then Pinsker's. This yields
\begin{align*}
   \E\Big[\Big\|\frac1n \sum_{i = 1}^n &\nabla_y c(x_i, y) \tilde p_\eps(x_i, y)
- \frac1n \sum_{i = 1}^n \nabla_y c(x_i, y) \hat p_\eps(x_i, y) \Big\|^2_{L^2(\nu)}\Big] 
\\
&\leqslant L^2
\E\Big[\Big\|\frac1n \sum_{i = 1}^n |\tilde p_\eps(x_i, y)
- \hat p_\eps(x_i, y) |\Big\|^2_{L^2(\nu)}\Big] \\
&\lesssim L^2 \E\Big[\int \KL{\frac1n \hat p_\eps(\cdot, y)}{\frac 1n \tilde p_\eps(\cdot, y)} \ud \nu(y) \Big] \\
&=\frac{L^2}{\eps} \E\Big[
\int\Big( \frac1n \sum_{i = 1}^n
\hat p_\eps(x_i, y)(\hat f_\eps(x_i)
- f_\eps(x_i) + \hat g_\eps(y) - \tilde g_\eps(y))
\Big) \ud \nu(y)
\Big] \\
&= \frac{L^2}{\eps}
\E\Big[\Phi_\eps^{\hat \mu \nu}(\hat f_\eps,
\hat g_\eps) - \Phi_\eps^{\hat \mu \nu}(f_\eps, 
\tilde g_\eps) + \Big\langle \int \hat p_\eps(\cdot,y)
\ud \nu(y) - 1,f_\eps - f_\eps \Big\rangle_{L^2(\hat \mu)} \Big] \\
&\leqslant 
\frac{L^2}{\eps}
\E\Big[\Phi_\eps^{\hat \mu \nu}(\hat f_\eps,
\hat g_\eps) - \Phi_\eps^{\hat \mu \nu}(f_\eps, g_\eps) + \Big\langle \int \hat p_\eps(\cdot,y)
\ud \nu(y) - 1,\hat f_\eps - f_\eps \Big\rangle_{L^2(\hat \mu)} \Big],
\end{align*}
where the second equality follows because
$(\hat \mu \otimes \nu)(\hat p_\eps) = (\hat \mu \otimes \nu)(\tilde p_\eps) = 1$,
and the final inequality is via Proposition~\ref{prop:entropic_dual_basics}.
To conclude the argument, we apply
Young's inequality to the inner product term
to obtain, for all $a > 0$,
\begin{align*}
\E\Big[\Big\langle \int \hat p_\eps(\cdot,y)
\ud \nu(y) - 1,\hat f_\eps - f_\eps \Big\rangle_{L^2(\hat \mu)} \Big]
&\leqslant a \E\Big[ \Big\| 
\int (\hat p_\eps(\cdot, y ) - 1) \ud \nu(y) \Big\|^2_{L^2(\hat \mu)}\Big] \\
&+ \frac1a \E[\|\hat f_\eps - f_\eps\|^2_{L^2(\hat \nu)}] \\
&= a \E\Big[ \Big\| 
\int (\hat p_\eps(\cdot, y ) - p_\eps(\cdot, y)) \ud \nu(y) \Big\|^2_{L^2(\hat \mu)}\Big] \\
&+ \frac1a \E[\|\hat f_\eps - f_\eps\|^2_{L^2(\hat \nu)}] \\
&\leqslant a \E[\| \hat p_\eps - p_\eps \|_{L^2(\hat \mu \otimes \nu)}^2]
+ \frac1a  \E[\|\hat f_\eps - f_\eps\|^2_{L^2(\hat \nu)}].
\end{align*}
By Lemma~\ref{lem:L2_density}, which is stated and proved
in the following section, 
$$
\E[\|\hat p_\eps - p_\eps \|_{L^2(\hat \mu \otimes \nu)}^2] \lesssim \frac{1}{\eps^2}\cdot \Big( \frac{L}{\eps} \Big)^{2d_\nu}\E[ \|\hat f_\eps - f_\eps\|^2_{L^2(\hat \mu)}
+ \|\hat g_\eps - g_\eps\|^2_{L^2(\nu)}] + \frac1n.
$$ Applying Lemma~\ref{lem:f_control_by_g}
and Lemma~\ref{lem:emp_g_convergence} we have
$$
\E[\|\hat f_\eps - f_\eps\|^2_{L^2(\hat \mu)}]
\lesssim \big(\eps^2 + \frac{1}{\eps^2} \big) \cdot \Big( \frac{L}{\eps} \Big)^{9d_\nu + 4} \cdot \frac1n.
$$ The result follows by collecting terms.
\end{proof}

\begin{proof}[Proof of Lemma~\ref{lem:semi_empirical_bound}]
Proposition~\ref{prop:entropic_dual_basics}
on concavity of the
dual implies that
$$
\E[\Phi^{\hat \mu \nu}_\eps( \hat f_\eps, \hat g_\eps)
- \Phi^{\hat \mu \nu}_\eps( f_\eps, g_\eps)]
\leqslant \E [ \langle \nabla \Phi^{\hat \mu \nu}_\eps( f_\eps, g_\eps),
(\hat f_\eps - f_\eps, \hat g_\eps - g_\eps)\rangle_{L^2(\hat \mu) \times L^2(\nu)}].
$$ Observe that because $(f_\eps, g_\eps)$
attain the $\nu$-marginal,
the first component of $\nabla \Phi_\eps^{\hat \mu \nu}$ vanishes, implying
\begin{align*}
\langle \nabla \Phi^{\hat \mu \nu}_\eps( f_\eps, g_\eps),
(\hat f_\eps - f_\eps, \hat g_\eps - g_\eps)\rangle_{L^2(\hat \mu) \times L^2(\nu)} = 
\big\langle 1-  \frac1n \sum_{i = 1}^n p_\eps(x_i, y)),
\hat g_\eps - g_\eps
\big\rangle_{L^2(\nu)}.
\end{align*}
Young's inequality yields, for all $a > 0$,
$$
\E[\Phi^{\hat \mu \nu}_\eps( \hat f_\eps, \hat g_\eps)
- \Phi^{\hat \mu \nu}_\eps( f_\eps, g_\eps)]
\lesssim 
\frac1a \E\big[ \big\|1 - \frac1n \sum_{i = 1}^n p_\eps(x_i, y)\big\|_{L^2(\nu)}^2\big]
+ a \E[\|\hat g_\eps - g_\eps\|^2_{L^2(\nu)}].
$$
The first term can be calculated
as in~\eqref{eqn:gradient_calc} to yield
$$
\E\big[ \big\|1 - \frac1n \sum_{i = 1}^n p_\eps(x_i, y)\big\|_{L^2(\nu)}^2\big]
\lesssim \frac{\|p_\eps\|^2_{L^2(\mu \otimes \nu)}}{n}.
$$ Finally,
applying Lemma~\ref{lem:variance_control}
and Proposition~\ref{prop:N_covering_numbers},
we obtain the result.
\end{proof}

\begin{proof}[Proof of Lemma~\ref{lem:nu_hat_g}]
Note that, since $(\hat \mu \otimes \nu)(\hat p_\eps) = 
(\hat \mu \otimes \nu)(p_\eps) =1$
and we make the normalization assumption
$\nu(g_\eps) = 0$,
we have
$$
\nu(\hat g_\eps)
= \Phi^{\hat \mu \nu}_\eps(\hat f_\eps, 
\hat g_\eps)
- \Phi^{\hat \mu \nu}_\eps(f_\eps, g_\eps) +
\hat \mu(f_\eps - \hat f_\eps).
$$ By Jensen's inequality, we have that
$$
|\nu(\hat g_\eps)|
\leqslant |\Phi^{\hat \mu \nu}_\eps(\hat f_\eps, 
\hat g_\eps)
- \Phi^{\hat \mu \nu}_\eps(f_\eps, g_\eps) |+
\|\hat f_\eps - f_\eps\|_{L^2(\hat \mu)}.
$$
Observe that
the two-sided concavity statements from
Proposition~\ref{prop:entropic_dual_basics}
imply that
\begin{align*}
|\Phi^{\hat \mu \nu}_\eps(\hat f_\eps, 
\hat g_\eps)
- \Phi^{\hat \mu \nu}_\eps(f_\eps, g_\eps) |
&\leqslant 
|\langle \nabla \Phi^{\hat \mu \nu}_\eps(f_\eps,
g_\eps), (\hat f_\eps - f_\eps, \hat g_\eps - g_\eps)\rangle_{L^2(\hat \mu) \times L^2(\nu)}|\\
&+|\langle \nabla \Phi^{\hat \mu \nu}_\eps(\hat f_\eps,
\hat g_\eps), (\hat f_\eps - f_\eps, \hat g_\eps - g_\eps)\rangle_{L^2(\hat \mu) \times L^2(\nu)}|.
\end{align*} For the first term, Cauchy-Schwarz
and Proposition~\ref{prop:dual_pointwise_control}
on uniform bounds for the dual
potentials imply that
$$
|\langle \nabla \Phi^{\hat \mu \nu}_\eps(f_\eps,
g_\eps), (\hat f_\eps - f_\eps, \hat g_\eps - g_\eps)\rangle_{L^2(\hat \mu) \times L^2(\nu)}|
\lesssim \|\nabla \Phi_\eps^{\hat \mu \nu}
(f_\eps, g_\eps)\|_{L^2(\hat \mu) \times 
L^2(\nu)}.
$$ For the second term,
since $(\hat f_\eps, \hat g_\eps)$ attain
the $\hat \mu$
marginal, the second component of
$\nabla \Phi^{\hat \mu \nu}_\eps(\hat f_\eps, \hat g_\eps)$ vanishes, so that 
\begin{align*}
|\langle \nabla \Phi^{\hat \mu \nu}_\eps(\hat f_\eps,
\hat g_\eps), (\hat f_\eps - f_\eps, \hat g_\eps - g_\eps)\rangle_{L^2(\hat \mu) \times L^2(\nu)}|
&= \big|\big\langle 1 - \int \hat p_\eps(x, y) \ud \nu(y),
\hat f_\eps - f_\eps \big\rangle_{L^2(\hat \mu)}\big| \\
&\leqslant \|\int \hat p_\eps(x, y)\ud \nu(y) - 1 \|_{L^2(\hat \mu)}
\|\hat f_\eps - f_\eps\|_{L^2(\hat \mu)} \\
&\lesssim \|\hat p_\eps\|_{L^{\infty}(\hat \mu \otimes \nu)}
\|\hat f_\eps - f_\eps\|_{L^2(\hat \mu)}.
\end{align*}
Now, let $\mathcal{E}$ denote the event
described in Lemma~\ref{lem:manifold_density_sup_control}.
Then by the pointwise control given in Proposition~\ref{prop:dual_pointwise_control},
we have
\begin{align*}
\E[ \|\hat p_\eps\|_{L^{\infty}(\hat \mu \otimes \nu)}^2
\|\hat f_\eps - f_\eps\|_{L^2(\hat \mu)}^2]
&=
\E[ \mathbbold{1}[\mathcal{E}]\|\hat p_\eps\|_{L^{\infty}(\hat \mu \otimes \nu)}^2
\|\hat f_\eps - f_\eps\|_{L^2(\hat \mu)}^2]\\
&+ \E[ \mathbbold{1}[\mathcal{E}^c]\|\hat p_\eps\|_{L^{\infty}(\hat \mu \otimes \nu)}^2
\|\hat f_\eps - f_\eps\|_{L^2(\hat \mu)}^2] \\
&\lesssim 
\E[ \mathbbold{1}[\mathcal{E}]\|\hat p_\eps\|_{L^{\infty}(\hat \mu \otimes \nu)}^2
\|\hat f_\eps - f_\eps\|_{L^2(\hat \mu)}^2]
+ \E[ \mathbbold{1}[\mathcal{E}^c]] e^{\frac{10}{\eps}} \\
&\lesssim 
\E[ \mathbbold{1}[\mathcal{E}]\|\hat p_\eps\|_{L^{\infty}(\hat \mu \otimes \nu)}^2
\|\hat f_\eps - f_\eps\|_{L^2(\hat \mu)}^2] + \frac1n \\
&\lesssim 
\Big(\frac{L}{\eps} \Big)^{2d_\nu}
\E[ \mathbbold{1}[\mathcal{E}]
\|\hat f_\eps - f_\eps\|_{L^2(\hat \mu)}^2] + \frac1n \\
&\leqslant 
\Big(\frac{L}{\eps} \Big)^{2d_\nu}
\E[\|\hat f_\eps - f_\eps\|_{L^2(\hat \mu)}^2] + \frac1n \\
\end{align*}
We thus conclude that
\begin{align*}
\E[\nu(\hat g_\eps)^2]&\lesssim 
\E\big[
(\langle \nabla \Phi^{\hat \mu \nu}_\eps(f_\eps,
g_\eps), (\hat f_\eps - f_\eps, \hat g_\eps - g_\eps)\rangle_{L^2(\hat \mu) \times L^2(\nu)})^2\\
&+(\langle \nabla \Phi^{\hat \mu \nu}_\eps(\hat f_\eps,
\hat g_\eps), (\hat f_\eps - f_\eps, \hat g_\eps - g_\eps)\rangle_{L^2(\hat \mu) \times L^2(\nu)})^2 
+ \|\hat f_\eps - f_\eps\|_{L^2(\hat \mu)}^2 \big]\\
&\lesssim 
\E\big[\|\nabla \Phi_\eps^{\hat \mu \nu}
(f_\eps, g_\eps)\|_{L^2(\hat \mu) \times 
L^2(\nu)}^2   + \Big(\frac{L}{\eps} \Big)^{2d_\nu}
\|\hat f_\eps - f_\eps\|_{L^2(\hat \mu)}^2\big] + \frac1n.
\end{align*}
The squared gradient term can be calculated
as in~\eqref{eqn:gradient_calc}
and then bounded with Lemma~\ref{lem:variance_control}
and Proposition~\ref{prop:N_covering_numbers}, and the
latter bounded with Lemma~\ref{lem:f_control_by_g}
and Lemma~\ref{lem:emp_g_convergence},
to yield the result.
\end{proof}

\subsection{Fast rates with MID scaling for maps and densities}
\label{subsec:fast_mid_maps}

In this section, we give
the proof of Corollary~\ref{cor:manifold_map}
and Corollary~\ref{cor:fast_density}.
To this end, the following Lemma is convenient.

\begin{lemma}\label{lem:L2_density} We have
    $$
    \E[\|\hat p_\eps - p_\eps\|^2_{L^2(\mu \otimes \nu)}]
    \lesssim \frac{1}{\eps^2} \cdot \Big(\frac{L}{\eps} \Big)^{2d_\nu}
    \E[\|\hat f_\eps - f_\eps\|^2_{L^2(\mu)}
    + \|\hat g_\eps - g_\eps\|^2_{L^2(\nu)}] + \frac1n.
    $$
And we also have the analogous bounds
    $$
    \E[\|\hat p_\eps - p_\eps\|^2_{L^2(\mu \otimes \hat\nu)}]
    \lesssim \frac{1}{\eps^2} \cdot \Big(\frac{L}{\eps} \Big)^{2d_\nu}
    \E[\|\hat f_\eps - f_\eps\|^2_{L^2(\mu)}
    + \|\hat g_\eps - g_\eps\|^2_{L^2(\hat\nu)}] + \frac1n,
    $$
    as well as,
    $$
    \E[\|\hat p_\eps - p_\eps\|^2_{L^2(\hat\mu \otimes \nu)}]
    \lesssim \frac{1}{\eps^2} \cdot \Big(\frac{L}{\eps} \Big)^{2d_\nu}
    \E[\|\hat f_\eps - f_\eps\|^2_{L^2(\hat\mu)}
    + \|\hat g_\eps - g_\eps\|^2_{L^2(\nu)}] + \frac1n.
    $$
\end{lemma}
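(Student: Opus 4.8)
The plan is to reduce all three estimates to the dual--potential convergence through an elementary pointwise comparison of $\hat p_\eps$ and $p_\eps$, after first isolating the low--probability event on which the uniform density bound of Lemma~\ref{lem:manifold_density_sup_control} fails. Write $\Delta f := \hat f_\eps - f_\eps$ and $\Delta g := \hat g_\eps - g_\eps$, understood via the canonical extensions so that they are defined on $\supp(\mu)$ and $\supp(\nu)$. By~\eqref{eqn:pop_scaling} and~\eqref{eqn:emp_scaling} we have $\log \hat p_\eps(x,y) - \log p_\eps(x,y) = \eps^{-1}(\Delta f(x) + \Delta g(y))$, and since $t \mapsto e^t$ is convex, $|e^a - e^b| \leqslant e^{a \lor b}|a - b|$; hence for all $x \in \supp(\mu)$ and $y \in \supp(\nu)$,
\[
|\hat p_\eps(x,y) - p_\eps(x,y)| \leqslant \frac{1}{\eps}\big(\hat p_\eps(x,y) \lor p_\eps(x,y)\big)\,|\Delta f(x) + \Delta g(y)|.
\]

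Next, let $\mathcal{E}$ be the event of Lemma~\ref{lem:manifold_density_sup_control}, so that $\P[\mathcal{E}^c] \leqslant \frac1n e^{-10/\eps}$ and on $\mathcal{E}$ both $\|\hat p_\eps\|_{L^{\infty}(\mu \otimes \nu)}$ and $\|p_\eps\|_{L^{\infty}(\mu \otimes \nu)}$ are $\lesssim (L/\eps)^{d_\nu}$ (the bound on $p_\eps$ being deterministic). Squaring the displayed inequality, using $(\Delta f(x) + \Delta g(y))^2 \leqslant 2\Delta f(x)^2 + 2 \Delta g(y)^2$, integrating against $\mu \otimes \nu$, and applying Fubini gives
\[
\mathbbold{1}[\mathcal{E}]\,\|\hat p_\eps - p_\eps\|_{L^2(\mu \otimes \nu)}^2 \lesssim \frac{1}{\eps^2}\Big(\frac{L}{\eps}\Big)^{2 d_\nu}\big(\|\Delta f\|_{L^2(\mu)}^2 + \|\Delta g\|_{L^2(\nu)}^2\big).
\]
On $\mathcal{E}^c$ we instead use Proposition~\ref{prop:dual_pointwise_control} and Assumption~\ref{assumption:cost}, which give $\hat p_\eps, p_\eps \leqslant e^{5/\eps}$ pointwise and hence $\|\hat p_\eps - p_\eps\|_{L^2(\mu \otimes \nu)}^2 \leqslant 4 e^{10/\eps}$ deterministically; combined with $\P[\mathcal{E}^c] \leqslant \frac1n e^{-10/\eps}$ this yields $\E[\mathbbold{1}[\mathcal{E}^c]\|\hat p_\eps - p_\eps\|_{L^2(\mu \otimes \nu)}^2] \leqslant \frac4n$. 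Taking expectations and adding the two contributions gives the first inequality.

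For the two remaining inequalities, the argument is identical with $\mu \otimes \hat\nu$, resp.\ $\hat\mu \otimes \nu$, in place of $\mu \otimes \nu$: the pointwise comparison above is valid at every point of $\supp(\mu) \times \supp(\nu)$, in particular at the sample points $(x, y_j)$, resp.\ $(x_i, y)$; on $\mathcal{E}$ the uniform density bound still controls $\hat p_\eps \lor p_\eps$ there; and integrating against the product measure that is empirical in one coordinate produces, via Fubini, the norms $\|\Delta f\|_{L^2(\mu)}^2 + \|\Delta g\|_{L^2(\hat\nu)}^2$, resp.\ $\|\Delta f\|_{L^2(\hat\mu)}^2 + \|\Delta g\|_{L^2(\nu)}^2$. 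The estimate on $\mathcal{E}^c$ is unchanged since the pointwise bounds on $\hat p_\eps$ and $p_\eps$ are uniform over $\supp(\mu)\times\supp(\nu)$.

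The only point demanding care is the split into $\mathcal{E}$ and $\mathcal{E}^c$: on $\mathcal{E}$ we must use the polynomial density bound of Lemma~\ref{lem:manifold_density_sup_control} — naively replacing it by the a priori exponential bound $e^{C/\eps}$ of Proposition~\ref{prop:dual_pointwise_control} would destroy MID scaling — while on $\mathcal{E}^c$ those exponential pointwise bounds are harmless precisely because the failure probability in that lemma carries the matching factor $e^{-10/\eps}$. I do not anticipate any real difficulty beyond this bookkeeping.
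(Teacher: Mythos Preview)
The proposal is correct and follows essentially the same route as the paper: split into the event $\mathcal{E}$ of Lemma~\ref{lem:manifold_density_sup_control} and its complement, use the pointwise estimate $|e^a - e^b| \leqslant (e^a \lor e^b)|a-b|$ together with the polynomial density bound on $\mathcal{E}$, and absorb the exponential loss on $\mathcal{E}^c$ into the failure probability $\frac{1}{n}e^{-10/\eps}$. Your explicit use of $(\Delta f + \Delta g)^2 \leqslant 2\Delta f^2 + 2\Delta g^2$ is in fact slightly cleaner than the paper's presentation, which writes the last step as an equality when a cross term is being suppressed.
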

With this lemma in hand, the
proofs of Corollary~\ref{cor:manifold_map}
and Corollary~\ref{cor:fast_density}
are a simple application of the results from the previous section.

\begin{proof}[Proof of Corollary~\ref{cor:manifold_map}]
To prove this result,
let us first assume that
$0 \in \supp(\nu)$
so that for each $y \in \supp(\nu)$,
we have
$\|y\|\leqslant R$.
By Young's inequality,
\begin{align*}
\E[\|\hat T_\eps - T_\eps\|^2_{L^2(\mu)}] &= 
\E[\|\hat T_\eps - T_\eps \|^2_{L^2(\mu)}]\\
&\lesssim 
\E\Big[ \Big\|\hat T_\eps  - \frac{1}{n} \sum_{j = 1}^n y_j
p_\eps(x, y_j) \Big\|_{L^2(\mu)}^2 \Big] 
+ \E\Big[ \Big\|\frac{1}{n} \sum_{j = 1}^n y_j
p_\eps(x, y_j) - T_\eps  \Big\|_{L^2(\mu)}^2 \Big].
\end{align*}
The second term is merely a variance,
and can be bounded as
\begin{align*}
\E\Big[ \Big\|\frac{1}{n} \sum_{j = 1}^n y_j
p_\eps(x, y_j) - T_\eps\Big\|_{L^2(\mu)}^2 \Big]
&= \frac{1}{n} \|y p_\eps(x, y) - T_\eps(x)
\|_{L^2(\mu\otimes \nu)}^2 \\
&\lesssim \frac{R^2}{n}\|p_\eps\|^2_{L^2(\mu \otimes \nu)} \lesssim \frac{R^2}{n}\Big(\frac{L}{\eps} \Big)^{d_\nu},
\end{align*} where the final inequality follows by
applying Lemma~\ref{lem:variance_control}
with Proposition~\ref{prop:N_covering_numbers}.
We now focus on the remaining expectation,
and apply triangle inequality and Jensen's to yield,
for any $x \in \supp \mu$,
\begin{align*}
\Big\|\hat T_\eps - \frac{1}{n} \sum_{j = 1}^n y_j 
p_\eps(x, y_j) \Big\|^2
&\lesssim 
\frac1n \sum_{j = 1}^n 
\|y_j\|^2(\hat p_\eps(x, y_j) - p_\eps(x, y_j))^2\\\
&\lesssim \frac{R^2}{n} \sum_{j = 1}^n
(\hat p_\eps(x, y_j) - p_\eps(x, y_j))^2.
\end{align*}
Taking expectations and applying Lemma~\ref{lem:L2_density}
yields
\begin{align*}
\E\Big[\Big\|\hat T_\eps- \frac{1}{n} \sum_{j = 1}^n y_j 
&p_\eps(x, y_j) \Big\|^2_{L^2(\mu)}  \Big]
\lesssim R^2 \E[\|\hat p_\eps - p_\eps\|_{L^2(\mu \otimes \hat \nu)}] \\
&\lesssim \frac{R^2}{\eps^2} \Big(\frac{L}{\eps} \Big)^{2d_\nu}
\E[\|\hat f_\eps - f_\eps\|^2_{L^2(\mu)}
+ \|\hat g_\eps - g_\eps\|^2_{L^2(\hat \nu)}] +
\frac{R^2}{n}.
\end{align*}
The result follows from Lemma~\ref{lem:f_control_by_g}
and Lemma~\ref{lem:emp_g_convergence}.
For the general case
where $0$ may not be in $\supp(\nu)$,
we can perform the above argument
with suitably offset $y$.
\end{proof}

\begin{proof}[Proof of Corollary~\ref{cor:fast_density}]
By Lemma~\ref{lem:L2_density},
$$
\E[\|\hat p_\eps - p_\eps\|^2_{L^2(\mu \otimes \nu)}]
\lesssim \frac{1}{\eps^2}\Big(\frac{L}{\eps} \Big)^{2d_\nu}
\E[\|\hat f_\eps - f_\eps\|^2_{L^2(\mu)}
+ 
\|\hat g_\eps - g_\eps\|^2_{L^2(\nu)}]
+ \frac{1}{n}.
$$
The result follows by
applying Lemma~\ref{lem:f_control_by_g}
and Lemma~\ref{lem:emp_g_convergence}
to control the $\|\hat f_\eps -  f_\eps\|_{L^2(\mu)}^2$
term, and Lemma~\ref{lem:pop_g_convergence}
to control the $\|\hat g_\eps - g_\eps\|_{L^2(\nu)}^2$ term.
\end{proof}

\paragraph*{Acknowledgements.}
The author gratefully acknowledges
partial support from
NSF awards IIS-1838071 and
CCF-2106377,
and is indebted to 
Enric Boix-Adsera,
Sinho Chewi,
Simone Di Marino,
Augusto Gerolin,
Dheeraj Nagaraj,
Jonathan Niles-Weed,
Aram-Alexandre Pooladian,
Philippe Rigollet,
George Stepaniants,
and Stephanie Wu
for enlightening conversations.
Special thanks go
to Sinho Chewi, Aram-Alexandre Pooladian, and Stephanie Wu for helpful comments
on earlier drafts.

\appendix 

\section{Background on embedded
manifolds and RGGs}\label{sec:background_manifolds}

\subsection{Preliminaries on embedded manifolds}
\label{subsec:prelim_embedded}
For a comprehensive introduction
to Riemannian manifolds,
we refer the reader to the books~\cite{do2016differential,lee2018introduction}.
In this section, we will establish
notation while reviewing the definitions
of some necessary
geometric quantities.

To this end, suppose $(N_0,h_0)$ is a compact, connected Riemannian
manifold. Recall that, in coordinates,
the canonical Riemannian volume has density
$$
d\vol_{N_0} (y) = \sqrt{\det h_0(y)}.
$$ 
Given $p \in N_0$ and $u, v \in T_pN_0$,
we shall write $\langle u, v\rangle_{h_0} := h_0(u, v)$,
and similarly $\|u\|_{h_0}^2 := \langle u, u \rangle_{h_0}$.
Recall that for a point $p \in N_0$,
the exponential map $\exp_p \colon T_pN_0 \to N_0$
is defined as $\exp_p(v) := \gamma_v(1)$ where
$\gamma_v$ is the constant-speed
geodesic such that $\gamma_v(0) = p$ and $\dot{\gamma}_v(0)
= v$.
For a point $p \in N_0$,
the injectivity radius $\inj(p)$ is defined as
$$
\inj(p):= \sup \big\{ R \geqslant 0 \,\, \big| \,  \exp_p \colon B(0, R) \subset T_pN_0
\to N_0 \textrm{ is a diffeomorphism} \big\}.
$$ The injectivity radius of $N_0$ is then defined as
$$
\inj(N_0) := \inf_{p \in N_0} \inj(p).
$$ 
It is an elementary fact that when $N_0$ is compact,
$\inj(N_0) > 0$, see e.g.~\cite[Lemma 6.16]{lee2018introduction}.

The Riemannian curvature tensor $\Rm$
maps from $4$-tuples of smooth vector fields
to smooth functions $\Rm \colon \mathscr{X}(N_0)^{4} \to C^{\infty}(N_0)$
and
is defined as, for smooth vector fields $W, X, Y, Z
\in \mathscr{X}(N_0)$,
$$
\Rm(X, Y, Z, W) := \langle  \nabla_X \nabla_Y
Z - \nabla_Y \nabla_X Z - \nabla_{[X, Y]}Z, W \rangle_{h_0}.
$$ Given $p \in M$ and $u, v \in T_pN_0$ linearly
independent,
 the sectional curvature of $u, v$ is defined as
 $$
 \sec(u, v) := \frac{\Rm(u, v, v, u)}{\|u\|_{h_0}\|v\|_{h_0} - 
 \langle u, v \rangle_{h_0}}.
 $$
 The sectional curvatures
 describe the geometry of the embedded surface
 generated by $u, v$, and when $u,v$ range
 over $T_pN_0$, determine the full Riemannian
 curvature tensor $\Rm$.
 It is, again, an elementary fact that when
 $N_0$ is compact, the sectional curvatures
 are uniformly bounded in absolute value,
 see~\cite[Section 9.3]{bishop1969manifolds}.
 
 Given a smooth manifold $N_0$,
 we say that $N_0$ is embedded in $\R^D$ if
 there is a smooth injection $\iota \colon N_0 \hookrightarrow \R^D$
 which is a homeomorphism onto its image
 and
 such that for all $p \in N_0$, the differential $d \iota_p \colon T_pN_0 \to T_{\iota(p)}\R^D$ has
 trivial kernel.
 In such a case, we identify $N_0$ with its image 
 under $\iota$, and so write $N_0 \subseteq \R^d$.
 When $N_0$ is endowed with a Riemannian structure $(N_0, h_0)$,
 we say that $(N_0, h_0)$ is an embedded Riemannian manifold
 in $\R^d$ if $N_0$ is embedded in $\R^d$ and
 $h_0(u, v) = \langle d \iota_p(u), d \iota_p(v) \rangle$ for
 all $u, v \in T_pN_0$ and the inner product is the
 standard Euclidean one.
 
 The main quantitative property of subsets in $\R^D$ we
 need is called the reach.
 To define this quantity, suppose $S \subseteq \R^D$,
 and for all $x \in \R^D$ let $d(x, S) := \inf_{y \in S}
 \|x - y\|$. Then $\reach(S)$ is
 the supremum over all $\eps \geqslant 0$ such that
 for all $x \in \R^D$ for which $d(x, S) \leqslant \eps$,
 there is a unique $y \in S$ so that
 $\|x - y\| = d(x, S)$.
 Once more, in our setting of compact
 smooth embedded manifolds $N_0$,
 $\reach(N_0) > 0$~\cite[Prop. 14]{thale200850}.

\subsection{Spectral gap for the RGG of $\nu$}
\label{subsec:rgg_spectral}
The interface between our results and those
relating to embedded Riemannian manifolds in Euclidean
spaces is primarily contained
in Theorem~\ref{thm:graph_spectrum} below,
which gives a spectral gap for the random geometric
graph (RGG) of $\nu$, and is used
to establish
our main technical result in this embedded manifold
setting, Lemma~\ref{lem:qg_emp_dual},
which gives a weak
form of strong concavity for the empirical
dual function. 

The RGG of $\nu$ is defined, for a fixed
threshold $\delta > 0$,
as the graph on $[n]$ with weights
for $j, k \in [n]$ defined as
$$
w_{jk} := \begin{cases}
\frac{C}{n\delta^{d_\nu + 2}} & \textrm{ if   } \|y_j - y_k\| < \delta, \\
0 & \textrm{ else},
\end{cases}
$$ where $C = C(d_\nu)$ is constant
only depending on the intrinsic dimension $d_\nu$.
We write $j \sim k$ when $\|y_j - y_k\| < \delta$,
and the resulting weighted graph on $[n]$
is written as $\Gamma$.
For $\alpha \in L^{\infty}(\hat \nu)$, define
the associated Dirichlet form by
$$
D(\alpha) := \frac{1}{n}  \sum_{j\sim k}
w_{jk} (\alpha(y_{k}) - \alpha(y_{j}))^2,
$$ where
$C = C(d_\nu)$ is a constant only
depending on the intrinsic dimension $d_\nu$.
The un-normalized graph Laplacian on $\Gamma$ with respect to $L^2(\hat \nu)$ is then
$$
\Delta_\Gamma(\alpha)(y_j) := 
\sum_{k \colon k \sim j}w_{jk} (\alpha(y_k) - \alpha(y_j)).
$$

We seek a spectral
gap for $\Delta_\Gamma$, meaning that its second
eigenvalue is bounded away from $0$.
To sketch how this is possible,
note that
under our assumptions on $N$ and $\nu$,
such a result holds for the continuous analog
of $\Delta_\Gamma$. By employing
recent work on the convergence of the spectrum
of RGGs to their continuous analogs,
we can transfer a continuous spectral
gap to a (random) discrete one~\cite{garcia2020error}.
The result we ultimately obtain is as follows.

\begin{theorem}[Spectral gap for the RGG of $\nu$]
\label{thm:graph_spectrum}
Suppose $\delta \lesssim 1$
and $n$ is large enough that $\delta^{d_\nu} \gtrsim
(\log n)/n$. Then with probability at least $1 - C/n$,
$\lambda_2(\Delta_\Gamma) \gtrsim 1$.
In this case, we have the inequality
$$
D(\alpha) \gtrsim \hat \nu((\alpha - \hat \nu(\alpha))^2) \quad \quad \forall \alpha  
\in L^{\infty}(\hat \nu).
$$
\end{theorem}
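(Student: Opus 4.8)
The plan is to derive the spectral gap $\lambda_2(\Delta_\Gamma) \gtrsim 1$ by comparing $\Delta_\Gamma$ to a continuous weighted Laplacian on $N$ whose gap is a constant depending only on $N$ and $\nu$, using the spectral consistency theory for random geometric graphs on embedded manifolds~\cite{garcia2020error}. Granting the eigenvalue bound, the Poincaré-type inequality is immediate from the variational characterization of the second eigenvalue of the (positive semidefinite) operator $-\Delta_\Gamma$ on $L^2(\hat\nu)$: a summation by parts identifies $\langle -\Delta_\Gamma \alpha, \alpha\rangle_{L^2(\hat\nu)}$ with $D(\alpha)$ up to a universal constant (this is exactly how $D$ and the weights $w_{jk}$ are normalized), so $D(\alpha) \gtrsim \lambda_2(\Delta_\Gamma)\,\hat\nu((\alpha - \hat\nu(\alpha))^2) \gtrsim \hat\nu((\alpha - \hat\nu(\alpha))^2)$ for every $\alpha \in L^{\infty}(\hat\nu)$. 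Thus the whole content of the theorem is the eigenvalue lower bound on an event of probability $1 - C/n$.

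First, on the continuous side, I would record the Poincaré inequality for $\nu$. Under Assumptions~\ref{assum:nu_manifold} and~\ref{assum:nu}, $N$ is compact, connected, smooth and without boundary, and $\nu$ has a Lipschitz, non-vanishing density $\rho$ with respect to $\vol_N$; hence $\nu$ is comparable to $\vol_N$ up to constants, and the weighted Dirichlet energy $u \mapsto \int_N \|\nabla_N u\|_h^2 \ud\nu$ has a spectral gap $\lambda_2 \geqslant c(N,\nu) > 0$. This is Proposition~\ref{prop:weighted_laplacian} (equivalently Proposition~\ref{prop:weighted_poincare}), whose constant depends only on the intrinsic dimension $d_\nu$, on geometric invariants of $N$ (diameter, curvature and injectivity-radius bounds, reach), and on uniform upper and lower bounds for $\rho$ together with its Lipschitz constant --- never on the extrinsic dimension $d$.

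Second, I would transfer this gap to the graph. The weight $w_{jk} = C(d_\nu)/(n\delta^{d_\nu+2})\,\mathbf{1}[\|y_j - y_k\| < \delta]$ is normalized so that $D$ is a consistent estimator of $\int_N \|\nabla_N \cdot\|_h^2 \ud\nu$: the factor $\delta^{-d_\nu}$ undoes the volume of an intrinsic $\delta$-ball (which agrees with the ambient Euclidean ball up to $1 + O(\delta^2)$ because $\reach(N) > 0$), and $\delta^{-2}$ undoes the squared finite difference. The key probabilistic input is that, when $\delta^{d_\nu} \gtrsim (\log n)/n$, the $\infty$-transport distance between $\hat\nu$ and $\nu$ is $O(\delta)$ with probability at least $1 - C/n$; plugging this into the non-asymptotic spectral-convergence estimates of~\cite{garcia2020error} yields $|\lambda_2(\Delta_\Gamma) - \lambda_2| = o(1)$ as $\delta \to 0$ on that event, so for $\delta \lesssim 1$ small enough $\lambda_2(\Delta_\Gamma) \geqslant \tfrac{1}{2}\lambda_2 \gtrsim 1$. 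The hypothesis $d_\nu \geqslant 3$ ensures the transport-distance and concentration estimates take the stated form, without the extra logarithmic losses of low intrinsic dimension.

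The main obstacle is matching the precise hypotheses of the external consistency theorem to our setting. The graph here is built from the \emph{hard-threshold} kernel $\mathbf{1}[\,\cdot < \delta\,]$ and from the \emph{ambient} Euclidean distance, rather than from a smooth kernel in the intrinsic metric, so I would need to (i) verify that the indicator kernel is admissible for~\cite{garcia2020error} (or first sandwich it between two smooth kernels at comparable scales), and (ii) control the Euclidean-versus-geodesic distortion at scale $\delta$ via the positive reach of $N$, absorbing the resulting $1 + O(\delta^2)$ multiplicative error into constants. A secondary, bookkeeping-type issue is to track that every constant produced along the way depends only on $d_\nu$, the geometry of $N$, and the density bounds, and to check that the stated growth condition $\delta^{d_\nu} \gtrsim (\log n)/n$ is exactly what the transport-distance bound and the spectral-perturbation estimate jointly require.
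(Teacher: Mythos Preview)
Your proposal is correct and matches the paper's approach exactly: the paper derives Theorem~\ref{thm:graph_spectrum} by combining the continuous spectral gap for the weighted Laplacian $\Delta_\nu$ (Proposition~\ref{prop:weighted_laplacian}), the non-asymptotic spectral consistency estimate of~\cite{garcia2020error} (stated as Theorem~\ref{thm:lambda_2_lower_garcia}), and the high-probability $W_\infty(\hat\nu,\nu)$ bound from the same work (Theorem~\ref{thm:w_infty_garcia}). The obstacles you flag---the hard-threshold kernel and the ambient-versus-intrinsic metric---are precisely what the paper absorbs by quoting the adapted versions of those external theorems.
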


To derive this theorem from the results of~\cite{garcia2020error},
we first introduce
the continuous analog
of the un-normalized graph Laplacian and subsequently discuss its spectrum.

\paragraph*{A certain weighted Laplacian.}
For a thorough introduction to weighted Riemannian Laplacians
we refer the reader to the book~\cite{grigoryan2009heat}.
Suppose $\nu$ has density $p\colon N \to \R_{\geqslant 0}$
with respect $\vol_N$. Consider the weighted Laplacian
operator
$$
\Delta_\nu(\cdot) := -\frac{1}{p}\div(p^2\nabla \cdot),
$$ where $\div$ is the Riemannian divergence on $N$.
In fact, this weighted Laplacian is the correct continuous
analog of the un-normalized graph Laplacian $\Delta_\Gamma$~\cite{bousquet2003measure,hein2007graph},
and we will thus consider its spectrum.
 Because $p$ is bounded
 away from $0$ and $N$ is assumed to be compact, 
 $\Delta_\nu$ has a discrete,
 non-negative spectrum,
 written $0 = \lambda_1 \leqslant \lambda_2 \leqslant \cdots$
 (with multiplicity), and the minimax principle holds~\cite[Section 1.4]{garcia2020error}.
 Since $N$ is connected and $p$ is bounded above and
 away from $0$, it follows that $\lambda_2 > 0$.
 Summarizing, we have the following Proposition.
\begin{prop}\label{prop:weighted_laplacian}
The operator $\Delta_\nu$ has a discrete, non-negative spectrum
$\{\lambda_k\}_{k \geqslant 1}$,
and $\lambda_1 = 0$,
while $\lambda_2 > 0$.
\end{prop}
We we will
simply treat $\lambda_2$ as a constant
depending on $N$ and $\nu$,
but we remark that
precise quantitative control
on $\lambda_2$ is intimately related to
the Ricci curvature
of the manifold $N$~\cite[Chapter 5]{li2012geometric}.

\paragraph*{Spectral convergence of the RGG of $\nu$.}
The results of~\cite{garcia2020error}
quantify the convergence of the spectrum
of $\Delta_\Gamma$ to that of $\Delta_\nu$.
For our purposes, it is enough to consider
their result only in terms of lower bound control
of the second eigenvalue of $\Delta_\Gamma$,
namely $\lambda_2(\Delta_\Gamma)$, in terms
of the second eigenvalue of $\Delta_\nu$,
namely $\lambda_2$,
but we emphasize that their results are far stronger
than we represent here.
To make the geometric constants involved in this
bound explicit, let $i_0 := \inj(N)$ denote
the injectivity radius of $N$,
$K$ be a uniform upper bound
on the absolute value of the sectional curvatures of $N$,
and $r := \reach(N)$ the reach (these
quantities are defined in the previous section). Since $N$ is compact,
$i_0, r > 0$ and $K < \infty$.

\begin{theorem}[Theorem 4 (adapted)~\cite{garcia2020error}]
\label{thm:lambda_2_lower_garcia}
Suppose $\delta > 0$ is such that
$$
\delta < \min \big\{ 1, \frac{i_0}{10}, \frac{1}{\sqrt{d_\nu K}}, \frac{r}{\sqrt{27d_\nu}} \big\}.
$$ Suppose $W_{\infty}(\hat \nu, \nu)(d_\nu + 5) < \delta$.
Then, if
$$
\sqrt{\lambda_2}\delta < C
$$ we have
$$
\lambda_2(\Delta_\Gamma) \geqslant \lambda_2(1 - C(\delta + 
\frac{1}{\delta} W_{\infty}(\hat\nu, \nu) + \sqrt{\lambda_2}\delta + K \delta^2)),
$$ where the constants $C$ only depend on $d_\nu$
and uniform bounds on $p$ and its Lipschitz constant.
\end{theorem}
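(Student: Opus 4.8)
The plan is to obtain Theorem~\ref{thm:lambda_2_lower_garcia} as a direct specialization of the main spectral convergence estimate of~\cite{garcia2020error}, keeping only the single inequality needed downstream. That work establishes, for each fixed eigenvalue index $k$, a two-sided \emph{relative} error bound comparing the $k$-th eigenvalue $\lambda_k(\Delta_\Gamma)$ of the (suitably normalized) unnormalized graph Laplacian of the $\delta$-random geometric graph to the $k$-th eigenvalue $\lambda_k(\Delta_\nu)$ of the continuum density-weighted Laplacian, provided $\delta$ is small relative to geometric quantities of $N$ (injectivity radius, a curvature upper bound, and the reach), the sample size is large enough that $W_\infty(\hat\nu,\nu)$ is sufficiently small relative to $\delta$, and $\sqrt{\lambda_k}\,\delta$ lies below an absolute constant. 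First I would instantiate this at $k=2$, writing $\lambda_2 = \lambda_2(\Delta_\nu)$; then I would retain only the lower tail of the two-sided bound, namely $\lambda_2(\Delta_\Gamma) \geqslant \lambda_2\big(1 - C(\delta + \tfrac{1}{\delta}W_\infty(\hat\nu,\nu) + \sqrt{\lambda_2}\,\delta + K\delta^2)\big)$, discarding the matching upper bound, which is never used.

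The one substantive point is reconciling normalization conventions. In~\cite{garcia2020error} the graph is built from a rescaled kernel $\eta_\delta(y,y') = \delta^{-d_\nu}\eta(\|y-y'\|/\delta)$, and the continuum limit it identifies is the weighted Laplacian whose Dirichlet form is $\int_N \|\nabla\alpha\|_h^2\, p^2\, d\vol_N$, i.e.\ exactly the operator $\Delta_\nu = -\tfrac1p\div(p^2\nabla\,\cdot\,)$ of Proposition~\ref{prop:weighted_laplacian}. The hard-threshold weights $w_{jk} = \tfrac{C(d_\nu)}{n\delta^{d_\nu+2}}\mathbf{1}[\|y_j - y_k\| < \delta]$ correspond to $\eta = \mathbf{1}[\,\cdot\, \leqslant 1]$ together with the constant $C(d_\nu)$ chosen to normalize the relevant second moment of $\eta$ to one --- which is precisely why the denominator carries $\delta^{d_\nu+2}$ rather than $\delta^{d_\nu}$, the extra $\delta^2$ absorbing the scaling of the finite-difference quotient. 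Once this identification is in place, the hypotheses of the cited theorem match those stated here verbatim, and its conclusion is the displayed lower bound.

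Finally, the constants in~\cite{garcia2020error} are explicit functions of $\inj(N)$, the curvature bound $K$, $\reach(N)$, and uniform upper and lower bounds on $p$ together with a Lipschitz constant of $p$; by compactness of $N$ and Assumptions~\ref{assum:nu_manifold}--\ref{assum:nu} these are all finite and positive, as recalled in Section~\ref{subsec:prelim_embedded}. I would absorb the dependence on $\inj(N)$ and $\reach(N)$ partly into a further shrinking of the admissible range of $\delta$ --- consolidating the several geometric smallness conditions into the single requirement $\delta < \min\{1, i_0/10, 1/\sqrt{d_\nu K}, r/\sqrt{27 d_\nu}\}$ --- and partly into the constants $C$, leaving a statement whose constants depend only on $d_\nu$ and on bounds for $p$ and its Lipschitz constant, as claimed. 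The statement is deterministic once the hypothesis on $W_\infty(\hat\nu,\nu)$ is granted; the probabilistic content used in Theorem~\ref{thm:graph_spectrum} is relegated to a separate concentration bound for $W_\infty$. I expect the only real obstacle to be bookkeeping: tracking how the various conditions on $\delta$ in the source interact so that the consolidation does not weaken the stated conclusion.
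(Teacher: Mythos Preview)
Your proposal is correct and matches the paper's treatment: the paper does not prove this statement at all but simply records it as an adaptation of Theorem~4 in~\cite{garcia2020error}, and your explanation of how the specialization to $k=2$, the retention of only the lower bound, and the reconciliation of normalizations yields the stated form is exactly the content of that adaptation.
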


This result combined
with Proposition~\ref{prop:weighted_laplacian}
shows that, so long as $\delta$ is sufficiently
small relative to constants depending on the geometry
of $N$, the intrinsic dimension
$d_\nu$, and uniform bounds on $p$ and its Lipschitz constant,
and $W_{\infty}(\hat \nu,  \nu) \lesssim \delta$,
we have
$\lambda_2(\Delta_\Gamma) \gtrsim 1$.

To make this bound usable, we also need
to get control on $W_{\infty}(\hat \nu, \nu)$.

\begin{theorem}[Theorem 2 (adapted)~\cite{garcia2020error}]
\label{thm:w_infty_garcia}
With probability at least $1 - C/n$,
$$
W_{\infty}(\hat \mu, \mu) \lesssim \Big(
\frac{\log n}{n} \Big)^{1/d_\nu},
$$ where the constants
only depend on $d_\nu$,
upper and lower bounds on
$p$ and its Lipschitz constant,
and
the injectivity radius of $N$,
uniform bound on the absolute
value of the sectional curvature of $N$,
and the reach of $N$.
\end{theorem}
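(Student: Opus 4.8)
The plan is to follow the proof of~\cite[Theorem 2]{garcia2020error}, of which this statement is a direct adaptation (with $\nu$ the manifold-supported measure of Assumption~\ref{assum:nu_manifold}), and to check that Assumptions~\ref{assum:nu_manifold} and~\ref{assum:nu} furnish exactly its hypotheses. The only use of the geometry of $N$ is that, at scales below the injectivity radius $i_0 := \inj(N)$, the manifold is Euclidean-like: Bishop--Gromov volume comparison, with constants controlled by a uniform bound $K$ on the absolute sectional curvature, gives $\vol_N(B(p,r)) \sim r^{d_\nu}$ uniformly for $r \leqslant i_0$; normal-coordinate charts are bi-Lipschitz with distortion controlled by $K$; and, since $N$ is embedded with positive reach $r_\ast := \reach(N)$, the extrinsic Euclidean distance and the geodesic distance on $N$ are comparable whenever both are at most $r_\ast$, so an $\infty$-transportation bound in one metric transfers to the other. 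Combined with the density bounds of Assumption~\ref{assum:nu}, this gives $\nu(B(p,r)) \sim r^{d_\nu}$ for small $r$, hence a ``dyadic cube'' decomposition of $N$ at each scale $2^{-k}$ into cells of diameter $\sim 2^{-k}$ and $\nu$-mass $\sim 2^{-k d_\nu}$, with implied constants depending only on $d_\nu$, $i_0$, $K$, $r_\ast$, and the density bounds.

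With this in hand, set $\delta := C_0 (\log n/n)^{1/d_\nu}$ for a large constant $C_0$, so each cell at scale $\delta$ has $\nu$-mass $\sim \log n/n$ and, after a multiplicative Chernoff bound, contains $\sim \log n$ sample points. More precisely, $n\hat\nu(Q)$ is $\mathrm{Binomial}(n,\nu(Q))$ for each cell $Q$ at each of the $O(\log n)$ relevant scales, and a union bound over the $O(n)$ cells shows that with probability at least $1 - C/n$ the empirical and population masses of every cell at every scale agree up to the fluctuation level $\sqrt{n\,\nu(Q)\log n}+\log n$ simultaneously; at the finest scale this forces $\hat\nu(Q) \in [\tfrac12\nu(Q),2\nu(Q)]$. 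On this good event one produces a coupling of $\hat\nu$ and $\nu$ with displacement $O(\delta)$. The mechanism — the content of García Trillos--Slepčev's construction — is to verify, via Strassen's duality for $W_\infty$, that for every union $U$ of scale-$\delta$ cells one has $\hat\nu(U) \leqslant \nu(U^{+})$, where $U^{+}$ adjoins to $U$ the cells within $O(\delta)$ of it: the deficit $\hat\nu(U)-\nu(U)$ is controlled, uniformly over all such $U$, by the multiscale concentration above, while the slack $\nu(U^{+}\setminus U)$ is of order (number of boundary cells of $U$) $\times\,\delta^{d_\nu}$, which by the isoperimetric inequality on $N$ is at least $j^{(d_\nu-1)/d_\nu}\delta^{d_\nu}$ for $U$ a union of $j$ cells. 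Since $d_\nu \geqslant 3$ one has $(d_\nu-1)/d_\nu > 1/2$, and this exponent gap is exactly what makes the boundary slack dominate the fluctuations and survive the union bound over the exponentially many choices of $U$. Feeding $\delta \sim (\log n/n)^{1/d_\nu}$ back in gives the stated rate.

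The main obstacle is precisely this last verification: turning the scale-by-scale concentration estimates into a \emph{uniform-in-$U$} Hall/Strassen condition at the bottom scale, rather than merely an $L^1$-type (i.e.\ $W_1$) bound, while keeping the bottleneck displacement at $O(\delta)$. This is the delicate combinatorial heart of the argument, and it is where the hypothesis $d_\nu \geqslant 3$ in Assumption~\ref{assum:nu_manifold} is genuinely used — in dimensions one and two the fluctuations overtake the boundary slack and the clean $(\log n/n)^{1/d_\nu}$ rate degrades by a power of $\log n$ or fails. I would either reproduce García Trillos--Slepčev's routing/matching lemma in this intrinsic-to-$N$ formulation, or, since the theorem is invoked verbatim as an adaptation, simply cite~\cite[Theorem 2]{garcia2020error} after confirming that Assumption~\ref{assum:nu_manifold} (compact smooth $N$ without boundary, $d_\nu \geqslant 3$, so that $i_0$, $K$, $r_\ast$ are finite and positive) and Assumption~\ref{assum:nu} (Lipschitz density bounded above and away from zero) are exactly the inputs its proof requires.
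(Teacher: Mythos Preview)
Your proposal is correct, and in fact goes well beyond what the paper does: this theorem is stated in the paper purely as a black-box citation of~\cite[Theorem 2]{garcia2020error}, with no proof or even proof sketch given. The paper's only ``argument'' is the label ``(adapted)'' and the implicit claim that Assumptions~\ref{assum:nu_manifold} and~\ref{assum:nu} match the hypotheses of that external result, which is exactly your final paragraph's recommendation. Your sketch of the underlying Garc\'ia Trillos--Slep\v{c}ev argument (volume comparison at small scales, dyadic decomposition, multiplicative Chernoff plus union bound over cells, and the Strassen/Hall verification where $d_\nu \geqslant 3$ is genuinely needed) is accurate and informative, but none of it appears in the paper itself.
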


Combining Proposition~\ref{prop:weighted_laplacian},
Theorem~\ref{thm:lambda_2_lower_garcia},
and Theorem~\ref{thm:w_infty_garcia},
we obtain Theorem~\ref{thm:graph_spectrum}.

\paragraph*{Scale of our results.} 
The statement of Theorem~\ref{thm:lambda_2_lower_garcia}
allows us
to describe the required
size of $\delta$ in Theorem~\ref{thm:graph_spectrum},
and hence $\eps/L$ in our results from section~\ref{subsec:embedded},
in terms of the injectivity radius $i_0$,
uniform bound $K$ on the
absolute value of the sectional curvature,
reach $r$ (these geometric 
quantities are described in detail in section~\ref{subsec:prelim_embedded})
and second eigenvalue $\lambda_2$,
as 
\begin{equation}\label{eqn:scale}
\frac{\eps}{L} < \min \Big\{ 1, \frac{i_0}{10},
\frac{1}{\sqrt{d_\nu K}}, \frac{r}{\sqrt{27d_\nu}}, 
\frac{C}{\sqrt{\lambda_2}} \Big\},
\end{equation} where $C$ is a constant depending on $d_\nu$, uniform
upper and lower bounds on $p$ and its Lipschitz
constant. In fact, we will use two
more small-scale
facts about the embedded manifold
$N$ beyond
Theorem~\ref{thm:lambda_2_lower_garcia},
namely Proposition~\ref{prop:nu_volume_control}
and Proposition~\ref{prop:N_covering_numbers} below,
but this bound is enough to permit their use.

\subsection{Additional facts about embedded manifolds}
\label{subsec:additional_embedded}

We also use Assumptions~\ref{assum:nu_manifold}
and~\ref{assum:nu} to derive a Poincaré inequality
for $\nu$ in Proposition~\ref{prop:weighted_poincare},
as well as to give
convenient bounds on the covering numbers
$\mathcal{N}(\nu, \cdot)$
in Proposition~\ref{prop:N_covering_numbers}.

Similar reasoning as in the previous section
implies that the
weighted Laplacian $\Delta(\cdot) = -\frac{1}{p} \div(p\nabla \cdot)$
sports a spectral gap, and entails the following result.
\begin{prop}\label{prop:weighted_poincare}
For all locally Lipschitz functions $\zeta \colon N \to \R$
we have
$$
\Var_{\nu}(\zeta) \lesssim 
\int \|\nabla_N \zeta(y)\|^2_h \ud \nu(y).
$$
\end{prop}

The following Proposition is an elementary
consequence of the fact that $N$
is an embedded manifold of $\R^d$.
\begin{prop}\label{prop:embedded_gradient_vs_ambient}
Let $\zeta \colon N \to \R$ and
$p \in N$.
Suppose $\zeta$ has a $\nabla_N$ gradient
at $p$, and admits a local extension
$\bar \zeta \colon U \to \R$ on some neighborhood
$U \subset \R^d$
of $p$ such that $\bar \zeta$ has
a Euclidean gradient at $p$.
Then
$$
\|\nabla_N \zeta(p)\|_h^2 \leqslant \|\nabla \bar \zeta(p)\|^2.
$$
\end{prop}

\begin{proof}[Proof of Proposition~\ref{prop:embedded_gradient_vs_ambient}]
Let $\iota \colon N \hookrightarrow \R^d$
be the embedding of $N$ into $\R^d$.
Then for all $v \in T_pN$,
$$
d\zeta_p(v) = \langle \nabla_N \zeta(p), v \rangle_h
= \langle d\iota_p(\nabla_N \zeta(p)), d\iota_p(v) \rangle,
$$ and on the other hand,
$$
d\zeta_p(v) = d\bar\zeta_{\iota(p)} \circ d\iota_p(v) = 
\langle \nabla \bar \zeta(\iota(p)), d\iota_p(v) \rangle.
$$ Since this is true for all $v\in T_pN$,
it follows that $d\iota_p(\nabla_N \zeta(p))$ is the orthogonal projection
of $\nabla \bar \zeta(p)$ onto $\img(d\iota_p) \subset T_{\iota(p)} \R^d$,
and so
$$
\|\nabla_N \zeta(p)\|^2_h = 
\|d\iota_p(\nabla_N \zeta(p))\|^2
\leqslant \|\nabla \bar \zeta(p)\|^2.
$$
\end{proof}

\begin{prop}
\label{prop:nu_volume_control}
Let $i_0:= \inj(N)$ 
be the injectivity radius of $N$
$K$ be a uniform upper bound on the absolute
value of the sectional curvatures of $N$, and $r:=\reach(N)$.
Suppose $\tau > 0$ is such that
$$
\tau \leqslant \min \Big\{ \frac{1}{\sqrt{K}}, i_0 , \frac{r}{2} \Big\}.
$$
Then for all $y \in N$,
$$
\tau^{d_\nu} \lesssim  \nu(B(y, \tau)) 
\lesssim \tau^{d_\nu}.
$$
\end{prop}
\begin{proof}[Proof of Proposition~\ref{prop:nu_volume_control}]
Fix some $y \in N$.
By~\cite[Proposition 2]{garcia2020error}, for any $y' \in N$
such that $\|y - y'\| \leqslant r/2$,
$$
\|y - y'\| \leqslant \dist_N(y, y') \leqslant 
\|y - y'\| + \frac{8}{r^2} \|y - y'\|^3
$$
Hence
$$
B_{\dist_N}(y, \tau) \subset B_{\|\cdot\|}(y, \tau) 
\cap N
\subset B_{\dist_N}\big(y, \tau + \frac{8}{r^2} \tau^3\big).
$$ Whence
$$
\nu(B(y, \tau)) = \nu(B_{\|\cdot\|}(y, \tau)) \geqslant
\nu(B_{\dist_N}(y, \tau))\gtrsim \vol_N(B_{\dist_N}(y, \tau)),
$$ where the final inequality follows because the density
of $\nu$ with respect to $\vol_N$ is bounded
away from $0$ under Assumptions~\ref{assum:nu_manifold}
and~\ref{assum:nu}. 
By~\cite[Equation 1.35]{garcia2020error},
when $\tau \lesssim 1$,
$$
\vol_N(B_{\dist_N}(y, \tau)) \gtrsim \tau^{d_\nu},
$$ yielding the lower bound. Similarly,
$$
\nu(B(y, \tau)) = \nu(B_{\|\cdot\|}(y, \tau)) \leqslant 
\nu\big(B_{\dist_N}\big(y, \tau + \frac{8}{r^2} \tau^3\big)\big)
\lesssim \vol_N\big(B_{\dist_N}\big(y,\tau + \frac{8}{r^2} \tau^3\big)\big),
$$ and we can conclude the upper bound
with another application of~\cite[Equation 1.35]{garcia2020error} where we have the opposite inequality
$$
\vol_N\big(B_{\dist_N}\big(y,\tau + \frac{8}{r^2} \tau^3\big)
\big) \lesssim \big( \tau + \frac{8}{r^2} \tau^3 \big)^{d_\nu}
\lesssim \tau^{d_\nu}.
$$

\end{proof}

The following control on the covering numbers
of $N$ will also be convenient.
\begin{prop}
\label{prop:N_covering_numbers}
Suppose $\tau$ is as in Proposition~\ref{prop:nu_volume_control}. Then
$$
\tau^{-d_\nu} \lesssim \mathcal{N}(N,\|\cdot\|, \tau) \lesssim 
\tau^{-d_\nu}.
$$
\end{prop}

\begin{proof}[Proof of Proposition~\ref{prop:N_covering_numbers}]
For the upper bound, take a maximal $\tau/2$ packing of $N$
in the Euclidean norm, $z_1, \ldots, z_K$. Then
$$
1 \geqslant \sum_{k = 1}^K \nu(B(z_k, \tau/2)).
$$ For $\tau \lesssim 1$, by Proposition~\ref{prop:nu_volume_control},
$$
1 \gtrsim K \tau^{d_\nu}.
$$ Observe that since the packing is maximal,
$K \geqslant \mathcal{N}(N,\|\cdot\|, \tau)$, completing the proof of the upper bound.

For the lower bound, we argue similarly.
Take any $\tau$ cover of $N$
in the Euclidean norm, $z_1, \ldots, z_K$. Then
by Proposition~\ref{prop:nu_volume_control}
$$
1 \leqslant \sum_{k = 1}^K \nu(B(z_k, \tau))
\lesssim K \tau^{d_\nu}.
$$ Taking a minimal $\tau$-covering yields the result.
\end{proof}

\section{Deferred proofs}
\label{sec:deferred}

\subsection{On the tightness of Theorem~\ref{thm:main_cost_covering}}
\label{subsec:tightness}

In this section, we give our results 
concerning
the tightness of Theorem~\ref{thm:main_cost_covering}
in terms of covering number dependence.
Our main observation
in this vein follows.

\begin{prop}\label{prop:tight}
Suppose there exists $r \geqslant 0$
such that for all $\eps > 0$, 
probability measures $\mu, \nu$ and cost functions
$c$ satisfying Assumption~\ref{assumption:cost}, there is a numerical constant such that
$$
\E[|S_\eps(\hat \mu, \hat \nu)
- S_\eps(\mu, \nu)|]
\lesssim (1 + \eps)
\sqrt{\frac{\mathcal{N}\big(\mu, \big(\frac{\eps}{L}\big)^r\big) \land \mathcal{N}\big(\nu, \big(\frac{\eps}{L}\big)^r\big)}{n}}.
$$ Then $r \geqslant 1$.
\end{prop}

To prove this result,
we establish the following fact.

\begin{prop}[Entropic estimation
of $W_1$ distances]\label{prop:W1_rate}
Suppose $c(x, y) = \|x -y\|$,
and there exists $r \geqslant 0$ 
such that
for all $\eps > 0$,
 and
all $\mu ,\nu$ supported in $B(0, 1/2)$,
there is a numerical constant for which
$$
\E[|S_\eps(\hat \mu, \hat \nu)
- S_\eps(\mu, \nu)|]
\lesssim (1 + \eps)
\sqrt{\frac{\mathcal{N}\big(\mu, \big(\frac{\eps}{L}\big)^r\big) \land \mathcal{N}\big(\nu, \big(\frac{\eps}{L}\big)^r\big)}{n}}.
$$
Then for dimension-dependent
constants $C_d, C_d'$,
$$
\mathbb{E}[|S_\eps(\hat \mu, \hat \nu) - W_1(\mu, \nu)|] 
\lesssim C_d \big\{ \frac{1}{rd + 4}  \log \big(C_d' n) + 1 \big\} \cdot n^{-1/(rd + 2)}
$$
In particular, Theorem~\ref{thm:main_cost_covering}
implies the rate $n^{-1/(d + 2)}$ 
for $W_1$ distance estimation, up
to logarithmic factors
and dimension-dependent constants.
\end{prop}

Proposition~\ref{prop:tight}
follows from this result
by recalling that for
$d \geqslant 3$, the minimax rates for $W_1$
distance estimation over all distributions
supported in $B(0, 1/2)$ are $n^{-1/d}$
up to logarithmic factors~\cite[Theorem 11]{niles2022estimation}.

\begin{proof}[Proof of Proposition~\ref{prop:W1_rate}]
Using standard upper bounds on covering numbers
in Euclidean spaces~\cite[Proposition 4.2.12]{vershynin2018high},
the hypothesis implies
\begin{equation}\label{eqn:hypothetical_value_rate}
\mathbb{E}[|S_\eps(\hat \mu, \hat \nu) - S_\eps(\mu, \nu)|] 
 \lesssim (1 + \eps) \cdot \Big(1 + \frac{1}{\eps^r} \Big)^{d/2}
\cdot \frac{1}{\sqrt{n}}.
\end{equation}
Note that for $\eps = 0$ we 
recover the Wasserstein-$1$ distance,
and the rate of approximation is known to be bounded as
\begin{equation}\label{eqn:W_1_approx}
 |S_\eps(\mu, \nu) - W_1(\mu, \nu)| \leqslant C_d \eps \log \Big( \frac{C'_d}{\eps} \Big),
 \end{equation}
 where $C_d, C'_d$ are dimension-dependent constants~\cite[Theorem 1]{genevay2019sample}.
Using $S_\eps(\hat \mu, \hat \nu)$
to estimate $W_1(\mu, \nu)$, suppose
we take $\eps = n^{-1/(rd + 2)}$.
We then find, for
dimension-dependent
constants $C_d, C_d'$ (potentially different than before), that
$$
\mathbb{E}[|S_\eps(\hat \mu, \hat \nu) - W_1(\mu, \nu)|] 
\lesssim C_d \big\{ \frac{1}{rd + 4}  \log \big(C_d' n) + 1 \big\} \cdot n^{-1/(rd + 2)}
$$

\end{proof}

\subsection{Deferred proofs from section~\ref{sec:proofs_fast_rates}}
\label{subsec:deferred_proofs_fast_rates}

\begin{proof}[Proof of Lemma~\ref{lem:empirical_mass_balls}]
Let $z_1, \ldots, z_K$
be a proper $\eps/2L$-net of $N$ such that
$K = \mathcal{N}^{\rm pr}(N, \frac{\eps}{2L}).$
For each $z \in N$, there exists
$k \in [K]$ such that $z \in B(z_k, \frac{\eps}{2L})$,
so that $B(z_k, \frac{\eps}{2L}) \subset B(z, \frac{\eps}{L})$.
Thus
$$
\inf_{z \in N} \hat \nu(B(z, \frac{\eps}{L}))
\geqslant \min_{k \in [K]} \hat \nu(B(z_k, \frac{\eps}{2L})).
$$ Therefore, for all $t > 0$,
\begin{align*}
\P[\inf_{z \in N}&  \hat \nu(B(z, \frac{\eps}{L}))
\leqslant \inf_{z \in N} \nu(B(z, \frac{\eps}{2L}))- t]
 \\
 &\leqslant \P[ \min_{k \in [K]}
 \hat \nu (B(z_k, \frac{\eps}{2L})) \leqslant
 \inf_{z \in N} \nu(B(z, \frac{\eps}{2L})) - t ] \\
 &\leqslant \sum_{k = 1}^K
 \P[\hat \nu (B(z_k, \frac{\eps}{2L})) \leqslant
 \inf_{z \in N} \nu(B(z, \frac{\eps}{2L})) - t] \\
 &\leqslant \sum_{k = 1}^K
 \P[\hat \nu (B(z_k, \frac{\eps}{2L})) \leqslant
 \nu(B(z_k, \frac{\eps}{2L})) - t].
\end{align*}
By Hoeffding's inequality, 
for each $k \in [K]$,
$$
\P[\hat \nu (B(z_k, \frac{\eps}{2L})) \leqslant
 \nu(B(z_k, \frac{\eps}{2L})) - t]
 \leqslant e^{- 2nt^2}.
$$
Therefore,
with probability at least
$1 - \frac{1}{n} e^{-\frac{10}{\eps}}$,
$$
\inf_{z \in N} \hat \nu(B(z, \frac{\eps}{L}))
\gtrsim \inf_{z \in N} \nu(B(z, \frac{\eps}{2L}))
 - \sqrt{\frac{1/\eps + \log(nK)}{n}}.
$$ Note that $K = \mathcal{N}^{\rm pr}(N, \frac{\eps}{2L})
\leqslant \mathcal{N}(N, \frac{\eps}{4L})$,
and so we may conclude
with Proposition~\ref{prop:nu_volume_control}
and Proposition~\ref{prop:N_covering_numbers}
combined with our assumption on $n$
in Equation~\eqref{eqn:n_large}.
\end{proof}

\begin{proof}[Proof of Lemma~\ref{lem:manifold_density_sup_control}]
For the first statement, observe that
for all $x \in \supp \mu$ 
$$
1 = \int p_\eps(x, y') \ud \nu(y').
$$ By Proposition~\ref{prop:consequences_of_lipschitz_cost}
for any $y \in \supp(\nu)$,
$$
1 \gtrsim p_\eps(x, y) \nu(B(y, \frac{\eps}{L}))
\gtrsim p_\eps(x, y) \big( \frac{\eps}{L} \big)^{d_\nu},
$$ where the second inequality
follows by Proposition~\ref{prop:nu_volume_control}.
Re-arranging yields the first statement
since $x \in \supp \mu$ and $y \in \supp \nu$
were arbitrary.

For the second statement, it suffices to show
the statement in the event described
by Lemma~\ref{lem:empirical_mass_balls}.
Use Proposition~\ref{prop:consequences_of_lipschitz_cost}
to find that, for any $x \in \supp \mu, y \in \supp \nu$,
$$
\hat p_\eps(x, y) \leqslant \min_{j \in [n]}
\big\{e^{\frac{2L}{\eps}\|y - y_j\|} \hat p_\eps(x, y_j)
\big\}.
$$ Reasoning as above,
$$
\hat p_\eps(x, y_j) \lesssim 
\hat \nu (B(y_j, \frac{\eps}{L}))^{-1}
\leqslant \sup_{y' \in N} \hat \nu(B(y', \frac{\eps}{L}
))^{-1}.
$$ Since we are working
in the event described by Lemma~\ref{lem:empirical_mass_balls},
$$
\sup_{y' \in N}
\hat \nu(B(y', \frac{\eps}{L} ))^{-1} \lesssim
\Big(\frac{L}{\eps}\Big)^{d_\nu}.
$$ It follows that there exists a $y_j$
such that $\|y - y_j\| \leqslant \frac{\eps}{L}$,
in which case
$$
\hat p_\eps(x, y) \lesssim 
\Big(\frac{L}{\eps}\Big)^{d_\nu}.
$$

\end{proof}

\begin{proof}[Proof of Lemma~\ref{lem:L2_density}]
We give the proof of the first
claim, the remaining two follow in the same fashion.
Let $\mathcal{E}$ denote the event in 
Lemma~\ref{lem:manifold_density_sup_control}.
Observe that, by the pointwise bounds in Proposition~\ref{prop:dual_pointwise_control},
\begin{align*}
\E[\|\hat p_\eps - p_\eps\|^2_{L^2(\mu \otimes \nu)}]
&=  \E[\mathbbold{1}[\mathcal{E}]
\|\hat p_\eps - p_\eps\|^2_{L^2(\mu \otimes \nu)}]
+ 
\E[\mathbbold{1}[\mathcal{E}^c]
\|\hat p_\eps - p_\eps\|^2_{L^2(\mu \otimes \nu)}] \\
&\lesssim 
\E[\mathbbold{1}[\mathcal{E}]
\|\hat p_\eps - p_\eps\|^2_{L^2(\mu \otimes \nu)}]
+ e^{\frac{10}{\eps}}
\E[\mathbbold{1}[\mathcal{E}^c]]  \\
&\leqslant
\E[\mathbbold{1}[\mathcal{E}]
\|\hat p_\eps - p_\eps\|^2_{L^2(\mu \otimes \nu)}]
+ \frac{1}{n}.
\end{align*}
Note that
$|e^a - e^b| \leqslant e^{a \lor b}|a- b|
= (e^a \lor e^b)|a - b|$
for all $a, b \in \R$,
and so
\begin{align*}
\E[\mathbbold{1}[\mathcal{E}]
\|\hat p_\eps - p_\eps\|^2_{L^2(\mu \otimes \nu)}]
&\leqslant \Big(\frac{L}{\eps} \Big)^{2d_\nu}
\E[\mathbbold{1}[\mathcal{E}]
\| \log \hat p_\eps - \log p_\eps \|_{L^2(\mu \otimes \nu)}^2] \\
&\leqslant \Big(\frac{L}{\eps} \Big)^{2d_\nu}
\E[ \| \log \hat p_\eps - \log p_\eps \|_{L^2(\mu \otimes \nu)}^2] \\
&= \frac{1}{\eps^2}\cdot  \Big(\frac{L}{\eps} \Big)^{2d_\nu}
\E[ \| \hat f_\eps - f_\eps\|^2_{L^2(\mu)}
+ \|\hat g_\eps - g_\eps\|^2_{L^2(\nu)}].
\end{align*}
The first inequality follows. \end{proof}

\bibliographystyle{alpha}
\bibliography{annot}
\end{document}